\numberwithin{equation}{section}
\newtheorem{thm}{Theorem}[section]
\newtheorem{prop}[thm]{Proposition}
\newtheorem{lem}[thm]{Lemma}
\newtheorem{cor}[thm]{Corollary}
\theoremstyle{definition}
\newtheorem{defn}[thm]{Definition}
\theoremstyle{remark}
\newtheorem{rem}[thm]{Remark}
\newcommand*{\Coeff}{\mathbb{K}}
\newcommand*{\BigCat}[1]{%
  \expandafter\ifx\expandafter\relax\detokenize{#1}\relax
    \mathcal{C}%
  \else
    \presubscript{#1}{\mathcal{C}}%
  \fi
}
\newcommand*{\Sbimod}[1]{%
  \expandafter\ifx\expandafter\relax\detokenize{#1}\relax
    \mathcal{S}\mathrm{bimod}%
  \else
    \presubscript{#1}{\mathcal{S}\mathrm{bimod}}%
  \fi
}
\DeclareMathOperator{\Hom}{Hom}
\DeclareMathOperator{\End}{End}
\DeclareMathOperator{\supp}{supp}
\DeclareMathOperator{\grk}{grk}
\DeclareMathOperator{\Parity}{Parity}
\DeclareMathOperator{\diag}{diag}
\DeclareMathOperator{\Ker}{Ker}
\DeclareMathOperator{\Aut}{Aut}
\newcommand*{\Z}{\mathbb{Z}}
\newcommand*{\presubscript}[2]{\prescript{}{#1}{#2}}
\newcommand*{\ch}{\mathrm{ch}}
\newcommand*{\triv}{\mathrm{triv}}
\newcommand*{\C}{\mathbb{C}}
\newcommand*{\id}{\mathrm{id}}
\title{On one-sided singular Soergel bimodules}
\author{Noriyuki Abe}
\address{Graduate School of Mathematical Sciences, the University of Tokyo, 3-8-1 Komaba, Meguro-ku, Tokyo 153-8914, Japan.}
\email{abenori@ms.u-tokyo.ac.jp}
\subjclass[2010]{20F55}
\begin{document}
\begin{abstract}
We establish a theory of one-sided singular Soergel bimodules which is a generalization of a part of Williamson's theory.
We use a formulation of Soergel bimodules developed by the author.
\end{abstract}
\maketitle
\section{Introduction}
Attached to a Coxeter system $(W,S)$, we have the Hecke algebra $\mathcal{H}$.
By a Hecke category, we mean a categorification of $\mathcal{H}$.
Such categories play an important role in representation theory.
For example, a tilting character formula for algebraic representations of a reductive group over a positive characteristic field is described in terms of such a categorification \cite{MR3805034,MR3868004}.

There are several categorifications of $\mathcal{H}$.
The first one is geometric.
Assume that $(W,S)$ is the Weyl group of a Kac-Moody group $G$ over $\C$ with the Borel subgroup $B$ and we consider sheaves of $\Coeff$-vector spaces on the flag variety $G/B$ where $\Coeff$ is a field.
If the characteristic of $\Coeff$ is zero, the category of $B$-equivariant semisimple complexes on the flag variety gives a Hecke category.
However, this category does not work well if the characteristic of $\Coeff$ is positive.
The category of parity sheaves, introduced in \cite{MR3230821}, works well even when the characteristic of $\Coeff$ is positive and it gives a good geometric Hecke category.

We also have an algebraic categorification.
It was defined by Soergel and it is now called the category of Soergel bimodules~\cite{MR2329762}.
His theory works well when the coefficient field $\Coeff$ has characteristic zero, but does not work when the characteristic of $\Coeff$ is positive for infinite Coxeter groups.
Note that Soergel bimodules depend on a choice of representation of $W$.
To be precise, Soergel's theory relies upon the assumption that the representation is reflection faithful. Over a characteristic zero field, we always have a reflection faithful representation. However, for example, when $(W,S)$ is an affine Weyl group, which is an interesting case from the point of view of modular representation theory, the natural representation arising from a reductive group is not reflection faithful over positive characteristic fields.
The author introduced a generalization of Soergel bimodules and proved that this works well even with positive characteristic fields.
In particular, it gives a Hecke category \cite{MR4321542}.

We also mention two other categorifications.
Both were defined earlier than \cite{MR4321542}.
Fiebig introduced a certain full-subcategory of the category of sheaves on a moment graph \cite{MR2395170} and proved that this is equivalent to the category of Soergel bimodules over characteristic zero fields.
He used this category to give an upper bound for primes $p$ with which Lusztig's conjecture does not hold \cite{MR2999126,MR2726602}.
The other category was introduced by Elias-Williamson \cite{MR3555156}.
They used this category to find a counterexample of Lusztig conjecture \cite{MR3671935} and also used it for a formulation of a conjecture by Riche-Williamson~\cite{MR3805034}.

Though constructed in different ways, the various Hecke categories discussed above are all equivalent.
In the characteristic zero cases, the equivalence between the category of semisimple complexes on the flag variety and the category of Soergel bimodules was proved by Soergel and it was used to prove the Koszul duality of the category $\mathcal{O}$ of complex reductive Lie algebras~\cite{MR1029692,MR1322847}.
In a positive characteristic case, under mild assumptions, it was proved that the three categories (Soergel bimodules, Fiebig's category and the category of Elias-Williamson) are equivalent to each other in \cite{MR4321542}, and the equivalence between parity sheaves and the category of Elias-Williamson is proved in \cite{MR3805034}.

The aim of this paper is to give a singular version of these stories.
We call a subset $S_{0}$ of $S$ finitary if the subgroup of $W$ generated by $S_{0}$ is finite.
Over a characteristic zero field, Williamson \cite{MR2844932} established the theory of singular Soergel bimodules based on Soergel bimodules.
Here, a singular Hecke category categorifies the module denoted by ${}^{S_{0}}\mathcal{H}^{S_{1}}$ with finitary subsets $S_0,S_1$.
Note that in this case we have ${}^{S_{0}}\mathcal{H}^{S_{1}}\simeq \triv_{S_{0}}\otimes_{\mathcal{H}_{S_{0}}}\mathcal{H}$ where $\mathcal{H}_{S_{0}}$ is the Hecke algebra attached to the parabolic subgroup for $S_{0}$ and $\triv_{S_{0}}$ is the trivial $\mathcal{H}_{S_{0}}$-module. 
The geometric construction of a singular category is the category of semi-simple complexes over a generalized flag variety equivariant for the action of a Levi subgroup.
Since the theory is based on Soergel's one, it only works with characteristic zero fields.
So we have the following natural questions.
\begin{enumerate}
\item What is an algebraic singular Hecke category in the positive characteristic case?
\item Is this algebraic singular Hecke category equivalent to the category of parity sheaves on a generalized flag variety?
\end{enumerate}
The aim of this paper is to answer these questions partially.
As we have mentioned above, Williamson constructed categories which categorifies ${}^{S_{0}}\mathcal{H}^{S_{1}}$.
In this paper, we only consider the case of $S_1 = \emptyset$.
We remark that we need some assumptions on $S_{0}$, see Section~\ref{sec:Soergel bimodules} for the details.
In particular, the action of $W_{S_{0}}$ has to be faithful.
Similar assumptions appear in some works~\cite{MR2844932,MR3644793}.

Now we are going into more details.
Fix a field $\Coeff$ and let $(V,\{\alpha_s\}_{s\in S},\{\alpha_s^\vee\}_{s\in S})$ be a realization over $\Coeff$~\cite[Definition~3.1]{MR3555156}.
We can attach $\alpha_t\in V$ (only up to $\Coeff^{\times}$) to any reflection $t\in W$.
Let $R$ be the symmetric algebra of $V$.
Let $S_0\subset S$ be a subset and denote the group generated by $S_0$ by $W_{S_0}$.
We assume that 
\begin{itemize}
\item $\#W_{S_0} < \infty$.
\item As a graded $R^{W_{S_0}}$-module, we have $R\simeq \bigoplus_{w\in W_{S_0}}R^{W_{S_0}}(-2\ell(w))$ where $\ell(w)$ is the length of $w$ and $(-2\ell(w))$ is a grading shift.
\item For any distinct reflections $t_1,t_2\in W_{S_0}$, $\{\alpha_{t_1},\alpha_{t_2}\}$ is linearly independent.
\end{itemize}
Then we define the category of singular Soergel bimodules $\Sbimod{S_0}$.
We skip the definition in the introduction.
See Section~\ref{sec:Singular Soergel bimodules} for the definition.
If $S_0 = \emptyset$, then this is the category introduced in \cite{MR4321542} and it is shown that the category $\Sbimod{\emptyset}$ is a categorification of the Hecke algebra. 
From the definition of the category, we have the following two extra structures on $\Sbimod{S_{0}}$:
\begin{itemize}
\item A right action of $\Sbimod{} = \Sbimod{\emptyset}$.
\item A grading shift $M\mapsto M(1)$.
\end{itemize}
Let $[\Sbimod{S_0}]$ be the split Grothendieck group of $\Sbimod{S_0}$.
By the above two structures, this is a right $\mathcal{H}$-module.
\begin{thm}[{Theorem~\ref{thm:classification}, Theorem~\ref{thm:Categorification}}]
\begin{enumerate}
\item There is a bijection between indecomposable objects in $\Sbimod{S_0}$ up to grading shift and $W_{S_0}\backslash W$.
\item $[\Sbimod{S_0}]\simeq \triv_{S_0}\otimes_{\mathcal{H}_{S_0}}\mathcal{H}$.
\end{enumerate}
\end{thm}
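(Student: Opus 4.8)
The plan is to follow the template of the non-singular theory: reduce both assertions to (a) a standard filtration on the objects of $\Sbimod{S_0}$ indexed by $W_{S_0}\backslash W$ and (b) a Soergel-type graded $\Hom$ formula, using the right action of $\Sbimod{}$ and the categorification $[\Sbimod{\emptyset}]\simeq\mathcal{H}$ established in \cite{arXiv:1901.02336_accepted} as the external inputs. Write ${}^{S_0}W\subset W$ for the set of minimal-length representatives of the cosets in $W_{S_0}\backslash W$, and recall that $\triv_{S_0}\otimes_{\mathcal{H}_{S_0}}\mathcal{H}$ is free over $\Z[v^{\pm1}]$ on $\{1\otimes H_w : w\in{}^{S_0}W\}$, with the right $\mathcal{H}$-action pinned down by the generators $H_s$. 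The two functors that tie the singular category to $\Sbimod{\emptyset}$ are the induction $R\otimes_{R^{W_{S_0}}}(-)$ and the restriction of left scalars; by the freeness hypothesis on $R$ over $R^{W_{S_0}}$ these are biadjoint up to a shift, they are monoidal for the right $\Sbimod{}$-action, and they carry standard filtrations to standard filtrations.

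\emph{Standard objects and $\Delta$-flags.} For $w\in{}^{S_0}W$, let $\Delta_{W_{S_0}w}\in\Sbimod{S_0}$ be the $(R^{W_{S_0}},R)$-bimodule underlying the standard bimodule $R_w$ (its left $R$-action restricted to $R^{W_{S_0}}$); using that $W_{S_0}$ permutes the $R_{uw}$ one checks this depends only on the coset. The first step is to show that every object of $\Sbimod{S_0}$ carries a finite filtration whose subquotients are shifts $\Delta_{W_{S_0}w}(n)$: this is a support-filtration argument --- strip off the closed piece corresponding to a coset maximal in the support, and check, using that $R$ is graded free over $R^{W_{S_0}}$ of the prescribed rank and that distinct roots of $W_{S_0}$ are linearly independent, that the resulting subquotient is a direct sum of shifts of standard objects. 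Since morphism spaces are finite-dimensional in each degree and objects are finitely generated, Krull--Schmidt holds, so every indecomposable has a well-defined support with a unique maximal coset.

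\emph{The $\Hom$ formula and the indecomposables.} Next I would establish the graded $\Hom$ formula
\[
  \grk\Hom_{\Sbimod{S_0}}(M,N)=\bigl(\overline{\ch M},\,\ch N\bigr),
\]
where $\ch$ takes values in $\triv_{S_0}\otimes_{\mathcal{H}_{S_0}}\mathcal{H}$ and $(-,-)$ is the sesquilinear form inherited from the standard form on $\mathcal{H}$. Dévissage along $\Delta$-flags reduces this to the case $M=\Delta_{W_{S_0}x}$, $N=\Delta_{W_{S_0}y}$ --- a localization computation over $R^{W_{S_0}}$ --- while compatibility with $(-)\otimes_R B_s$ propagates it to all singular Bott--Samelson objects $B_\emptyset\otimes_R B_{s_1}\otimes_R\cdots\otimes_R B_{s_\ell}$, with $B_\emptyset=R$ the unit $(R^{W_{S_0}},R)$-bimodule; the required compatibility is read off from the known non-singular formula via the adjunction between $R\otimes_{R^{W_{S_0}}}(-)$ and restriction of scalars. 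Granting the formula, for a reduced expression of $w\in{}^{S_0}W$ the graded endomorphism ring of the associated singular Bott--Samelson object is nonnegatively graded with $\Coeff$ in degree $0$, so that object has a unique-up-to-shift indecomposable summand $B_{W_{S_0}w}$ with $\ch B_{W_{S_0}w}\in 1\otimes H_w+\sum_{x<w}\Z[v^{\pm1}]\,(1\otimes H_x)$, every other summand being a shift of some $B_{W_{S_0}x}$ with $x<w$. Induction on the Bruhat order on ${}^{S_0}W$ then shows that $\{B_{W_{S_0}w}\}_{w\in{}^{S_0}W}$ is, up to shift, a complete and irredundant list of the indecomposables, which is part~(1). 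I expect this $\Hom$ formula --- concretely, the local-ring property of these endomorphism rings --- to be the main obstacle; it is where the three standing hypotheses are essential, and where one must use the comparison functors to $\Sbimod{\emptyset}$ with care (or reprove a ``light leaves'' type spanning result directly in the singular setting).

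\emph{Categorification.} For part~(2), part~(1) together with Krull--Schmidt makes $[\Sbimod{S_0}]$ free over $\Z[v^{\pm1}]$ on $\{[B_{W_{S_0}w}]\}_{w\in{}^{S_0}W}$, hence, by the upper-triangular change of basis above, also on $\{[\Delta_{W_{S_0}w}]\}_{w\in{}^{S_0}W}$. Define $\ch\colon[\Sbimod{S_0}]\to\triv_{S_0}\otimes_{\mathcal{H}_{S_0}}\mathcal{H}$ on the latter basis by $[\Delta_{W_{S_0}w}]\mapsto 1\otimes H_w$; well-definedness follows because $\Delta$-flag multiplicities are intrinsic, so every short exact sequence of $\Delta$-flagged objects maps to a valid relation. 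It then remains to check right $\mathcal{H}$-linearity: since the action on $[\Sbimod{S_0}]$ is $[M]\mapsto[M\otimes_R B_s]$ and $\mathcal{H}$ is generated by the $H_s$, it suffices to compute $[\Delta_{W_{S_0}w}\otimes_R B_s]$ and match it with $(1\otimes H_w)\cdot(H_s+v)$, a short case analysis according to whether $ws$ is again minimal in its coset and whether $ws>w$. As $\ch$ carries a basis to a basis it is an isomorphism of $\Z[v^{\pm1}]$-modules, and being right $\mathcal{H}$-linear it is the asserted isomorphism of right $\mathcal{H}$-modules.
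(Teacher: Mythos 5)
Your derivation of part (1) rests on a claim that is false. You propose to extract the indecomposables from ``the local-ring property'' of singular Bott--Samelson objects: for a reduced expression of $w\in{}^{S_0}W$, you assert that $\End^{\bullet}$ of $\pi_{S_0,\emptyset,*}(B_{s_1}\otimes\dots\otimes B_{s_\ell})$ is nonnegatively graded with $\Coeff$ in degree zero. Already in the case $S_0=\emptyset$ this fails: in type $A_2$ the word $(s,t,s)$ is reduced, $\ch(B_s\otimes B_t\otimes B_s)=H_{sts}+vH_{st}+vH_{ts}+(1+v^2)H_s+v^2H_t+(v+v^3)H_e$, and the very Hom formula you intend to prove gives $\grk\End^{\bullet}=2+5v^{-2}+4v^{-4}+v^{-6}$, so the degree-zero endomorphism algebra is two-dimensional --- consistent with $B_s\otimes B_t\otimes B_s\simeq B(sts)\oplus B(s)$ and inconsistent with locality (your sentence is also internally at odds with its own conclusion, which allows further summands $B_{W_{S_0}x}(n)$, $x<w$). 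The mechanism that actually works, and that the paper uses, is a stalk argument: the paper starts from $\pi_{S_0,\emptyset,*}(B(w_-))$, whose $w$-part is $R_w(\ell(w_-))$, graded free of rank one, so exactly one indecomposable summand survives at $w$; this gives existence of ${}_{S_0}B(w)$. Moreover the completeness half of (1) --- that \emph{every} indecomposable of $\Sbimod{S_0}$ is some ${}_{S_0}B(w)(n)$, equivalently that every object splits into such --- does not follow from the Hom formula at all: one must lift maps from the top subquotient $M_{\ge w}/M_{>w}$ back to $M$, i.e.\ one needs surjectivity of $\Hom(M,N_I)\to\Hom(M,N_I/N_{I'})$ for closed $I'\subset I$ (Proposition~\ref{prop:projectivity}) together with graded freeness of these subquotients (Proposition~\ref{prop:freeness of subquotients}). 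Your proposal never establishes this projectivity, and your d\'evissage proof of the Hom formula would silently require the same statement (exactness of $\Hom(M,-)$ on the support filtration of $N$), so it cannot be postponed or absorbed into a ``localization computation''.

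Beyond this gap, note that your logical order inverts the paper's, and the inversion is precisely what makes your first step hard. The paper proves Theorem~\ref{thm:classification} first, by the lifting argument above, with every input reduced to the non-singular case through the adjoint pair $(\pi_{S_0,\emptyset}^*,\pi_{S_0,\emptyset,*})$; Proposition~\ref{prop:indecomposable for the longest} ($R\otimes_{R^{W_{S_0}}}R\simeq B(w_{S_0})$) is what guarantees $\pi_{S_0,\emptyset}^*$ stays inside $\Sbimod{}$. It then proves Theorem~\ref{thm:Categorification} by showing ${}_{S_0}\ch$ is triangular on the ${}_{S_0}B(w)$ and satisfies ${}_{S_0}\ch\circ\pi_{S_0,\emptyset,*}=p\circ\ch$ (Proposition~\ref{prop:push in Grothendieck group}), so $\mathcal{H}$-linearity is inherited from the non-singular $\ch$ rather than checked by a case analysis on $[\Delta\otimes B_s]$; only afterwards is the singular Hom formula deduced (Theorem~\ref{thm:hom formula}), and its proof uses the surjectivity of $[\Sbimod{}]\to[\Sbimod{S_0}]$, i.e.\ Theorem~\ref{thm:Categorification} itself. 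A Williamson-style route (Hom formula first, via $\Delta$-flags) can be carried out, but then Propositions~\ref{prop:projectivity} and~\ref{prop:freeness of subquotients}, which you wave at with a GKM remark, become the actual content; also, the biadjointness of induction and restriction that you invoke is more than the paper establishes (Lemma~\ref{lem:adjointness in singular Soergel bimodules} gives one adjunction, which is all that is needed). Your definition of ${}_{S_0}\ch$ via $\Delta$-multiplicities and the $\Delta$-flag existence are fine and agree with the paper's; the two missing ingredients above are where part (1), and hence part (2), is actually decided.
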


Next, we state a theorem with a geometric setting.
Let $G$ be a Kac-Moody group and $B$ a Borel subgroup of $G$.
Assume that the Weyl group of $G$ is $(W,S)$.
Then $S$ corresponds to the set of simple roots.
The subset $S_0\subset S$ defines a standard parabolic subgroup $P_{S_0}$ and the generalized flag variety $\presubscript{S_0}{X} = P_{S_0}\backslash G$.
Let $\Parity_B(\presubscript{S_0}{X})$ be the category of $B$-equivariant parity sheaves on $\presubscript{S_0}{X}$.
Let $X$ be the flag variety.
Then by the convolution product, we have a right action of $\Parity_B(X)$ on $\Parity_B(\presubscript{S_0}{X})$.
Recall that $\Sbimod{S_0}$ has a right action of $\Sbimod{}$.
\begin{thm}[Theorem~\ref{thm:equivalence}]
We have $\Sbimod{S_0}\simeq \Parity_B(\presubscript{S_0}{X})$, which is compatible with the right actions.
\end{thm}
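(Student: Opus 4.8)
\section*{Proof proposal}

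The plan is to realize the equivalence through an equivariant hypercohomology functor, exactly as in the nonsingular case, and to bootstrap as much as possible from the already established equivalence $\mathbb{V}\colon\Parity_B(X)\simeq\Sbimod{}$, i.e.\ the case $S_0=\emptyset$. Viewing a $B$-equivariant sheaf on $\presubscript{S_0}{X}=P_{S_0}\backslash G$ as a $(P_{S_0},B)$-biequivariant sheaf $\widetilde{\mathcal{F}}$ on $G$, I would set
\[
\mathbb{H}(\mathcal{F})=\bigoplus_{n\in\Z}\mathbb{H}^{n}_{P_{S_0}\times B}(G,\widetilde{\mathcal{F}}).
\]
Under the three hypotheses on the realization one identifies $H^\bullet_B(\mathrm{pt})\simeq R$ and $H^\bullet_{P_{S_0}}(\mathrm{pt})\simeq R^{W_{S_0}}$ (the second needs $\#W_{S_0}<\infty$, the freeness of $R$ over $R^{W_{S_0}}$, and a GKM-type argument that can largely be imported from the $S_0=\emptyset$ machinery), so that $\mathbb{H}$ takes values in graded $(R^{W_{S_0}},R)$-bimodules, intertwining $[1]$ with the shift $(1)$. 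The right-action compatibility is then built in: for $\mathcal{F}\in\Parity_B(\presubscript{S_0}{X})$ and $\mathcal{G}\in\Parity_B(X)$, a K\"unneth/base-change computation along the (proper) multiplication map $P_{S_0}\backslash G\times_B G\to P_{S_0}\backslash G$ gives $\mathbb{H}(\mathcal{F}\ast\mathcal{G})\simeq\mathbb{H}(\mathcal{F})\otimes_R\mathbb{V}(\mathcal{G})$, the tensor product being underived because $\mathbb{V}(\mathcal{G})$ is graded free as a left $R$-module by parity.

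Next I would check that the essential image lies in $\Sbimod{S_0}$. The closed point stratum of $\presubscript{S_0}{X}$ corresponds to the biorbit $P_{S_0}\subseteq G$, whose stabilizer in $P_{S_0}\times B$ is conjugate to $B$; hence $\mathbb{H}(\mathcal{E}_e)=H^\bullet_{P_{S_0}\times B}(P_{S_0})\simeq H^\bullet_B(\mathrm{pt})=R$, with left $R^{W_{S_0}}$-action restricted along $R^{W_{S_0}}\hookrightarrow R$ --- that is, the unit object of $\Sbimod{S_0}$. Combining this with the previous paragraph, $\mathbb{H}$ sends each Bott--Samelson-type object $\mathcal{E}_e\ast\mathcal{E}_{s_1}\ast\cdots\ast\mathcal{E}_{s_n}$ to the corresponding generating object $\mathbf{1}\otimes_R B_{s_1}\otimes_R\cdots\otimes_R B_{s_n}$ of $\Sbimod{S_0}$. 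Since such objects generate $\Parity_B(\presubscript{S_0}{X})$ up to shifts, finite sums and direct summands, and $\Sbimod{S_0}$ is idempotent-closed, $\mathbb{H}$ factors through $\Sbimod{S_0}$.

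The heart of the argument is full faithfulness, i.e.\ the singular analogue of Soergel's Hom-formula: for parity sheaves $\mathcal{E},\mathcal{F}$ the natural map
\[
\bigoplus_{n\in\Z}\Hom(\mathcal{E},\mathcal{F}[n])\longrightarrow\Hom_{(R^{W_{S_0}},R)}\bigl(\mathbb{H}(\mathcal{E}),\mathbb{H}(\mathcal{F})\bigr)
\]
is an isomorphism of graded $R^{W_{S_0}}$-modules. I would prove this by d\'evissage along the stratification $\presubscript{S_0}{X}=\bigsqcup_{\overline w\in W_{S_0}\backslash W}\presubscript{S_0}{X}_{\overline w}$: both sides carry compatible ``standard filtrations'' indexed by $W_{S_0}\backslash W$ (on the left from the $(j_{\overline w!},j_{\overline w}^{!})$-adjunctions together with parity vanishing; on the right from the structure theory behind Theorem~\ref{thm:classification}), each associated graded piece is graded free over $R^{W_{S_0}}$, and one checks the map on associated graded by identifying $\mathbb{H}$ of the standard objects with the standard bimodules and computing $\Hom$'s between standards explicitly. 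Equivalently, apply $R\otimes_{R^{W_{S_0}}}(-)$ and use a compatibility $\mathbb{V}\circ\pi^{*}\simeq(\text{shift})\circ\bigl(R\otimes_{R^{W_{S_0}}}(-)\bigr)\circ\mathbb{H}$ for the projection $\pi\colon X\to\presubscript{S_0}{X}$, reducing to the already known full faithfulness of $\mathbb{V}$ and using that $R$ is free of finite rank over $R^{W_{S_0}}$. Granting this, $\mathbb{H}$ is fully faithful and additive with image inside $\Sbimod{S_0}$; since the indecomposables on both sides are indexed by $W_{S_0}\backslash W$ --- parity sheaves geometrically, $\Sbimod{S_0}$ by Theorem~\ref{thm:classification} --- and $\mathbb{H}$ preserves the maximal coset occurring in the support, $\mathbb{H}$ is essentially surjective, hence an equivalence; compatibility with the right actions is the displayed isomorphism of the first paragraph.

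The main obstacle is precisely this Hom-formula in positive characteristic: establishing the $\Ext$-vanishing between standard and costandard objects needed for the d\'evissage, identifying $\mathbb{H}$ on standards, and the graded-freeness statement on the algebraic side. This is where the Krull--Schmidt property of $\Parity_B(\presubscript{S_0}{X})$ and all three hypotheses on the realization ($\#W_{S_0}<\infty$, freeness of $R$ over $R^{W_{S_0}}$, and linear independence of any two distinct reflection vectors in $W_{S_0}$) are genuinely used. Getting $\mathbb{H}$ off the ground with values in $(R^{W_{S_0}},R)$-bimodules --- i.e.\ $H^\bullet_{P_{S_0}}(\mathrm{pt})\simeq R^{W_{S_0}}$ --- is a secondary technical point that can be reduced to the $S_0=\emptyset$ case.
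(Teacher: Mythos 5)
Your proposal is correct in outline and shares the paper's architecture: the same functor (your $(P_{S_0},B)$-biequivariant description is just $H^\bullet_B(\presubscript{S_0}{X},-)$), the same K\"unneth/torsor argument for compatibility with convolution, the same observation that every parity sheaf on $\presubscript{S_0}{X}$ is a summand of a convolution $\mathcal{E}_e\ast(\text{Bott--Samelson})$, i.e.\ of $\pi_{S_0,\emptyset,*}$ of a Bott--Samelson object, and essential surjectivity deduced from full faithfulness plus idempotent splitting or the classification. The genuine divergence is in the decisive step, full faithfulness. The paper does not build standard filtrations on Hom spaces in the singular category: it first proves the purely algebraic Hom formula for $\Sbimod{S_0}$ (Theorem~\ref{thm:hom formula}) by writing $M=\pi_{S_0,\emptyset,*}(M_0)$ and reducing along the adjunction to the known $S_0=\emptyset$ formula; combined with the parity-sheaf computation $\Hom^\bullet(\mathcal{F},\mathcal{G})\simeq\bigoplus_w\Hom^\bullet(j_w^*\mathcal{F},j_w^!\mathcal{G})$ this gives equality of graded ranks, so that only \emph{injectivity} of the comparison map has to be proved geometrically, by induction over closed unions of strata using the exact sequences for $j_!j^*$ and $i_*i^!$ (Lemmas~\ref{lem:relation of restriction between sheaf and Soergel bimodule} and \ref{lem:relation of restriction between sheaf and Soergel bimodule, locally closed}), after a base-change reduction to a field. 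Your primary route (matching standard filtrations on both Hom spaces and comparing associated graded pieces, Williamson-style) is viable but needs more infrastructure than you indicate -- identification of $\presubscript{S_0}{\mathbb{H}}$ on standard/costandard objects and the algebraic surjectivity and freeness statements; the paper's Propositions~\ref{prop:projectivity} and \ref{prop:freeness of subquotients} are exactly the required inputs, so what each approach buys is clear: yours is more self-contained in spirit, the paper's leverages the already-proven algebraic Hom formula so that only an injectivity statement requires geometric d\'evissage. Your alternative shortcut via $\mathbb{V}\circ\pi^*\simeq\pi^*\circ\presubscript{S_0}{\mathbb{H}}$ carries a real caveat: in the paper this compatibility is obtained only \emph{after} the equivalence, by taking left adjoints of $\presubscript{S_0}{\mathbb{H}}\circ\pi_{S_0,\emptyset,*}\simeq\pi_{S_0,\emptyset,*}\circ\mathbb{H}$, so using it as an input you would have to prove it directly (decomposing the pushforward along the smooth proper fibration $X\to\presubscript{S_0}{X}$ and checking both the left $R$-action and the decompositions over $Q$), a nontrivial extra step -- this is precisely why the paper routes every reduction through $\pi_{S_0,\emptyset,*}$, whose compatibility with equivariant cohomology is immediate. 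Finally, note that the convolution isomorphism must be checked to respect the fixed-point decompositions over $Q$, i.e.\ to be an isomorphism in $\BigCat{S_0}$ and not merely of $(R^{W_{S_0}},R)$-bimodules; your sketch passes over this, and the paper devotes Lemma~\ref{lem:monoidal structure and H} to it.
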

When $S_0 = \emptyset$, this can be obtained by combining the results in \cite{MR3805034,MR4321542}.
The proof of the above theorem is different from this proof and is closer to the original proof in the characteristic zero case.

\subsection*{Acknowledgment}
The author was supported by JSPS KAKENHI Grant Number 18H01107.

\section{Singular Soergel bimodules}\label{sec:Singular Soergel bimodules}
\subsection{Notation}
In this paper, we use the following notation.
Let $(W,S)$ be a Coxeter system, $\Coeff$ a noetherian integral domain.
The unit element of $W$ is denoted by $e$ and the length function is denoted by $\ell$.
We fix a realization $(V,\{(\alpha_s,\alpha_s^\vee)\}_{s\in S})$ over $\Coeff$~\cite[Definition~3.1]{MR3555156}.
We assume that $\alpha_s\neq 0$ and $\alpha_s^\vee\colon V\to \Coeff$ is surjective.
For each $t = wsw^{-1}$ with $s\in S$ and $w\in W$, we put $\alpha_t = w(\alpha_s)$.
This depends on a choice of $(w,s)$ but $\Coeff^{\times} \alpha_t$ does not depend on $(w,s)$~\cite[Lemma~2.1]{MR4321542}.
We fix such $(w,s)$ for each $t$ to define $\alpha_t$.
Let $R$ be the symmetric algebra of $V$ over $\Coeff$ and $Q$ the field of fractions of $R$.
For a subset $S_0\subset S$, let $W_{S_0}$ be the group generated by $S_0$, $R^{W_{S_0}}$ the subalgebra of $R$ consisting of $W_{S_0}$-invariants.
Each $x\in W_{S_0}\backslash W$ has the minimal length representative.
Let $x_{-}$ be this representative.
For $f\in R^{W_{S_{0}}}$, $x^{-1}(f)$ denotes $w^{-1}(f)$ where $w$ is a representative of $x$.
This does not depend on the choice of $w$.

When $S_{1}\subset S_{0}$, for $x\in W_{S_{0}}\backslash W$, let $\prescript{S_{1}}{S_{0}}{[x]}$ be the inverse image of $x$ under the natural projection $W_{S_{1}}\backslash W\to W_{S_{0}}\backslash W$.

The Bruhat order on $W$ is denoted by $\le$.
We also define the order $\le$ on $W_{S_0}\backslash W$ by $x\le y$ if and only if $x_-\le y_-$.
We define a topology on $W_{S_0}\backslash W$ as follows: a subset $I\subset W_{S_0}\backslash W$ is open if $y\in I$, $x\in W_{S_{0}}\backslash W$, $x\ge y$ implies $x\in I$.

The algebra $R$ is a graded algebra with $\deg(V) = 2$, here by graded we always mean $\Z$-graded.
For a graded $R$-module $M = \bigoplus_{i\in\Z}M^i$, we define a graded $R$-module $M(k)$ by $M(k) = \bigoplus_{i\in\Z}M(k)^i$, $M(k)^i = M^{i + k}$.
A graded $R$-module $M$ is called graded free if it is isomorphic to $\bigoplus_{i = 1}^r R(n_i)$ for some $n_1,\dots,n_r\in \Z$.
Note that in this paper graded free means graded free of finite rank.
If $M\simeq \bigoplus_{i = 1}^rR(n_i)$ is graded free, the graded rank $\grk(M)\in \Z[v,v^{-1}]$ of $M$ is defined by $\grk(M) = \sum_{i = 1}^rv^{n_i}$ where $v$ is an indeterminate.

\subsection{A category}
Throughout this section, we fix a subset $S_0\subset S$ such that $W_{S_0}$ is finite and $V$ is faithful as a $W_{S_0}$-representation.
Let $Q^{W_{S_0}}$ be the set of $W_{S_0}$-invariants in $Q$.
\begin{lem}\label{lem:invariants and fractions}
\begin{enumerate}
\item The algebra $Q^{W_{S_0}}$ is equal to the field of fractions of $R^{W_{S_0}}$.
\item Let $S_1\subset S_0$. Then the multiplication map $R^{W_{S_1}}\otimes_{R^{W_{S_0}}}Q^{W_{S_0}}\to Q^{W_{S_1}}$ is an isomorphism.
\end{enumerate}
\end{lem}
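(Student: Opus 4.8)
The plan for (1) is the classical observation that, for a finite group acting on an integral domain, the invariants of the fraction field form the fraction field of the invariants. Recall that $R$ is a polynomial ring over the integral domain $\Coeff$, hence itself an integral domain, so $Q$ is a field on which $W_{S_0}$ acts. The inclusion of the fractional field of $R^{W_{S_0}}$ into $Q^{W_{S_0}}$ is clear, so I would only have to produce the reverse inclusion. Given $q\in Q^{W_{S_0}}$, write $q=a/b$ with $a,b\in R$ and $b\neq 0$, and set $N(b)=\prod_{w\in W_{S_0}}w(b)$. Then $N(b)\in R^{W_{S_0}}$ and $N(b)\neq 0$ (as $R$ is a domain), while $q\,N(b)=a\prod_{w\neq e}w(b)$ visibly lies in $R$; being moreover $W_{S_0}$-invariant, it lies in $R^{W_{S_0}}$. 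Thus $q=(q\,N(b))/N(b)$ lies in the fractional field of $R^{W_{S_0}}$.

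For (2), the first step is to rewrite the source as a localization: by (1), $Q^{W_{S_0}}=T^{-1}R^{W_{S_0}}$ with $T=R^{W_{S_0}}\setminus\{0\}$, so $R^{W_{S_1}}\otimes_{R^{W_{S_0}}}Q^{W_{S_0}}\cong T^{-1}R^{W_{S_1}}$. Since $R^{W_{S_1}}$ is a domain and $0\notin T$, this localization is a subring of the fractional field of $R^{W_{S_1}}$, which by (1) applied to $S_1$ (note $W_{S_1}$ is a finite subgroup of $W_{S_0}$) equals $Q^{W_{S_1}}$; under this identification the multiplication map is the evident inclusion $T^{-1}R^{W_{S_1}}\hookrightarrow Q^{W_{S_1}}$. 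Hence injectivity is automatic and I would only need surjectivity. For that, given $q\in Q^{W_{S_1}}$, use (1) to write $q=a/b$ with $a,b\in R^{W_{S_1}}$ and $b\neq 0$; since $b$ is $W_{S_1}$-invariant, $w(b)$ depends only on the coset $wW_{S_1}$, so the relative norm $N'(b)=\prod_{wW_{S_1}\in W_{S_0}/W_{S_1}}w(b)$ is a well-defined nonzero element of $R^{W_{S_0}}$ (invariance under $W_{S_0}$ follows because left translation permutes the cosets). Then $q\,N'(b)=a\prod_{wW_{S_1}\neq eW_{S_1}}w(b)$ lies in $R$ and is fixed by $W_{S_1}$ (being the product of the $W_{S_1}$-invariants $q$ and $N'(b)$), hence lies in $R^{W_{S_1}}$; so $q=(q\,N'(b))/N'(b)\in T^{-1}R^{W_{S_1}}$, as required.

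I do not expect a genuine obstacle here: the proof is a two-fold application of the norm trick, and the only points that need care are that $R$ and all the rings $R^{W}$ are integral domains (so that the fraction fields in play are meaningful) and that in (2) the product defining $N'(b)$ is taken over a \emph{full} set of coset representatives, which is exactly what forces the denominator into $R^{W_{S_0}}$. An alternative route to surjectivity in (2), if one prefers, is a dimension count: granting the freeness of $R^{W_{S_1}}$ over $R^{W_{S_0}}$ of rank $[W_{S_0}:W_{S_1}]$, the source becomes a $Q^{W_{S_0}}$-vector space of that dimension, which by faithfulness of $V$ and Galois theory equals $[Q^{W_{S_1}}:Q^{W_{S_0}}]$, forcing the injection to be onto; but the norm argument is more elementary and avoids invoking freeness.
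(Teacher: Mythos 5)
Your proof is correct and follows essentially the same route as the paper: for (1) the norm trick clearing the denominator (the paper applies the norm to the numerator instead, which amounts to the same computation), and for (2) injectivity via localization of a domain plus surjectivity via the relative norm over coset representatives of $W_{S_1}$ in $W_{S_0}$, which is exactly the element the paper exhibits as a preimage.
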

\begin{proof}
(1)
Let $Q_1$ be the field of fractions of $R^{W_{S_0}}$ and $Q_2$ the $W_{S_0}$-invariants of $Q$.
Then we have $Q_1\subset Q_2$.
Let $f\in Q_2$ and denote $f = f_1/f_2$ where $f_1,f_2\in R$.
Then we have $f = (\prod_{w\in W_{S_0}}w(f_1))/((\prod_{w\in W_{S_0},w\ne 1}w(f_1))f_2)$.
Since $f$ is $W_{S_0}$-invariant, the denominator is also $W_{S_0}$-invariant.
Hence $f\in Q_1$.

(2)
Since the map is induced by $R^{W_{S_1}}\hookrightarrow Q^{W_{S_1}}$ with a localization to $Q^{W_{S_0}}$, this is injective.
Let $f_1/f_2\in Q^{W_{S_1}}$, where $f_1,f_2\in R^{W_{S_1}}$.
Then the element $f_1/f_2$ is the image of $(\prod_{w\in W_{S_0}/W_{S_1},w\ne 1}w(f_2))f_1)\otimes (1/(\prod_{w\in W_{S_0}/W_{S_1}}w(f_2)))$.
\end{proof}

\begin{defn}
We define the category $\BigCat{S_0}$ as follows.
An object $M$ of $\BigCat{S_0}$ is a graded $(R^{W_{S_0}},R)$-bimodule with a decomposition $M\otimes_{R}Q = \bigoplus_{x\in W_{S_0}\backslash W}M^x_{Q}$ such that $fm = mx^{-1}(f)$ for any $x\in W_{S_0}\backslash W$, $m\in M^x_{Q}$ and $f\in R^{W_{S_0}}$.
A morphism $\varphi\colon M\to N$ in $\BigCat{S_0}$ is an $(R^{W_{S_0}},R)$-bimodule homomorphism $M\to N$ of degree zero such that $\varphi(M_Q^x)\subset N_Q^x$ for any $x\in W_{S_0}\backslash W$.

We also define some related objects as follows:
\begin{itemize}
\item For $n\in\Z$ and $M\in \BigCat{S_0}$, a graded $(R^{W_{S_0}},R)$-bimodule $M(n)$ is again an object of $\BigCat{S_0}$.
\item We put $\Hom^{\bullet}_{\BigCat{S_0}}(M,N) = \bigoplus_{n\in\Z}\Hom_{\BigCat{S_0}}(M,N(n))$.

\item We put $\supp_W(M) = \{w\in W_{S_0}\backslash W\mid M_Q^w\ne 0\}$.
\item Set $\BigCat{} = \BigCat{\emptyset}$.
\end{itemize}
\end{defn}
Note that the actions of $R^W$ on both sides on $M\otimes_{R}Q$ for $M\in \BigCat{S_{0}}$ coincide with each other.
Hence, if $M\to M\otimes_{R}Q$ is injective, then $M$ is a graded $R^{W_{S_0}}\otimes_{R^W}R$-module.

\begin{rem}
Assume $M\in \BigCat{S_0}$.
Then the left action of $0\ne f\in R^{W_{S_0}}$ on each direct summand $M^x_Q$ is invertible.
Hence it is also invertible on $M\otimes_{R}Q$.
Therefore $M\otimes_{R}Q$ is also a left $Q^{W_{S_0}}$-module and we have $M\otimes_{R}Q\simeq Q^{W_{S_0}}\otimes_{R^{W_{S_0}}}M\otimes_{R}Q$.
\end{rem}
\begin{defn}
Let $w\in W_{S_{0}}\backslash W$ and we define $R_{w}\in \BigCat{S_{0}}$ as follows.
As a right $R$-module, $R_w = R$ and the left action of $f\in R^{W_{S_0}}$ is given by $fr = rw^{-1}(f)$ for $r\in R_w$.
We put $(R_w)_Q^x = 0$ if $x\ne w$ and $(R_w)_Q^w = Q$.
\end{defn}
\begin{defn}
Let $S_1\subset S_0$.
For $M\in \BigCat{S_1}$, let $\pi_{S_0,S_1,*}(M)$ be the restriction of $M$ to $(R^{W_{S_0}},R)$ with 
\[
\pi_{S_0,S_1,*}(M)^x_Q = \bigoplus_{y \in \prescript{S_{1}}{S_{0}}{[x]}}M^{y}_Q.
\]
This defines a functor $\pi_{S_0,S_1,*}\colon \BigCat{S_1}\to \BigCat{S_0}$.
\end{defn}

We also define a functor $\pi_{S_0,S_1}^*\colon \BigCat{S_0}\to \BigCat{S_1}$ as follows.
For $M\in \BigCat{S_0}$, we put $\pi_{S_0,S_1}^*(M) = R^{W_{S_1}}\otimes_{R^{W_{S_0}}}M$.
Since $M\otimes_R Q$ is a left $Q^{W_{S_0}}$-module, we have
\begin{align*}
\pi_{S_0,S_1}^*(M)\otimes_{R}Q & \simeq R^{W_{S_1}}\otimes_{R^{W_{S_0}}}M\otimes_{R}Q\\
& \simeq R^{W_{S_1}}\otimes_{R^{W_{S_0}}}Q^{W_{S_0}}\otimes_{Q^{W_{S_0}}}M\otimes_{R}Q\\
& \simeq Q^{W_{S_1}}\otimes_{Q^{W_{S_0}}}M\otimes_{R}Q\\
& \simeq \bigoplus_{x\in W_{S_0}\backslash W}Q^{W_{S_1}}\otimes_{Q^{W_{S_0}}}M_Q^x.
\end{align*}

\begin{defn}
Let $x\in W_{S_{0}}\backslash W$ and $N$ a $(Q^{W_{S_{0}}},Q)$-module such that $fn = nx^{-1}(f)$ for $f\in Q^{W_{S_{0}}}$ and $n\in N$.
For $y\in \prescript{S_{1}}{S_{0}}{[x]}$, we define a $(Q^{W_{S_{1}}},Q)$-module structure on $N$ by $fn = ny^{-1}(f)$ for $f\in Q^{W_{S_{1}}}$ and $n\in N$.
The $(Q^{W_{S_{1}}},Q)$-module with this structure is written as $\epsilon_{y}^{S_{1}}N$.
\end{defn}

To give a structure of an object in $\BigCat{S_1}$ to $\pi_{S_{0},S_{1}}^{*}(M)$, we use the following lemma.

\begin{lem}\label{lem:to define pull-back}
Let $x\in W_{S_0}\backslash W$.
The map $f\otimes m\mapsto (my^{-1}(f))_y$ gives an isomorphism $Q^{W_{S_1}}\otimes_{Q^{W_{S_0}}}M_Q^x\simeq \bigoplus_{y\in \prescript{S_{1}}{S_{0}}{[x]}}\epsilon^{S_{1}}_{y}M_Q^x$ as $(Q^{W_{S_{1}}},Q)$-bimodules.
\end{lem}
\begin{proof}
Since the $W_{S_{0}}$-representation $V$ is faithful, the map $W_{S_{0}}\to \Aut(Q)$ is injective, where $\Aut(Q)$ is the set of automorphisms of the field $Q$.
By Galois descent \cite[V.10.4, Proposition~8]{MR1994218} we have $Q\otimes_{Q^{W_{S_{0}}}}M_{x}^{Q}\simeq \bigoplus_{z\in \prescript{\emptyset}{S_{0}}{[x]}}\epsilon_{z}^{\emptyset}M_{x}^{Q}$, here the isomorphism is given by $f\otimes m\mapsto (mz^{-1}(f))_{z\in \prescript{\emptyset}{S_{0}}{[x]}}$.
This is $W_{S_{1}}$-equivariant where $w\in W_{S_{1}}$ acts as
\begin{itemize}
\item $f\otimes m\mapsto w(f)\otimes m$ on the left hand side.
\item $(m_{z})_{z\in\prescript{\emptyset}{S_{0}}{[x]}}\mapsto (m_{w^{-1}z})_{z\in \prescript{\emptyset}{S_{0}}{[x]}}$ on the right hand side.
\end{itemize}
Therefore it induces an isomorphism between $W_{S_{1}}$-invariants.
The space of $W_{S_{1}}$-invariants in $Q\otimes_{Q^{W_{S_{0}}}}M_{x}^{Q}$ is $Q^{W_{S_{1}}}\otimes_{Q^{W_{S_{0}}}}M_{x}^{Q}$. (This follows, for example, by taking a basis of $M_{x}^{Q}$ as a left $Q^{W_{S_{0}}}$-vector space.)
The space of $W_{S_{1}}$-invariants in $\bigoplus_{z\in \prescript{\emptyset}{S_{0}}{[x]}}\epsilon_{z}^{\emptyset}M_{x}^{Q}$ is the set of $(m_{z})$ such that $m_{z'} = m_{z}$ if $z'\in W_{S_{1}}z$.
Let $\overline{z}$ be the image of $z\in \prescript{\emptyset}{S_{0}}{[x]}$ in $\prescript{S_{1}}{S_{0}}{[x]}$.
Then $\bigoplus_{y\in \prescript{S_{1}}{S_{0}}{[x]}}\epsilon_{y}^{S_{1}}M_{x}^{Q}\to \left(\bigoplus_{z\in \prescript{\emptyset}{S_{1}}{[x]}}\epsilon_{z}^{\emptyset}M_{x}^{Q}\right)^{W_{S_{1}}}$ (here $(\cdot)^{W_{S_{1}}}$ denotes the space of $W_{S_{1}}$-invariants) given by $(m_{y})\mapsto (m_{\overline{z}})_{z\in \prescript{\emptyset}{S_{0}}{[x]}}$ is an isomorphism as $(Q^{W_{S_{1}}},Q)$-bimodules.
Hence we get $Q^{W_{S_{1}}}\otimes_{Q^{W_{S_{0}}}}M_{x}^{Q}\simeq \bigoplus_{y\in \prescript{S_{1}}{S_{0}}{[x]}}\epsilon^{S_{1}}_{y}M_{Q}^{x}$.
\end{proof}

\begin{defn}
For $M\in \BigCat{S_0}$, we put $\pi_{S_0,S_1}^*(M) = R^{W_{S_1}}\otimes_{R^{W_{S_0}}}M$ as above.
Let $y\in W_{S_{1}}\backslash W$ and $x$ the image of $y$ in $W_{S_0}\backslash W$, namely $y\in  \prescript{S_{1}}{S_{0}}{[x]}$.
We define $(\pi_{S_0,S_1}^*M)_Q^y$ as the inverse image of $\epsilon^{S_{1}}_{y}M_{Q}^{x}$ by the isomorphism in the previous lemma.

Let $N\in \BigCat{S_0}$ and $\varphi\colon M\to N$ be a morphism in $\BigCat{S_0}$.
Then $(\id\otimes\varphi)(Q^{W_{S_1}}\otimes_{Q^{W_{S_0}}}M_Q^x)\subset Q^{W_{S_1}}\otimes_{Q^{W_{S_0}}}N_Q^x$ for each $x\in W_{S_0}\backslash W$.
By the lemma below, $\id\otimes \varphi$ is a morphism $\pi_{S_0,S_1}^*(M)\to \pi_{S_0,S_1}^*(N)$.
Hence we get a functor $\pi_{S_0,S_1}^*\colon \BigCat{S_0}\to \BigCat{S_1}$.
\end{defn}

\begin{lem}\label{lem:automatically morphism in C}
Let $x\in W_{S_0}\backslash W$ and assume that for each $y\in \prescript{S_{0}}{S_{1}}{[x]}$.
We also assume that $fm = my^{-1}(f)$ for any $f\in Q^{W_{S_1}},m\in M_i^y$ where $i = 1,2$.
Then any $(Q^{W_{S_1}},Q)$-bimodule homomorphism $\bigoplus_y M_1^y\to \bigoplus_y M_2^y$ sends $M_1^y$ to $M_2^y$.
\end{lem}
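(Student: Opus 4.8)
The plan is to use that, for each $y$ in the fibre of $x$, the left $Q^{W_{S_1}}$-module structure on $M_i^y$ is nothing but the right $Q$-module structure precomposed with the field embedding $Q^{W_{S_1}}\hookrightarrow Q$, $f\mapsto y^{-1}(f)$, and that these embeddings are pairwise distinct. Write the given $(Q^{W_{S_1}},Q)$-bimodule homomorphism $\Phi\colon\bigoplus_y M_1^y\to\bigoplus_y M_2^y$ in matrix form $\Phi=(\Phi_{y',y})$, where $\Phi_{y',y}\colon M_1^y\to M_2^{y'}$ is the composite of the inclusion of $M_1^y$, the map $\Phi$, and the projection onto $M_2^{y'}$. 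It suffices to show $\Phi_{y',y}=0$ for all $y\ne y'$.

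So fix $y\ne y'$ in $W_{S_1}\backslash W$, both with image $x$ in $W_{S_0}\backslash W$. For $m\in M_1^y$ and $f\in Q^{W_{S_1}}$, right $Q$-linearity of $\Phi_{y',y}$ gives $\Phi_{y',y}(my^{-1}(f))=\Phi_{y',y}(m)y^{-1}(f)$, while left $Q^{W_{S_1}}$-linearity together with the relations $fm=my^{-1}(f)$ in $M_1^y$ and $f\,\Phi_{y',y}(m)=\Phi_{y',y}(m)(y')^{-1}(f)$ in $M_2^{y'}$ gives $\Phi_{y',y}(my^{-1}(f))=\Phi_{y',y}(m)(y')^{-1}(f)$. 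Subtracting,
\[
\Phi_{y',y}(m)\bigl(y^{-1}(f)-(y')^{-1}(f)\bigr)=0\qquad\text{for all }f\in Q^{W_{S_1}}.
\]
Since $Q$ is a field, it now suffices to exhibit a single $f\in Q^{W_{S_1}}$ with $y^{-1}(f)\ne(y')^{-1}(f)$: then the bracketed element is invertible and $\Phi_{y',y}(m)=0$ for every $m$.

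To find such an $f$, pick representatives $w,w'\in W$ of $y$ and $y'$. As $y,y'$ have the same image in $W_{S_0}\backslash W$ we have $w'w^{-1}\in W_{S_0}$, and $w'w^{-1}\notin W_{S_1}$ since $y\ne y'$. Were $w^{-1}(f)=(w')^{-1}(f)$ to hold for all $f\in Q^{W_{S_1}}$, applying $w'$ would show that $w'w^{-1}$ fixes $Q^{W_{S_1}}$ pointwise; but the faithfulness of $V$ as a $W_{S_0}$-representation makes $Q/Q^{W_{S_0}}$ a finite Galois extension with group $W_{S_0}$, and then the subgroup of $W_{S_0}$ fixing the intermediate field $Q^{W_{S_1}}$ pointwise is exactly $W_{S_1}$ --- contradicting $w'w^{-1}\notin W_{S_1}$. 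Hence the required $f$ exists, $\Phi_{y',y}=0$ for $y\ne y'$, and therefore $\Phi(M_1^y)\subset M_2^y$.

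The argument is essentially forced once the matrix reformulation is made; the only ingredient beyond the hypotheses is the Galois correspondence for the tower $Q^{W_{S_0}}\subset Q^{W_{S_1}}\subset Q$ and the subgroups $W_{S_1}\subset W_{S_0}$, which is the same fact already used in the proof of Lemma~\ref{lem:to define pull-back}; I do not anticipate a real obstacle.
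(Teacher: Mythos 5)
Your proof is correct. It takes a somewhat different route from the paper's. The paper first reduces to $x=e$ by the same twisting trick used in the proof of Lemma~\ref{lem:to define pull-back}, then observes that both $\bigoplus_y M_1^y$ and $\bigoplus_y M_2^y$ are modules over the commutative ring $Q^{W_{S_1}}\otimes_{Q^{W_{S_0}}}Q\simeq\bigoplus_{z\in W_{S_1}\backslash W_{S_0}}Q$, that the given homomorphism $\varphi$ is linear over this ring, and that the idempotents $e_z$ of this ring act as the projections onto the summands, so that $\varphi(M_1^z)=\varphi\bigl(e_z(\bigoplus_y M_1^y)\bigr)\subset e_z(\bigoplus_y M_2^y)=M_2^z$. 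You instead argue componentwise: you kill each off-diagonal matrix entry $\Phi_{y',y}$ by comparing the two evaluations of $\Phi_{y',y}(my^{-1}(f))$ and then separating the embeddings $y^{-1},(y')^{-1}\colon Q^{W_{S_1}}\to Q$, using that $Q/Q^{W_{S_0}}$ is Galois with group $W_{S_0}$ (faithfulness of $V$ as a $W_{S_0}$-representation) and that the fixer of $Q^{W_{S_1}}$ in $W_{S_0}$ is exactly $W_{S_1}$. The underlying input is the same Galois-theoretic fact --- your separation of embeddings is precisely what makes $Q^{W_{S_1}}\otimes_{Q^{W_{S_0}}}Q$ split as a product of copies of $Q$ in Lemma~\ref{lem:to define pull-back} --- but your argument buys a direct, elementary proof that needs neither the reduction to $x=e$ nor the explicit idempotents, while the paper's version is shorter because it simply reuses the decomposition already established in that lemma.
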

\begin{proof}
Consider the algebra homomorphism $Q^{W_{S_{0}}}\to Q$ defined by $f\mapsto x^{-1}(f)$ and regard $Q$ as a $Q^{W_{S_{0}}}$-algebra via this map.
Then both $\bigoplus_y M_1^y$ and $\bigoplus_y M_2^y$ are $Q^{W_{S_1}}\otimes_{Q^{W_{S_0}}}Q$-bimodules and the homomorphism is a $Q^{W_{S_1}}\otimes_{Q^{W_{S_0}}}Q$-module homomorphism.
Let $z\in W_{S_1}\backslash W_{S_0}$.
We have $Q^{W_{S_1}}\otimes_{Q^{W_{S_0}}}Q\simeq \bigoplus_{y\in \prescript{S_{0}}{S_{1}}{[x]}} Q$ by Lemma~\ref{lem:to define pull-back}.
Define $e_z\in Q^{W_{S_1}}\otimes_{Q^{W_{S_0}}}Q$ such that the image of $e_z$ in $\bigoplus_y Q$ is given by $(\delta_{zy})_y$ where $\delta_{zy}$ is Kronecker's delta.
Then $\varphi(M_1^z) = \varphi(e_z(\bigoplus_y M_1^y))\subset e_z(\bigoplus_y M_2^y) = M_2^z$.
\end{proof}

\begin{rem}
Lemma~\ref{lem:automatically morphism in C} may fail if $V$ is not a faithful $W_{S_{0}}$-representation.
\end{rem}

\begin{lem}\label{lem:adjointness in singular Soergel bimodules}
The pair $(\pi_{S_0,S_1}^*,\pi_{S_0,S_1,*})$ is an adjoint pair.
\end{lem}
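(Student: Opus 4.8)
The plan is to remove the decomposition data, reduce to the classical restriction/extension-of-scalars adjunction, and then verify that the resulting unit and counit (equivalently, the hom-set bijection they produce) are compatible with the decompositions after $\otimes_R Q$.

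Forgetting the decompositions, $\pi_{S_0,S_1}^* = R^{W_{S_1}}\otimes_{R^{W_{S_0}}}(-)$ while $\pi_{S_0,S_1,*}$ is restriction of scalars along the graded inclusion $R^{W_{S_0}}\hookrightarrow R^{W_{S_1}}$ (the right $R$-action being left untouched), so on graded $(-,R)$-bimodules these form the familiar adjoint pair. For $M\in\BigCat{S_0}$ and $N\in\BigCat{S_1}$, I will use the natural degree-preserving bijection
\[
\Hom_{(R^{W_{S_1}},R)}\bigl(R^{W_{S_1}}\otimes_{R^{W_{S_0}}}M,\,N\bigr)\ \xrightarrow{\ \sim\ }\ \Hom_{(R^{W_{S_0}},R)}(M,N),\qquad \psi\longmapsto\bigl(m\mapsto\psi(1\otimes m)\bigr),
\]
with inverse $\varphi\mapsto(f\otimes m\mapsto f\,\varphi(m))$, where on the right $N$ carries the restricted left $R^{W_{S_0}}$-action and $f\,\varphi(m)$ uses the ambient left $R^{W_{S_1}}$-action. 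Once this is granted, it remains only to check that the bijection and its inverse send morphisms of $\BigCat{S_1}$ to morphisms of $\BigCat{S_0}$ and vice versa; naturality of the restricted bijection is then inherited for free.

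For the first direction, take $\psi\in\Hom_{\BigCat{S_1}}(\pi_{S_0,S_1}^*M,N)$ and $m\in M_Q^x$. The computation of $\pi_{S_0,S_1}^*M\otimes_R Q$ preceding Lemma~\ref{lem:to define pull-back}, together with the definition of the summands $(\pi_{S_0,S_1}^*M)_Q^y$ via that lemma, shows that $1\otimes m$ lies in $\bigoplus_{y}(\pi_{S_0,S_1}^*M)_Q^y$, the sum taken over $y\in W_{S_1}\backslash W$ mapping to $x$; hence $\psi(1\otimes m)\in\bigoplus_{y\mapsto x}N_Q^y = (\pi_{S_0,S_1,*}N)_Q^x$, so $m\mapsto\psi(1\otimes m)$ is a morphism in $\BigCat{S_0}$. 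For the second direction, take $\varphi\in\Hom_{\BigCat{S_0}}(M,\pi_{S_0,S_1,*}N)$ and let $\psi$ be the associated map. On the $x$-block $Q^{W_{S_1}}\otimes_{Q^{W_{S_0}}}M_Q^x$ of $\pi_{S_0,S_1}^*M\otimes_R Q$ the map $\psi\otimes\id_Q$ is $f\otimes m\mapsto f\,\varphi(m)$, and since $\varphi(M_Q^x)\subset(\pi_{S_0,S_1,*}N)_Q^x=\bigoplus_{y\mapsto x}N_Q^y$ and this block is stable under the left $Q^{W_{S_1}}$-action (each $N_Q^y$ being so), we obtain a $(Q^{W_{S_1}},Q)$-bimodule homomorphism $\bigoplus_{y\mapsto x}(\pi_{S_0,S_1}^*M)_Q^y\to\bigoplus_{y\mapsto x}N_Q^y$. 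Each $(\pi_{S_0,S_1}^*M)_Q^y$ satisfies $fm=my^{-1}(f)$ by the last clause of Lemma~\ref{lem:to define pull-back}, and each $N_Q^y$ does by the definition of $\BigCat{S_1}$; so Lemma~\ref{lem:automatically morphism in C} forces this homomorphism to send $(\pi_{S_0,S_1}^*M)_Q^y$ into $N_Q^y$, i.e.\ $\psi$ is a morphism in $\BigCat{S_1}$.

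Because the bijection and its inverse are mutually inverse and natural already at the level of bimodules, restricting them yields the desired natural isomorphism $\Hom_{\BigCat{S_1}}(\pi_{S_0,S_1}^*M,N)\simeq\Hom_{\BigCat{S_0}}(M,\pi_{S_0,S_1,*}N)$, and the adjunction follows. I do not expect any serious obstacle: the classical adjunction is off the shelf, and the only real content is the decomposition bookkeeping, handled by Lemma~\ref{lem:to define pull-back} in the first direction and by Lemma~\ref{lem:automatically morphism in C} in the second. The one thing demanding care is keeping straight which ring of invariants acts on which graded piece, and through which twist $x^{-1}$ or $y^{-1}$.
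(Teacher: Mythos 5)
Your proposal is correct and follows essentially the same route as the paper: start from the classical extension/restriction-of-scalars adjunction on graded bimodules, check the easy direction by tracking $1\otimes m$ through the decomposition, and use Lemma~\ref{lem:automatically morphism in C} (with the blockwise structure from Lemma~\ref{lem:to define pull-back}) to handle the other direction. Apart from swapping the names $\varphi$ and $\psi$ and spelling out the hypotheses of Lemma~\ref{lem:automatically morphism in C} a bit more explicitly, it matches the paper's proof.
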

\begin{proof}
We have $\Hom_{(R^{W_{S_1}},R)}(R^{W_{S_1}}\otimes_{R^{W_{S_0}}}M,N)\simeq \Hom_{(R^{W_{S_0}},R)}(M,N)$ by the universality of tensor products.
We prove that this isomorphism preserves the morphisms in $\BigCat{S_0}$ and $\BigCat{S_1}$.
Assume that $\varphi\colon \pi_{S_0,S_1}^*(M)\to N$  and $\psi\colon M\to \pi_{S_0,S_1,*}(N)$ corresponds to each other by this isomorphism.
The correspondence is given by $\psi(m) = \varphi(1\otimes m)$ and $\varphi(f\otimes m) = f\psi(m)$.

Assume that $\varphi$ is a morphism in $\BigCat{S_1}$.
By the definition, for $m\in M^x_{Q}$ with $x\in W_{S_0}\backslash W$, we have $1\otimes m\in \bigoplus_{y \in \prescript{S_{1}}{S_{0}}{[x]}}(\pi_{S_0,S_1}^*M)_{Q}^{y}$.
Hence $\psi(m) = \varphi(1\otimes m)\in \bigoplus_{y \in \prescript{S_{1}}{S_{0}}{[x]}}N^y_Q = (\pi_{S_0,S_1,*}N)^x_Q$.

On the other hand, assume that $\psi$ is a morphism in $\BigCat{S_0}$.
Recall that we have $\pi_{S_0,S_1}^{*}M\otimes_{R}Q = \bigoplus_{x\in W_{S_0}\backslash W}Q^{W_{S_1}}\otimes_{Q^{W_{S_0}}}M^x_Q$ and this decomposition induces
\[
Q^{W_{S_1}}\otimes_{Q^{W_{S_0}}}M^x_Q \simeq \bigoplus_{y \in \prescript{S_{1}}{S_{0}}{[x]}}(\pi_{S_0,S_1}^{*}M)^y_Q.
\]
Therefore
\[
\varphi\left(\bigoplus_{y \in \prescript{S_{1}}{S_{0}}{[x]}}(\pi_{S_0,S_1}^{*}M)^y_Q\right)
=
Q^{W_{S_1}}\psi(M^x_Q)
\subset
(\pi_{S_0,S_1,*}N)^x_Q
=
\bigoplus_{y \in \prescript{S_{1}}{S_{0}}{[x]}}N^y_Q.
\]
By Lemma~\ref{lem:automatically morphism in C}, $\varphi$ is a morphism in $\BigCat{S_1}$.
\end{proof}
\begin{prop}
Let $S_2\subset S_1\subset S_0$.
Then $\pi_{S_0,S_1,*}\circ \pi_{S_1,S_2,*} \simeq \pi_{S_0,S_2,*}$ and $\pi_{S_1,S_2}^*\circ\pi_{S_0,S_1}^* \simeq \pi_{S_0,S_2}^*$.
\end{prop}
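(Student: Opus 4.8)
The plan is to dispose of the push-forwards first, where the identity is essentially a tautology, and then to deduce the pull-back statement formally by uniqueness of adjoints.

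For $N\in\BigCat{S_2}$, both $(\pi_{S_0,S_1,*}\circ\pi_{S_1,S_2,*})(N)$ and $\pi_{S_0,S_2,*}(N)$ are, as graded $(R^{W_{S_0}},R)$-bimodules, just $N$ with its left action restricted along $R^{W_{S_0}}\hookrightarrow R^{W_{S_1}}\hookrightarrow R^{W_{S_2}}$, respectively along $R^{W_{S_0}}\hookrightarrow R^{W_{S_2}}$, and these have the same composite. For the $Q$-decomposition, grouping $W_{S_2}\backslash W$ first into the fibres of $W_{S_2}\backslash W\to W_{S_1}\backslash W$ and then into the fibres of $W_{S_1}\backslash W\to W_{S_0}\backslash W$ yields exactly the fibres of $W_{S_2}\backslash W\to W_{S_0}\backslash W$, so the two decompositions agree term by term; the same bookkeeping identifies morphisms. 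Hence $\pi_{S_0,S_1,*}\circ\pi_{S_1,S_2,*}$ and $\pi_{S_0,S_2,*}$ coincide, a fortiori are isomorphic.

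For the pull-backs I would invoke uniqueness of adjoints. By Lemma~\ref{lem:adjointness in singular Soergel bimodules} each of $(\pi_{S_0,S_1}^*,\pi_{S_0,S_1,*})$, $(\pi_{S_1,S_2}^*,\pi_{S_1,S_2,*})$ and $(\pi_{S_0,S_2}^*,\pi_{S_0,S_2,*})$ is an adjoint pair, and a composite of adjoint pairs is again an adjoint pair, so $\pi_{S_1,S_2}^*\circ\pi_{S_0,S_1}^*$ is left adjoint to $\pi_{S_0,S_1,*}\circ\pi_{S_1,S_2,*}$. By the first part this right adjoint is $\pi_{S_0,S_2,*}$, of which $\pi_{S_0,S_2}^*$ is also a left adjoint; since a left adjoint is unique up to natural isomorphism, $\pi_{S_1,S_2}^*\circ\pi_{S_0,S_1}^*\simeq\pi_{S_0,S_2}^*$.

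Alternatively one can argue directly: the associativity isomorphism $R^{W_{S_2}}\otimes_{R^{W_{S_1}}}(R^{W_{S_1}}\otimes_{R^{W_{S_0}}}M)\xrightarrow{\ \sim\ }R^{W_{S_2}}\otimes_{R^{W_{S_0}}}M$, $a\otimes(b\otimes m)\mapsto ab\otimes m$, is an isomorphism of the underlying bimodules, and one checks that it respects the $Q$-decompositions by applying Lemma~\ref{lem:to define pull-back} twice and comparing with its single application to $\pi_{S_0,S_2}^*$: both sides become $\bigoplus_{z\in W_{S_2}\backslash W}M^{x(z)}_Q$ with $x(z)$ the image of $z$ in $W_{S_0}\backslash W$, and the two identifications coincide because the Galois descent isomorphism of Lemma~\ref{lem:to define pull-back} is transitive in the tower $Q\supset Q^{W_{S_1}}\supset Q^{W_{S_0}}$. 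Either way there is no real obstacle; the one point requiring care is this compatibility of the $Q$-gradings for the pull-back, which the adjointness argument sidesteps entirely.
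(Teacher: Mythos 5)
Your proof is correct and follows essentially the same route as the paper: the composition of push-forwards is a tautology on underlying bimodules and $Q$-decompositions, and the statement for pull-backs is deduced from this together with Lemma~\ref{lem:adjointness in singular Soergel bimodules} via uniqueness of left adjoints. The alternative direct argument via the associativity isomorphism is a fine extra check but not needed.
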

\begin{proof}
The first part is obvious and the second follows from the first and the previous proposition.
\end{proof}

Let $I\subset W_{S_0}\backslash W$ be a subset.
For $M\in \BigCat{S_0}$, we define $M^I$ to be the image of $M\to M\otimes_{R}Q = \bigoplus_{x\in W_{S_{0}}\backslash W}M^x_Q\to \bigoplus_{x\in I}M^x_Q$ and $M_I$ the inverse image of $\bigoplus_{x\in I}M^x_Q\subset \bigoplus_{x\in W_{S_{0}}\backslash W}M^x_Q$ in $M$.
It is easy to see that $M^I\otimes_{R}Q\simeq M_I\otimes_{R}Q\simeq \bigoplus_{x\in I}M^x_Q$.
Therefore $M_I,M^I\in \BigCat{S_0}$.
We write $M_{w}$, $M^{w}$ for $M_{\{w\}}$, $M^{\{w\}}$, respectively.
The proof of the following proposition is straightforward.
\begin{prop}\label{prop:support and push. pull}
Let $S_1\subset S_0$, $\pi\colon W_{S_1}\backslash W\to W_{S_0}\backslash W$ be the natural projection and $I\subset W_{S_0}\backslash W$.
Then we have $\pi_{S_0,S_1,*}(M)_I = \pi_{S_0,S_1,*}(M_{\pi^{-1}(I)})$, $\pi_{S_0,S_1}^*(M_I) = \pi_{S_0,S_1}^*(M)_{\pi^{-1}(I)}$, $\pi_{S_0,S_1,*}(M)^I = \pi_{S_0,S_1,*}(M^{\pi^{-1}(I)})$ and $\pi_{S_0,S_1}^*(M^I) = \pi_{S_0,S_1}^*(M)^{\pi^{-1}(I)}$.
\end{prop}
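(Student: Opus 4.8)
The plan is to dispose of the two push-forward identities by unwinding definitions, and to reduce the two pull-back identities to the exactness of $R^{W_{S_1}}\otimes_{R^{W_{S_0}}}(-)$. For the first and third identities, observe that $\pi_{S_0,S_1,*}$ is restriction of scalars along $R^{W_{S_0}}\hookrightarrow R^{W_{S_1}}$: it alters neither the underlying module nor the canonical map $M\to M\otimes_R Q$, and by construction $\bigoplus_{x\in I}\pi_{S_0,S_1,*}(M)^x_Q=\bigoplus_{y\in\pi^{-1}(I)}M^y_Q$ inside $M\otimes_R Q$. Hence the inverse image in $\pi_{S_0,S_1,*}(M)$ of $\bigoplus_{x\in I}\pi_{S_0,S_1,*}(M)^x_Q$ coincides, as a submodule, with the inverse image in $M$ of $\bigoplus_{y\in\pi^{-1}(I)}M^y_Q$, which is exactly $\pi_{S_0,S_1,*}(M_{\pi^{-1}(I)})=\pi_{S_0,S_1,*}(M)_I$; the same computation with images instead of inverse images gives $\pi_{S_0,S_1,*}(M)^I=\pi_{S_0,S_1,*}(M^{\pi^{-1}(I)})$, and the $(R^{W_{S_0}},R)$-bimodule structures agree on the nose. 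This step is immediate.

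For the second and fourth, I would start from the presentations $M_I=\Ker\bigl(M\to\bigoplus_{x\notin I}M^x_Q\bigr)$ and $M^I=\Ima\bigl(M\to\bigoplus_{x\in I}M^x_Q\bigr)$, where in both cases the map is the canonical $M\to M\otimes_R Q$ followed by projection onto the indicated summands. The targets here are $Q^{W_{S_0}}$-modules, and $\pi_{S_0,S_1}^*=R^{W_{S_1}}\otimes_{R^{W_{S_0}}}(-)$ turns such a module $N'$ into $Q^{W_{S_1}}\otimes_{Q^{W_{S_0}}}N'$ by Lemma~\ref{lem:invariants and fractions}(2); moreover $\pi_{S_0,S_1}^*$ carries the canonical map $M\to M\otimes_R Q$ to the canonical map $\pi_{S_0,S_1}^*(M)\to\pi_{S_0,S_1}^*(M)\otimes_R Q$ — this is precisely the chain of isomorphisms displayed before Lemma~\ref{lem:to define pull-back} — and under that identification $\bigoplus_{y\in\pi^{-1}(J)}(\pi_{S_0,S_1}^*M)^y_Q$ becomes $\bigoplus_{x\in J}Q^{W_{S_1}}\otimes_{Q^{W_{S_0}}}M^x_Q$ for every $J\subseteq W_{S_0}\backslash W$, because $W_{S_1}\backslash W=\bigsqcup_x\pi^{-1}(x)$. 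Granting that $\pi_{S_0,S_1}^*$ commutes with $\Ker$ and $\Ima$, applying it to the two presentations then yields
\[
\pi_{S_0,S_1}^*(M_I)=\Ker\bigl(\pi_{S_0,S_1}^*(M)\to\textstyle\bigoplus_{x\notin I}Q^{W_{S_1}}\otimes_{Q^{W_{S_0}}}M^x_Q\bigr)=\pi_{S_0,S_1}^*(M)_{\pi^{-1}(I)},
\]
and, in the same way, $\pi_{S_0,S_1}^*(M^I)=\pi_{S_0,S_1}^*(M)^{\pi^{-1}(I)}$; the bimodule structures match by inspection.

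The whole content is therefore the bracketed clause, namely the exactness of $R^{W_{S_1}}\otimes_{R^{W_{S_0}}}(-)$, equivalently the flatness of $R^{W_{S_1}}$ over $R^{W_{S_0}}$, and this is the step I expect to be the only genuine obstacle. It is where the hypotheses on $W_{S_0}$ enter: using that $R$ is graded free over the relevant invariant subrings (the situation of interest, in particular the geometric one), the finite injective extension $R^{W_{S_1}}\hookrightarrow R$ is faithfully flat, and flatness of $R^{W_{S_1}}$ over $R^{W_{S_0}}$ then follows by descending the flatness of $R$ over $R^{W_{S_0}}$ along $R^{W_{S_1}}\to R$. Alternatively — and avoiding any mention of flatness — one can replace this clause by the direct verification that $\pi_{S_0,S_1}^*$ sends $R$-torsion-free objects to $R$-torsion-free objects, applied to the quotient $M/M_I$, which is $R$-torsion-free; that is really all that is needed. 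Once this one point is settled, the four identities fall out of the bookkeeping with the fibres of $\pi$ sketched above.
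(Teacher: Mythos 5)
Your handling of the two push-forward identities is correct and is certainly what the paper regards as the "straightforward" content (the paper in fact prints no argument at all for this proposition): restriction of scalars changes neither the underlying module nor the map $M\to M\otimes_RQ$, and by definition the $I$-part of the decomposition of $\pi_{S_0,S_1,*}(M)\otimes_RQ$ is the $\pi^{-1}(I)$-part of that of $M\otimes_RQ$, so the inverse images and images literally coincide.

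For the two pull-back identities, however, there is a gap, and it sits exactly where you yourself locate it. The proposition is stated in the setting of the subsection "A category", where the only hypotheses are that $W_{S_0}$ is finite and that $V$ is a faithful $W_{S_0}$-representation; the graded-freeness $R\simeq\bigoplus_{w\in W_{S_0}}R^{W_{S_0}}(-2\ell(w))$ enters only in the later subsection on Soergel bimodules, and nothing is ever assumed about $R$ as an $R^{W_{S_1}}$-module for a proper nonempty $S_1\subset S_0$. Hence both inputs of your descent argument ($R$ flat over $R^{W_{S_0}}$ and $R$ faithfully flat over $R^{W_{S_1}}$) are imported hypotheses rather than consequences of the stated setting, so what you prove is the proposition in the "situation of interest", not the proposition as stated. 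Your proposed substitute --- that it suffices to check that $\pi_{S_0,S_1}^*$ preserves right $R$-torsion-freeness, applied to $M/M_I$ --- is also not quite enough for the kernel identity: right-exactness of $R^{W_{S_1}}\otimes_{R^{W_{S_0}}}(-)$ together with torsion-freeness of $\pi_{S_0,S_1}^*(M/M_I)$ only shows that the \emph{image} of $\pi_{S_0,S_1}^*(M_I)$ in $\pi_{S_0,S_1}^*(M)$ equals $\pi_{S_0,S_1}^*(M)_{\pi^{-1}(I)}$; the injectivity of $\pi_{S_0,S_1}^*(M_I)\to\pi_{S_0,S_1}^*(M)$, i.e.\ the vanishing of the obstruction coming from $\mathrm{Tor}_1^{R^{W_{S_0}}}(R^{W_{S_1}},M/M_I)$, is an additional point, and it too is supplied only by flatness; moreover the preservation of torsion-freeness is itself left unproved without flatness. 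In fairness, the paper gives no proof and its later applications of the pull-back identities occur under the standing assumptions of the subsequent subsections (where, for $S_1=\emptyset$, the flatness you need is exactly the assumed graded freeness of $R$ over $R^{W_{S_0}}$, and in the geometric setting the intermediate freeness also holds), so your argument does cover the cases the paper actually uses; but as a proof of the proposition in the generality in which it is asserted, the flatness step is not justified, and you should either add these hypotheses explicitly or give an argument avoiding them.
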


Let $N\in \BigCat{S_0}$ and $M\in \BigCat{}$.
Then we define $N\otimes M\in \BigCat{S_0}$ as follows.
As an $(R^{W_{S_0}},R)$-bimodule, $N\otimes M = N\otimes_{R}M$ and we put
\[
(N\otimes M)^w_{Q} =  \bigoplus_{x\in W}N^{wx^{-1}}_Q\otimes_{Q}M^x_Q
\]
for $w\in W_{S_0}\backslash W$.
Here, recall that $M\otimes_{R}Q$ is a $Q$-bimodule.
Hence we have an identification $(N\otimes_{R}M)\otimes_{R}Q \simeq (N\otimes_{R}Q)\otimes_{Q}(M\otimes_{R}Q)$.

\subsection{Soergel bimodules}\label{sec:Soergel bimodules}
We introduced the category of Soergel bimodules $\Sbimod{}\subset \BigCat{}$ in \cite{MR4321542}.
We recall the definition.
For each $s\in S$, set $B_s = R\otimes_{R^s}R(1)$.
This is a graded $R$-bimodule and this has the unique structure of an object in $\BigCat{}$ such that $\supp_W(B_s) = \{e,s\}$.
We also have $R_e\in \BigCat{}$.
Then $\Sbimod{}$ is the smallest full-subcategory of $\BigCat{}$ which contains $\{R_e\}\cup \{B_s\mid s\in S\}$ and closed under $\oplus,(1),(-1),\otimes$ and taking direct summands.

In this subsection, we assume the following.
\begin{itemize}
\item If $t_1,t_2$ are distinct reflections in $W_{S_0}$ then $\alpha_{t_1}$ and $\alpha_{t_2}$ are linearly  independent in a $\Coeff/\mathfrak{m}$-vector space $V/\mathfrak{m}V$ for any maximal ideal $\mathfrak{m}\subset \Coeff$ (GKM condition).
\item As an $R^{W_{S_0}}$-module, we have $R\simeq \bigoplus_{w\in W_{S_0}}R^{W_{S_0}}(-2\ell(w))$.
\item \cite[Assumption~3.2]{MR4321542} holds.
\end{itemize}
We also assume that $\Coeff$ is complete local.
\begin{rem}
If $V$ comes from a root system of a Kac-Moody algebra, then the third assumption is satisfied~\cite{arXiv:2012.09414}.
If $V|_{W_{S_0}}$ comes from a root datum whose torsion primes are invertible in $\Coeff$, the second assumption is satisfied~\cite{MR0342522}.
\end{rem}

\begin{rem}
Assume the second condition holds.
Since there is no degree zero homomorphism from $R^{W_{S_0}}$ to $R^{W_{S_0}}(-2\ell(w))$ except $w\ne 1$, the factor of the right hand side corresponding to $w = 1$ is $R^{W_{S_0}}\subset R$.
In particular, $R^{W_{S_0}}\hookrightarrow R$ splits.
\end{rem}

By \cite{MR4321542}, for each $w\in W$ there exists an indecomposable object $B(w)\in \Sbimod{}$ such that $\supp_W(B(w))\subset \{x\in W\mid x\le w\}$ and $B(w)^{w}\simeq R_w(\ell(w))$.
We have the following proposition.
This is well-known, but we give a proof for the sake of completeness.

\begin{prop}\label{prop:indecomposable for the longest}
Let $w_{S_0}$ be the longest element in $W_{S_0}$.
Put $\mathcal{Z}_{S_0} = \{(z_{w})\in \bigoplus_{w\in W_{S_0}}R\mid z_{wt}\equiv z_w\pmod{\alpha_t}\ (t\in T_{S_0})\}$ where $T_{S_0}$ is the set of reflections in $W_{S_0}$.
Then we have $B(w_{S_0})\simeq R\otimes_{R^{W_{S_0}}}R(\ell(w_{S_0}))\simeq \mathcal{Z}_{S_0}(\ell(w_{S_0}))$.
Here $\mathcal{Z}_{S_0}$ is regarded as an $R$-bimodule via $f(z_w)g = (w^{-1}(f)z_wg)$ for $(z_w)\in \mathcal{Z}$ and $f,g\in R$.
\end{prop}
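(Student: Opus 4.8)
The plan is to prove the two isomorphisms in the statement in turn. First I would identify $R\otimes_{R^{W_{S_0}}}R$ with $\mathcal{Z}_{S_0}$ by a direct invariant-theoretic argument, and then show that $R\otimes_{R^{W_{S_0}}}R(\ell(w_{S_0}))$ is an indecomposable Soergel bimodule whose support and top component coincide with those of $B(w_{S_0})$, so that it must be $B(w_{S_0})$ by the classification of indecomposable Soergel bimodules.

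\textbf{Step 1 ($R\otimes_{R^{W_{S_0}}}R\simeq\mathcal{Z}_{S_0}$).} Consider the degree-zero $R$-bimodule map $\Phi\colon R\otimes_{R^{W_{S_0}}}R\to\bigoplus_{w\in W_{S_0}}R$, $a\otimes b\mapsto (w^{-1}(a)b)_{w\in W_{S_0}}$, the target being given the bimodule structure of the statement. It is well defined since $w^{-1}$ fixes $R^{W_{S_0}}$, and its image lies in $\mathcal{Z}_{S_0}$ because $h-t(h)\in\alpha_t R$ for every $h\in R$ and every reflection $t$. By the freeness assumption $R\otimes_{R^{W_{S_0}}}R$ is graded free, hence torsion free, as a right $R$-module, so it embeds into $(R\otimes_{R^{W_{S_0}}}R)\otimes_R Q=R\otimes_{R^{W_{S_0}}}Q$; since $V$ is faithful over $W_{S_0}$, $Q/Q^{W_{S_0}}$ is Galois with group $W_{S_0}$ (as in the proof of Lemma~\ref{lem:to define pull-back}), and comparing $Q$-dimensions identifies $\Phi\otimes_R Q$ with the isomorphism $R\otimes_{R^{W_{S_0}}}Q\simeq Q\otimes_{Q^{W_{S_0}}}Q\simeq\bigoplus_{w\in W_{S_0}}Q$; thus $\Phi$ is injective. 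For surjectivity onto $\mathcal{Z}_{S_0}$ I would run the standard GKM computation of the structure algebra of the Bruhat graph of $W_{S_0}$: choosing a homogeneous $R^{W_{S_0}}$-basis $\{b_v\}_{v\in W_{S_0}}$ of $R$ with $b_e=1$ and $\deg b_v=2\ell(v)$, one filters $\mathcal{Z}_{S_0}$ along the Bruhat order on $W_{S_0}$ and checks, using the GKM condition (distinct edges out of a vertex carry linearly independent labels) together with the freeness assumption (for the rank count), that the elements $\Phi(b_v\otimes 1)=(w^{-1}(b_v))_{w}$ form a graded $R$-basis of $\mathcal{Z}_{S_0}$. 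Hence $\Phi$ is an isomorphism, and applying $(\ell(w_{S_0}))$ gives the second isomorphism of the statement.

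\textbf{Step 2 ($R\otimes_{R^{W_{S_0}}}R(\ell(w_{S_0}))\simeq B(w_{S_0})$).} Put $n=\ell(w_{S_0})$ and fix a reduced expression $s_1\cdots s_n$ of $w_{S_0}$. \emph{(i) It lies in $\Sbimod{}$.} One realizes $R\otimes_{R^{W_{S_0}}}R(n)$ as a direct summand of the Bott--Samelson bimodule $B_{s_1}\otimes_R\cdots\otimes_R B_{s_n}$. The map $a\otimes b\mapsto a\otimes 1\otimes\cdots\otimes 1\otimes b$ is already a well-defined degree-zero bimodule morphism into it (one may move any $f\in R^{W_{S_0}}\subseteq\bigcap_i R^{s_i}$ through all the tensor factors), and one constructs a retraction from the Frobenius-extension structure of $R$ over the $R^{s_i}$ and over $R^{W_{S_0}}$ (equivalently, via a light-leaves projector), the key input being that, the word being reduced, the iterated Demazure operator $\partial_{s_1}\cdots\partial_{s_n}$ equals the trace $\partial_{w_{S_0}}\colon R\to R^{W_{S_0}}(-2n)$ of the extension $R/R^{W_{S_0}}$, available by the freeness assumption. \emph{(ii) It is indecomposable.} Since $R\otimes_{R^{W_{S_0}}}R$ is a cyclic $R$-bimodule (a quotient of $R\otimes_{\Coeff}R$) generated by $1\otimes 1$, every morphism in $\BigCat{}$ from it to itself is multiplication by a degree-zero element of the ring $R\otimes_{R^{W_{S_0}}}R$; but that ring is non-negatively graded with degree-zero part $\Coeff\cdot(1\otimes 1)$, so $\End_{\BigCat{}}(R\otimes_{R^{W_{S_0}}}R)=\Coeff\cdot\id$, which is a local ring. \emph{(iii) Support and top.} By Step~1, $\supp_W(R\otimes_{R^{W_{S_0}}}R)=W_{S_0}\subseteq\{x\in W\mid x\le w_{S_0}\}$, and the $w_{S_0}$-component of $R\otimes_{R^{W_{S_0}}}R$ is the image of $a\otimes b\mapsto w_{S_0}^{-1}(a)b$, namely all of $R$ with the $w_{S_0}$-twisted left action, i.e.\ $R_{w_{S_0}}$; after the shift this becomes $R_{w_{S_0}}(n)$. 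Combining (i)--(iii), $R\otimes_{R^{W_{S_0}}}R(n)$ is an indecomposable object of $\Sbimod{}$, hence isomorphic to $B(w)(m)$ for some $w\in W$ and $m\in\Z$; comparing supports forces $w=w_{S_0}$, and then comparing $w_{S_0}$-components with $B(w_{S_0})^{w_{S_0}}\simeq R_{w_{S_0}}(\ell(w_{S_0}))$ forces $m=0$. This gives $R\otimes_{R^{W_{S_0}}}R(\ell(w_{S_0}))\simeq B(w_{S_0})$.

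Steps 2(ii)--2(iii) are essentially formal once Step~1 is in hand. The real work is split between the surjectivity half of Step~1 --- the GKM description of $\mathcal{Z}_{S_0}$, classical but, as noted, awkward to cite cleanly --- and Step~2(i), the explicit retraction exhibiting $R\otimes_{R^{W_{S_0}}}R(\ell(w_{S_0}))$ as a summand of a Bott--Samelson bimodule. I expect 2(i) to be the hardest point, since it is precisely where reducedness of the chosen word and the freeness assumption have to interact, through the identity $\partial_{s_1}\cdots\partial_{s_n}=\partial_{w_{S_0}}$.
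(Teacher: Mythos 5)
Your overall architecture parallels the paper's: identify $R\otimes_{R^{W_{S_0}}}R$ with the structure algebra $\mathcal{Z}_{S_0}$ (injectivity from torsion-freeness plus the generic decomposition, surjectivity from GKM and a rank count), realize $R\otimes_{R^{W_{S_0}}}R(\ell(w_{S_0}))$ as a direct summand of a Bott--Samelson bimodule for a reduced word of $w_{S_0}$, and finish by computing the degree-zero endomorphisms and comparing support and the $w_{S_0}$-component. Steps 2(ii)--(iii) are fine. The genuine gap is exactly at the point you flag as hardest, Step 2(i): you build the retraction from the Frobenius-extension structure of $R$ over $R^{W_{S_0}}$, with key input that the iterated Demazure operator $\partial_{s_1}\dotsm\partial_{s_n}$ is independent of the reduced word, lands in $R^{W_{S_0}}$, and is the trace of a Frobenius extension, and you assert this is ``available by the freeness assumption.'' It is not. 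In the generality of this paper --- an arbitrary realization over a complete local noetherian domain, assuming only graded freeness of $R$ over $R^{W_{S_0}}$, the GKM condition and \cite[Assumption~3.2]{arXiv:1901.02336_accepted} --- braid relations for Demazure operators and Demazure's theorem on $\partial_{w_{S_0}}$ are precisely the classical inputs that are neither assumed nor established (\cite{MR0342522} is invoked only for root data with torsion primes inverted, and only to verify the freeness hypothesis). The paper's construction is designed to avoid this: it fixes one reduced word, starts from the element of $\mathcal{Z}$ concentrated at $e$ with entry $\prod_i s_1\dotsm s_{i-1}(\alpha_{s_i})$, applies a single $\partial_{s_l}$ at each step via $f\otimes g\otimes h\mapsto f\otimes\partial_{s_l}(g)h$, tracks only the one coordinate $p_{s_1\dotsm s_l}(\varphi_l(u_l))$, and then proves $\varphi\circ\psi=\id$ by a degree argument; no word-independence, no containment in invariants, no perfect trace form is needed. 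To keep your formulation you would have to prove those properties of $\partial_{w_{S_0}}$ under the stated hypotheses, which is essentially the content you are trying to establish.

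A smaller issue in Step 1: your Hilbert-series pinch (filtering $\mathcal{Z}_{S_0}$ along the Bruhat order, each subquotient embedding into $\bigl(\prod_{wt>w}\alpha_t\bigr)R$, against $\grk(R\otimes_{R^{W_{S_0}}}R)=\sum_w v^{-2\ell(w)}\grk R$) proves surjectivity when $\Coeff$ is a field, but the proposition is needed over a complete local noetherian domain, so you still need the reduction modulo maximal ideals and Nakayama, as in Lemma~\ref{lem:succ quot of Z}. Note also that the paper proves the sharper statement of that lemma (using Soergel's elements from \cite[2.2]{MR1173115} to identify each subquotient exactly, not just bound it); your pinch suffices for the isomorphism $R\otimes_{R^{W_{S_0}}}R\simeq\mathcal{Z}_{S_0}$ itself, but the finer statement is used later for the character computation $\ch(R\otimes_{R^{W_{S_0}}}R)=\sum_{w\in W_{S_0}}v^{-\ell(w)}H_w$.
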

In particular $R\otimes_{R^{W_{S_{0}}}}R$ is indecomposable in $\mathcal{C}$ and it is also indecomposable as an $R$-bimodule by Lemma~\ref{lem:automatically morphism in C}.

We prove this proposition in the rest of this subsection.
We put $\mathcal{Z} = \mathcal{Z}_{S_{0}}$ to simplify the notation.
We have $\mathcal{Z}\otimes_{R}Q\simeq \bigoplus_{w\in W_{S_{0}}}Q$ and from this $\mathcal{Z}$ is an object of $\BigCat{}$.
We have a map $\Phi\colon R\otimes_{R^{W_{S_{0}}}}R\to \mathcal{Z}$ defined by $f\otimes g\mapsto (w^{-1}(f)g)_{w\in W_{S_{0}}}$.

\begin{lem}\label{lem:moment graph and Rtensor R, over Q}
The map $\Phi$ is injective.
Moreover the map is bijective after tensoring with $Q$.
\end{lem}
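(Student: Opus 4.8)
The plan is to deduce both assertions from two inputs already available: the freeness of $R$ over $R^{W}$ (our standing assumption $R\simeq\bigoplus_{w\in W}R^{W}(-2\ell(w))$) and the fact that $Q/Q^{W}$ is a finite Galois extension with group $W$ (which, since $V$ is a faithful $W$-representation, was observed in the proof of Lemma~\ref{lem:to define pull-back}). Write $\phi\colon R\otimes_{R^{W}}R\to\mathcal{Z}$ for the map $f\otimes g\mapsto(w^{-1}(f)g)_{w\in W}$. I would handle the two claims in the order opposite to the statement: first identify $\phi\otimes_{R}\mathrm{id}_{Q}$ with a Galois isomorphism, and then combine this with flatness to get injectivity of $\phi$ itself.

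For the bijectivity after $-\otimes_{R}Q$: since $R$ is free over $R^{W}$, Lemma~\ref{lem:invariants and fractions}(2) with $S_{1}=\emptyset$ says the multiplication map $R\otimes_{R^{W}}Q^{W}\to Q$ is an isomorphism. Base-changing along $Q^{W}\hookrightarrow Q$ produces isomorphisms
\[
(R\otimes_{R^{W}}R)\otimes_{R}Q\simeq R\otimes_{R^{W}}Q\simeq(R\otimes_{R^{W}}Q^{W})\otimes_{Q^{W}}Q\simeq Q\otimes_{Q^{W}}Q,
\]
and one checks by tracing elements that under these identifications $\phi\otimes_{R}\mathrm{id}_{Q}$ becomes the map $a\otimes b\mapsto(w^{-1}(a)b)_{w\in W}$ from $Q\otimes_{Q^{W}}Q$ to $\bigoplus_{w\in W}Q\simeq\mathcal{Z}\otimes_{R}Q$. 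By Lemma~\ref{lem:to define pull-back} (applied with $M=R_{e}$ and $S_{1}=\emptyset$) this last map is an isomorphism. Hence $\phi\otimes_{R}\mathrm{id}_{Q}$ is an isomorphism.

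For injectivity of $\phi$: consider the commutative square whose top row is $\phi$, bottom row is $\phi\otimes_{R}\mathrm{id}_{Q}$, left vertical is $\mathrm{id}_{R}\otimes(R\hookrightarrow Q)\colon R\otimes_{R^{W}}R\to R\otimes_{R^{W}}Q$, and right vertical is the localization map $\mathcal{Z}\to\mathcal{Z}\otimes_{R}Q$. The left vertical is injective because $R$ is flat (even free) over $R^{W}$; the right vertical is injective because $\mathcal{Z}$ is an $R$-submodule of the free module $\bigoplus_{w\in W}R$, hence torsion-free over the domain $R$. Since the bottom row is an isomorphism by the previous paragraph, the composite along the bottom and left is injective, and by commutativity this composite equals (right vertical)$\,\circ\,\phi$; therefore $\phi$ is injective.

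I do not expect a serious obstacle here: the whole argument is an assembly of flatness with two isomorphisms, one about localizations of invariants (Lemma~\ref{lem:invariants and fractions}) and one of Galois type (Lemma~\ref{lem:to define pull-back}). The only point requiring care is verifying that $\phi\otimes_{R}\mathrm{id}_{Q}$ really corresponds to the map $a\otimes b\mapsto(w^{-1}(a)b)_{w}$ under the chain of identifications displayed above, and noting that the freeness hypothesis on $R$ over $R^{W}$ is used twice — once to invoke Lemma~\ref{lem:invariants and fractions}(2) and once for flatness.
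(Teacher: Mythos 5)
Your proof is correct and follows essentially the same route as the paper: the bijectivity after $-\otimes_R Q$ is exactly the Galois-type isomorphism of Lemma~\ref{lem:to define pull-back} (the paper leaves the identification with $Q\otimes_{Q^W}Q$ implicit, which you spell out via Lemma~\ref{lem:invariants and fractions}(2)), and the injectivity is deduced from the freeness of $R$ over $R^{W}$ exactly as in the paper, which phrases it as torsion-freeness of $R\otimes_{R^W}R$ rather than flatness in the left variable. The only cosmetic slip is that Lemma~\ref{lem:invariants and fractions}(2) does not actually require the freeness assumption, so freeness is used only once, for the injectivity step.
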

\begin{proof}
By Lemma~\ref{lem:to define pull-back}, the map is bijective after tensoring with $Q$.
By the assumption, $R\otimes_{R^{W_{S_{0}}}}R$ is a free $R$-module, hence torsion-free.
Therefore the injectivity of $R\otimes_{R^{W_{S_{0}}}}R\to \mathcal{Z}$ follows.
\end{proof}

Hence $R\otimes_{R^{W_{S_{0}}}}R$ has a structure of an object of $\BigCat{}$.
The following lemma follows from the argument in \cite[2.2]{MR1173115}.
\begin{lem}\label{lem:Soergel's element}
For each $w\in W_{S_{0}}$, there exists $F_{w}\in R\otimes_{R^{W_{S_0}}}R$ whose image $(z_x)\in \mathcal{Z}$ satisfies
\begin{enumerate}
\item $z_{w}$ is of degree $2\ell(ww_{S_{0}})$ non-zero element.
\item if $z_{x}\ne 0$, then $x\le w$.
\end{enumerate}
\end{lem}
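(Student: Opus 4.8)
\emph{Proof sketch.} The elements $F_w$ are algebraic avatars of the classes of Schubert varieties in equivariant cohomology, and the plan is to produce them by a divided-difference recursion based at a ``point class'' $F_e$; I would induct on $\ell(w)$, the inductive step being formal and the base case carrying all the content. (Throughout, $W=W_{S_0}$ and $w_0=w_{S_0}$, as in the text just above.)

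First I would set up the operators. For $s\in S$ let $\partial_s\colon R\to R$, $\partial_s(f)=(f-s(f))/\alpha_s$; this is $R^{s}$-linear of degree $-2$, and since $R^{W}\subseteq R^{s}$ the assignment $\Delta_s\colon f\otimes g\mapsto\partial_s(f)\otimes g$ is a well-defined degree $-2$ endomorphism of $R\otimes_{R^{W}}R$. Using $x^{-1}s=(sx)^{-1}$ one checks that on sections $(\Delta_s z)_x=(z_x-z_{sx})/x^{-1}(\alpha_s)$, the quotient lying in $R$ because $sx=x\cdot(x^{-1}sx)$ and $z\in\mathcal{Z}$. I would then record two facts, for $v\in W$ and $s\in S$ with $sv>v$ and for any $z\in\mathcal{Z}$ with $z_y=0$ whenever $y\not\le v$: (i) $(\Delta_s z)_x=0$ whenever $x\not\le sv$ --- the lifting property of the Bruhat order forces $x\not\le v$ and $sx\not\le v$; (ii) if moreover $z_v$ is nonzero and homogeneous of degree $d$, then $(\Delta_s z)_{sv}=z_v/v^{-1}(\alpha_s)$ is nonzero and homogeneous of degree $d-2$ --- here $z_{sv}=0$ since $sv\not\le v$, the relation $z_{vt}\equiv z_v\pmod{\alpha_t}$ with $t=v^{-1}sv$ shows $v^{-1}(\alpha_s)\mid z_v$, and $R$ is a domain.

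For the base case I would take $F_e$ to be the Casimir element $\sum_{w\in W}b_w\otimes b_w^{*}$ of $R/R^{W}$, where $\{b_w\}$ is a homogeneous $R^{W}$-basis of $R$ with $\deg b_w=2\ell(w)$ and $\{b_w^{*}\}$ the dual basis with respect to the trace $\partial_{w_0}\colon R\to R^{W}$; the hypothesis $R\simeq\bigoplus_w R^{W}(-2\ell(w))$ is precisely what makes $R/R^{W}$ a graded Frobenius extension of window $2\ell(w_0)$, so such a dual basis exists. Then $\deg b_w^{*}=2\ell(w_0)-2\ell(w)$, so $F_e$ is homogeneous of degree $2\ell(w_0)$; the standard identity $(h\otimes 1)F_e=(1\otimes h)F_e$ for $h\in R$ reads on sections as $x^{-1}(h)z_x=z_x h$ for all $x$ and $h$, which, as $R$ is a domain and $V$ is a faithful $W$-module, forces $z_x=0$ for every $x\ne e$; moreover $F_e\ne 0$ (the coefficient of the basis vector $b_e\otimes b_e^{*}$ is $1$), so by the injectivity of $R\otimes_{R^{W}}R\to\mathcal{Z}$ (Lemma~\ref{lem:moment graph and Rtensor R, over Q}) the surviving coordinate $z_e$ is nonzero of degree $2\ell(w_0)$. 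Hence $F_e$ satisfies (1) and (2).

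Finally, for general $w$ I would fix a reduced expression $w=s_{i_k}\cdots s_{i_1}$ and set $F_w:=\Delta_{s_{i_k}}\circ\cdots\circ\Delta_{s_{i_1}}(F_e)\in R\otimes_{R^{W}}R$. With $v_j=s_{i_j}\cdots s_{i_1}$ one has $v_0=e$, $v_k=w$, $\ell(v_j)=j$ and $s_{i_j}v_{j-1}>v_{j-1}$, so iterating (i) and (ii) shows $F_w$ is supported in $\{y\le w\}$ with $z_w$ nonzero of degree $2\ell(w_0)-2\ell(w)=2\ell(ww_0)$, which is the assertion (no independence of the reduced expression is needed). The one genuinely non-formal ingredient is thus the base case: the Frobenius/Casimir input is where both the GKM-type divisibility in $\mathcal{Z}$ and the freeness of $R$ over $R^{W}$ enter, while everything else is Bruhat-order bookkeeping for the operators $\Delta_s$.
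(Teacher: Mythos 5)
Your strategy---manufacture the point class at $e$ and spread it up the Bruhat order by divided--difference operators $\partial_s\otimes\mathrm{id}$---is exactly the mechanism of the argument of Soergel that the paper cites for this lemma, and your inductive step is correct: the section formula $(\Delta_s z)_x=(z_x-z_{sx})/x^{-1}(\alpha_s)$, the lifting-property bookkeeping giving support in $\{y\le sv\}$, the identification of the new leading coefficient $z_v/v^{-1}(\alpha_s)$ as a nonzero homogeneous element of degree $d-2$ (using that $R$ is a domain and the congruence built into $\mathcal{Z}$), and the degree count $2\ell(w_0)-2\ell(w)=2\ell(ww_0)$ all check out.

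The gap is the base case, which you correctly identify as the only non-formal ingredient but then do not actually supply. You assert that the hypothesis $R\simeq\bigoplus_{w\in W}R^{W}(-2\ell(w))$ ``is precisely what makes'' $R^{W}\subset R$ a graded Frobenius extension with trace $\partial_{w_0}$, so that dual bases $\{b_w\},\{b_w^{*}\}$ exist. That is not a formal consequence of graded freeness, and in the paper's generality (an arbitrary realization over a complete local noetherian integral domain, with only the assumptions listed at the start of Section~\ref{sec:Soergel bimodules}) two points need proof: (i) that $\partial_{w_0}$, defined through a chosen reduced expression, takes values in $R^{W}$ at all --- the natural route is independence of the reduced expression, i.e.\ braid relations for the $\partial_s$, which are not automatic for a general realization and are nowhere justified in your sketch; and (ii) that the induced $R^{W}$-bilinear form $(f,g)\mapsto\partial_{w_0}(fg)$ is perfect, equivalently that $\Hom_{R^{W}}(R,R^{W})$ is free of rank one over $R$. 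Graded freeness with the correct graded rank makes (ii) numerically consistent but does not imply it: perfectness is a relative Gorenstein statement, classically available via Demazure's results for root data with torsion primes invertible (the situation in the paper's Remark), and provable over a field by a descent/Gorenstein argument, but it requires an argument or citation here, with a further reduction to handle the complete local base $\Coeff$. Since $F_e$, hence every $F_w$, is built from these dual bases, the lemma is not proved as written. Note that your induction only needs \emph{some} homogeneous $R^{W}$-valued trace of degree $-2\ell(w_0)$ admitting dual bases (it need not be $\partial_{w_0}$); so the precise missing step is the existence of such a Frobenius structure, equivalently of an element of $R\otimes_{R^{W}}R$ of degree $2\ell(w_0)$ whose image in $\mathcal{Z}$ is supported at $e$ --- which is exactly the content the cited argument supplies in Soergel's setting and which must be verified under the present hypotheses.
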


\begin{lem}\label{lem:succ quot of Z}
Let $I\subset W_{S_{0}}$ be a closed subset and $w\in I$ a maximal element.
Set $I' = I\setminus\{w\}$.
We have an embeddings $(R\otimes_{R^{W_{S_{0}}}}R)_I/(R\otimes_{R^{W_{S_{0}}}}R)_{I'}\hookrightarrow (\mathcal{Z})_{I}/(\mathcal{Z})_{I'}\hookrightarrow R$ where last map is  defined by $(z_x)_{x\in W_{S_{0}}}\mapsto z_w$.
Then the first map is an isomorphism and the image of the second map is $(\prod_{wt > w}\alpha_t)R$ where $t$ runs through reflections in $W_{S_{0}}$ such that $wt > w$.
\end{lem}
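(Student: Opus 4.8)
The plan is to analyze the two maps
\[
(R\otimes_{R^{W}}R)_I/(R\otimes_{R^{W}}R)_{I'}\longrightarrow \mathcal{Z}_I/\mathcal{Z}_{I'}\longrightarrow R,\qquad (z_x)_x\mapsto z_w,
\]
working "locally at $w$" and reducing everything to the moment-graph description of $\mathcal{Z}$. First I would observe that, because $w$ is maximal in the closed set $I$, the quotient $\mathcal{Z}_I/\mathcal{Z}_{I'}$ is naturally identified (via $(z_x)_{x}\mapsto z_w$) with its image in the single summand $(\mathcal{Z})_Q^w\simeq Q$, so the second map is injective by construction and it remains to compute the image. An element $z_w\in R$ arises as the $w$-component of some $(z_x)_{x\in I}$ satisfying the GKM congruences $z_{xt}\equiv z_x\pmod{\alpha_t}$; since we only need to extend $z_w$ to the \emph{closed} set $I$ and $w$ is maximal, the only congruences that constrain $z_w$ directly are those coming from reflections $t$ with $wt<w$ (equivalently $wt\in I'$), where we can choose $z_{wt}$ freely below and the congruence becomes vacuous — so in fact $z_w$ is \emph{unconstrained} by those. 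The real constraint is the opposite one: to lift, we need, for each reflection $t$ with $wt>w$, a congruence; but $wt\notin I$ since $w$ is maximal, so there is no such constraint either. This suggests the image is all of $R$, which contradicts the claimed answer, so the subtlety I must get right is that $\mathcal{Z}_I$ is defined as the \emph{inverse image} inside $\mathcal{Z}$ of $\bigoplus_{x\in I}\mathcal{Z}_Q^x$, i.e.\ one takes genuine elements of $\mathcal{Z}$ (sections over all of $W$) that happen to be supported on $I$; thus $z_w$ must extend to a section over $W_{\ge}$-closed sets containing $w$, and the congruences at reflections $t$ with $wt>w$, where $z_{wt}=0$, force $z_w\equiv 0\pmod{\alpha_t}$. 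Hence the image lands in $(\prod_{wt>w}\alpha_t)R$.

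For the reverse inclusion I would exhibit an explicit section: given $g\in R$, I want $(z_x)_{x\in W}\in\mathcal{Z}$ with $z_w=(\prod_{wt>w}\alpha_t)g$ and $z_x=0$ for $x\not\le w$. The cleanest route is to invoke the element $F_w\in R\otimes_{R^{W_{S_0}}}R$ from the preceding lemma: its image in $\mathcal{Z}$ has $w$-component a nonzero element of degree $2\ell(ww_0)$ supported on $\{x\le w\}$. A standard argument (using that $\mathcal{Z}$ is the ring of sections of the moment graph and that the $w$-component of a section supported on $\{\le w\}$ is always divisible by $\prod_{wt>w}\alpha_t$, by the argument just given) shows that this $w$-component equals a unit times $\prod_{wt>w}\alpha_t$; then $R\cdot(\text{image of }F_w)$ already surjects onto $(\prod_{wt>w}\alpha_t)R$ modulo $\mathcal{Z}_{I'}$, and since $F_w\in R\otimes_{R^W}R$, this simultaneously shows the image of the \emph{first} map contains $(\prod_{wt>w}\alpha_t)R$.

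It then remains to see the first map is an isomorphism. Injectivity is immediate from Lemma~\ref{lem:moment graph and Rtensor R, over Q} (the inclusion $R\otimes_{R^W}R\hookrightarrow\mathcal{Z}$ is injective, and it is compatible with the $(-)_I$ operation since both sides have the same $Q$-decomposition). For surjectivity, by what was just shown both $(R\otimes_{R^W}R)_I/(R\otimes_{R^W}R)_{I'}$ and $\mathcal{Z}_I/\mathcal{Z}_{I'}$ map onto $(\prod_{wt>w}\alpha_t)R$ under the second map, and the second map is injective on $\mathcal{Z}_I/\mathcal{Z}_{I'}$; a diagram chase then forces the first map to be surjective, hence an isomorphism, and identifies both quotients with $(\prod_{wt>w}\alpha_t)R$ as claimed.

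I expect the main obstacle to be the divisibility statement — that the $w$-component of \emph{any} section of $\mathcal{Z}$ supported on $\{x\le w\}$ is divisible by exactly $\prod_{wt>w}\alpha_t$ (not just by each $\alpha_t$ individually). Divisibility by each $\alpha_t$ with $wt>w$ is the GKM congruence with $z_{wt}=0$; getting divisibility by the \emph{product} uses the GKM/linear-independence hypothesis on the $\alpha_t$ in $V/\mathfrak{m}V$ for every maximal ideal $\mathfrak{m}$, via the fact that $R$ is a polynomial ring over $\Coeff$ and the $\alpha_t$ form a regular sequence; this is precisely the point where the second and first bulleted assumptions of the subsection are used, and where I would be most careful to cite the moment-graph argument of \cite{MR1173115} rather than reprove it.
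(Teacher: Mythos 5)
Your route is essentially the paper's: the upper bound on the image comes from the GKM congruences (for $z\in\mathcal{Z}_I$ and $t$ a reflection with $wt>w$, maximality of $w$ in the closed set $I$ forces $z_{wt}=0$, hence $\alpha_t\mid z_w$, and pairwise linear independence of the $\alpha_t$ modulo every maximal ideal upgrades this to divisibility by the product), and the lower bound comes from the element $F_w$ of the preceding lemma together with a degree count; the first map is then an isomorphism by sandwiching, exactly as in the paper.

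The one genuine gap is your claim that the $w$-component $z'_w$ of the image of $F_w$ ``equals a unit times $\prod_{wt>w}\alpha_t$.'' Divisibility plus the degree count only give $z'_w=c\prod_{wt>w}\alpha_t$ with $c\in\Coeff$, $c\neq 0$ (both sides have degree $2\ell(ww_0)$, since the number of reflections $t$ with $wt>w$ is $\ell(ww_0)$). When $\Coeff$ is a field this $c$ is automatically invertible and your argument closes; but the standing hypotheses of this subsection only require $\Coeff$ to be a complete local noetherian integral domain, and for such $\Coeff$ a nonzero $c$ may lie in the maximal ideal $\mathfrak{m}$, in which case $z'_wR\subsetneq(\prod_{wt>w}\alpha_t)R$ and your surjectivity step fails as stated. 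The paper closes precisely this point with an extra argument you omit: the inclusion $M\hookrightarrow(\prod_{wt>w}\alpha_t)R$ (with $M$ the image of $(R\otimes_{R^W}R)_I/(R\otimes_{R^W}R)_{I'}$ in $R$) is surjective after tensoring with $\Coeff/\mathfrak{m}$ -- because the field-coefficient argument applies to the reduction -- and then Nakayama's lemma gives surjectivity over $\Coeff$. You need to add this reduction-mod-$\mathfrak{m}$ plus Nakayama step, or some other justification that $c$ is a unit, for the lower bound to hold in the stated generality; the remainder of your proposal (injectivity of both maps, the reading of $\mathcal{Z}_I$ as genuine sections of $\mathcal{Z}$ supported on $I$, and passing from divisibility by each $\alpha_t$ to divisibility by their product) matches the paper's proof.
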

\begin{proof}
Let $M$ be the image of $(R\otimes_{R^{W_{S_{0}}}}R)_I/(R\otimes_{R^{W_{S_{0}}}}R)_{I'}\hookrightarrow R$.
Let $z = (z_x)\in \mathcal{Z}_I$.
Then since $w$ is maximal in $I$, for any reflection $t\in W$ such that $wt > w$, we have $wt\notin I$.
Hence $z_{wt} = 0$.
Therefore $z_w\equiv 0\pmod{\alpha_t}$.
Hence $z_{w}\in (\prod_{wt > w}\alpha_t)R$.
In particular we have $M\subset (\prod_{wt > w}\alpha_t)R$.
Take $F_{w}\in R\otimes_{R^{W_{S_{0}}}}R$ such that the image $z' = (z'_w)\in \mathcal{Z}$ of $F_{w}$ satisfies the conditions of Lemma~\ref{lem:Soergel's element}.
Hence $z'_wR\subset M$.
If $\Coeff$ is a field, then since the graded rank of $z'_w R$ and $(\prod_{wt > w}\alpha_t)R$ are equal, we conclude $M = (\prod_{wt > w}\alpha_t)R$.
In general, the embedding $M\hookrightarrow (\prod_{wt > w}\alpha_t)R$ is surjective after tensoring $\Coeff/\mathfrak{m}$ for any maximal ideal $\mathfrak{m}\subset \Coeff$.
Hence by Nakayama's lemma, $M\hookrightarrow (\prod_{wt > w}\alpha_t)R$ is surjective.
\end{proof}

\begin{lem}
The map $R\otimes_{R^{W_{S_{0}}}}R\to \mathcal{Z}$ is an isomorphism.
\end{lem}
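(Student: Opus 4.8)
The plan is to compare the two $R$-bimodules through their filtrations by closed subsets of $W$ and to induct on the cardinality of such a subset, feeding Lemma~\ref{lem:succ quot of Z} into the inductive step.

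First I would note the torsion-freeness that makes these filtrations well-behaved: $R\otimes_{R^{W}}R$ is graded free over $R$ (since $R$ is graded free over $R^{W}$ by assumption), and $\mathcal{Z}\subset\bigoplus_{w\in W}R$ is torsion-free; hence $\mathcal{Z}\to\mathcal{Z}\otimes_{R}Q$ and $R\otimes_{R^{W}}R\to(R\otimes_{R^{W}}R)\otimes_{R}Q$ are injective, so $\mathcal{Z}_{W}=\mathcal{Z}$, $(R\otimes_{R^{W}}R)_{W}=R\otimes_{R^{W}}R$, and the map $\varphi\colon R\otimes_{R^{W}}R\to\mathcal{Z}$ is injective by Lemma~\ref{lem:moment graph and Rtensor R, over Q}. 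Because (by Lemma~\ref{lem:to define pull-back}) $\varphi$ becomes, after $\otimes_{R}Q$, an isomorphism respecting the decompositions indexed by $W$, it restricts for every closed (i.e.\ Bruhat-downward-closed) $I\subset W$ to a map $\varphi_{I}\colon(R\otimes_{R^{W}}R)_{I}\to\mathcal{Z}_{I}$, compatibly with the inclusions coming from $I'\subset I$.

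Now I induct on $\#I$, the case $I=\emptyset$ being trivial. For $I\ne\emptyset$, choose $w\in I$ maximal in the Bruhat order and set $I'=I\setminus\{w\}$; then $I'$ is still closed, since if $x\le y$ with $y\in I'$ then $x\in I$, and $x=w$ would force $w\le y$, hence $w=y$, contradicting $y\ne w$. Consider the commutative diagram with exact rows
\[
\begin{tikzcd}[column sep=small]
0\arrow[r] & (R\otimes_{R^{W}}R)_{I'}\arrow[r]\arrow[d,"\varphi_{I'}"] & (R\otimes_{R^{W}}R)_{I}\arrow[r]\arrow[d,"\varphi_{I}"] & (R\otimes_{R^{W}}R)_{I}/(R\otimes_{R^{W}}R)_{I'}\arrow[r]\arrow[d,"\overline{\varphi}_{I}"] & 0\\
0\arrow[r] & \mathcal{Z}_{I'}\arrow[r] & \mathcal{Z}_{I}\arrow[r] & \mathcal{Z}_{I}/\mathcal{Z}_{I'}\arrow[r] & 0.
\end{tikzcd}
\]
By induction $\varphi_{I'}$ is an isomorphism, and by Lemma~\ref{lem:succ quot of Z} so is $\overline{\varphi}_{I}$; since $\varphi_{I}$ is injective, the five lemma (or a one-line chase through the diagram) shows $\varphi_{I}$ is an isomorphism. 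Taking $I=W$ gives the lemma.

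I do not anticipate a real obstacle: the substantive input — that passing from $I'$ to $I$ multiplies the relevant subquotient of $R\otimes_{R^{W}}R$ by the factor $\prod_{wt>w}\alpha_{t}$ inside $R$, matching $\mathcal{Z}$ exactly — is precisely Lemma~\ref{lem:succ quot of Z}, which has already absorbed the GKM hypothesis and the graded-freeness of $R$ over $R^{W}$. What remains is only careful bookkeeping: that deleting a maximal element of a closed set leaves it closed, that $W$ is itself closed so the filtration exhausts both modules, and that $\varphi$ respects the $Q$-decompositions so that the restrictions $\varphi_{I}$ and the induced maps $\overline{\varphi}_{I}$ are well defined.
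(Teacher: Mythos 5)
Your proposal is correct and follows essentially the same route as the paper: induction on the size of a closed subset $I\subset W$ (here $W=W_{S_0}$ is finite), removing a Bruhat-maximal element and comparing the two short exact sequences, with Lemma~\ref{lem:succ quot of Z} supplying the isomorphism on the subquotient and Lemma~\ref{lem:moment graph and Rtensor R, over Q} the injectivity. The extra bookkeeping you add (closedness of $I'$, compatibility with the $Q$-decompositions) is fine and consistent with the paper's argument.
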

\begin{proof}
We prove that for any closed subset $I\subset W$ we have $(R\otimes_{R^{W_{S_{0}}}}R)_I \xrightarrow{\sim} (\mathcal{Z})_I$ by induction on $\#I$.
Let $w\in I$ be a maximal element and set $I' = I\setminus\{w\}$.
Then we have the following commutative diagram with exact rows.
\[
\begin{tikzcd}
0 \arrow[r] & (R\otimes_{R^{W}}R)_{I'}\arrow[r]\arrow[d,"\sim",sloped] & (R\otimes_{R^{W_{S_{0}}}}R)_I\arrow[r]\arrow[d] & (R\otimes_{R^{W_{S_{0}}}}R)_I/(R\otimes_{R^{W_{S_{0}}}}R)_{I'}\arrow[r]\arrow[d,"\sim",sloped] & 0\\
0\arrow[r] & (\mathcal{Z})_{I'}\arrow[r] & (\mathcal{Z})_{I}\arrow[r] & (\mathcal{Z})_{I}/(\mathcal{Z})_{I'}\arrow[r] & 0.
\end{tikzcd}
\]
The left vertical map is an isomorphism by inductive hypothesis and the right vertical map is an isomorphism by the above lemma.
Hence the middle vertical map is also an isomorphism.
\end{proof}

Next we prove $R\otimes_{R^{W_{S_{0}}}}R\in \Sbimod{}$.
Define the map $p_{w}\colon R\otimes_{R^{W_{S_{0}}}}R\to R$ by $p_{w}(f\otimes g) = w^{-1}(f)g$.
This is the composition of the isomorphism $R\otimes_{R^{W_{S_{0}}}}R\simeq \mathcal{Z}$ and the projection $\mathcal{Z}\to R$ to $w$-part.
Fix a reduced expression $w_{S_{0}} = s_1\ldots s_{\ell(w_{S_{0}})}$.

To prove $R\otimes_{R^{W_{S_{0}}}}R(\ell(w_{S_{0}}))\in \Sbimod{}$, it is sufficient to prove that this is a direct summand of $B_{s_1}\otimes\cdots\otimes B_{s_{\ell(w_{S_{0}})}}$.
We construct a morphism $B_{s_1}\otimes\cdots\otimes B_{s_{\ell(w_{S_{0}})}}\to R\otimes_{R^{W_{S_{0}}}}R(\ell(w_{S_{0}}))$ which will correspond to the projection to the direct summand.
The construction will be done inductively, so for each $l$, we construct
\[
\varphi_l\colon B_{s_1}\otimes\cdots\otimes B_{s_l}\to R\otimes_{R^{W_{S_{0}}}}R(2\ell(w_{S_{0}}) - l)
\]
such that $p_{s_1\cdots s_l}(\varphi_{l}((1\otimes 1)\otimes\cdots \otimes(1\otimes 1))) = \prod_{i = l + 1}^{\ell(w_{S_{0}})}s_{l + 1}\cdots s_{i - 1}(\alpha_{s_i})$.

First we construct $\varphi_0\colon R\to R\otimes_{R^{W_{S_{0}}}}R(2\ell(w_{S_{0}}))$.
Define an element $z = (z_w)\in \mathcal{Z}$ by $z_e = \prod_{i = 1}^{\ell(w_{S_{0}})}s_1\cdots s_{i - 1}(\alpha_{s_i})$ and $z_w = 0$ for $w\ne e$.
Then $R\ni f\mapsto fz\in \mathcal{Z}(2\ell(w_{S_{0}}))$ is a morphism.
We define $\varphi_0$ as the composition of this morphism with an isomorphism $\mathcal{Z}(2\ell(w_{S_{0}}))\simeq R\otimes_{R^{W_{S_{0}}}}R(2\ell(w_{S_{0}}))$.

We assume that we have already defined $\varphi_{l - 1}$.
Set $M = B_{s_1}\otimes \cdots \otimes B_{s_{l - 1}}$, $w = s_1\ldots s_l$ and $s = s_l$.
We define $\varphi_{l}\colon M\otimes B_s\to R\otimes_{R^{W_{S_{0}}}}R(2\ell(w_{S_{0}}) - l)$ by
\begin{multline*}
M\otimes B_s\xrightarrow{\varphi_{l - 1}\otimes\id}R\otimes_{R^{W_{S_{0}}}}R\otimes_{R}B_s(2\ell(w_{S_{0}}) - l + 1) = R\otimes_{R^{W_{S_{0}}}}R\otimes_{R^s}R(2\ell(w_{S_{0}}) - l + 2)\\
\xrightarrow{f\otimes g\otimes h\mapsto f\otimes \partial_{s}(g)h}R\otimes_{R^{W_{S_{0}}}}R(2\ell(w_{S_{0}}) - l),
\end{multline*}
here $\partial_s\colon R\to R$ is defined by $\partial_s(f) = (f - s(f))/\alpha_s$.
For simplicity put $u_{l} = (1\otimes 1)\otimes\cdots \otimes(1\otimes 1)\in B_{s_1}\otimes\cdots\otimes B_{s_l}$ and we calculate $\varphi_l(u_l)$.
For any $f\otimes g\in R\otimes_{R^{W_{S_{0}}}}R$, we have
\[
p_{w}(f\otimes\partial_s(g)) = w^{-1}(f)\frac{g - s(g)}{\alpha_s} = \frac{p_{w}(f\otimes g) - s(p_{ws}(f\otimes g))}{\alpha_{s}}.
\]
Hence we have
\[
\varphi_{l}(u_l) = \frac{p_{w}(\varphi_{l - 1}(u_{l - 1})) - s(p_{ws}(\varphi_{l - 1}(u_{l - 1})))}{\alpha_{s}}.
\]
Since $\supp_{W}(M)\subset \{x\in W_{S_{0}}\mid x\le ws\}$, we have $w\notin \supp_{W}(M)$.
Therefore the $w$-part of the image of $\varphi_{l - 1}$, regarding as an element in $\mathcal{Z}$, is zero.
Hence $p_{w}(\varphi_{l - 1}(u_{l - 1})) = 0$.
On the other hand, by inductive hypothesis, we know what $p_{ws}(\varphi_{l - 1}(u_{l - 1}))$ is.
Therefore we get
\begin{align*}
\varphi_{l}(u_l)
& =
-\frac{s_l(\prod_{i = l}^{\ell(w_{S_{0}})}s_{l}\cdots s_{i - 1}(\alpha_{s_i}))}{\alpha_{s_l}}\\
& =
-\frac{s_l(\alpha_{s_l})}{\alpha_{s_l}}\prod_{i = {l + 1}}^{\ell(w_{S_{0}})}s_{l + 1}\cdots s_{i - 1}(\alpha_{s_i})
=
\prod_{i = {l + 1}}^{\ell(w_{S_{0}})}s_{l + 1}\cdots s_{i - 1}(\alpha_{s_i}).
\end{align*}
Hence the morphism $\varphi_l$ has the desired property.

Set $N = B_{s_1}\otimes\cdots\otimes B_{s_{\ell(w_{S_{0}})}}$ and $\varphi = \varphi_{\ell(w_{S_{0}})}$.
This is a morphism $N\to R\otimes_{R^{W_{S_{0}}}}R(\ell(w_{S_{0}}))$ such that $p_{w_{S_{0}}}(\varphi(u_{\ell(w_{S_{0}})})) = 1$.
Then we have an $R$-bimodule homomorphism $R\otimes R(\ell(w_{S_{0}}))\to N$ defined by $f\otimes g\mapsto fu_{\ell(w_{S_{0}})}g$.
For each $f\in R^{W_{S_{0}}}$, the left action of $f$ and the right action of $f$ on $N$ is equal.
Hence this gives $\psi\colon R\otimes_{R^{W_{S_{0}}}}R(\ell(w_{S_{0}}))\to N$.
By Lemma~\ref{lem:automatically morphism in C}, this is a morphism in $\BigCat{}$.
Consider $\varphi(\psi(1\otimes 1)) = \varphi(u_{\ell(w_{S_{0}})})\in R\otimes_{R^{W_{S_{0}}}}R(\ell(w_{S_{0}}))$.
This has the degree $-\ell(w_{S_{0}})$.
On the other hand, the degree $(-\ell(w_{S_{0}}))$-part of $R\otimes_{R^{W_{S_{0}}}}R(\ell(w_{S_{0}}))$ is $\{c(1\otimes 1)\mid c\in \Coeff\}$.
Hence we have $\varphi(u_{\ell(w_{S_{0}})}) = c(1\otimes 1)$ for some $c\in \Coeff$.
We have $c = p_{w_{S_{0}}}(c(1\otimes 1)) = p_{w_{S_{0}}}(\varphi(u_{\ell(w_{S_{0}})}))$ and this is $1$ by the construction of $\varphi$.
Hence $c = 1$ and we have $\varphi(\psi(1\otimes 1)) = 1\otimes 1$.
Since $R\otimes_{R^{W_{S_{0}}}}R(\ell(w_{S_{0}}))$ is generated by $1\otimes 1$ as an $R$-bimodule, we get $\varphi\circ\psi = \id$.
Hence $R\otimes_{R^{W_{S_{0}}}}R(\ell(w_{S_{0}}))$ is a direct summand of $N$ and therefore is an object of $\Sbimod{}$.

\begin{proof}[Proof of Proposition~\ref{prop:indecomposable for the longest}]
We prove $R\otimes_{R^{W_{S_{0}}}}R(\ell(w_{S_{0}}))\simeq B(w_{S_{0}})$.
The object $B(w_{S_{0}})$ is characterized as the unique indecomposable summand of $B_{s_1}\otimes\cdots\otimes B_{s_{\ell(w_{S_{0}})}}$ such that $B(w_{S_{0}})^{w_{S_{0}}}\simeq R(\ell(w_{S_{0}}))$.
We have $(R\otimes_{R^{W_{S_{0}}}}R(\ell(w_{S_{0}})))^{w_{S_{0}}}\simeq (\mathcal{Z}(\ell(w_{S_{0}})))^{w_{S_{0}}}\simeq R(\ell(w_{S_{0}}))$.
Therefore it is sufficient to prove that $R\otimes_{R^{W_{S_{0}}}}R$ is indecomposable.
By Lemma~\ref{lem:automatically morphism in C}, $\End_{\BigCat{}}(R\otimes_{R^{W_{S_{0}}}}R) = \End_{R\otimes R}(R\otimes_{R^{W_{S_{0}}}}R)$ is isomorphic to $R\otimes_{R^{W_{S_{0}}}}R\simeq \mathcal{Z}$.
Hence $\End_{\BigCat{}}(R\otimes_{R^{W_{S_{0}}}}R)$ has no nontrivial idempotents, and therefore $R\otimes_{R^{W_{S_{0}}}}R$ is indecomposable.
\end{proof}

\subsection{Singular Soergel bimodules}
In the rest of this section, we assume that $S_0$ satisfies the conditions in the previous section~\ref{sec:Soergel bimodules}.
We also assume that $\Coeff$ is complete local.

We remark on the following easy consequence of the assumption in the previous section~\ref{sec:Soergel bimodules}.
\begin{lem}\label{lem:pull-push is direct sum}
Let $M\in \BigCat{S_0}$.
Then $\pi_{S_0,\emptyset,*}\pi_{S_0,\emptyset}^*(M)\simeq \bigoplus_{w\in W_{S_0}}M(-2\ell(w))$.
\end{lem}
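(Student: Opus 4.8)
The statement to prove is Lemma~\ref{lem:pull-push is direct sum}: for $M\in\BigCat{S_0}$ one has
$\pi_{S_0,\emptyset,*}\pi_{S_0,\emptyset}^*(M)\simeq \bigoplus_{w\in W_{S_0}}M(-2\ell(w))$.
By definition $\pi_{S_0,\emptyset}^*(M) = R\otimes_{R^{W_{S_0}}}M$ (taking $S_1=\emptyset$, so $R^{W_{S_1}}=R$), and $\pi_{S_0,\emptyset,*}$ is just restriction of scalars along $R^{W_{S_0}}\hookrightarrow R$ together with the coarsening of the grading-space decomposition. So as a graded $(R^{W_{S_0}},R)$-bimodule the left-hand side is simply $R\otimes_{R^{W_{S_0}}}M$, where the left $R^{W_{S_0}}$-action is via the left $R$-factor. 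The plan is therefore to produce a grading-preserving isomorphism $R\otimes_{R^{W_{S_0}}}M\simeq\bigoplus_{w\in W_{S_0}}M(-2\ell(w))$ of $(R^{W_{S_0}},R)$-bimodules, and then check it respects the decompositions indexed by $W_{S_0}\backslash W = W\!/\!\!\sim$ (in the $S_1=\emptyset$ case, plain $W$) so that it is a morphism in $\BigCat{\emptyset}$.

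First I would use the second standing assumption of Section~\ref{sec:Soergel bimodules}: as a graded $R^{W_{S_0}}$-module, $R\simeq\bigoplus_{w\in W_{S_0}}R^{W_{S_0}}(-2\ell(w))$. Tensoring this isomorphism on the right over $R^{W_{S_0}}$ with $M$ gives immediately
\[
R\otimes_{R^{W_{S_0}}}M \;\simeq\; \Bigl(\bigoplus_{w\in W_{S_0}}R^{W_{S_0}}(-2\ell(w))\Bigr)\otimes_{R^{W_{S_0}}}M \;\simeq\; \bigoplus_{w\in W_{S_0}}M(-2\ell(w)),
\]
as graded right $R$-modules, and the left $R^{W_{S_0}}$-module structure is preserved because the displayed decomposition of $R$ is one of $R^{W_{S_0}}$-modules (the left action on $R\otimes_{R^{W_{S_0}}}M$ is via the $R$-factor, which contains $R^{W_{S_0}}$). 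So the underlying graded $(R^{W_{S_0}},R)$-bimodule isomorphism is essentially free; the real content is the compatibility with the $Q$-decomposition. For that I would compute, exactly as in the displayed chain of isomorphisms preceding Lemma~\ref{lem:to define pull-back}, that
$\pi_{S_0,\emptyset}^*(M)\otimes_R Q\simeq\bigoplus_{x\in W_{S_0}\backslash W}Q\otimes_{Q^{W_{S_0}}}M_Q^x$, and then invoke Lemma~\ref{lem:to define pull-back}: since $Q/Q^{W_{S_0}}$ is Galois with group $W_{S_0}$, the map $f\otimes m\mapsto (m\,y^{-1}(f))_y$ identifies $Q\otimes_{Q^{W_{S_0}}}M_Q^x$ with $\bigoplus_{y}M_Q^x$, $y$ ranging over the coset of $x$, and the pushforward just forgets the coset labelling, re-summing these pieces over all $y$ lying above $x$; the total is $\bigoplus_{w\in W_{S_0}}\bigl(\bigoplus_{x}M_Q^x\bigr) = \bigoplus_{w\in W_{S_0}}(M\otimes_R Q)$, matching the $Q$-localization of the right-hand side $\bigoplus_w M(-2\ell(w))$ grading-space by grading-space.

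The one thing requiring a little care — and the likely main obstacle — is matching the grading shifts on the two sides of the isomorphism, i.e. checking that the free-module decomposition of $R$ over $R^{W_{S_0}}$ with shifts $(-2\ell(w))$ can be chosen compatibly with the Galois-theoretic splitting used for the $Q$-part, so that one genuinely gets the same multiset $\{-2\ell(w)\}_{w\in W_{S_0}}$ of shifts on both the $R$-module level and the $\BigCat{\emptyset}$ level, rather than merely an abstract non-graded or non-$\BigCat{}$ isomorphism. This is handled by noting that any graded $R^{W_{S_0}}$-linear splitting of $R$ induces, after $\otimes_R Q$, a $Q^{W_{S_0}}$-linear splitting of $Q$, and the latter is detected by Lemma~\ref{lem:to define pull-back} to have the stated coset structure; conversely the shifts are forced by comparing graded ranks (using that $\Coeff$ is a domain, so graded ranks over $R$ are well-defined). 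Once the underlying bimodule isomorphism, the grading, and the $Q$-decomposition all match, the resulting map is by construction a morphism in $\BigCat{\emptyset}$, and its inverse is too (or one applies Lemma~\ref{lem:automatically morphism in C}), completing the proof.
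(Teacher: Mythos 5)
Your proof is correct and takes essentially the route the paper leaves implicit (the lemma is stated there without proof as an ``easy consequence'' of the assumption $R\simeq\bigoplus_{w\in W_{S_0}}R^{W_{S_0}}(-2\ell(w))$): tensor that graded decomposition of $R$ over $R^{W_{S_0}}$ with $M$ and observe that the resulting explicit isomorphism respects the $Q$-decompositions, so no extra compatibility choice is actually needed. The only slip is one of labelling: both sides are objects of $\BigCat{S_0}$ (the left-hand side only after applying $\pi_{S_0,\emptyset,*}$), so the isomorphism is a morphism in $\BigCat{S_0}$, not in $\BigCat{\emptyset}$ as you twice write.
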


We define the category $\Sbimod{S_0}\subset \BigCat{S_0}$ of singular Soergel bimodules as follows.
\begin{defn}
An object $M\in \BigCat{S_0}$ is in $\Sbimod{S_0}$ if and only if there is $B\in \Sbimod{}$ such that $M$ is a direct summand of $\pi_{S_0,\emptyset,*}(B)$.
\end{defn}

Since any $M\in \Sbimod{S_0}$ is finitely generated as a right $R$-module, $\Hom^{\bullet}_{\Sbimod{S_0}}(M,N)\subset \Hom^{\bullet}_{\text{-$R$}}(M,N)$ is a finitely generated $R$-module.
Hence $\Hom_{\Sbimod{S_0}}(M,N)$ is a finitely generated $\Coeff$-module.
Therefore $\Sbimod{S_0}$ is Krull-Schmidt.

\begin{prop}
Let $S_1\subset S_0$.
Then the functors $\pi_{S_0,S_1,*}$ and $\pi_{S_0,S_1}^{*}$ preserve the singular Soergel bimodules.
Namely we have $\pi_{S_0,S_1,*}(\Sbimod{S_1})\subset \Sbimod{S_0}$ and $\pi_{S_0,S_1}^*(\Sbimod{S_0})\subset \Sbimod{S_1}$.
\end{prop}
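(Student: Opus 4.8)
The plan is to handle the two functors separately. The common tools are: the composition isomorphisms $\pi_{S_0,S_1,*}\circ\pi_{S_1,\emptyset,*}\simeq\pi_{S_0,\emptyset,*}$ and $\pi_{S_1,\emptyset}^*\circ\pi_{S_0,S_1}^*\simeq\pi_{S_0,\emptyset}^*$ established above; Lemma~\ref{lem:pull-push is direct sum}, which also applies with $S_1$ in place of $S_0$ because $S_1\subset S_0$ again satisfies the hypotheses of Section~\ref{sec:Soergel bimodules} (the GKM condition holds since $T_{S_1}\subset T_{S_0}$, and the remaining hypotheses, notably the freeness of $R$ over $R^{W_{S_1}}$, are inherited as well); and a computation of $\pi_{S_0,\emptyset}^*\circ\pi_{S_0,\emptyset,*}$, carried out below, resting on Proposition~\ref{prop:indecomposable for the longest}. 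The assertion for $\pi_{S_0,S_1,*}$ is immediate: if $M\in\Sbimod{S_1}$ is a direct summand of $\pi_{S_1,\emptyset,*}(B)$ with $B\in\Sbimod{}$, then, $\pi_{S_0,S_1,*}$ being additive, $\pi_{S_0,S_1,*}(M)$ is a direct summand of $\pi_{S_0,S_1,*}\pi_{S_1,\emptyset,*}(B)\simeq\pi_{S_0,\emptyset,*}(B)$, which lies in $\Sbimod{S_0}$ by definition.

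The heart of the argument is the computation of $\pi_{S_0,\emptyset}^*\circ\pi_{S_0,\emptyset,*}$. As an $(R,R)$-bimodule, $\pi_{S_0,\emptyset}^*\pi_{S_0,\emptyset,*}(B)$ is $R\otimes_{R^{W_{S_0}}}B$ with $R$ acting on the left through the first tensor factor, and the map $f\otimes b\mapsto(f\otimes1)\otimes b$ identifies it with $(R\otimes_{R^{W_{S_0}}}R)\otimes_{R}B$. By Proposition~\ref{prop:indecomposable for the longest}, $R\otimes_{R^{W_{S_0}}}R\simeq B(w_{S_0})(-\ell(w_{S_0}))$ in $\BigCat{}$. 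After checking that this identification is also compatible with the decompositions over $Q$---which comes down to comparing the formula defining the $\BigCat{}$-structure of $\pi_{S_0,\emptyset}^*$, the description of $\pi_{S_0,\emptyset,*}$ on the $Q$-decomposition, and the definition of the $\BigCat{}$-tensor product, and observing that on the component indexed by $w\in W$ both sides are $\bigoplus_{z}B^{z}_{Q}$ with $z$ running over the coset $W_{S_0}w$---one obtains an isomorphism
\[
\pi_{S_0,\emptyset}^*\pi_{S_0,\emptyset,*}(B)\simeq\bigl(B(w_{S_0})\otimes B\bigr)(-\ell(w_{S_0}))
\]
in $\BigCat{}$. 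Since $B(w_{S_0})\in\Sbimod{}$ and $\Sbimod{}$ is closed under $\otimes$ and grading shifts, this object lies in $\Sbimod{}$.

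With this in hand, the assertion for $\pi_{S_0,S_1}^*$ follows. Let $M\in\Sbimod{S_0}$ be a direct summand of $\pi_{S_0,\emptyset,*}(B)$ with $B\in\Sbimod{}$; then $\pi_{S_0,S_1}^*(M)$ is a direct summand of $\pi_{S_0,S_1}^*\pi_{S_0,\emptyset,*}(B)$, which by Lemma~\ref{lem:pull-push is direct sum} applied to $S_1$ is itself a direct summand (the $w=e$ component) of
\begin{align*}
\pi_{S_1,\emptyset,*}\pi_{S_1,\emptyset}^*\bigl(\pi_{S_0,S_1}^*\pi_{S_0,\emptyset,*}(B)\bigr)
&=\pi_{S_1,\emptyset,*}\bigl(\pi_{S_0,\emptyset}^*\pi_{S_0,\emptyset,*}(B)\bigr)\\
&\simeq\pi_{S_1,\emptyset,*}\bigl((B(w_{S_0})\otimes B)(-\ell(w_{S_0}))\bigr),
\end{align*}
where the equality is the composition isomorphism $\pi_{S_1,\emptyset}^*\circ\pi_{S_0,S_1}^*\simeq\pi_{S_0,\emptyset}^*$ and the last step is the computation above. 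Since $(B(w_{S_0})\otimes B)(-\ell(w_{S_0}))\in\Sbimod{}$, the right-hand side lies in $\Sbimod{S_1}$ by definition, and hence so does its direct summand $\pi_{S_0,S_1}^*(M)$.

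The step I expect to be the main obstacle is the one in the second paragraph: promoting the evident $(R,R)$-bimodule isomorphism $\pi_{S_0,\emptyset}^*\pi_{S_0,\emptyset,*}(B)\simeq(R\otimes_{R^{W_{S_0}}}R)\otimes B$ to an isomorphism in $\BigCat{}$, i.e.\ checking compatibility with the $Q$-decompositions and the attendant twisted bimodule relations. The remaining ingredients---the composition isomorphisms, Lemma~\ref{lem:pull-push is direct sum}, and the passage of the Section~\ref{sec:Soergel bimodules} hypotheses to $S_1$---are routine.
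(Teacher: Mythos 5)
Your proposal is correct and follows essentially the same route as the paper: the push-forward case via the composition $\pi_{S_0,S_1,*}\circ\pi_{S_1,\emptyset,*}\simeq\pi_{S_0,\emptyset,*}$, and the pull-back case by identifying $\pi_{S_0,\emptyset}^*\pi_{S_0,\emptyset,*}(B)$ with $(R\otimes_{R^{W_{S_0}}}R)\otimes_R B$ in $\BigCat{}$ (using Proposition~\ref{prop:indecomposable for the longest} and closure of $\Sbimod{}$ under $\otimes$), then realizing $\pi_{S_0,S_1}^*$ of the object as a direct summand of $\pi_{S_1,\emptyset,*}\pi_{S_1,\emptyset}^*(\cdot)\simeq\pi_{S_1,\emptyset,*}\pi_{S_0,\emptyset}^*(\cdot)$ via Lemma~\ref{lem:pull-push is direct sum} applied to $S_1$. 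Your extra care about the $Q$-decomposition and the hypotheses holding for $S_1$ only makes explicit what the paper leaves implicit.
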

\begin{proof}
The first part is obvious.
We prove the second part.
Let $M\in \Sbimod{}$ and we prove that $\pi_{S_0,S_1}^*(\pi_{S_0,\emptyset,*}(M))\in \Sbimod{S_1}$.
First we prove this when $S_1 = \emptyset$.
Then we have $\pi_{S_0,\emptyset}^*(\pi_{S_0,\emptyset,*}(M))\simeq R\otimes_{R^{W_{S_0}}}M = (R\otimes_{R^{W_{S_0}}}R)\otimes_{R}M$.
By Proposition~\ref{prop:indecomposable for the longest}, $R\otimes_{R^{W_{S_0}}}R\in \Sbimod{}$ and, by the construction of $\pi_{S_0,\emptyset}^*$, $(R\otimes_{R^{W_{S_0}}}R)\otimes_{R}M$ is isomorphic to the tensor product in the category $\BigCat{}$.
Hence $(R\otimes_{R^{W_{S_0}}}R)\otimes_{R}M\in \Sbimod{}$ since $\Sbimod{}$ is closed under the tensor product.
For general $S_1$, for $N\in \Sbimod{S_0}$, $\pi_{S_0,S_1}^*(N)$ is a direct summand of $\pi_{S_1,\emptyset,*}(\pi_{S_1,\emptyset}^*(\pi_{S_0,S_1}^*(N)))\simeq \pi_{S_1,\emptyset,*}(\pi_{S_0,\emptyset}^*(N))$ by Lemma~\ref{lem:pull-push is direct sum} and since $\pi_{S_0,\emptyset}^*(N)\in \Sbimod{}$, we have $\pi_{S_1,\emptyset,*}(\pi_{S_0,\emptyset}^*(N))\in \Sbimod{S_1}$.
Hence $\pi_{S_0,S_1}^*(N)\in \Sbimod{S_1}$.
\end{proof}

The following lemma shows a certain projectivity of Soergel bimodules.

\begin{prop}\label{prop:projectivity}
Let $I'\subset I\subset W_{S_0}\backslash W$ be closed subsets.
Then for $M,N\in \Sbimod{S_0}$, the natural map $\Hom_{\BigCat{S_{0}}}(M,N_{I})\to \Hom_{\BigCat{S_{0}}}(M,N_{I}/N_{I'})$ is surjective.
\end{prop}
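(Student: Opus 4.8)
The plan is to reduce the statement to the non-singular case $S_0=\emptyset$, where a result of this type is presumably available (or can be proved directly), and then transport it along the adjunction $(\pi_{S_0,\emptyset}^*,\pi_{S_0,\emptyset,*})$ using the compatibility of these functors with the truncation operations $(-)_I$ established in the excerpt. First I would observe that, by definition, $M\in\Sbimod{S_0}$ is a direct summand of $\pi_{S_0,\emptyset,*}(B)$ for some $B\in\Sbimod{}$, and similarly $N$ is a summand of $\pi_{S_0,\emptyset,*}(C)$. Since $\Hom(M,-)$ is a direct summand of $\Hom(\pi_{S_0,\emptyset,*}(B),-)$ as a functor and truncation commutes with $\pi_{S_0,\emptyset,*}$ (the Proposition on $\pi_{S_0,S_1,*}(M)_I=\pi_{S_0,S_1,*}(M_{\pi^{-1}(I)})$), it suffices to prove surjectivity of $\Hom(\pi_{S_0,\emptyset,*}(B),N_I)\to\Hom(\pi_{S_0,\emptyset,*}(B),N_I/N_{I'})$.

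Next I would apply adjunction (Lemma~\ref{lem:adjointness in singular Soergel bimodules}): $\Hom_{\Sbimod{S_0}}(\pi_{S_0,\emptyset,*}(B),L)$ — wait, the adjunction runs the other way, $(\pi^*,\pi_*)$, so instead I would use that $\pi_{S_0,\emptyset,*}$ has $\pi_{S_0,\emptyset}^*$ as a \emph{left} adjoint, giving $\Hom(\pi_{S_0,\emptyset}^*(?),N_I)\simeq\Hom(?,\pi_{S_0,\emptyset,*}(N_I))$. To exploit this I first rewrite $M$ itself: by Lemma~\ref{lem:pull-push is direct sum}, $M$ is a direct summand of $\pi_{S_0,\emptyset,*}\pi_{S_0,\emptyset}^*(M)\simeq\bigoplus_{w\in W_{S_0}}M(-2\ell(w))$, hence (comparing the $w=1$ summand, using the Remark that $R^{W_{S_0}}\hookrightarrow R$ splits) $M$ is a direct summand of $\pi_{S_0,\emptyset,*}(M')$ where $M'=\pi_{S_0,\emptyset}^*(M)\in\Sbimod{}$. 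So without loss of generality $M=\pi_{S_0,\emptyset,*}(M')$ with $M'\in\Sbimod{}$. Then
\[
\Hom_{\Sbimod{S_0}}(\pi_{S_0,\emptyset,*}(M'),N_I)\supset\text{(a summand: apply }\pi_{S_0,\emptyset}^*\text{ and adjunction)}\ \Hom_{\Sbimod{}}(M',\pi_{S_0,\emptyset}^*(N_I)),
\]
and since $\pi_{S_0,\emptyset}^*(N_I)=\pi_{S_0,\emptyset}^*(N)_{\pi^{-1}(I)}$ with $\pi^{-1}(I)\subset\pi^{-1}(I')$ — no, $\pi^{-1}(I')\subset\pi^{-1}(I)$ — both closed subsets of $W$, and $\pi_{S_0,\emptyset}^*(N)\in\Sbimod{}$, we are reduced to the statement for $\Sbimod{}$: the map $\Hom(M',N''_{J})\to\Hom(M',N''_J/N''_{J'})$ is surjective for $M',N''\in\Sbimod{}$ and $J'\subset J\subset W$ closed. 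One must check that the identification of Hom-summands is compatible with the quotient map $N_I\to N_I/N_{I'}$, which it is, since everything in sight is functorial in $N$.

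Finally, the non-singular case $\Hom(M',N''_J)\twoheadrightarrow\Hom(M',N''_J/N''_{J'})$ should follow by dévissage on $\Sbimod{}$: it holds when $M'=R_e$ and is stable under $(1)$, $\oplus$, direct summands, and — the one real point — under $-\otimes B_s$, using the standard short exact sequences relating $N''_{Jx^{-1}}$ for the two cosets $x\in\{e,s\}$ and the fact that $\Hom(M'\otimes B_s,-)\simeq\Hom(M',-\otimes B_s)$ by self-adjointness of $B_s$ (up to shift). Concretely one reduces to showing that tensoring the short exact sequence $0\to N''_{J'}\to N''_J\to N''_J/N''_{J'}\to 0$ with $B_s$ stays exact and interacts correctly with truncation; this is where the GKM/standard-bimodule bookkeeping lives. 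The main obstacle I expect is precisely this last step: ensuring that the relevant subquotients $(N''\otimes B_s)_{?}$ are computed correctly in $\BigCat{}$ — i.e. that truncation commutes with $-\otimes B_s$ up to the expected filtration — and that no higher $\Ext$ obstruction appears, which should be handled by the graded-freeness of the pieces $N''_J/N''_{J'}$ over $R$ (they are graded free, being subquotients supported on single cosets, so $\Ext^1$ from a graded-free module vanishes) rather than by any homological algebra beyond Nakayama.
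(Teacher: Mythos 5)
There is a genuine gap, in two places. First, the de-singularization step as you wrote it does not go through with the tools the paper provides. The only adjunction available is $(\pi_{S_0,\emptyset}^*,\pi_{S_0,\emptyset,*})$ (Lemma~\ref{lem:adjointness in singular Soergel bimodules}), so to convert $\Hom_{\Sbimod{S_0}}(M,N_I)$ into a Hom in $\Sbimod{}$ you must present the \emph{second} argument as a pushforward: reduce $N$ (not $M$) to $\pi_{S_0,\emptyset,*}(N_0)$, note $N_I=\pi_{S_0,\emptyset,*}((N_0)_{\widetilde I})$ and $N_I/N_{I'}\simeq\pi_{S_0,\emptyset,*}((N_0)_{\widetilde I}/(N_0)_{\widetilde I'})$, and then adjunction identifies both Hom spaces with $\Hom_{\Sbimod{}}(\pi_{S_0,\emptyset}^*M,-)$ compatibly with the quotient map; this is exactly how the paper argues. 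Your route instead replaces $M$ by $\pi_{S_0,\emptyset,*}(M')$ and then needs $\pi_{S_0,\emptyset,*}$ to be a \emph{left} adjoint of $\pi_{S_0,\emptyset}^*$, which is not established here; and the ``summand'' relation you display points the wrong way for the conclusion you want: to transfer surjectivity you need $\Hom_{\Sbimod{S_0}}(M,-)$ to be a natural \emph{retract} of the functor you control (e.g.\ of $\Hom_{\Sbimod{}}(\pi_{S_0,\emptyset}^*M,\pi_{S_0,\emptyset}^*(-))$, using the unit $L\to\pi_{S_0,\emptyset,*}\pi_{S_0,\emptyset}^*L$ and a splitting of $R^{W_{S_0}}\hookrightarrow R$), not to contain the controlled group as a summand. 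With the containment in your direction, surjectivity of the controlled map only shows that the image of $\Hom(M,N_I)\to\Hom(M,N_I/N_{I'})$ contains one summand of the target, not all of it. Either the paper's reduction on $N$, or the retract argument just described (which also needs exactness of $\pi_{S_0,\emptyset}^*$ and naturality of the splitting), would repair this, but as written the step fails.

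Second, the base case $S_0=\emptyset$ is not proved. The paper does not re-derive it: after reducing to $\#(I\setminus I')=1$ and $M$ a Bott--Samelson object, it invokes the factorization of any morphism $B_{s_1}\otimes\dotsb\otimes B_{s_l}\to R_w(n)$ through $B_{t_1}\otimes\dotsb\otimes B_{t_r}$ for a reduced expression of $w$ and then the corresponding surjectivity statement from the earlier paper (Theorem~3.12(3) and Corollary~3.18 of [Abe19]). Your d\'evissage over $-\otimes B_s$ leaves open precisely the hard point you flag yourself, namely the interaction of the truncations $(-)_J$ with $-\otimes B_s$; and the suggested mechanism (graded freeness of $N''_J/N''_{J'}$ as a right $R$-module kills the obstruction) is not adequate, because the Hom spaces in question are morphisms in $\BigCat{}$ (bimodule maps respecting the decomposition over $Q$), so no $\Ext$-vanishing for right $R$-modules settles the lifting problem. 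So the non-singular input must either be cited from [Abe19] or proved by a genuinely different argument, which your sketch does not supply.
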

\begin{proof}
We may assume $N = \pi_{S_0,\emptyset,*}(N_0)$ for $N_0\in \Sbimod{}$.
Let $\widetilde{I}$ (resp.\ $\widetilde{I}'$) be the inverse image of $I$ (resp.\ $I'$) by $W\to W_{S_0}\backslash W$.
Then these are also closed and $N_{I} = \pi_{S_0,\emptyset,*}((N_0)_{\widetilde{I}})$, $N_I/N_{I'}\simeq \pi_{S_0,\emptyset,*}((N_0)_{\widetilde{I}}/(N_0)_{\widetilde{I}'})$ by Proposition~\ref{prop:support and push. pull}.
By Lemma~\ref{lem:adjointness in singular Soergel bimodules}, we have the following commutative diagram
\[
\begin{tikzcd}
\Hom(M,N_{I})\arrow[r]\arrow[d,dash,"\sim" sloped]&  \Hom(M,N_{I}/N_{I'})\arrow[d,dash,"\sim" sloped]\\
\Hom(\pi_{S_0,\emptyset}^*(M),(N_0)_{\widetilde{I}})\arrow[r] &  \Hom(\pi_{S_0,\emptyset}^*(M),(N_0)_{\widetilde{I}}/(N_0)_{\widetilde{I}'}).
\end{tikzcd}
\]
Therefore we may assume $S_0 = \emptyset$.

By induction on $\#(I\setminus I')$, we may assume $\#(I\setminus I') = 1$.
We may assume $M = B_{s_1}\otimes\dotsb\otimes B_{s_l}$ for some $(s_1,\dots,s_l)\in S^l$.
Let $w\in I\setminus I'$.
Fix a reduced expression $w = t_1\dotsm t_r$ of $w$.
The object $N_I/N_{I'}$ is isomorphic to a direct sum of objects of a form $R_{w}(n)$ with $n\in\Z$ and since any $B_{s_1}\otimes\dotsm\otimes B_{s_l}\to R_w(n)$ factors through $B_{t_1}\otimes \dots\otimes B_{t_r}(n - \ell(w))\to B_{t_1}\otimes \dots\otimes B_{t_r}(n - \ell(w))^{w}\simeq R_w(n)$ \cite[Theorem~3.12 (3)]{MR4321542}, any homomorphism $B_{s_1}\otimes\dots\otimes B_{s_l}\to N_I/N_{I'}$ is decomposed into $B_{s_1}\otimes\dotsm\otimes B_{s_l} \to B_{t_1}\otimes\dots\otimes B_{t_r}(k)\to N_I/N_{I'}$ for some $k\in\Z$.
Hence we may assume $(s_1,\dots,s_l) = (t_1,\dots,t_r)$.
This is \cite[Corollary~ 3.18]{MR4321542}.
\end{proof}

\begin{prop}\label{prop:freeness of subquotients}
Let $I_1\subset I_2\subset W_{S_0}\backslash W$ be closed subsets.
Then for $M\in \Sbimod{S_0}$, $M_{I_2}/M_{I_1}$ is graded free as a right $R$-module.
\end{prop}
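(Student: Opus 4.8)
The plan is to reduce successively to simpler situations: first to $S_0=\emptyset$; then to the case where $I_2\setminus I_1$ meets $\supp_W(M)$ in a single point, where the quotient is a direct sum of twisted grading shifts of some $R_w$ and hence, as a right $R$-module, graded free; and finally to glue the general case together by an $\operatorname{Ext}$-vanishing argument over $R$.

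\emph{Reduction to $S_0=\emptyset$.} By definition $M$ is a direct summand of $\pi_{S_0,\emptyset,*}(B)$ for some $B\in\Sbimod{}$. Let $\widetilde I_j\subseteq W$ be the preimage of $I_j$ under $W\to W_{S_0}\backslash W$; this is again closed, since the projection to minimal coset representatives is order-preserving. The compatibility of $\pi_{S_0,\emptyset,*}$ with the truncation functors established above gives $\pi_{S_0,\emptyset,*}(B)_{I_j}=\pi_{S_0,\emptyset,*}(B_{\widetilde I_j})$, and since $(-)_I$ commutes with finite direct sums while $\pi_{S_0,\emptyset,*}$ is exact and leaves the underlying right $R$-module unchanged, $M_{I_2}/M_{I_1}$ is a direct summand, as a right $R$-module, of the finitely generated module $B_{\widetilde I_2}/B_{\widetilde I_1}$. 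A direct summand of a finitely generated graded free $R$-module is graded free: lift a homogeneous basis of $N/\mathfrak m N$, with $\mathfrak m$ the graded maximal ideal of $R$, to a homogeneous generating set, producing a split surjection from a graded free module of the same graded rank, and kill its kernel by graded Nakayama. Hence it suffices to prove the statement for $M\in\Sbimod{}$ and closed $I_1\subseteq I_2\subseteq W$.

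\emph{One-stratum steps.} For closed $I'\subsetneq I\subseteq W$ with $I\setminus I'=\{w\}$ a single element one has $M_I/M_{I'}\simeq\bigoplus_i R_w(n_i)$ for a finite family of shifts — this is exactly the input on standard subquotients of objects of $\Sbimod{}$ already used in the proof of Proposition~\ref{prop:projectivity} (cf.\ \cite{arXiv:1901.02336_accepted}) — and since $R_w\cong R$ as right $R$-modules, $M_I/M_{I'}$ is graded free. An elementary manipulation of closed subsets upgrades this to: if $I'\subsetneq I$ are closed and $\supp_W(M)\cap(I\setminus I')=\{w\}$, then $M_I/M_{I'}$ is graded free over $R$; one replaces $I'$ by $\widehat{I'}=\{x\in I:x\not\ge w\}$ and $I$ by $\widehat{I'}\cup\{x\in I:x\le w\}$ — both closed — which changes neither $M_{I'}$ nor $M_I$ (the elements moved lie outside $\supp_W(M)$) and leaves a single-element difference. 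Then I would induct on $n=\#(\supp_W(M)\cap(I_2\setminus I_1))$, which is finite because $M$ is finitely generated over $R$. For $n=0$ one has $M_{I_2}=M_{I_1}$. For $n\ge1$, pick $w\in\supp_W(M)\cap(I_2\setminus I_1)$ maximal in the Bruhat order and set $J=\{x\in I_2:x\not\ge w\}$; then $J$ is closed, $I_1\subseteq J\subsetneq I_2$, $\supp_W(M)\cap(I_2\setminus J)=\{w\}$, and $\#(\supp_W(M)\cap(J\setminus I_1))=n-1$. The third isomorphism theorem gives a short exact sequence
\[
0\longrightarrow M_J/M_{I_1}\longrightarrow M_{I_2}/M_{I_1}\longrightarrow M_{I_2}/M_J\longrightarrow 0
\]
of right $R$-modules whose outer terms are graded free, by the inductive hypothesis and the one-stratum step respectively; since graded free modules are graded projective, the sequence splits and $M_{I_2}/M_{I_1}$ is graded free.

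The one genuinely nontrivial ingredient is the one-stratum input — that restricting an object of $\Sbimod{}$ to a one-step difference of closed subsets yields a direct sum of twisted shifts of $R_w$. If one prefers not to cite this, the route is to reduce (again by passing to direct summands, exactly as above) to a Bott--Samelson bimodule $B_{s_1}\otimes\cdots\otimes B_{s_l}$ and induct on $l$ using the short exact sequence obtained from $0\to R_e(-1)\to B_{s_l}\to R_{s_l}(1)\to 0$ by tensoring on the left with $B_{s_1}\otimes\cdots\otimes B_{s_{l-1}}$ (which is free as a right $R$-module, so exactness is preserved); the delicate point there is that this sequence does not respect the support stratification in a naive way, so its interaction with the truncations $(-)_I$ must be tracked carefully. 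Everything else — the closed-subset combinatorics, the finiteness of the filtration despite $I_2\setminus I_1$ being possibly infinite, and the final splitting — is routine.
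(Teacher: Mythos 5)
Your proof is correct and follows essentially the same route as the paper: reduce to $S_0=\emptyset$ via the compatibility of $\pi_{S_0,\emptyset,*}$ with truncation and passage to direct summands, use finiteness of $\supp_W(M)$ to obtain a finite chain of closed subsets with one-element steps, invoke the known one-step freeness (\cite[Corollary~3.18]{arXiv:1901.02336_accepted}, the same input used in Proposition~\ref{prop:projectivity}), and conclude since extensions of graded free modules split. The only cosmetic difference is that you induct on support points and adjust the closed sets accordingly, whereas the paper simply intersects $I_1\subset I_2$ with a finite closed set containing $\supp_W(M)$ before refining the chain.
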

\begin{proof}
As $M$ is a direct summand of $\pi_{S_0,\emptyset,*}(N)$ for some $N\in \Sbimod{}$, we may assume $M = \pi_{S_0,\emptyset,*}(N)$.
Here note that $(M_{1}\oplus M_{2})_{I}\simeq (M_{1})_{I}\oplus (M_{2})_{I}$ for $M_{1},M_{2}\in \BigCat{S_{0}}$ and a subset $I\subset W_{S_{0}}\backslash W$.
Then, by Proposition~\ref{prop:support and push. pull}, $M_{I_i}\simeq N_{\pi^{-1}(I_i)}$ where $\pi\colon W\to W_{S_0}\backslash W$ is the natural projection and $\pi^{-1}(I_i)$ is closed for $i = 1,2$.
Therefore we may assume $S_0 = \emptyset$.

There is a finite closed subset $J\subset W$ which contains $\supp_W(M)$.
By replacing $I_i$ with $I_i\cap J$, we may assume that $I_i$ are finite.
There exists a sequence of closed subsets $I_1 = J_1 \subset J_2\subset \dotsb\subset J_r = I_2$ such that $\#(J_{i + 1}\setminus J_i) = 1$.
Since $M_{J_{i + 1}}/M_{J_i}$ is graded free as a right $R$-module~\cite[Corollary~3.18]{MR4321542}, we also have that $M_{I_2}/M_{I_1}$ is graded free.
\end{proof}

We now prove that the main structural results for one-sided singular Soergel bimodules, as proven by Williamson under more restrictive hypotheses (cf.\ \cite[Theorem~7.10, Proposition~7.11]{MR2844932}), continue to hold in our setting

\begin{thm}\label{thm:classification}
\begin{enumerate}
\item For each $w\in W_{S_0}\backslash W$, there exists an indecomposable object $\presubscript{S_0}{B(w)}\in\Sbimod{S_0}$ such that $\supp_W(\presubscript{S_0}{B(w)})\subset \{x\in W_{S_0}\backslash W\mid x\le w\}$ and $\presubscript{S_0}{B(w)}^w\simeq R_w(\ell(w_-))$.
Moreover $\presubscript{S_0}{B(w)}$ is unique up to isomorphism.
\item For any indecomposable object $B\in \Sbimod{S_0}$ there exists unique $(w,n)\in (W_{S_0}\backslash W)\times \Z$ such that $B\simeq \presubscript{S_0}{B(w)}(n)$.
\item For any $w\in W_{S_0}\backslash W$, $\pi_{S_0,\emptyset,*}B(w_-) = \presubscript{S_0}{B(w)}\oplus \bigoplus_{y < w,n\in\Z}{}\presubscript{S_0}{B(y)}(n)^{m_{y,n}}$ for some $m_{y,n}\in\Z_{\ge 0}$.
\item Let $S_1\subset S_0$, $w\in W_{S_0}\backslash W$ and $w_{1,+}\in W_{S_1}\backslash W$ the image of the maximal length representative of $w$.
Then $\pi_{S_0,S_1}^*(\presubscript{S_0}{B(w)})\simeq \presubscript{S_1}{B(w_{1,+})}(\ell(w_{S_1})-\ell(w_{S_0}))$.
\end{enumerate}
\end{thm}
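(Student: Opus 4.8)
The plan is to establish (1)--(3) together by transporting the classification of indecomposable objects of $\Sbimod{\emptyset}$ from \cite{arXiv:1901.02336_accepted} through the adjoint pair $(\pi_{S_0,\emptyset}^*,\pi_{S_0,\emptyset,*})$, and then to deduce (4) by computing supports, tops and endomorphism rings directly. For (1) I would take $\presubscript{S_0}{B(w)}$ to be the indecomposable summand of $\pi_{S_0,\emptyset,*}(B(w_-))$ whose $W$-support contains $w$. Using the standard compatibility of the Bruhat orders on $W$ and $W_{S_0}\backslash W$ with the projection $\pi\colon W\to W_{S_0}\backslash W$ and the minimal-representative map, the image of $\{x\le w_-\}$ is contained in $\{y\le w\}$, and since $w_-$ is the only element of $W_{S_0}w_-$ lying below $w_-$ one gets $(\pi_{S_0,\emptyset,*}B(w_-))^w_Q\simeq B(w_-)^{w_-}_Q\simeq Q$ and $(\pi_{S_0,\emptyset,*}B(w_-))^{\{w\}}\simeq R_w(\ell(w_-))$; hence exactly one summand in a Krull--Schmidt decomposition, with multiplicity one, has $w$ in its support, and it has the asserted support and top. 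For uniqueness, suppose $B'\in\Sbimod{S_0}$ is indecomposable with the same support and top data. Applying Proposition~\ref{prop:projectivity} with $I=\{\le w\}$ and $I'=\{<w\}$ (both closed, $w$ maximal in $I$), and using $N_I/N_{I'}\simeq N^{\{w\}}$ for torsion-free $N$ supported in $I$, I would lift an isomorphism $B'^{\{w\}}\xrightarrow{\ \sim\ }\presubscript{S_0}{B(w)}^{\{w\}}$ and its inverse to morphisms $\varphi\colon B'\to\presubscript{S_0}{B(w)}$ and $\psi\colon\presubscript{S_0}{B(w)}\to B'$; since $\End(R_w(\ell(w_-)))=\Coeff$, an endomorphism of $B'$ (or of $\presubscript{S_0}{B(w)}$) inducing the identity on the rank-one quotient cannot lie in the maximal ideal of the local ring $\End(B')$, hence is invertible, so $\psi\varphi$ and $\varphi\psi$ are automorphisms and $B'\simeq\presubscript{S_0}{B(w)}$.

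For (2), the same lifting argument shows that an indecomposable $B\in\Sbimod{S_0}$ is a grading shift of some $\presubscript{S_0}{B(w)}$ as soon as one knows that $\supp_W(B)$ has a unique maximal element $w$ and that $\dim_Q B^w_Q=1$: then $B^{\{w\}}$ is graded free of rank one by Proposition~\ref{prop:freeness of subquotients}, equals $\presubscript{S_0}{B(w)}(n)^{\{w\}}$ for a unique $n$, and applying Proposition~\ref{prop:projectivity} to both $\Hom(B,\presubscript{S_0}{B(w)}(n))$ and $\Hom(\presubscript{S_0}{B(w)}(n),B)$ (with $I$ the down-set generated by $\supp_W(B)$, in which $w$ is maximal, and $I'=I\setminus\{w\}$) exhibits $\presubscript{S_0}{B(w)}(n)$ as a summand of $B$. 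The two needed properties I would deduce from the corresponding facts in $\Sbimod{\emptyset}$: writing $B$ as a summand of $\pi_{S_0,\emptyset,*}\pi_{S_0,\emptyset}^*(B)\simeq\bigoplus_{v\in W_{S_0}}B(-2\ell(v))$ (Lemma~\ref{lem:pull-push is direct sum}) it is a summand of some $\pi_{S_0,\emptyset,*}(B(x))$ up to shift, and since $\pi_{S_0,\emptyset}^*(B)^z_Q\simeq B^{\pi(z)}_Q$ for $z\in W$ (via the description in Lemma~\ref{lem:to define pull-back}) one matches maximal elements of $\supp_W(\pi_{S_0,\emptyset}^*B)$ with maximal elements of $\supp_W(B)$ and reduces to the facts that an indecomposable object of $\Sbimod{\emptyset}$ has all $Q$-ranks $\le1$ and a unique maximal element in its support. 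Part (3) is then immediate: decomposing $\pi_{S_0,\emptyset,*}(B(w_-))$ into indecomposables, each is a shift of some $\presubscript{S_0}{B(y)}$ with $y\le w$ by (2), and the one-dimensionality of the $w$-part forces exactly one summand to equal $\presubscript{S_0}{B(w)}$ (in degree $0$), the others having $y<w$.

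For (4), with $S_1\subseteq S_0$ also satisfying the standing hypotheses, put $w_+=w_{S_0}w_-$ (the maximal-length representative of $w$), so that $w_{1,+}$ is the image of $w_+$ in $W_{S_1}\backslash W$ and, by a length computation for parabolic cosets, $\ell((w_{1,+})_-)=\ell(w_-)+\ell(w_{S_0})-\ell(w_{S_1})$. Since $\pi_{S_0,S_1}^*$ preserves singular Soergel bimodules, $\pi_{S_0,S_1}^*(\presubscript{S_0}{B(w)})\in\Sbimod{S_1}$; its $W$-support is $\pi^{-1}(\supp_W(\presubscript{S_0}{B(w)}))\subseteq\pi^{-1}(\{\le w\})=\{\le w_{1,+}\}$ (a parabolic-combinatorics identity, using that $w_+$ is Bruhat-maximal in $W_{S_0}w_-$), and by Lemma~\ref{lem:to define pull-back} its $w_{1,+}$-part over $Q$ is the image of $\presubscript{S_0}{B(w)}^w_Q$, hence $\simeq Q$, with $\pi_{S_0,S_1}^*(\presubscript{S_0}{B(w)})^{\{w_{1,+}\}}\simeq R_{w_{1,+}}(\ell(w_-))=\presubscript{S_1}{B(w_{1,+})}^{\{w_{1,+}\}}(\ell(w_{S_1})-\ell(w_{S_0}))$. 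By the characterization in (1) applied to $S_1$ it then remains only to prove indecomposability of $\pi_{S_0,S_1}^*(\presubscript{S_0}{B(w)})$. I would do this by showing that the ring map $\pi_{S_0,S_1}^*\colon\End_{\Sbimod{S_0}}(\presubscript{S_0}{B(w)})\to\End_{\Sbimod{S_1}}(\pi_{S_0,S_1}^*(\presubscript{S_0}{B(w)}))$ is an isomorphism: it is injective (apply $\pi_{S_0,S_1,*}$), and by adjunction together with a relative version of Lemma~\ref{lem:pull-push is direct sum} (giving $\pi_{S_0,S_1,*}\pi_{S_0,S_1}^*(M)\simeq\bigoplus_{v\in W_{S_1}\backslash W_{S_0}}M(-2\ell(v))$, with the summand $v=e$ undistorted) its image is the $v=e$ summand of $\bigoplus_v\Hom(\presubscript{S_0}{B(w)},\presubscript{S_0}{B(w)}(-2\ell(v)))$, so surjectivity is equivalent to $\Hom^{<0}_{\Sbimod{S_0}}(\presubscript{S_0}{B(w)},\presubscript{S_0}{B(w)})=0$; granting this, $\End_{\Sbimod{S_1}}(\pi_{S_0,S_1}^*(\presubscript{S_0}{B(w)}))\cong\End_{\Sbimod{S_0}}(\presubscript{S_0}{B(w)})$ is local and the object is indecomposable.

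The main obstacle, and the only nonformal input, is the positivity needed in steps (2) and (4): the $Q$-rank-one and unique-maximal-support properties of indecomposables in $\Sbimod{S_0}$, and the vanishing $\Hom^{<0}_{\Sbimod{S_0}}(\presubscript{S_0}{B(w)},\presubscript{S_0}{B(w)})=0$ of negative-degree self-extensions. These must be transported from the known structure of $\Sbimod{\emptyset}$ through $\pi_{S_0,\emptyset}^*$ and $\pi_{S_0,\emptyset,*}$, making essential use of the support filtration via Propositions~\ref{prop:projectivity} and~\ref{prop:freeness of subquotients} and of the hypothesis $R\simeq\bigoplus_{v\in W_{S_0}}R^{W_{S_0}}(-2\ell(v))$; once these are in place, everything else follows formally from Krull--Schmidt and the adjunction.
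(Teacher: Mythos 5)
Parts (1) and (3) of your plan coincide with the paper's argument, and your lifting mechanics (lift maps on the top $M^w=M_I/M_{I'}$ through Proposition~\ref{prop:projectivity}, use freeness from Proposition~\ref{prop:freeness of subquotients} and locality of endomorphism rings) are exactly right. The genuine gap is in (2): you reduce the classification of indecomposables to the ``positivity'' inputs that an indecomposable $B\in\Sbimod{S_0}$ has a unique maximal element $w$ of $\supp_W(B)$ with $\dim_Q B^w_Q=1$, and you propose to transport these from $\Sbimod{}$ using the claim that an indecomposable Soergel bimodule has all $Q$-ranks $\le 1$. That claim is false: $\dim_Q B(x)^y_Q$ is the number of terms of $\grk(B(x)^y)$, which in characteristic zero equals $h_{y,x}(1)$ and is already $2$ for $y=s_2$, $x=s_2s_1s_3s_2$ in type $A_3$; moreover, knowing only that $B$ is a summand of some $\pi_{S_0,\emptyset,*}(B(x))(m)$ with $x$ not necessarily a minimal coset representative does not give uniqueness of the maximal support element either. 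The deeper point is that these inputs are unnecessary: the paper proves directly that \emph{every} $M\in\Sbimod{S_0}$ is a direct sum of shifts of the $\presubscript{S_0}{B(w)}$'s, by taking an arbitrary maximal $w\in\supp_W(M)$, observing that $M^w=M_I/M_{I'}$ (with $I=\{x\not>w\}$) is graded free and nonzero, splitting off \emph{any} rank-one free summand, and lifting the inclusion and projection as you do. Dropping the rank-one and unique-maximum requirements removes the obstacle you flag and yields (2), (3) and the uniqueness in (1) simultaneously.

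In (4), the first half (support estimate, computation of the $w_{1,+}$-part, hence a summand $\presubscript{S_1}{B(w_{1,+})}(\ell(w_{S_1})-\ell(w_{S_0}))$) agrees with the paper, but your route to indecomposability of $\pi_{S_0,S_1}^*(\presubscript{S_0}{B(w)})$ requires $\Hom_{\Sbimod{S_0}}(\presubscript{S_0}{B(w)},\presubscript{S_0}{B(w)}(-2\ell(v_-)))=0$ for $e\ne v\in W_{S_1}\backslash W_{S_0}$ (besides a relative pull--push lemma you would also have to prove). This negative-degree vanishing is a Soergel-conjecture/parity-type positivity statement: by Theorem~\ref{thm:hom formula} it amounts to degree bounds on the graded ranks $\grk(\presubscript{S_0}{B(w)}^y)$ that are established nowhere in the paper and are not expected to hold over general coefficients $\Coeff$ (they fail precisely when the relevant multiplicities are non-constant Laurent polynomials). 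So this step is a real gap, not a deferrable formality. The paper's proof avoids it entirely: writing $\pi_{S_0,S_1}^*(\presubscript{S_0}{B(w)})\simeq\presubscript{S_1}{B(w_{1,+})}(n)\oplus M$, it reduces to $S_1=\emptyset$, compares $Q$-dimensions on the fibre $\pi^{-1}(w)$ (one-dimensional both for the pullback and for the summand $B(w_+)(-\ell(w_{S_0}))$) to conclude $w\notin\supp_W(\pi_{S_0,\emptyset,*}(M))$, and then uses Lemma~\ref{lem:pull-push is direct sum}: $\pi_{S_0,\emptyset,*}\pi_{S_0,\emptyset}^*(\presubscript{S_0}{B(w)})$ is a direct sum of shifts of $\presubscript{S_0}{B(w)}$, so its summand $\pi_{S_0,\emptyset,*}(M)$ is also such a direct sum, forcing $\pi_{S_0,\emptyset,*}(M)=0$ and hence $M=0$. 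You should either adopt this argument or supply a proof of the negative-degree vanishing, which is not available at this level of generality.
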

\begin{proof}
Let $\pi\colon W\to W_{S_0}\backslash W$ be the natural projection.
Then since $\pi^{-1}(w)\cap \supp_W(B(w_-)) = \{w_-\}$, $\pi_{S_0,\emptyset,*}(B(w_-))^{w}\simeq B(w_-)^{\pi^{-1}(w)} = B(w_-)^{w_-}\simeq R_w(\ell(w_-))$.
Therefore there exists a unique indecomposable direct summand $\presubscript{S_0}{B(w)}\in\Sbimod{S_0}$ of $\pi_{S_0,\emptyset,*}(B(w_-))$ such that $\presubscript{S_0}{B(w)}^w\simeq R_w(\ell(w_-))$.
This object satisfies the conditions of (1).

For the uniqueness in (1) and (2), (3), it is sufficient to prove that any object $M\in \Sbimod{S_0}$ is a direct sum of $\presubscript{S_0}{B(w)}(n)$ where $w\in W_{S_0}\backslash W$ and $n\in\Z$.
Let $w\in \supp_W(M)$ be a maximal element and set $I = \{x\in W_{S_0}\backslash W\mid x\not > w\}$, $I' = I\setminus \{w\}$.
Then $I$ and $I'$ are both closed and we have $M_I = M$, $M_I/M_{I'} = M^w$.
In particular, $M^w$ is graded free by Proposition~\ref{prop:freeness of subquotients}.
Hence there exists $n\in \Z$ such that $\presubscript{S_0}{B(w)}(n)^w\simeq R_w(n + \ell(w_-))$ is a direct summand of $M^w$.
Let $i\colon \presubscript{S_0}{B(w)}(n)^w\to M^w$ and $p\colon M^w\to \presubscript{S_0}{B(w)}(n)^w$ be the embedding to and the projection from the direct summand, respectively.
By Proposition~\ref{prop:projectivity}, there exists $\widetilde{i}\colon \presubscript{S_0}{B(w)}(n)\to M$ and $\widetilde{p}\colon M\to \presubscript{S_0}{B(w)}(n)$ which induce $i$ and $p$, respectively.
Then $\widetilde{p}\circ\widetilde{i}$ induces identity on $\presubscript{S_0}{B(w)}(n)^w$.
In particular, $\widetilde{p}\circ\widetilde{i}$ is not nilpotent, hence it is an isomorphism since $\presubscript{S_0}{B(w)}(n)$ is indecomposable.
Therefore $\presubscript{S_0}{B(w)}(n)$ is a direct summand of $M$.

We prove (4).
Put $n = \ell(w_{S_1})-\ell(w_{S_0})$.
Let $\pi_0\colon W_{S_1}\backslash W\to W_{S_0}\backslash W$ be the natural projection.
Then we have $\supp_{W}(\pi_{S_0,S_1}^*(\presubscript{S_0}{B(w)})) = \pi_0^{-1}(\supp_{W}(\presubscript{S_0}{B(w)}))\subset \{x\in W_{S_1}\backslash W\mid x\le w_{1,+}\}$.
We also have $\pi_{S_0,S_1}^*(\presubscript{S_0}{B(w)})^{w_{1,+}}\simeq \presubscript{S_0}{B(w)}^{w} \simeq R_w(\ell(w_-)) \simeq \presubscript{S_1}{B(w_{1,+})^{w_{1,+}}}(\ell(w_-) - \ell((w_{1,+})_-))$.
We have $\ell(w_-) - \ell((w_{1,+})_-) = n$.
Hence $\presubscript{S_1}{B(w_{1,+})}(n)$ is a direct summand of $\pi_{S_0,S_1}^*(\presubscript{S_0}{B(w)})$.
Take $M\in \Sbimod{S_1}$ such that $\pi_{S_0,S_1}^*(\presubscript{S_0}{B(w)})\simeq \presubscript{S_1}{B(w_{1,+})}(n)\oplus M$.
We prove $M = 0$.
Let $w_+\in W$ be the maximal length representative of $w$.
We have $\pi_{S_0,\emptyset}^*(\presubscript{S_0}{B(w)})\simeq \pi_{S_1,\emptyset}^*(\presubscript{S_1}{B(w_{1,+})}(n))\oplus \pi_{S_1,\emptyset}^*(M)$ and, by the above argument, $\pi_{S_1,\emptyset}^*(\presubscript{S_1}{B(w_{1,+})}(n))$ has a direct summand $B(w_+)(-\ell(w_{S_0}))$.
Assume that we can prove $\pi_{S_0,\emptyset}^*(\presubscript{S_0}{B(w)})\simeq B(w_+)(-\ell(w_{S_0}))$.
Then we conclude $\pi_{S_1,\emptyset}^*(\presubscript{S_1}{B(w_{1,+})}(n)) \simeq B(w_+)(-\ell(w_{S_0}))$ and $\pi_{S_1,\emptyset}^*(M) = 0$.
By the definition, it implies $M = 0$.
Therefore it is sufficient to prove that $\pi_{S_0,\emptyset}^*(\presubscript{S_0}{B(w)})\simeq B(w_+)(-\ell(w_{S_0}))$, namely we may assume $S_1 = \emptyset$.

We have $\pi_{S_{0},\emptyset}^{*}(\presubscript{S_{0}}{B(w)}) = B(w_{+})(-\ell(w_{S_{0}}))\oplus M$.
Since $\pi_{S_{0},\emptyset}^{*}(\presubscript{S_{0}}{B(w)})^{\pi^{-1}(w)}\simeq \pi_{S_{0},\emptyset}^{*}(\presubscript{S_{0}}{B(w)}^{w}) = R\otimes_{R^{W}}R(\ell(w_{-}))$ is indecomposable by Proposition~\ref{prop:indecomposable for the longest}, we have $\pi_{S_{0},\emptyset}^{*}(\presubscript{S_{0}}{B(w)})^{\pi^{-1}(w)} = B(w_{+})^{\pi^{-1}(w)}(-\ell(w_{S_{0}}))$ and $M^{\pi^{-1}(w)} = 0$.
Therefore we have $w\notin \supp_W(\pi_{S_0,\emptyset,*}(M))$.
The object $\pi_{S_0,\emptyset,*}(\pi_{S_0,\emptyset}^*(\presubscript{S_0}{B(w)}))$ is a direct sum of $\presubscript{S_0}{B(w)}$ by Lemma~\ref{lem:pull-push is direct sum}.
Hence $\pi_{S_0,\emptyset,*}(M)$ is a direct sum of $\presubscript{S_0}{B(w)}$.
Since $w\notin \supp_W(\pi_{S_0,\emptyset,*}(M))$, $\pi_{S_0,\emptyset,*}(M) = 0$ and this implies $M = 0$.
\end{proof}

\subsection{Duality}
For $M\in \BigCat{S_0}$, we define $D(M) = \presubscript{S_0}{D}(M) = \Hom^{\bullet}_{\text{-$R$}}(M,R)$.
Here $\Hom^{\bullet}_{\text{-$R$}}$ is the space of right $R$-homomorphisms.
This is a graded $(R^{W_{S_0}},R)$-bimodule via $(f\varphi g)(m) = \varphi(fmg)$ for $f\in R^{W_{S_0}}$, $g\in R$, $\varphi\in D(M)$ and $m\in M$.
We have $D(M)\otimes_{R}Q \simeq\Hom_{\text{-$Q$}}(M\otimes_{R}Q,Q)\simeq \bigoplus_{w\in W_{S_0}\backslash W}\Hom_{\text{-$Q$}}(M_Q^w,Q)$.
By putting $D(M)_{Q}^{w} = \Hom_{\text{-$Q$}}(M_Q^w,Q)$, we have $D(M)\in \BigCat{S_0}$.
Since any $M\in\Sbimod{S_0}$ is free as a right $R$-module, we have $D^2(M)\simeq M$.
Note that $D(\Sbimod{})\subset \Sbimod{}$~\cite[Lemma~2.20]{MR4321542}.

\begin{prop}
Let $M\in \Sbimod{S_0}$ and $B\in \Sbimod{}$, $D(M\otimes B)\simeq D(M)\otimes D(B)$.
\end{prop}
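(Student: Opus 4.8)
The plan is to construct a natural candidate map and prove it is an isomorphism in $\BigCat{S_0}$. For $\varphi\in D(M)=\Hom^{\bullet}_{\text{-$R$}}(M,R)$ and $\psi\in D(B)=\Hom^{\bullet}_{\text{-$R$}}(B,R)$, define $\Theta\colon D(M)\otimes D(B)\to D(M\otimes B)$ by $\Theta(\varphi\otimes\psi)(m\otimes b)=\psi(\varphi(m)b)$, where $\varphi(m)\in R$ acts on $b\in B$ via the left $R$-action. First I would check that $\Theta$ is well defined over $R$ on the source and on $M\otimes_{R}B$, and that it is an $(R^{W_{S_0}},R)$-bimodule map of degree $0$; the one point to watch is that both balancing relations hold precisely because $R$ is commutative. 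Since $\Theta$ is moreover natural in $M$ and in $B$, one may reduce: by a retraction argument (using that $D$ and $-\otimes B$ are additive) it suffices to treat $M=\pi_{S_0,\emptyset,*}(M_{0})$ with $M_{0}\in\Sbimod{}$, and since $D$ commutes with $\pi_{S_0,\emptyset,*}$ and $\pi_{S_0,\emptyset,*}(M_{0})\otimes B\simeq\pi_{S_0,\emptyset,*}(M_{0}\otimes B)$, the whole statement reduces to the case $S_{0}=\emptyset$.

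Next I would show $\Theta$ is a morphism in $\BigCat{S_0}$, i.e.\ that after $-\otimes_{R}Q$ it respects the $W_{S_0}\backslash W$-decompositions. Using the defining formula for $(N\otimes M)^{Q}_{w}$ and the description of $D(-)^{Q}_{w}$, the source and target of $\Theta\otimes_{R}Q$ break up as $\bigoplus_{w}\bigoplus_{x\in W}$ of, respectively, $\Hom_{\text{-$Q$}}(M^{Q}_{wx^{-1}},Q)\otimes_{Q}\Hom_{\text{-$Q$}}(B^{Q}_{x},Q)$ and $\Hom_{\text{-$Q$}}(M^{Q}_{wx^{-1}}\otimes_{Q}B^{Q}_{x},Q)$, and a direct computation identifies $\Theta\otimes_{R}Q$ on each summand with a (twisted) canonical comparison map between these finite-dimensional $Q$-vector spaces, which is an isomorphism. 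This shows simultaneously that $\Theta$ respects the decompositions and that $\Theta\otimes_{R}Q$ is an isomorphism.

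It remains to show $\Theta$ is itself an isomorphism. Since $M$ is graded free as a right $R$-module, so is $D(M)$; since $B$, like every Soergel bimodule, is graded free also as a left $R$-module, $M\otimes_{R}B$ and hence $D(M\otimes B)$ are graded free as right $R$-modules, and likewise $D(M)\otimes_{R}D(B)\simeq\bigoplus_{j}D(M)(n_{j})$ (here $D(B)\in\Sbimod{}$ by self-duality of Soergel bimodules, so it is graded free as a left $R$-module). A short graded-rank computation — using that $\grk$ of a Soergel bimodule is unchanged under swapping left and right, and that $D$ sends $\grk$ to its image under $v\mapsto v^{-1}$ — shows both sides of $\Theta$ have equal graded rank. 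As $\Theta\otimes_{R}Q$ is an isomorphism and both sides are torsion-free over $R$, $\Theta$ is injective with torsion cokernel $C$. If $\Coeff$ is a field this forces $C=0$ degree by degree; for general $\Coeff$, base change along $\Coeff\to\Coeff/\mathfrak{m}$ — compatible with $D$, with $-\otimes-$, and with membership in $\Sbimod{}$ since Bott--Samelson bimodules base change — reduces $\Theta\otimes_{\Coeff}\Coeff/\mathfrak{m}$ to the field case, so $C=\mathfrak{m}C$, and graded Nakayama applied in the lowest nonzero degree of the finitely generated, bounded-below module $C$ gives $C=0$.

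The main obstacle I anticipate is the second step: matching the two $W_{S_0}\backslash W$-graded decompositions through $\Theta$ and verifying the comparison maps are isomorphisms requires keeping careful track of the cosets $wx^{-1}$ and of how the left and right $R$- and $Q$-actions interact with $D$ and with $\otimes_{Q}$. Reducing first to $S_{0}=\emptyset$ as above removes the coset bookkeeping but not the core of this verification.
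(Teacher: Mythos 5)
Your map $\Theta(\varphi\otimes\psi)(m\otimes b)=\psi(\varphi(m)b)$ is exactly the natural map the paper uses (your formula is the careful way to write it), and your verification that it respects the $Q$-decompositions, via $(M\otimes B)^w_Q=\bigoplus_x M^{wx^{-1}}_Q\otimes_Q B^x_Q$, is precisely the paper's ``it is easy to see that this morphism is a morphism in $\BigCat{S_0}$''. The divergence, and the problem, is in how you prove $\Theta$ is bijective. Two of your justifications are not correct as stated: you deduce that $M\otimes_R B$ (hence $D(M\otimes B)$) is graded free as a \emph{right} $R$-module from the fact that $B$ is graded free as a \emph{left} $R$-module, and you write $D(M)\otimes_R D(B)\simeq\bigoplus_j D(M)(n_j)$ using left-freeness of $D(B)$. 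A left $R$-module decomposition of $B$ (or $D(B)$) is not respected by the right $R$-action on the tensor product, so neither of these gives a right $R$-module decomposition, and consequently the comparison of graded ranks as right $R$-modules — the pivot of your injectivity-plus-Nakayama argument — is not established by what you wrote. The base-change step over $\Coeff\to\Coeff/\mathfrak{m}$ is also only asserted, and would need its own verification that $D$, the tensor product and the relevant freeness survive reduction.

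All of this machinery (reduction to $S_0=\emptyset$, rank counting, torsion cokernel, Nakayama) is unnecessary, and the missing observation is the one the paper's proof rests on: $M\in\Sbimod{S_0}$ is graded free of finite rank as a right $R$-module, hence so is $D(M)=\Hom^{\bullet}_{\text{-}R}(M,R)$, with dual shifts. Choosing such a basis, i.e.\ writing $M\simeq\bigoplus_i R(n_i)$ as right $R$-modules, identifies $D(M)\otimes_R D(B)\simeq\bigoplus_i D(B)(-n_i)$ and $D(M\otimes_R B)\simeq D(\bigoplus_i B(n_i))\simeq\bigoplus_i D(B)(-n_i)$ compatibly with $\Theta$ (it suffices to check $\Theta$ is the identity for $M=R(n)$ and use additivity of both sides in $M$). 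This gives the isomorphism at once, for arbitrary $S_0$ and without any hypothesis on $B$ beyond $D(B)$ being defined, which is exactly the paper's one-line argument ``since $M$ is a free right $R$-module, this is an isomorphism''. So your write-up needs either this repair or a correct re-derivation of the right-module freeness and rank statements before the Nakayama step can be run.
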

\begin{proof}
There is a natural map $D(M)\otimes D(B)\to D(M\otimes B)$ defined by $\varphi\otimes\psi\mapsto ((m\otimes b)\mapsto \varphi(m)\psi(b))$ and since $M$ is a free right $R$-module, this is an isomorphism.
It is easy to see that this morphism is a morphism in $\BigCat{S_0}$.
\end{proof}

\begin{prop}
Let $S_1\subset S_0$.
Then $\pi_{S_0,S_1,*}(\presubscript{S_1}{D}(M))\simeq \presubscript{S_0}{D}(\pi_{S_0,S_1,*}(M))$ for $M\in \BigCat{S_1}$.
In particular, $\presubscript{S_0}{D}$ preserves $\Sbimod{S_0}$.
\end{prop}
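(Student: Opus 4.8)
The plan is to produce the isomorphism as the identity on underlying data: I would first check that the two sides coincide as $(R^{W_{S_0}},R)$-bimodules, and then that this identification matches their $Q$-decompositions. Since $\pi_{S_0,S_1,*}$ only restricts the left action of $R^{W_{S_1}}$ along $R^{W_{S_0}}\hookrightarrow R^{W_{S_1}}$ and coarsens the decomposition over $Q$, whereas $\presubscript{S_1}{D}$ and $\presubscript{S_0}{D}$ are both $\Hom^{\bullet}_{\text{-}R}(-,R)$, which depends only on the right $R$-module structure, the first point is immediate: both $\pi_{S_0,S_1,*}(\presubscript{S_1}{D}(M))$ and $\presubscript{S_0}{D}(\pi_{S_0,S_1,*}(M))$ have underlying $(R^{W_{S_0}},R)$-bimodule $\Hom^{\bullet}_{\text{-}R}(M,R)$, with the left action of $f\in R^{W_{S_0}}$ on $\varphi$ given in either case by $(f\varphi)(m)=\varphi(fm)$, where $fm$ uses the left $R^{W_{S_1}}$-module structure of $M$.

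For the second point, fix $x\in W_{S_0}\backslash W$ and let $y$ range over the $y\in W_{S_1}\backslash W$ with image $\overline{y}=x$; this index set is finite, being in bijection with $W_{S_1}\backslash W_{S_0}$ and $W_{S_0}$ being finite. Then
\[
\pi_{S_0,S_1,*}\bigl(\presubscript{S_1}{D}(M)\bigr)_Q^{x}=\bigoplus_{\overline{y}=x}\presubscript{S_1}{D}(M)_Q^{y}=\bigoplus_{\overline{y}=x}\Hom_{\text{-}Q}(M_Q^{y},Q),
\]
whereas, using that $\Hom_{\text{-}Q}(-,Q)$ carries a finite direct sum to a direct sum,
\[
\presubscript{S_0}{D}\bigl(\pi_{S_0,S_1,*}(M)\bigr)_Q^{x}=\Hom_{\text{-}Q}\Bigl(\bigoplus_{\overline{y}=x}M_Q^{y},Q\Bigr)=\bigoplus_{\overline{y}=x}\Hom_{\text{-}Q}(M_Q^{y},Q).
\]
These agree, compatibly with varying $x$ and with the right $Q$-action, so the identity map is an isomorphism $\pi_{S_0,S_1,*}(\presubscript{S_1}{D}(M))\simeq\presubscript{S_0}{D}(\pi_{S_0,S_1,*}(M))$ in $\BigCat{S_0}$.

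For the final assertion, let $M\in\Sbimod{S_0}$; by definition $M$ is a direct summand of $\pi_{S_0,\emptyset,*}(B)$ for some $B\in\Sbimod{}$, so applying $\presubscript{S_0}{D}$ and the isomorphism just proved with $S_1=\emptyset$ shows that $\presubscript{S_0}{D}(M)$ is a direct summand of $\pi_{S_0,\emptyset,*}(D(B))$. Hence it suffices to know that $D$ preserves $\Sbimod{}$, which is formal: $\Sbimod{}$ is generated from $\{R_e\}\cup\{B_s\mid s\in S\}$ by $\oplus$, the shifts $(\pm1)$, $\otimes$, and passage to direct summands; $D$ commutes with $\oplus$, sends $(n)$ to $(-n)$, commutes with $\otimes$ on $\Sbimod{}$ by the preceding proposition, and sends direct summands to direct summands; and $D(R_e)\simeq R_e$ is clear while $D(B_s)\simeq B_s$ is the standard self-duality of $B_s$ arising from the Frobenius structure of $R$ over $R^s$. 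Therefore $D(\Sbimod{})\subseteq\Sbimod{}$ and $\presubscript{S_0}{D}(M)\in\Sbimod{S_0}$.

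I expect no real obstacle: the argument is essentially bookkeeping. The only things requiring attention are keeping the restriction of the left action separate from the coarsening of the $Q$-grading in the first two steps, and the elementary finiteness of the fibers of $W_{S_1}\backslash W\to W_{S_0}\backslash W$. Note that neither freeness nor finite generation of $M$ enters the main isomorphism; only the last step, reducing to $D(B_s)\simeq B_s$, invokes the specific structure of Soergel bimodules.
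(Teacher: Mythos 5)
Your proof is correct and is essentially the argument the paper intends: its own proof consists of the single line ``This is obvious,'' and your identification of both sides with the right $R$-linear dual $\Hom^{\bullet}_{\text{-}R}(M,R)$ (same restricted left $R^{W_{S_0}}$-action) together with the finite-fiber comparison of the decompositions over $Q$ is exactly the bookkeeping behind that claim. Your additional verification that $D$ preserves $\Sbimod{}$ (via $D(R_e)\simeq R_e$, $D(B_s)\simeq B_s$, compatibility with $\oplus$, shifts, $\otimes$ and summands) is also sound and consistent with the paper, which itself uses $D(B_s)\simeq B_s$ in the following proposition.
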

\begin{proof}
This is obvious.
\end{proof}

\begin{lem}\label{lem:dual of M^I}
Let $M\in\BigCat{S_0}$ and $I$ a subset of $W_{S_0}\backslash W$.
Then $D(M^I)\simeq D(M)_I$.
If moreover $M$ and $D(M)^I$ are free right $R$-modules, then $D(M_I)\simeq D(M)^I$.
\end{lem}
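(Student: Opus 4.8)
The plan is to prove the first isomorphism $D(M^I)\simeq D(M)_I$ directly, and then to obtain the second one from it by applying $D$ twice, using that $D^2$ is, via the canonical double--dual morphism, the identity on objects of $\BigCat{S_0}$ that are free as right $R$-modules.

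For the first isomorphism, recall that $M^I$ is by definition the image of $M$ in $\bigoplus_{x\in I}M_Q^x$, so there is a canonical surjection $\rho\colon M\twoheadrightarrow M^I$ with $\Ker\rho=\Ker\bigl(M\to\bigoplus_{x\in I}M_Q^x\bigr)$. Applying $\Hom_{\text{-}R}(-,R)$ to $0\to\Ker\rho\to M\to M^I\to 0$ identifies $D(M^I)$ with the $R$-submodule $\{\varphi\in D(M)\mid\varphi|_{\Ker\rho}=0\}$ of $D(M)$. On the other hand, unwinding the definition of $(-)_I$ together with the identification $D(M)\otimes_R Q\simeq\bigoplus_x\Hom_{\text{-}Q}(M_Q^x,Q)$ recalled above, $D(M)_I$ is the $R$-submodule $\{\varphi\in D(M)\mid(\varphi\otimes\id_Q)|_{M_Q^x}=0\text{ for all }x\notin I\}$ of $D(M)$. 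I would then check that these two submodules coincide. One inclusion is immediate: if $m\in\Ker\rho$ then $m\otimes 1\in\bigoplus_{x\notin I}M_Q^x$, so for $\varphi\in D(M)_I$ we get $\varphi(m)=(\varphi\otimes\id_Q)(m\otimes 1)=0$. For the reverse inclusion, suppose $\varphi$ vanishes on $\Ker\rho$, fix $x\notin I$ and $\xi\in M_Q^x$, and choose $r\in R\setminus\{0\}$ and $m\in M$ with $m\otimes 1=\xi r$ (possible because the image of $M$ generates $M\otimes_R Q$ as a $Q$-module); since $\xi r\in M_Q^x$, this forces the components of $m\otimes 1$ outside $M_Q^x$ to vanish, in particular $m\in\Ker\rho$, so $(\varphi\otimes\id_Q)(\xi)\,r=(\varphi\otimes\id_Q)(m\otimes 1)=\varphi(m)=0$ and hence $(\varphi\otimes\id_Q)(\xi)=0$. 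Finally I would observe that this identification is a homomorphism of $(R^{W_{S_0}},R)$-bimodules and that, after $\otimes_R Q$, both sides have $x$-component $\Hom_{\text{-}Q}(M_Q^x,Q)$ for $x\in I$ and $0$ otherwise, so that it is an isomorphism in $\BigCat{S_0}$. No freeness hypothesis enters at this stage.

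For the second isomorphism, I would apply the first one with $D(M)$ in place of $M$, which gives $D(D(M)^I)\simeq D(D(M))_I=D^2(M)_I$. Since $M$ is free as a right $R$-module, the double--dual morphism $M\to D^2(M)$ is an isomorphism in $\BigCat{S_0}$; because $(-)_I$ is functorial (any morphism in $\BigCat{S_0}$ carries $(-)_I$ into $(-)_I$), it follows that $D^2(M)_I\simeq M_I$, and therefore $D(D(M)^I)\simeq M_I$. Applying $D$ once more yields $D(M_I)\simeq D^2(D(M)^I)$, and since $D(M)^I$ is free as a right $R$-module the double--dual morphism identifies $D^2(D(M)^I)$ with $D(M)^I$; hence $D(M_I)\simeq D(M)^I$.

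I expect the only real point to be the reverse inclusion in the first isomorphism, namely the localization observation that every element of $M_Q^x$ is, after multiplication by a suitable nonzero element of $R$, the image under $M\to M\otimes_R Q$ of an element of $\Ker\rho$; everything else is formal bookkeeping with the $\BigCat{S_0}$-structures and with the double--dual morphism.
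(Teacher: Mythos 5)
Your argument is correct and follows the paper's route: the second isomorphism is obtained exactly as in the paper, by applying the first isomorphism to $D(M)$ and using $D^2\simeq\id$ on (finite-rank) free right $R$-modules, first for $M$ and then for $D(M)^I$. The only difference is that you spell out the first isomorphism $D(M^I)\simeq D(M)_I$ (which the paper dismisses as easy), and your verification — identifying $D(M^I)$ with the annihilator of $\Ker(M\to\bigoplus_{x\in I}M_Q^x)$ inside $D(M)$ and matching it with $D(M)_I$ via a clearing-denominators argument — is sound.
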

\begin{proof}
The first one is easy.
To prove the second one, apply the first one to $D(M)$.
Then $D(D(M)^I)\simeq D(D(M))_I\simeq M_I$ since $M$ is a free right $R$-module.
If $D(M)^I$ is free, then $D(M)^I\simeq D(D(D(M)^I))\simeq D(M_I)$.
\end{proof}

\begin{prop}
Let $I_1\subset I_2\subset W_{S_0}\backslash W$ be both open or both closed.
Then for $M\in \Sbimod{S_0}$, $M_{I_2}/M_{I_1}$ is graded free as a right $R$-module.
\end{prop}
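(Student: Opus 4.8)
The plan is to handle the two cases separately. When $I_1$ and $I_2$ are both closed there is nothing to prove: this is exactly Proposition~\ref{prop:freeness of subquotients}. So the real work is the case where $I_1\subseteq I_2$ are both open, and I would reduce it to the closed case by duality. Put $N = D(M)$; then $N\in\Sbimod{S_0}$ because $D$ preserves $\Sbimod{S_0}$, and $D(N)\cong M$ because $M$ is free as a right $R$-module. Let $J_j = (W_{S_0}\backslash W)\setminus I_j$, so that $J_2\subseteq J_1$ are closed. First I would record the elementary observation that $N_{J_j}$ is precisely the kernel of the composite $N\to N\otimes_R Q\to\bigoplus_{x\in I_j}N^x_Q$, so that $N^{I_j}\cong N/N_{J_j}$, and that $N_{J_2}\subseteq N_{J_1}\subseteq N$ yields the short exact sequence
\[
0\longrightarrow N_{J_1}/N_{J_2}\longrightarrow N/N_{J_2}\longrightarrow N/N_{J_1}\longrightarrow 0 .
\]

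The key point is that all three terms are graded free: this is Proposition~\ref{prop:freeness of subquotients} applied to $N$ and to the chain of closed subsets $\emptyset\subseteq J_2\subseteq J_1\subseteq W_{S_0}\backslash W$ (the full orbit space is closed, its complement being the open set $\emptyset$). Hence the sequence splits, and applying the additive functor $D$ --- which consequently preserves split exactness and sends graded free modules to graded free modules --- yields a split exact sequence
\[
0\longrightarrow D(N^{I_1})\longrightarrow D(N^{I_2})\longrightarrow D(N_{J_1}/N_{J_2})\longrightarrow 0
\]
of graded free $R$-modules. By the first assertion of Lemma~\ref{lem:dual of M^I} applied to $N$, together with $D(N)\cong M$, there are isomorphisms $D(N^{I_j})\cong D(N)_{I_j}\cong M_{I_j}$ under which the left-hand arrow is the canonical inclusion $M_{I_1}\hookrightarrow M_{I_2}$; therefore $M_{I_1}$ is a direct summand of $M_{I_2}$ with complement $M_{I_2}/M_{I_1}\cong D(N_{J_1}/N_{J_2})$, which is graded free. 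This finishes the open case.

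The step I expect to require the most care is the very last identification. One must check that the isomorphism $D(N^{I})\cong D(N)_{I}$ of Lemma~\ref{lem:dual of M^I} is natural with respect to inclusions of subsets of $W_{S_0}\backslash W$, so that it carries the projection $N^{I_2}\twoheadrightarrow N^{I_1}$ to the inclusion $D(N)_{I_1}\hookrightarrow D(N)_{I_2}$. This is routine and can be read off from the construction proving that lemma, but it is genuinely needed: it is what forces the \emph{particular} submodule $M_{I_1}\subseteq M_{I_2}$ to be split (and hence its quotient to be graded free), rather than merely exhibiting $M_{I_1}$ as isomorphic to a split submodule of a copy of $M_{I_2}$.
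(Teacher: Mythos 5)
Your proof is correct and follows essentially the same route as the paper: the closed case is Proposition~\ref{prop:freeness of subquotients}, and the open case is reduced to it by setting $N=D(M)$, using $N^{I_j}\simeq N/N_{J_j}$ and Lemma~\ref{lem:dual of M^I} to identify $M_{I_2}/M_{I_1}\simeq D(N_{J_1}/N_{J_2})$. You are merely more explicit than the paper about why dualizing the sequence stays exact (splitness via freeness of the quotient) and about the naturality of $D(N^I)\simeq D(N)_I$, both of which the paper leaves implicit.
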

\begin{proof}
We have proved when $I_1,I_2$ are closed in Proposition~\ref{prop:freeness of subquotients}.
We assume that $I_1,I_2$ are open and set $J_i = W\setminus I_i$.
Set $N = D(M)$.
Then $D(N)_{I_i}\simeq D(N^{I_i})\simeq D(N/N_{J_i})$.
Therefore we have $D(N)_{I_2}/D(N)_{I_1}\simeq D(\Ker(N/N_{J_2}\to N/N_{J_1}))\simeq D(N_{J_1}/N_{J_2})$.
The right hand side is free since $N_{J_1}/N_{J_2}$ is free as we have proved.
\end{proof}

\begin{rem}
Let $I\subset W_{S_0}\backslash W$ be a closed or an open subset.
Then by putting $I_2 = W_{S_0}\backslash W$ and $I_1 = (W_{S_0}\backslash W)\setminus I$, we have $M_{I_2}/M_{I_1} = M/M_{I_1}\simeq M^I$ is a free right $R$-module.
\end{rem}

\subsection{Hecke algebras and Hecke modules}
We continue to assume that $S_0$ satisfies the conditions in Section~\ref{sec:Soergel bimodules} and $\Coeff$ is complete local.

Let $[\Sbimod{S_0}]$ be the split Grothendieck group of $\Sbimod{S_0}$.
Then $[M][B] = [M\otimes B]$ gives a structure of a right $[\Sbimod{}]$-module on $[\Sbimod{S_0}]$.
By $v[M] = [M(1)]$, $[\Sbimod{S_0}]$ is a $\Z[v,v^{-1}]$-module where $v$ is an indeterminate.

Let $\mathcal{H}$ be the Hecke algebra attached to $(W,S)$.
Here we use the following definition for $\mathcal{H}$:
The $\Z[v,v^{-1}]$-algebra $\mathcal{H}$ is generated by $\{H_w\mid w\in W\}$ with the following relations: $(H_s - v^{-1})(H_s + v) = 0$ for $s\in S$ and $H_{w_1w_2} = H_{w_1}H_{w_2}$ if $\ell(w_1w_2) = \ell(w_1) + \ell(w_2)$.
It is known that $\{H_w\mid w\in W\}$ is a $\Z[v,v^{-1}]$-basis of $\mathcal{H}$.
It is proved in \cite{MR4321542} that the map $\ch\colon [\Sbimod{}]\to \mathcal{H}$ defined by $\ch([B]) = \sum_{x\in W}v^{-\ell(x)}\grk(B^x)H_x$ is a $\Z[v,v^{-1}]$-algebra isomorphism.
Therefore, $[\Sbimod{S_0}]$ is a right $\mathcal{H}$-module.

It is straightforward to prove the following.
\begin{lem}
Let $S_1\subset S_0$.
\begin{enumerate}
\item We have $\pi_{S_0,S_1,*}(M\otimes B)\simeq \pi_{S_0,S_1,*}(M)\otimes B$ for $M\in \BigCat{S_1}$ and $B\in \BigCat{}$.
\item We have $\pi_{S_0,S_1}^*(M\otimes B)\simeq \pi_{S_0,S_1}^*(M)\otimes B$ for $M\in \BigCat{S_0}$ and $B\in \BigCat{}$.
\end{enumerate}
\end{lem}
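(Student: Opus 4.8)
The plan is to verify both isomorphisms at the level of the underlying bimodules and then check compatibility with the $Q$-decompositions; part (1) is purely formal, and the only point requiring a little care is part (2).

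For (1), recall that $\pi_{S_0,S_1,*}$ changes nothing but the left scalars (it restricts along $R^{W_{S_0}}\hookrightarrow R^{W_{S_1}}$) and regroups the $Q$-decomposition along the projection $W_{S_1}\backslash W\to W_{S_0}\backslash W$; write $\overline y\in W_{S_0}\backslash W$ for the image of $y\in W_{S_1}\backslash W$. Since $-\otimes B$ tensors on the right and only touches the right $R$-module structure, restricting on the left and tensoring on the right commute, so $\pi_{S_0,S_1,*}(M\otimes B)$ and $\pi_{S_0,S_1,*}(M)\otimes B$ have the same underlying bimodule $M\otimes_R B$. Unwinding the two definitions of the $Q$-decomposition, for $z\in W_{S_0}\backslash W$ one gets
\[
\pi_{S_0,S_1,*}(M\otimes B)^{z}_{Q}=\bigoplus_{\overline y=z}\ \bigoplus_{x\in W}M^{yx^{-1}}_{Q}\otimes_{Q}B^{x}_{Q},
\qquad
(\pi_{S_0,S_1,*}(M)\otimes B)^{z}_{Q}=\bigoplus_{x\in W}\ \bigoplus_{\overline{y'}=zx^{-1}}M^{y'}_{Q}\otimes_{Q}B^{x}_{Q},
\]
where $y,y'$ run over $W_{S_1}\backslash W$. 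These two decompositions agree under the bijection $(y,x)\leftrightarrow(y',x)$ with $y'=yx^{-1}$ (indeed $\overline{yx^{-1}}=zx^{-1}$, and conversely $\overline{y'x}=z$), so the identity map of $M\otimes_R B$, which is visibly a degree-zero bimodule map, is an isomorphism in $\BigCat{S_0}$.

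For (2), I would take the canonical associativity isomorphism $R^{W_{S_1}}\otimes_{R^{W_{S_0}}}(M\otimes_R B)\xrightarrow{\ \sim\ }(R^{W_{S_1}}\otimes_{R^{W_{S_0}}}M)\otimes_R B$ of $(R^{W_{S_1}},R)$-bimodules as the candidate isomorphism $\pi_{S_0,S_1}^*(M\otimes B)\to\pi_{S_0,S_1}^*(M)\otimes B$. After $-\otimes_R Q$ it is a $(Q^{W_{S_1}},Q)$-bimodule map, since on either side the left $R^{W_{S_1}}$-action extends to $Q^{W_{S_1}}$ exactly as in the computation preceding Lemma~\ref{lem:to define pull-back}. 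It also respects the coarse $W_{S_0}\backslash W$-grading: by that same computation the coarse $z$-piece of $\pi_{S_0,S_1}^*(M\otimes B)\otimes_R Q$ is $Q^{W_{S_1}}\otimes_{Q^{W_{S_0}}}(M\otimes B)^{z}_{Q}$, while that of $\pi_{S_0,S_1}^*(M)\otimes B$ is $\bigoplus_{x\in W}(Q^{W_{S_1}}\otimes_{Q^{W_{S_0}}}M^{zx^{-1}}_{Q})\otimes_{Q}B^{x}_{Q}$ (grouping the $y$-parts by $\overline{yx^{-1}}=zx^{-1}$ as in (1)), and the candidate map, being induced from the identity of $M\otimes_R B$, is precisely the $\otimes_Q$-associativity that matches these. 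Restricting to each coarse $z$-piece, both sides are a direct sum indexed by $\{y\in W_{S_1}\backslash W\mid\overline y=z\}$ of $(Q^{W_{S_1}},Q)$-bimodules on which $Q^{W_{S_1}}$ acts through $y^{-1}$, so Lemma~\ref{lem:automatically morphism in C} forces the candidate map to send the $y$-component to the $y$-component. Hence it is an isomorphism in $\BigCat{S_1}$, which proves (2).

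The main (indeed only) obstacle is in (2): one must make sure that coarsening the grading genuinely commutes with $-\otimes B$, so that "the coarse $z$-piece is induced from the input" holds simultaneously for $\pi_{S_0,S_1}^*(M\otimes B)$ and for $\pi_{S_0,S_1}^*(M)\otimes B$; this is the same bookkeeping as the bijection $y'=yx^{-1}$ from (1), now combined with the $y$-component identification of Lemma~\ref{lem:to define pull-back}. If one prefers to avoid invoking Lemma~\ref{lem:automatically morphism in C}, the alternative is to trace a pure tensor $f\otimes(m\otimes b)$ with $m\in M^{zx^{-1}}_{Q}$ and $b\in B^{x}_{Q}$ through both identifications, using the defining relation $qb=b\,x^{-1}(q)$ on $B^{x}_{Q}$ to check that the two resulting $y$-components coincide; this is a routine but slightly longer computation.
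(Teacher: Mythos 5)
Your proof is correct. The paper gives no argument of its own here (it simply declares the lemma straightforward to prove), and your verification --- the reindexing $y'=yx^{-1}$ of the summands for (1), and for (2) the canonical associativity isomorphism checked first against the coarse $W_{S_0}\backslash W$-grouping and then refined componentwise via Lemmas~\ref{lem:to define pull-back} and~\ref{lem:automatically morphism in C} --- is exactly the intended routine unwinding of the definitions.
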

Therefore the functor $\pi_{S_0,S_1,*}$ (resp.\ $\pi_{S_0,S_1}^{*}$) induces an $[\Sbimod{}]\simeq\mathcal{H}$-module homomorphism $[\Sbimod{S_0}]\to [\Sbimod{S_1}]$ (resp.\ $[\Sbimod{S_1}]\to [\Sbimod{S_0}]$).

For a subset $S_0\subset S$, let $\mathcal{H}_{S_0}$ be the $\Z[v,v^{-1}]$-subalgebra of $\mathcal{H}$ generated by $\{H_s\mid s\in S_0\}$.
This is isomorphic to the Hecke algebra attached to $(W_{S_0},S_0)$.
The trivial character $\triv\colon \mathcal{H}\to \Z[v,v^{-1}]$ is defined by $\triv(H_w) = v^{-\ell(w)}$.
The restriction of $\triv$ on $\mathcal{H}_{S_0}$ is denoted by $\triv_{S_0}$.
Then we have a right $\mathcal{H}$-module $\triv_{S_0}\otimes_{\mathcal{H}_{S_0}}\mathcal{H}$.
We define $\presubscript{S_0}{\ch}\colon \Sbimod{S_0}\to \triv_{S_0}\otimes_{\mathcal{H}_{S_0}}\mathcal{H}$ by
\[
\presubscript{S_0}{\ch}([M]) = \sum_{w\in W_{S_0}\backslash W}1\otimes v^{\ell(w_-)}\grk(M_{\ge w}/M_{>w})H_{w_-}
\]
where $M_{\ge w} = M_{\{x\in W_{S_0}\backslash W\mid x\ge w\}}$ and $M_{> w} = M_{\{x\in W_{S_0}\backslash W\mid x > w\}}$.
By the lemma below, $\presubscript{\emptyset}{\ch}$ is the same as $\ch$ defined above.
\begin{lem}
For $M\in \Sbimod{}$ and $w\in W$, we have $\grk(M^w) = v^{2\ell(w)}\grk(M_{\ge w}/M_{\ge w})$.
\end{lem}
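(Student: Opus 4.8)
Before seeing the author's argument, here is how I would approach the proof of the final lemma (which should of course read $\grk(M^w)=v^{2\ell(w)}\grk(M_{\ge w}/M_{>w})$).

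The plan is to recognize the identity as the coefficientwise form of $\ch=\ch'$, where $\ch'\colon[\Sbimod{}]\to\mathcal H$ is defined by $\ch'([M])=\sum_{w\in W}v^{\ell(w)}\grk(M_{\ge w}/M_{>w})H_w$. Both $\ch$ and $\ch'$ are $\Z[v,v^{-1}]$-linear: for $\ch'$ this uses that $M_{\ge w}/M_{>w}$ is graded free (shown above), that it is additive in $M$, and that $(M(1))_{\ge w}/(M(1))_{>w}=(M_{\ge w}/M_{>w})(1)$. So it suffices to check $\ch'=\ch$ on a $\Z[v,v^{-1}]$-spanning set of $[\Sbimod{}]$. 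I would use the Bott--Samelson classes $[BS(\underline x)]$, $BS(\underline x)=B_{s_1}\otimes\dots\otimes B_{s_n}$: since $B(x)$ occurs with multiplicity one in $BS(\underline x)$ for a reduced word $\underline x$ and the remaining summands are $B(y)(\,\cdot\,)$ with $y<x$, a Bruhat induction shows each $[B(x)]$ is a $\Z[v,v^{-1}]$-combination of Bott--Samelson classes, so these span.

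On Bott--Samelsons $\ch$ is computed from its algebra-homomorphism property (the cited result of \cite{arXiv:1901.02336_accepted}): $\ch([BS(\underline x)])=\ch([B_{s_1}])\cdots\ch([B_{s_n}])$. A direct computation gives $\ch'([R_e])=H_e$ and $\ch'([B_s])=vH_e+H_s=\ch([B_s])$, the latter from $(B_s)_{\ge e}/(B_s)_{>e}\simeq R_e(1)$ and $(B_s)_{\ge s}/(B_s)_{>s}\simeq R_s(-1)$. Hence it remains to prove the multiplicativity statement $\ch'([M\otimes B_s])=\ch'([M])\,\ch([B_s])$ for every $M\in\Sbimod{}$; an induction on $n$ then finishes. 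For this I would start from the short exact sequence $0\to R_s(-1)\to B_s\to R_e(1)\to 0$ in $\BigCat{}$ (the kernel and image of multiplication $B_s\to R$), tensor on the left by the flat right $R$-module $M$ to get $0\to(M\otimes R_s)(-1)\to M\otimes B_s\to M(1)\to 0$ (using $M\otimes R_e\simeq M$), and then read off $(\,\cdot\,)_{\ge w}/(\,\cdot\,)_{>w}$ of the three terms, using that $M\otimes R_s$ is $M$ with supports relabelled by $x\mapsto xs$ and treating separately the cases $ws>w$ and $ws<w$.

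The hard point is exactly this last computation: right translation by the standard bimodule $R_s$ does not respect the Bruhat order, so $(M\otimes R_s)_{\ge w}$ is not merely a shift of $M_{\ge ws}$, and the functor $(\,\cdot\,)_{\ge w}/(\,\cdot\,)_{>w}$ is not exact on the displayed sequence — this is the usual subtlety in describing the standard filtration of a tensor product. As in the classical arguments of Soergel and Williamson~\cite{MR2844932} and in \cite{arXiv:1901.02336_accepted}, one controls it by playing the sequence above against its $D$-dual $0\to R_e(1)\to B_s\to R_s(-1)\to 0$, using that $D$ interchanges $(\,\cdot\,)^I$ with $(\,\cdot\,)_I$ (Lemma~\ref{lem:dual of M^I}) and preserves $\Sbimod{}$, together with the surjectivity statement Proposition~\ref{prop:projectivity}. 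In fact the lemma is essentially the description of the standard filtration of Soergel bimodules carried out in \cite{arXiv:1901.02336_accepted}, so the shortest honest write-up simply deduces it from there.
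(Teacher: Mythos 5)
Your reduction of the lemma to the identity $\ch=\ch'$, the linearity remarks, and the computation $\ch'([B_s])=vH_e+H_s=\ch([B_s])$ are all correct, and Bott--Samelson classes do span $[\Sbimod{}]$ over $\Z[v,v^{-1}]$. But as written the proposal does not prove the lemma: its entire content has been pushed into the multiplicativity $\ch'([M\otimes B_s])=\ch'([M])\,\ch([B_s])$, and that step is left open. Your own sketch --- tensor $0\to R_s(-1)\to B_s\to R_e(1)\to 0$ with $M$ and read off $(\cdot)_{\ge w}/(\cdot)_{>w}$ --- does not go through as stated, for exactly the reason you point out: this functor is not exact on $0\to (M\otimes R_s)(-1)\to M\otimes B_s\to M(1)\to 0$, and the remedy you offer (``play it against the $D$-dual sequence, as in Soergel, Williamson and \cite{arXiv:1901.02336_accepted}'', or ``simply deduce it from there'') is a pointer to the hard content rather than an argument; no specific statement of \cite{arXiv:1901.02336_accepted} supplying this standard-filtration multiplicativity is identified. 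So there is a genuine gap precisely at the step you yourself flag as the hard point, and the surrounding induction machinery does nothing to close it.

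The paper's proof needs neither the induction nor any multiplicativity. It quotes from \cite{arXiv:1901.02336_accepted} (Corollary~3.18 and Proposition~3.19 there) the graded-rank relation $\grk(N_w)=v^{-2\ell(w)}\grk(N_{\not>w}/N_{\not\ge w})$, valid for every $N\in\Sbimod{}$, applies it to $N=D(M)$, and transports it through the duality: $D(M)_w\simeq D(M^w)$ and $D(M)_{\not>w}/D(M)_{\not\ge w}\simeq D(M/M_{>w})/D(M/M_{\ge w})\simeq D(M_{\ge w}/M_{>w})$ by Lemma~\ref{lem:dual of M^I} together with the graded freeness of these subquotients, and finally $\grk(D(N))=\overline{\grk(N)}$ for $N$ graded free. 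Substituting $v\mapsto v^{-1}$ gives $\grk(M^w)=v^{2\ell(w)}\grk(M_{\ge w}/M_{>w})$ in two lines. You had these ingredients in hand --- you invoke Lemma~\ref{lem:dual of M^I}, $D$-duality and Proposition~\ref{prop:projectivity} --- but only as auxiliary tools inside the unfinished filtration analysis; applied directly to the already-available result of \cite{arXiv:1901.02336_accepted}, they finish the proof with no Bott--Samelson induction at all.
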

\begin{proof}
We have $\grk(D(M)_w) = v^{-2\ell(w)}\grk(D(M)_{\not> w}/D(M)_{\not\ge w})$ by \cite[Corollary~3.18, Proposition~3.19]{MR4321542}.
Note that, as in Lemma~\ref{lem:dual of M^I}, $D(M)_w\simeq D(M^w)$ and $D(M)_{\not >w}/D(M)_{\not \ge w}\simeq D(M^{\not >w})/D(M^{\not \ge w}) = D(M/M_{>w})/D(M/M_{\ge w})\simeq D(M_{\ge w}/M_{>w})$.
Therefore, $\grk(D(M^w))  = v^{-2\ell(w)}\grk(D(M_{\ge w}/M_{>w}))$.
Hence $\grk(M^w) = v^{2\ell(w)}\grk(M_{\ge w}/M_{> w})$.
\end{proof}

\begin{lem}\label{lem:M_K for K locally closed}
Let $J_1\subset J_2, J'_1\subset J'_2$ be open subsets of $W_{S_0}\backslash W$.
If $J_2\setminus J_1 = J'_2\setminus J'_1$, then $M_{J_2}/M_{J_1}$ and $M_{J'_2}/M_{J'_1}$ are naturally isomorphic to each other.
\end{lem}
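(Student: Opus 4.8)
The plan is to realise the wanted isomorphism as the natural comparison map inside $M\otimes_R Q$ and to check it is an isomorphism after cutting the problem down to a single cell; throughout, $M\in\Sbimod{S_0}$.

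First I would reduce to comparing a presentation with a smaller one. If $K=J_2\setminus J_1=J_2'\setminus J_1'$ are two open presentations, a direct check from the definition of $K$ shows that $(J_1\cap J_1',\,J_2\cap J_2')$ is again an open presentation of $K$ contained componentwise in each of the two; hence it suffices to treat the case $(\widetilde J_1,\widetilde J_2)\subseteq(J_1,J_2)$ (componentwise) with $\widetilde J_2\setminus\widetilde J_1=J_2\setminus J_1=K$. Put $L=J_2\setminus\widetilde J_2$; one sees $L=J_1\setminus\widetilde J_1$ and $L\cap K=\emptyset$. Since $M_J$ depends only on $J\cap\supp_W M$ and $\supp_W M$ is finite, we may induct on $\#(L\cap\supp_W M)$, the base case $L\cap\supp_W M=\emptyset$ giving $M_{\widetilde J_i}=M_{J_i}$ and hence nothing to prove. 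For the inductive step pick $z\in L$ maximal in $L$; then $\widehat J_i:=\widetilde J_i\cup\{z\}$ are open, still present $K$, lie between $(\widetilde J_1,\widetilde J_2)$ and $(J_1,J_2)$, and $z$ is minimal in each $\widehat J_i$ (an element of the open set $\widetilde J_i$ lying below $z$ would force $z\in\widetilde J_i$). So it is enough to compare $(\widetilde J_1,\widetilde J_2)$ with $(\widehat J_1,\widehat J_2)$, i.e.\ to handle the one‑step case $\widetilde J_i=\widehat J_i\setminus\{z\}$.

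In that case $M_{\widetilde J_2}\cap M_{\widehat J_1}=M_{\widetilde J_1}$ (intersection of preimages) and $M_{\widetilde J_2}+M_{\widehat J_1}\subseteq M_{\widehat J_2}$; by the second isomorphism theorem the natural map $M_{\widetilde J_2}/M_{\widetilde J_1}\to M_{\widehat J_2}/M_{\widehat J_1}$ induced by these inclusions is an isomorphism if and only if $M_{\widetilde J_2}+M_{\widehat J_1}=M_{\widehat J_2}$. Since $\widehat J_2\setminus\widetilde J_2=\{z\}$, the quotient $M_{\widehat J_2}/M_{\widetilde J_2}$ is canonically the image $(M_{\widehat J_2})^{z}$ of $M_{\widehat J_2}$ in $M^{z}_Q$, and the image of $M_{\widehat J_1}$ there is $(M_{\widehat J_1})^{z}$; so the equality to prove is $(M_{\widehat J_1})^{z}=(M_{\widehat J_2})^{z}$ as $R$‑submodules of $M^{z}_Q$. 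It therefore suffices to prove: for $z$ minimal in an open set $B\subseteq W_{S_0}\backslash W$, the submodule $(M_B)^{z}\subseteq M^{z}_Q$ is independent of $B$ — equivalently, taking $B=\{x:x\ge z\}$, in which $z$ is also minimal, $(M_B)^{z}=M_{\ge z}/M_{>z}$. Indeed $\{x:x\ge z\}\subseteq\widehat J_1\subseteq\widehat J_2$ gives $(M_{\ge z}/M_{>z})\subseteq(M_{\widehat J_1})^{z}\subseteq(M_{\widehat J_2})^{z}$, and all three then coincide.

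The heart of the argument, and the step I expect to be the only genuine difficulty, is this independence statement. By the usual device — write $M$ as a summand of $\pi_{S_0,\emptyset,*}(N)$ with $N\in\Sbimod{}$, so that $M_{B}=\pi_{S_0,\emptyset,*}(N_{\pi^{-1}(B)})$ and likewise for $M_B/M_{B\setminus\{z\}}$ — it reduces to $S_0=\emptyset$. For $z$ minimal in an open $B$ the subquotient $M_B/M_{B\setminus\{z\}}=(M_B)^{z}$ is graded free as a right $R$‑module, supported on $\{z\}$, by the freeness of one‑element open subquotients (Proposition~\ref{prop:freeness of subquotients} and its analogue for open subsets; cf.\ \cite[Cor.~3.18, Prop.~3.19]{arXiv:1901.02336}); moreover its graded rank is the multiplicity of the cell at $z$ in a costandard filtration of $M$ and so is intrinsic to $M$ and $z$ — this well‑definedness of costandard filtrations is the part of the theory of \cite{arXiv:1901.02336} that I would need to invoke (or reprove). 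Granting it, $(M_{\ge z}/M_{>z})\subseteq(M_B)^{z}$ is an inclusion of graded‑free $R$‑submodules of $M^{z}_Q$ of the same graded rank which becomes an isomorphism after $\otimes_R Q$; such an inclusion is an equality. (When $\Coeff$ is a field this is immediate from graded ranks, or from the fact that the inclusion matrix has determinant a nonzero element of $R$ of degree zero, i.e.\ a nonzero, hence invertible, scalar; for general complete local $\Coeff$ all modules in sight are graded free, so base change to the residue field $\Coeff/\mathfrak m$ is harmless and reduces to the field case, whence the determinant is a unit in $\Coeff$.) Finally, every isomorphism produced is induced by the canonical maps $M_{J}\hookrightarrow M_{J'}$ together with the decomposition of $M\otimes_R Q$, so they are natural in $M$, as required.
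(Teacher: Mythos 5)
Your overall plan is genuinely different from the paper's (which keeps the two nested presentations fixed and inducts on $\#(J_2\setminus J_1)$ with a five-lemma argument, quoting the one-element case from \cite[Corollary~3.18]{arXiv:1901.02336_accepted}), and several of your steps are correct: the reduction to nested presentations by intersecting, the injectivity of the comparison map via $M_{\widetilde J_2}\cap M_{\widehat J_1}=M_{\widetilde J_1}$, and the identification of the one-step case with the statement that for $z$ minimal in an open set $B$ the image of $M_B$ in $M^z_Q$ equals $M_{\ge z}/M_{>z}$. However, your interpolation between the two presentations is not well-founded as written. You induct on $\#(L\cap\supp_W M)$ but choose $z$ maximal in $L$; a difference $L$ of two open sets can be infinite with no maximal element even when $K$ is a single point (in the universal Coxeter group on three generators $r,s,t$, compare the presentations $(\{x>s\},\{x\ge s\})$ and $(W\setminus\{e,s\},W\setminus\{e\})$ of $K=\{s\}$: then $L$ is the set of non-identity words in $r,t$ and has no maximal element), so the step may not start; and if the chosen $z$ lies outside $\supp_W M$ the measure does not decrease, so it need not terminate. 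Choosing $z$ maximal in $L\cap\supp_W M$ instead breaks openness of $\widetilde J_i\cup\{z\}$, because elements of $L\setminus\supp_W M$ above $z$ may be missing. This is repairable (adjoin $J_i\cap\{x\mid x\ge z\}$ rather than the single point $z$, with $z$ maximal in $L\cap\supp_W M$; among the added elements only $z$ meets the support, so your one-step analysis still applies), but the inductive step as stated fails.

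The more serious gap is the heart itself: the claim $(M_B)^z=M_{\ge z}/M_{>z}$ for $z$ minimal in the open $B$ (after reducing to $S_0=\emptyset$) is precisely the one-element case of the lemma, and you do not prove it. Your argument defers its key input, the independence of the graded rank of $M_B/M_{B\setminus\{z\}}$ from $B$ (well-definedness of costandard multiplicities), to the earlier paper; that input is of essentially the same nature as the statement being proved, whereas the paper simply invokes \cite[Corollary~3.18]{arXiv:1901.02336_accepted} for the canonical isomorphism itself — if that is available to you, you should cite it directly and your rank argument becomes superfluous. Moreover, the proposed upgrade from ``equal graded rank and isomorphism after $\otimes_R Q$'' to equality is only complete when $\Coeff$ is a field: in general the degree-zero determinant is a nonzero element of the complete local ring $\Coeff$, which need not be a unit, and your suggested base change to $\Coeff/\mathfrak{m}$ would require that the formation of $M_B$ and of these subquotients (defined as preimages inside $M\otimes_R Q$) commutes with $-\otimes_{\Coeff}\Coeff/\mathfrak{m}$, which you have not established. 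So the fallback argument does not close this gap either.
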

\begin{proof}
Assume that $J'_1,J'_2\subset W_{S_0}\backslash W$ are open subsets such that $J'_1\subset J_1$, $J'_2\subset J_2$ and $J_2\setminus J_1 = J'_2\setminus J'_1$.
We prove that the natural homomorphism $M_{J'_2}/M_{J'_1}\to M_{J_2}/M_{J_1}$ is an isomorphism.
We may assume that $M = \pi_{S_0,\emptyset,*}(N)$ for some $N\in\Sbimod{}$.
Let $\pi\colon W\to W_{S_0}\backslash W$ be the natural projection.
Then we have $M_{J} = N_{\pi^{-1}(J)}$ for $J = J_1,J_2,J'_1,J'_2$ by Proposition~\ref{prop:support and push. pull}.
Therefore, by replacing $M,J_1,J_2,J'_1,J'_2$ with $N,\pi^{-1}(J_1),\pi^{-1}(J_2),\pi^{-1}(J'_1),\pi^{-1}(J'_2)$, respectively, we may assume $S_0 = \emptyset$.
We prove the lemma by induction on $\#(J_2\setminus J_1)$.
If $\#(J_2\setminus J_1) = 1$, then this is \cite[Corollary~3.18]{MR4321542}.
Assume that $\#(J_2\setminus J_1) > 1$ and take an open subset $J_3$ such that $J_1\subsetneq J_3\subsetneq J_2$.
Set $J'_3 = J_3\cap J_2'$.
Then we have $J_2'\setminus J'_3 = J_2\setminus J_3, J_3'\setminus J_1' = J_3\setminus J_1$ and $J'_3\subset J_3$.
By the commutative diagram with exact rows
\[
\begin{tikzcd}
0\arrow[r] & M_{J'_3}/M_{J'_1}\arrow[r]\arrow[d,"\sim",sloped] & M_{J'_2}/M_{J'_1} \arrow[r]\arrow[d] & M_{J'_2}/M_{J'_3}\arrow[r]\arrow[d,"\sim" sloped] & 0\\
0\arrow[r] & M_{J_3}/M_{J_1}\arrow[r] & M_{J_2}/M_{J_1} \arrow[r] & M_{J_2}/M_{J_3}\arrow[r] & 0,
\end{tikzcd}
\]
we get the lemma.
\end{proof}

\begin{prop}\label{prop:push in Grothendieck group}
Let $S_1\subset S_0$.
We define $p\colon \triv_{S_1}\otimes_{\mathcal{H}_{S_1}}\mathcal{H}\to \triv_{S_0}\otimes_{\mathcal{H}_{S_0}}\mathcal{H}$ by $p(1\otimes h) = 1\otimes h$.
Then we have $\presubscript{S_0}{\ch}([\pi_{S_0,S_1,*}(M)]) = p(\presubscript{S_1}{\ch}([M]))$ for $M\in \Sbimod{S_1}$.
\end{prop}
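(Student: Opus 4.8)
The plan is to compute both sides as explicit elements of $\triv_{S_0}\otimes_{\mathcal{H}_{S_0}}\mathcal{H}$: I will reduce $\presubscript{S_0}{\ch}([\pi_{S_0,S_1,*}(M)])$ to a sum, over the fibres of $\pi\colon W_{S_1}\backslash W\to W_{S_0}\backslash W$, of the graded ranks of the one-coset pieces of $M$, and then match it termwise with $p(\presubscript{S_1}{\ch}([M]))$ via a short identity in the Hecke module.

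\textbf{Step 1: a graded-rank computation.} Fix $x\in W_{S_0}\backslash W$. Since the sets $\{y\ge x\}$ and $\{y>x\}$ are open in $W_{S_0}\backslash W$, the identity $\pi_{S_0,S_1,*}(N)_I=\pi_{S_0,S_1,*}(N_{\pi^{-1}(I)})$ proved earlier gives $\pi_{S_0,S_1,*}(M)_{\ge x}=\pi_{S_0,S_1,*}(M_{\pi^{-1}(\ge x)})$ and likewise for $>x$. As $\pi_{S_0,S_1,*}$ is exact and leaves the underlying right $R$-module unchanged,
\[
\grk\bigl(\pi_{S_0,S_1,*}(M)_{\ge x}/\pi_{S_0,S_1,*}(M)_{>x}\bigr)=\grk\bigl(M_{\pi^{-1}(\ge x)}/M_{\pi^{-1}(>x)}\bigr).
\]
Now $\pi$ is continuous for the two topologies (monotonicity of the Bruhat order on parabolic quotients, already used above), so $\pi^{-1}(\ge x)$ and $\pi^{-1}(>x)$ are open with difference the finite set $\pi^{-1}(x)$. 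Enumerate $\pi^{-1}(x)=\{z_1,\dots,z_k\}$ along a linear extension of the Bruhat order (so $z_a<z_b$ forces $a<b$) and set $I_j=\pi^{-1}(>x)\cup\{z_j,\dots,z_k\}$. Using monotonicity of $\pi$ one checks each $I_j$ is open, $I_1=\pi^{-1}(\ge x)$, $I_{k+1}=\pi^{-1}(>x)$, and $I_j\setminus I_{j+1}=\{z_j\}=\{y\ge z_j\}\setminus\{y>z_j\}$; hence by Lemma~\ref{lem:M_K for K locally closed} (applied to the open pairs $(I_{j+1},I_j)$ and $(\{y>z_j\},\{y\ge z_j\})$) one gets $M_{I_j}/M_{I_{j+1}}\simeq M_{\ge z_j}/M_{>z_j}$. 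Each such piece is graded free as a right $R$-module — this is exactly what makes $\presubscript{S_1}{\ch}$ well defined — so the graded rank is additive along the filtration $M_{I_{k+1}}\subset\dots\subset M_{I_1}$, and
\[
\grk\bigl(M_{\pi^{-1}(\ge x)}/M_{\pi^{-1}(>x)}\bigr)=\sum_{\substack{z\in W_{S_1}\backslash W\\ \pi(z)=x}}\grk(M_{\ge z}/M_{>z}).
\]

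\textbf{Step 2: the Hecke-module identity.} Fix $z\in W_{S_1}\backslash W$ and put $x=\pi(z)$, $y=x_-$. Since $y$ is the minimal-length element of the left $W_{S_0}$-coset containing $z_-$, we may write $z_-=u\,y$ with $u\in W_{S_0}$ and $\ell(z_-)=\ell(u)+\ell(y)$. Then $H_{z_-}=H_uH_y$, so in $\triv_{S_0}\otimes_{\mathcal{H}_{S_0}}\mathcal{H}$ we have $1\otimes H_{z_-}=\triv_{S_0}(H_u)(1\otimes H_y)=v^{-\ell(u)}(1\otimes H_y)$, whence
\[
1\otimes v^{\ell(z_-)}H_{z_-}=v^{\ell(z_-)-\ell(u)}(1\otimes H_y)=v^{\ell(y)}(1\otimes H_y)=1\otimes v^{\ell(x_-)}H_{x_-}.
\]

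\textbf{Step 3: conclusion and the obstacle.} Substituting Step 1 into the definition of $\presubscript{S_0}{\ch}$ and applying Step 2 to each $z$:
\begin{align*}
\presubscript{S_0}{\ch}([\pi_{S_0,S_1,*}(M)])
&=\sum_{x\in W_{S_0}\backslash W}\ \sum_{\substack{z\in W_{S_1}\backslash W\\ \pi(z)=x}}1\otimes v^{\ell(x_-)}\grk(M_{\ge z}/M_{>z})H_{x_-}\\
&=\sum_{z\in W_{S_1}\backslash W}1\otimes v^{\ell(z_-)}\grk(M_{\ge z}/M_{>z})H_{z_-}
=p\bigl(\presubscript{S_1}{\ch}([M])\bigr).
\end{align*}
The main obstacle is Step 1: one must order the fibre $\pi^{-1}(x)$ so that all the partial unions $I_j$ are open, so that Lemma~\ref{lem:M_K for K locally closed} identifies the successive quotients with the one-coset pieces $M_{\ge z}/M_{>z}$; granting this (and their graded freeness, which is already known and underlies the very definition of $\presubscript{S_1}{\ch}$), the passage to graded ranks is additive and Steps 2 and 3 are purely formal bookkeeping.
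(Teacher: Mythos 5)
Your proof is correct and follows essentially the same route as the paper: identify $(\pi_{S_0,S_1,*}M)_{\ge x}/(\pi_{S_0,S_1,*}M)_{>x}$ with $M_{\pi^{-1}(\ge x)}/M_{\pi^{-1}(>x)}$, slice it along a chain of open sets using Lemma~\ref{lem:M_K for K locally closed} into the pieces $M_{\ge z}/M_{>z}$ for $z$ in the fibre, and collapse $1\otimes v^{\ell(z_-)}H_{z_-}$ to $1\otimes v^{\ell(x_-)}H_{x_-}$ via the length-additive factorization $z_-=u\,x_-$ and the trivial character. The only differences are presentational (you sum over the fibre directly and verify openness of the chain explicitly, where the paper parametrizes by $W_{S_1}\backslash W_{S_0}$ and simply takes such a chain).
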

\begin{proof}
Fix $w\in W_{S_0}\backslash W$.
Denote the image of $w_-$ in $W_{S_1}\backslash W$ by $\overline{w_-}$.
For $w\in W_{S_{0}}\backslash W$, we put $I_{\ge w} = \{x\in W_{S_{0}}\backslash W\mid x\ge w\}$ and $I_{> w} = \{x\in W_{S_{0}}\backslash W\mid x > w\}$.
Let $\pi\colon W_{S_1}\backslash W\to W_{S_0}\backslash W$ be the natural projection.
Then we have 
\[
\text{$(\pi_{S_0,S_1,*}(M))_{\ge w} = M_{\pi^{-1}(I_{\ge w})}$ and $(\pi_{S_0,S_1,*}(M))_{> w} = M_{\pi^{-1}(I_{>w})}$}.
\]
We have
\[
\text{$\pi^{-1}(I_{\ge w}) = \{x\in W_{S_1}\backslash W\mid x\ge \overline{w_-}\}$ and $\pi^{-1}(I_{>w}) = \{x\in W_{S_1}\backslash W\mid x\ge \overline{w_-}\}\setminus (W_{S_1}\backslash W_{S_0}w_-)$}.
\]
Take a sequence of open subsets $\{x\in W_{S_1}\backslash W\mid x\ge \overline{w_-}\}\setminus (W_{S_1}\backslash W_{S_0}w_-) = J_0 \subset J_1\subset \dotsb\subset J_r =  \{x\in W_{S_1}\backslash W\mid x\ge \overline{w_-}\}$ such that $\#(J_i\setminus J_{i - 1}) = 1$.
Pick $x_i\in J_i\setminus J_{i - 1}$.
Then by Lemma~\ref{lem:M_K for K locally closed}, we have $M_{J_i}/M_{J_{i - 1}}\simeq M_{\ge x_i}/M_{>x_i}$.
Therefore 
\begin{align*}
\grk((\pi_{S_0,S_1,*}M)_{\ge w}/(\pi_{S_0,S_1,*}M)_{> w}) & = \grk(M_{\{x\in W_{S_1}\backslash W\mid x\ge \overline{w_-}\}}/M_{ \{x\in W_{S_1}\backslash W\mid x\ge \overline{w_-}\}\setminus (W_{S_{1}}\backslash W_{S_0}w_{-})})\\
& = \sum_{i = 1}^r\grk(M_{\ge x_i}/M_{> x_i})\\
& =  \sum_{x\in W_{S_1}\backslash W_{S_0}}\grk(M_{\ge xw_-}/M_{> xw_-}).
\end{align*}
Hence
\begin{align*}
\presubscript{S_0}{\ch}([\pi_{S_0,S_1,*}M]) & = \sum_{w\in W_{S_0}\backslash W}1\otimes v^{\ell(w_-)}\grk((\pi_{S_0,S_1,*}(M))_{\ge w}/(\pi_{S_0,S_1,*}(M))_{> w})H_{w_-}\\
& = \sum_{w\in W_{S_0}\backslash W}\sum_{x\in W_{S_1}\backslash W_{S_0}}1\otimes v^{\ell(w_-)}\grk(M_{\ge xw_-}/M_{>xw_-})H_{w_-}.
\end{align*}
For $w\in W_{S_0}\backslash W$ and $x\in W_{S_1}\backslash W_{S_0}$, we have $(xw_-)_- = x_-w_-$.
Moreover, since $\ell(x_-w_-) = \ell(x_-) + \ell(w_-)$, we have 
\begin{align*}
1\otimes v^{\ell((xw_-)_-)}H_{(xw_-)_-} & = 1\otimes v^{\ell(x_-)}v^{\ell(w_-)}H_{x_-}H_{w_-} \\& = \triv(H_{x_-})\otimes v^{\ell(x_-)}v^{\ell(w_-)}H_{w_-} \\ &= 1\otimes v^{\ell(w_-)}H_{w_-}.
\end{align*}
Therefore
\begin{align*}
\presubscript{S_0}{\ch}([\pi_{S_0,S_1,*}M]) 
& = \sum_{w\in W_{S_0}\backslash W}\sum_{x\in W_{S_1}\backslash W_{S_0}}1\otimes v^{\ell(x_-w_-)}\grk(M_{\ge xw_-}/M_{>xw_-})H_{x_-w_-}\\
& = \sum_{y\in W_{S_1}\backslash W}1\otimes v^{\ell(y_-)}\grk(M_{\ge y}/M_{> y})H_{y_-}\\
& = p(\presubscript{S_1}{\ch}([M])).
\end{align*}
We get the proposition.
\end{proof}

\begin{thm}\label{thm:Categorification}
The map $\presubscript{S_0}{\ch}$ is an $\mathcal{H}$-module isomorphism.
\end{thm}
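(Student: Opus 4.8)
The plan is to show $\presubscript{S_0}{\ch}$ is a well-defined homomorphism of right $\mathcal{H}$-modules which is bijective, by reducing everything to the already-established case $S_0=\emptyset$ via the push-forward functor $\pi_{S_0,\emptyset,*}$. First I would check that $\presubscript{S_0}{\ch}$ is a homomorphism of right $\mathcal{H}\simeq[\Sbimod{}]$-modules. Since $[\Sbimod{S_0}]$ is generated as an abelian group by the classes $[\presubscript{S_0}{B(w)}(n)]$ (Theorem~\ref{thm:classification}(2)), and the right action of $\mathcal{H}$ is by $[M]\mapsto[M\otimes B]$, it suffices to compute $\presubscript{S_0}{\ch}([M\otimes B_s])$ in terms of $\presubscript{S_0}{\ch}([M])$ and the right multiplication by $H_s$ on $\triv_{S_0}\otimes_{\mathcal{H}_{S_0}}\mathcal{H}$. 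This is a local computation on the subquotients $M_{\ge w}/M_{>w}$: one uses the filtration of $M\otimes B_s$ induced by that of $B_s$ (support $\{e,s\}$), exactly as in the nonsingular case treated in \cite{arXiv:1901.02336_accepted}, together with Proposition~\ref{prop:freeness of subquotients} to guarantee the relevant subquotients are graded free so that $\grk$ is additive in short exact sequences. Alternatively, and more cleanly, one observes that $\presubscript{\emptyset}{\ch}=\ch$ is an $\mathcal{H}$-module map, that $\ch([\pi_{S_0,\emptyset,*}N])=p_\emptyset(\ch[N])$ by Proposition~\ref{prop:push in Grothendieck group} (with $S_1=\emptyset$, $p_\emptyset\colon\mathcal{H}\to\triv_{S_0}\otimes_{\mathcal{H}_{S_0}}\mathcal{H}$), and that $p_\emptyset$ is a surjective $\mathcal{H}$-module map; since $\pi_{S_0,\emptyset,*}$ is compatible with the right $\Sbimod{}$-action and surjective on objects up to summands, this forces $\presubscript{S_0}{\ch}$ to be an $\mathcal{H}$-module homomorphism.

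Next I would prove surjectivity. The $\mathcal{H}$-module $\triv_{S_0}\otimes_{\mathcal{H}_{S_0}}\mathcal{H}$ has $\Z[v,v^{-1}]$-basis $\{1\otimes H_{w_-}\mid w\in W_{S_0}\backslash W\}$, where $w_-$ is the minimal-length representative. I claim that the transition matrix from $\{\presubscript{S_0}{\ch}([\presubscript{S_0}{B(w)}])\}_{w\in W_{S_0}\backslash W}$ to this basis is unitriangular with respect to the Bruhat order $\le$ on $W_{S_0}\backslash W$. Indeed, by Theorem~\ref{thm:classification}(1), $\supp_W(\presubscript{S_0}{B(w)})\subset\{x\le w\}$ and $\presubscript{S_0}{B(w)}^w\simeq R_w(\ell(w_-))$; translating the definition of $\presubscript{S_0}{\ch}$, the coefficient of $1\otimes H_{w_-}$ in $\presubscript{S_0}{\ch}([\presubscript{S_0}{B(w)}])$ is $v^{\ell(w_-)}\grk(\presubscript{S_0}{B(w)}_{\ge w}/\presubscript{S_0}{B(w)}_{>w})=v^{\ell(w_-)}\grk(\presubscript{S_0}{B(w)}^w)=v^{\ell(w_-)}\cdot v^{-\ell(w_-)}=1$ (using $M^w\simeq M_{\ge w}/M_{>w}$ for the maximal support element $w$, as in the proof of Theorem~\ref{thm:classification}), while all other nonzero coefficients involve $1\otimes H_{y_-}$ with $y<w$. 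Hence $\{\presubscript{S_0}{\ch}([\presubscript{S_0}{B(w)}])\}$ is a $\Z[v,v^{-1}]$-basis of $\triv_{S_0}\otimes_{\mathcal{H}_{S_0}}\mathcal{H}$, giving surjectivity.

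Finally, injectivity follows from the same unitriangularity: since $\{[\presubscript{S_0}{B(w)}(n)]\mid w\in W_{S_0}\backslash W,\ n\in\Z\}$ is a $\Z$-basis of $[\Sbimod{S_0}]$ (Theorem~\ref{thm:classification}(2)) and $\{1\otimes v^n H_{w_-}\}$ is a $\Z$-basis of $\triv_{S_0}\otimes_{\mathcal{H}_{S_0}}\mathcal{H}$, the $\Z[v,v^{-1}]$-unitriangular change of basis shows $\presubscript{S_0}{\ch}$ sends a basis to a basis, hence is a $\Z$-module (indeed $\mathcal{H}$-module) isomorphism. The main obstacle I anticipate is the first step — verifying that $\presubscript{S_0}{\ch}$ respects the right $\mathcal{H}$-action; once one invokes Proposition~\ref{prop:push in Grothendieck group} together with the fact that $\pi_{S_0,\emptyset,*}$ is essentially surjective onto $\Sbimod{S_0}$ up to direct summands and intertwines the $\Sbimod{}$-actions, this reduces to the nonsingular statement, and the remaining surjectivity and injectivity are formal consequences of the classification theorem and the support/unitriangularity estimates.
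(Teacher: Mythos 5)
Your overall strategy coincides with the paper's: prove unitriangularity of $\presubscript{S_0}{\ch}$ on the classes $[\presubscript{S_0}{B(w)}]$ to get bijectivity, and reduce compatibility with the right $\mathcal{H}$-action to the case $M=\pi_{S_0,\emptyset,*}(N)$ via Proposition~\ref{prop:push in Grothendieck group}. However, two steps are defective as written. First, the diagonal-coefficient computation is incorrect: for $w$ maximal in the support, $M_{\ge w}/M_{>w}$ equals $M_w$ (an inverse image), not $M^w$ (an image); the proof of Theorem~\ref{thm:classification} identifies $M^w$ with $M_{\{x\not>w\}}/M_{\{x\not\ge w\}}$, a quotient by the complementary closed subset, which is a different object, and in the nonsingular case the graded ranks of $M^w$ and $M_{\ge w}/M_{>w}$ differ by $v^{2\ell(w)}$ --- that is exactly the content of the lemma immediately after the definition of $\presubscript{S_0}{\ch}$. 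Moreover $\grk(\presubscript{S_0}{B(w)}^w)=v^{\ell(w_-)}$, not $v^{-\ell(w_-)}$, by Theorem~\ref{thm:classification}(1). Your two errors cancel to give the correct value $1$, but the argument does not establish the fact actually needed, namely $\grk(\presubscript{S_0}{B(w)}_{\ge w}/\presubscript{S_0}{B(w)}_{>w})=v^{-\ell(w_-)}$. A correct route: by Theorem~\ref{thm:classification}(3) and Proposition~\ref{prop:push in Grothendieck group}, the coefficient of $1\otimes H_{w_-}$ in $\presubscript{S_0}{\ch}([\pi_{S_0,\emptyset,*}B(w_-)])=p(\ch([B(w_-)]))$ equals $1$, since for $x<w_-$, writing $x=yx'_-$ with $y\in W_{S_0}$ one has $1\otimes H_x=v^{-\ell(y)}\otimes H_{x'_-}$ with $x'<w$; and every summand $\presubscript{S_0}{B(y)}(n)$ with $y<w$ contributes $0$ to this coefficient because its support avoids $w$, which pins down the coefficient for $\presubscript{S_0}{B(w)}$.

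Second, in your ``cleaner'' argument for the module property, the fact that $\pi_{S_0,\emptyset,*}$ is surjective on objects up to direct summands does not by itself imply that the classes $[\pi_{S_0,\emptyset,*}(N)]$ generate $[\Sbimod{S_0}]$, which is what the reduction to $M=\pi_{S_0,\emptyset,*}(N)$ actually requires: from $[\pi_{S_0,\emptyset,*}(N)]=[M]+[M']$ one cannot isolate $[M]$. The paper avoids this by proving the $\Z[v,v^{-1}]$-bijectivity of $\presubscript{S_0}{\ch}$ \emph{first} and then deducing the generation statement from the surjectivity of $p$ together with $\presubscript{S_0}{\ch}\circ[\pi_{S_0,\emptyset,*}]=p\circ\ch$; in your ordering you would instead need an independent argument, for instance induction on the Bruhat order using the decomposition in Theorem~\ref{thm:classification}(3). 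Both gaps are fixable with material you already cite, but as written the proof is incomplete at exactly these two points.
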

\begin{proof}
The split Grothendieck group $[\Sbimod{S_0}]$ has a $\Z[v,v^{-1}]$-basis $[\presubscript{S_0}{B(w)}]$ with $w\in W_{S_0}\backslash W$.
We have $\presubscript{S_0}{\ch}(\presubscript{S_0}{B(w)})\in 1\otimes H_{w_-} + \sum_{x < w}\Z[v,v^{-1}]\otimes H_{x_-}$.
Since $\{1\otimes H_{w_-}\mid w\in W_{S_0}\backslash W\}$ is a $\Z[v,v^{-1}]$-basis of $\triv_{S_0}\otimes_{\mathcal{H}_{S_0}}\mathcal{H}$, $\{\presubscript{S_0}{\ch}(\presubscript{S_0}{B(w)})\mid w\in W_{S_0}\backslash W\}$ is also a $\Z[v,v^{-1}]$-basis.
Hence $\presubscript{S_0}{\ch}$ is a $\Z[v,v^{-1}]$-module isomorphism.

Define $p$ as in Proposition~\ref{prop:push in Grothendieck group}.
Obviously $p$ is surjective and an $\mathcal{H}$-module homomorphism.
Therefore, with the previous lemma and the fact that $\ch$ and $\presubscript{S_0}{\ch}$ are bijectives, $\{\pi_{S_0,\emptyset,*}(N)\mid N\in \Sbimod{}\}$ generates $[\Sbimod{S_0}]$ as a $\Z[v,v^{-1}]$-module.
Therefore, to prove $\presubscript{S_0}{\ch}([M\otimes B]) = \presubscript{S_0}{\ch}([M])\ch([B])$, we may assume $M = \pi_{S_0,\emptyset,*}(N)$.
Note that $\ch$ is an algebra homomorphism.
Hence we have 
\begin{align*}
\presubscript{S_0}{\ch}([\pi_{S_0,\emptyset,*}(N)\otimes B]) 
& = \presubscript{S_0}{\ch}([\pi_{S_0,\emptyset,*}(N\otimes B)])\\
& = p(\ch([N\otimes B]))\\
& = p(\ch([N])\ch([B]))\\
& = p(\ch([N]))\ch([B])\\
& = \presubscript{S_0}{\ch}([\pi_{S_0,\emptyset,*}(N)])\ch([B]).
\end{align*}
We get the theorem.
\end{proof}

Recall that for each $w\in W$, we have an indecomposable Soergel bimodule $B(w)\in \Sbimod{}$.
We put $\underline{H}_{w} = \ch([B(w)])$.
By Proposition~\ref{prop:indecomposable for the longest}, we have $\ch(R\otimes_{R^{W_{S_{0}}}}R) = v^{-\ell(w_{S_{0}})}\underline{H}_{w_{S_{0}}}$ and by Lemma~\ref{lem:succ quot of Z}, we have $\underline{H}_{w_{S_{0}}} = v^{\ell(w_{S_{0}})}\sum_{w\in W_{S_0}}v^{-\ell(w)}H_w\in \mathcal{H}$.
The map $\triv_{S_0}\otimes_{\mathcal{H}_{S_0}}\mathcal{H}\to \mathcal{H}$ defined by $1\otimes h\mapsto v^{-\ell(w_{S_{0}})}\underline{H}_{w_{S_{0}}}h$ is well-defined.

\begin{lem}
Define $i\colon \triv_{S_0}\otimes_{\mathcal{H}_{S_0}}\mathcal{H}\to \mathcal{H}$ by $i(1\otimes h) = v^{-\ell(w_{S_{0}})}\underline{H}_{w_{S_{0}}}h$.
Then we have $\ch([\pi_{S_0,\emptyset}^*(M)]) = i(\presubscript{S_0}{\ch}([M]))$.
\end{lem}
\begin{proof}
By Lemma~\ref{prop:push in Grothendieck group} and Theorem~\ref{thm:Categorification}, the map $[\Sbimod{}]\to [\Sbimod{S_0}]$ induced by $\pi_{S_0,\emptyset,*}$ is surjective.
Hence we may assume $M = \pi_{S_0,\emptyset,*}(N)$ for some $N\in \Sbimod{}$.
Since $\pi_{S_0,\emptyset}^*\pi_{S_0,\emptyset,*}(N)\simeq (R\otimes_{R^{W_{S_0}}}R)\otimes_{R}N$, we have $[\pi_{S_0,\emptyset}^*\pi_{S_0,\emptyset,*}(N)] = [R\otimes_{R^{W_{S_0}}}R][N]$.
By Lemma~\ref{prop:push in Grothendieck group}, $1\otimes \ch([N]) = \presubscript{S_0}{\ch}([M])$.
Hence $\ch([\pi_{S_0,\emptyset}^*(M)]) = \ch([R\otimes_{R^{W_{S_0}}}R])\ch([N]) = i(1\otimes \ch([N])) = i(\presubscript{S_0}{\ch}([M]))$.
\end{proof}

We define some notation.
\begin{itemize}
\item Let $h\mapsto \overline{h}$ be the $\Z$-algebra involution on $\mathcal{H}$ defined by $\overline{\sum_x a_xH_x} = \sum_x \overline{a_x}H^{-1}_{x^{-1}}$, here for $f(v)\in \Z[v,v^{-1}]$, we put $\overline{f(v)} = f(v^{-1})$.
\item The map $a\otimes h\mapsto \overline{a}\otimes \overline{h}$ is well-defined on $\triv_{S_0}\otimes_{\mathcal{H}_{S_0}}\mathcal{H}$.
We write also $m\mapsto \overline{m}$ for this map.
\item Let $\omega\colon \mathcal{H}\to \mathcal{H}$ be a $\Z$-algebra anti-involution defined by $\omega(\sum_{x}a_xH_x) = \sum_{x}\overline{a_{x}}H_{x}^{-1}$.
\item For $m,m'\in \triv_{S_0}\otimes_{\mathcal{H}_{S_0}}\mathcal{H}$, take $a_{x},b_{x}\in \Z[v,v^{-1}]$ for each $x\in W_{S_0}\backslash W$ such that $m = \sum_{x}a_{x}\otimes H_{x_-}$ and $\overline{m'} = \sum_{x}b_{x}\otimes H_{x_-}$.
Then we define $\langle m,m'\rangle_{\mathcal{H},S_0} = \sum_x \overline{a_xb_x}$.
\end{itemize}

It is straightforward to see that $\langle mh,m'\rangle_{\mathcal{H},S_0} = \langle m,m'\omega(h)\rangle_{\mathcal{H},S_0}$ for $m,m'\in \triv_{S_0}\otimes_{\mathcal{H}_{S_0}}\mathcal{H}$, $h\in \mathcal{H}$.
When $S_0 = \emptyset$, we also have $\langle hm,m'\rangle_{\mathcal{H},\emptyset} = \langle m,\omega(h)m'\rangle_{\mathcal{H},\emptyset}$.

\begin{thm}[{cf.~\cite[Theorem~7.9]{MR2844932}}]\label{thm:hom formula}
For $M,N\in \Sbimod{S_0}$, the right $R$-module $\Hom^{\bullet}_{\Sbimod{S_0}}(M,N)$ is graded free and the graded rank is given by
\[
\grk\Hom^{\bullet}_{\Sbimod{S_0}}(M,N) = \langle \presubscript{S_0}{\ch}(M),\presubscript{S_0}{\ch}(N)\rangle_{\mathcal{H},S_0}.
\]
\end{thm}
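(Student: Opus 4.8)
The plan is to push everything down to the case $S_0=\emptyset$, which is known, by means of the functors attached to the inclusion $\emptyset\subset S_0$; write $\pi^{*}=\pi^{*}_{S_0,\emptyset}$ and $\pi_{*}=\pi_{S_0,\emptyset,*}$, recall that both preserve singular Soergel bimodules and commute with the shifts $(n)$, and let $i$ and $p$ be the maps defined just before the theorem (so $i\colon\triv_{S_0}\otimes_{\mathcal{H}_{S_0}}\mathcal{H}\to\mathcal{H}$, $1\otimes h\mapsto\ch(R\otimes_{R^{W_{S_0}}}R)h$, and $p\colon\mathcal{H}\to\triv_{S_0}\otimes_{\mathcal{H}_{S_0}}\mathcal{H}$, $h\mapsto 1\otimes h$). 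First I would combine the adjunction $(\pi^{*},\pi_{*})$ of Lemma~\ref{lem:adjointness in singular Soergel bimodules} with the splitting of Lemma~\ref{lem:pull-push is direct sum} to obtain, for $M,N\in\Sbimod{S_0}$, an isomorphism of graded $R$-modules
\[
\Hom^{\bullet}_{\Sbimod{}}(\pi^{*}M,\pi^{*}N)\simeq\Hom^{\bullet}_{\Sbimod{S_0}}(M,\pi_{*}\pi^{*}N)\simeq\bigoplus_{w\in W_{S_0}}\Hom^{\bullet}_{\Sbimod{S_0}}(M,N)(-2\ell(w)),
\]
noting that $\pi^{*}M,\pi^{*}N\in\Sbimod{}$ because $\pi^{*}$ preserves singular Soergel bimodules.

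Since the theorem is already known for $S_0=\emptyset$, the left-hand side is a finitely generated graded free $R$-module, and the right-hand side exhibits $\Hom^{\bullet}_{\Sbimod{S_0}}(M,N)$ (the summand indexed by $w=e$, which is $\Hom^{\bullet}_{\Sbimod{S_0}}(M,N)$ itself since $\ell(e)=0$) as a graded $R$-module direct summand of it; hence $\Hom^{\bullet}_{\Sbimod{S_0}}(M,N)$ is graded free by graded Nakayama, using that $\Coeff$ is complete local. Taking graded ranks in the displayed isomorphism and applying the $S_0=\emptyset$ Hom formula together with the identity $\ch([\pi^{*}M])=i(\presubscript{S_0}{\ch}(M))$ established above, I get
\[
\Bigl(\sum_{w\in W_{S_0}}v^{-2\ell(w)}\Bigr)\grk\Hom^{\bullet}_{\Sbimod{S_0}}(M,N)=\langle\, i(\presubscript{S_0}{\ch}(M)),\, i(\presubscript{S_0}{\ch}(N))\,\rangle_{\mathcal{H},\emptyset}.
\]

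It then remains to prove the purely algebraic identity
\[
\langle i(m),i(m')\rangle_{\mathcal{H},\emptyset}=\Bigl(\sum_{w\in W_{S_0}}v^{-2\ell(w)}\Bigr)\langle m,m'\rangle_{\mathcal{H},S_0}\qquad(m,m'\in\triv_{S_0}\otimes_{\mathcal{H}_{S_0}}\mathcal{H}),
\]
after which I cancel the nonzero factor $\sum_{w}v^{-2\ell(w)}$ in the integral domain $\Z[v,v^{-1}]$ and conclude (applied to $m=\presubscript{S_0}{\ch}(M)$, $m'=\presubscript{S_0}{\ch}(N)$). I would prove the identity in two elementary steps. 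First, $p\circ i=\bigl(\sum_{w\in W_{S_0}}v^{-2\ell(w)}\bigr)\cdot\id$ on $\triv_{S_0}\otimes_{\mathcal{H}_{S_0}}\mathcal{H}$, which follows from $\ch(R\otimes_{R^{W_{S_0}}}R)=\sum_{u\in W_{S_0}}v^{-\ell(u)}H_u$ and $1\otimes H_u h=v^{-\ell(u)}(1\otimes h)$ for $u\in W_{S_0}$. Second, $i$ and $p$ are adjoint for the two pairings, i.e. $\langle i(\mu),\eta\rangle_{\mathcal{H},\emptyset}=\langle\mu,p(\eta)\rangle_{\mathcal{H},S_0}$ for $\mu\in\triv_{S_0}\otimes_{\mathcal{H}_{S_0}}\mathcal{H}$ and $\eta\in\mathcal{H}$; both sides are conjugate-linear in $\mu$ and linear in $\eta$, so it suffices to check $\mu=1\otimes H_{x_-}$ ($x\in W_{S_0}\backslash W$) and $\eta=H_y$, and then — using $H_uH_{x_-}=H_{ux_-}$ with $\ell(ux_-)=\ell(u)+\ell(x_-)$ for $u\in W_{S_0}$, and expanding $\overline{H_y}=\sum_{w}\overline{r_{w,y}}H_w$ with $r_{w,y}\in\Z[v,v^{-1}]$ — both sides come out to $\sum_{u\in W_{S_0}}v^{\ell(u)}r_{ux_-,y}$. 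Combining the two steps gives $\langle i(m),i(m')\rangle_{\mathcal{H},\emptyset}=\langle m,p(i(m'))\rangle_{\mathcal{H},S_0}=\bigl(\sum_{w}v^{-2\ell(w)}\bigr)\langle m,m'\rangle_{\mathcal{H},S_0}$, which is the identity.

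The step I expect to be the main obstacle is this algebraic identity, and inside it the adjointness $\langle i(\mu),\eta\rangle_{\mathcal{H},\emptyset}=\langle\mu,p(\eta)\rangle_{\mathcal{H},S_0}$: its verification requires careful bookkeeping with the bar involution, with the translation between the basis $\{H_x\}_{x\in W}$ of $\mathcal{H}$ and the basis $\{1\otimes H_{x_-}\}$ of $\triv_{S_0}\otimes_{\mathcal{H}_{S_0}}\mathcal{H}$, and with length additivity for minimal-length coset representatives. By contrast the categorical input (the first displayed isomorphism) is routine given the adjunction of Lemma~\ref{lem:adjointness in singular Soergel bimodules} and the splitting of Lemma~\ref{lem:pull-push is direct sum}, and the passage from graded freeness of $\Hom^{\bullet}_{\Sbimod{}}(\pi^{*}M,\pi^{*}N)$ to that of $\Hom^{\bullet}_{\Sbimod{S_0}}(M,N)$ is a standard application of graded Nakayama.
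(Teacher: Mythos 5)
Your proof is correct, but it is organized differently from the paper's. The paper first uses the surjectivity of $[\Sbimod{}]\to[\Sbimod{S_0}]$ (Proposition~\ref{prop:push in Grothendieck group} plus Theorem~\ref{thm:Categorification}, together with Krull--Schmidt additivity of both sides) to reduce to $M=\pi_{S_0,\emptyset,*}(M_0)$, $N=\pi_{S_0,\emptyset,*}(N_0)$, then applies the adjunction once to identify $\Hom^{\bullet}_{\Sbimod{S_0}}(M,N)\simeq\Hom^{\bullet}_{\Sbimod{}}(\pi_{S_0,\emptyset}^*\pi_{S_0,\emptyset,*}(M_0),N_0)$, so graded freeness is immediate from the $S_0=\emptyset$ case, and finally verifies the rank formula by an explicit expansion in $\mathcal{H}$ using $h=\ch(B(w_{S_0}))$, $\overline{h}=h$, $\omega(h)=h$ and the coefficients $a'_w,b'_w$. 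You instead keep $M,N$ arbitrary, apply $\pi^{*}$ to both variables, and use Lemma~\ref{lem:pull-push is direct sum} to produce the factor $\sum_{w\in W_{S_0}}v^{-2\ell(w)}$, which you then cancel after proving the clean compatibility $p\circ i=\bigl(\sum_{w}v^{-2\ell(w)}\bigr)\id$ and $\langle i(\mu),\eta\rangle_{\mathcal{H},\emptyset}=\langle\mu,p(\eta)\rangle_{\mathcal{H},S_0}$; I checked this adjointness identity on the bases $1\otimes H_{x_-}$, $H_y$ and it holds (both sides equal $\sum_{u\in W_{S_0}}v^{\ell(u)}r_{ux_-,y}$), so your Hecke-algebra bookkeeping is sound and arguably more conceptual than the paper's expansion. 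The trade-offs: your route avoids the Grothendieck-group surjectivity/Krull--Schmidt reduction, but it needs two extra (standard) ingredients you should state explicitly --- that $\Hom^{\bullet}_{\Sbimod{S_0}}(M,N)$ is finitely generated over $R$ (noted in the paper just before the definition of $\Sbimod{S_0}$), and that a finitely generated graded direct summand of a graded free $R$-module is graded free, which works because $R$ is non-negatively graded with local degree-zero part $\Coeff$ (completeness is not even needed here, only locality); also note that the cancellation of $\sum_{w}v^{-2\ell(w)}$ is legitimate since $\Z[v,v^{-1}]$ is an integral domain. With those points made explicit, your argument is a complete and valid alternative proof.
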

\begin{proof}
If $S_0 = \emptyset$, then this is \cite[Theorem~4.6]{MR4321542}.

We prove the general case.
By Lemma~\ref{prop:push in Grothendieck group} and Theorem~\ref{thm:Categorification}, the map $[\Sbimod{}]\to [\Sbimod{S_0}]$ induced by $\pi_{S_0,\emptyset,*}$ is surjective.
Therefore we may assume $M = \pi_{S_0,\emptyset,*}(M_0)$ and $N = \pi_{S_0,\emptyset,*}(N_0)$ for some $M_0,N_0\in \Sbimod{}$.
Then 
\[
\Hom^{\bullet}_{\Sbimod{S_0}}(M,N)\simeq \Hom^{\bullet}_{\Sbimod{}}(\pi_{S_0,\emptyset}^*(\pi_{S_0,\emptyset,*}(M_0)),N_0)
\]
is graded free and the graded rank is $\langle \ch(\pi_{S_0,\emptyset}^*(\pi_{S_0,\emptyset,*}(M_0))),\ch(N_0)\rangle_{\mathcal{H},\emptyset}$.
Take $a_x,b_x\in \Z[v,v^{-1}]$ such that $\ch(M_0) = \sum_x a_xH_x$ and $\overline{\ch(N_0)} = \sum_x b_xH_x$.
For each $w\in W_{S_0}\backslash W$, set $a'_w = \sum_{y\in W_{S_0}}v^{-\ell(y)}a_{yw_-}$ and $b'_w = \sum_{y\in W_{S_0}}v^{-\ell(y)}b_{yw_-}$.
Then we have $\presubscript{S_0}{\ch}([M]) = p(\ch([M_0])) = \sum_{w\in W_{S_0}\backslash W}a'_w\otimes H_{w_-}$.
Similarly, we also have $\overline{\presubscript{S_0}{\ch}([M])} = \overline{p(\ch([M_0]))} = p(\overline{\ch([M_0])}) = \sum_{w\in W_{S_0}\backslash W}b'_w\otimes H_{w_-}$.
Hence $\langle \presubscript{S_0}{\ch}([M]),\presubscript{S_0}{\ch}([N])\rangle_{\mathcal{H},S_0} = \sum_{w\in W_{S_0}\backslash W}\overline{a'_{w}b'_{w}}$.

We have $\underline{H}_{w_{S_{0}}}H_x = v^{-\ell(x)}\underline{H}_{w_{S_{0}}}$ for any $x\in W_{S_0}$.
Hence for $x = yw_-$ with $y\in W_{S_0}$ and $w\in W_{S_0}\backslash W$, we have $\underline{H}_{w_{S_{0}}}H_x = v^{-\ell(y)}\underline{H}_{w_{S_{0}}}H_{w_-}$.
Therefore 
\begin{align*}
\underline{H}_{w_{S_{0}}}\ch([M_0]) & = \sum_{y\in W_{S_0},w\in W_{S_0}\backslash W}a_{yw_-}v^{-\ell(y)}\underline{H}_{w_{S_{0}}}H_{w_-} \\ & = \sum_{w\in W_{S_0}\backslash W}a'_w\underline{H}_{w_{S_{0}}}H_{w_-} \\ & = \sum_{w\in W_{S_0}\backslash W,y\in W_{S_0}}a'_wv^{\ell(w_{S_0}) - \ell(y)}H_{yw_-}.
\end{align*}
In the last part we use $\underline{H}_{w_{S_{0}}} = v^{\ell(w_{S_{0}})}\sum_{w\in W_{S_0}}v^{-\ell(w)}H_w$.
Since $\overline{\underline{H}_{w_{S_{0}}}} = \underline{H}_{w_{S_{0}}}$, we have $\overline{\underline{H}_{w_{S_{0}}}\ch([N_0])} = \underline{H}_{w_{S_{0}}}\overline{\ch([N_0])} = \sum_{w\in W_{S_0}\backslash W}b'_w\underline{H}_{w_{S_{0}}}H_{w_-}$.
Therefore, by using $\omega(\underline{H}_{w_{S_{0}}}) = \underline{H}_{w_{S_{0}}}$, 
\begin{align*}
\grk\Hom^{\bullet}_{\Sbimod{S_0}}(M,N) & = 
\langle \ch([\pi_{S_0,\emptyset}^*(\pi_{S_0,\emptyset,*}(M_0))]),\ch([N_0])\rangle_{\mathcal{H},\emptyset}\\
& = \langle \ch([R\otimes_{R^W}R])\ch([M_0]),\ch([N_0])\rangle_{\mathcal{H},\emptyset}\\
& = \langle v^{- \ell(w_{S_0})}\underline{H}_{w_{S_{0}}}\ch([M_0]),\ch([N_0])\rangle_{\mathcal{H},\emptyset}\\
& = \langle v^{- \ell(w_{S_0})}\ch([M_0]),\omega(\underline{H}_{w_{S_{0}}})\ch([N_0])\rangle_{\mathcal{H},\emptyset}\\
& = \langle v^{- \ell(w_{S_0})}\ch([M_0]),\underline{H}_{w_{S_{0}}}\ch([N_0])\rangle_{\mathcal{H},\emptyset}\\
& = \left\langle v^{- \ell(w_{S_0})}\ch([M_0]),\underline{H}_{w_{S_{0}}}\overline{\sum_{w\in W_{S_0}\backslash W}b'_wH_{w_-}}\right\rangle_{\mathcal{H},\emptyset}\\
& = \left\langle v^{- \ell(w_{S_0})}\underline{H}_{w_{S_{0}}}\ch([M_0]),\overline{\sum_{w\in W_{S_0}\backslash W}b'_wH_{w_-}}\right\rangle_{\mathcal{H},\emptyset}\\
& = \left\langle\sum_{w\in W_{S_0}\backslash W,y\in W_{S_0}}a'_wv^{- \ell(y)}H_{yw_-},\overline{\sum_{w\in W_{S_0}\backslash W}b'_wH_{w_-}}\right\rangle_{\mathcal{H},\emptyset}\\
& = \sum_{w\in W_{S_0}\backslash W}\overline{a'_{w}b'_{w}} = \langle\presubscript{S_0}{\ch}([M]),\presubscript{S_0}{\ch}([N])\rangle.
\end{align*}
We get the theorem.
\end{proof}

\begin{prop}
For $M\in \Sbimod{S_0}$, we have $\presubscript{S_0}{\ch}(D(M))\simeq \overline{\presubscript{S_0}{\ch}(M)}$.
\end{prop}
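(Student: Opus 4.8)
The plan is to reduce the identity to the already available non‑singular case, $\ch(D(N)) = \overline{\ch(N)}$ for $N\in\Sbimod{}$ (\cite{arXiv:1901.02336_accepted}), and to propagate it through the functor $\pi_{S_0,\emptyset,*}$, exploiting that this functor is compatible both with duality and with the character maps.

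First I would note that both assignments $[M]\mapsto \presubscript{S_0}{\ch}(D(M))$ and $[M]\mapsto \overline{\presubscript{S_0}{\ch}(M)}$ define additive maps $[\Sbimod{S_0}]\to \triv_{S_0}\otimes_{\mathcal{H}_{S_0}}\mathcal{H}$: this is clear for the second, and for the first it holds because $D$ is additive on objects and, by $D(M(n))\simeq D(M)(-n)$ and $D^2\simeq\id$ on $\Sbimod{S_0}$, induces an additive endomorphism of the split Grothendieck group. Hence it suffices to verify the identity on a set of abelian‑group generators of $[\Sbimod{S_0}]$. As recalled in the proof of Theorem~\ref{thm:Categorification}, the classes $[\pi_{S_0,\emptyset,*}(N)]$ with $N\in\Sbimod{}$ generate $[\Sbimod{S_0}]$ over $\Z[v,v^{-1}]$; since this family is stable under grading shift (because $[\pi_{S_0,\emptyset,*}(N)(n)] = [\pi_{S_0,\emptyset,*}(N(n))]$ and $N(n)\in\Sbimod{}$), it already generates over $\Z$. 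So I may assume $M = \pi_{S_0,\emptyset,*}(N)$ for some $N\in\Sbimod{}$.

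For such $M$, the proposition that $\presubscript{S_0}{D}$ commutes with $\pi_{S_0,\emptyset,*}$ gives $D(M)\simeq \pi_{S_0,\emptyset,*}(D(N))$, and Proposition~\ref{prop:push in Grothendieck group} applied with $S_1=\emptyset$ (where $\presubscript{\emptyset}{\ch}=\ch$) yields $\presubscript{S_0}{\ch}(D(M)) = p(\ch(D(N)))$ and $\presubscript{S_0}{\ch}(M) = p(\ch(N))$, with $p$ the map $1\otimes h\mapsto 1\otimes h$. The bar involution on $\triv_{S_1}\otimes_{\mathcal{H}_{S_1}}\mathcal{H}$ sends $1\otimes h$ to $1\otimes\overline{h}$ for every $S_1$, so $p$ intertwines the bar involutions, whence $\overline{\presubscript{S_0}{\ch}(M)} = p(\overline{\ch(N)})$. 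Invoking the case $S_0=\emptyset$, namely $\ch(D(N)) = \overline{\ch(N)}$, I then get $\presubscript{S_0}{\ch}(D(M)) = p(\ch(D(N))) = p(\overline{\ch(N)}) = \overline{\presubscript{S_0}{\ch}(M)}$, which is the claim.

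The only step carrying genuine content is the non‑singular identity $\ch\circ D = \overline{(-)}\circ\ch$; everything else is formal bookkeeping with the compatibilities already established. Should a self‑contained proof of the non‑singular case be preferred, the natural route is to combine Lemma~\ref{lem:dual of M^I} with the graded‑rank formulas of \cite[Corollary~3.18, Proposition~3.19]{arXiv:1901.02336_accepted} that enter the lemma preceding Lemma~\ref{lem:M_K for K locally closed}, together with the expansion of $H_{w}^{-1}$ in the standard basis; but I would treat that as an independent matter and keep it separate from the singular argument above.
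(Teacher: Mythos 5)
Your reduction to the non-singular case coincides with the paper's proof: write $M=\pi_{S_0,\emptyset,*}(N)$ (possible because $[\Sbimod{}]\to[\Sbimod{S_0}]$ is surjective), use the compatibility $\presubscript{S_0}{D}\circ\pi_{S_0,\emptyset,*}\simeq\pi_{S_0,\emptyset,*}\circ\presubscript{\emptyset}{D}$ together with Proposition~\ref{prop:push in Grothendieck group}, and note that $p\colon h\mapsto 1\otimes h$ intertwines the bar involutions; all of this is correct, and your care about $\Z$- versus $\Z[v,v^{-1}]$-generation is fine. The one point where you diverge is exactly the step you yourself call the only one with content: you take $\ch(D(N))=\overline{\ch(N)}$ for $N\in\Sbimod{}$ as a quotable fact from \cite{arXiv:1901.02336_accepted}. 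The paper does not cite this; it proves it (that is the second half of its proof), and the duality $D$ is set up in the present paper, so a complete argument cannot simply outsource the base case to that reference. Your fallback sketch (Lemma~\ref{lem:dual of M^I} plus the graded-rank identities plus expanding $H_w^{-1}$ in the standard basis via $R$-polynomials) is not carried out and is much heavier than necessary, so as written the proposal has a gap at the non-singular case.

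The gap is closed by a short argument that fits your framework verbatim and is what the paper does. Both $[N]\mapsto\ch(D(N))$ and $[N]\mapsto\overline{\ch(N)}$ are additive, send $[N(1)]$ to $v^{-1}$ times the value at $[N]$, and are multiplicative: for the first use $D(N\otimes B)\simeq D(N)\otimes D(B)$ (the first proposition of the Duality subsection, with $S_0=\emptyset$) and that $\ch$ is a ring isomorphism; for the second use that the bar involution is a ring involution of $\mathcal{H}$. Since $[\Sbimod{}]\simeq\mathcal{H}$ is generated as a $\Z[v,v^{-1}]$-algebra by the classes $[B_s]$, it suffices to check the identity on $R_e$ and on $B_s$, where $D(R_e)\simeq R_e$, $D(B_s)\simeq B_s$, and $\overline{\ch(B_s)}=\overline{H_s+v}=H_s^{-1}+v^{-1}=(H_s+v-v^{-1})+v^{-1}=\ch(B_s)$. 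With this verification inserted in place of the citation, your proof agrees with the paper's.
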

\begin{proof}
Since $[\Sbimod{}]\to [\Sbimod{S_0}]$ defined by $[M]\mapsto [\pi_{S_0,\emptyset,*}(M)]$ is surjective, we may assume $M = \pi_{S_0,\emptyset,*}(M_0)$ for some $M_0\in \Sbimod{}$.
Let $p\colon \mathcal{H}\to \triv_{S_0}\otimes_{\mathcal{H}_{S_0}}\mathcal{H}$ be the map defined by $h\mapsto 1\otimes h$.
Then $\presubscript{S_0}{\ch}(D(M)) = p(\ch(D(M_0)))$ and $\overline{\presubscript{S_0}{\ch}(M)} = p(\overline{\ch(M_0)})$.
Therefore we may assume $S_0 = \emptyset$.

The $\Z[v,v^{-1}]$-algebra $[\Sbimod{}]\simeq \mathcal{H}$ is generated by $[B_s]$ with $s\in S$.
Hence we may assume $M = B_s$.
In this case, $D(B_s)\simeq B_s$ and $\overline{\ch(B_s)} = \overline{H_s} = H_s$.
Hence we get the proposition.
\end{proof}

\section{Parity sheaves and singular Soergel bimodules}
We continue to assume that $\Coeff$ is complete local Noetherian integral domain.

\subsection{General notation}
For an algebraic variety $X$ with an action of an algebraic group $B$, let $D_B^{\mathrm{b}}(X)$ be the bounded $B$-equivariant derived category of constructible $\Coeff$-coefficient sheaves.
Let $f\colon X\to Y$ be a morphism between algebraic varieties with $B$-actions and assume that $f$ commutes with the $B$-actions.
Then we have functors $f^!,f^*\colon D_B^{\mathrm{b}}(Y)\to D_B^{\mathrm{b}}(X)$ and $f_!,f_*\colon D_B^{\mathrm{b}}(X)\to D_B^{\mathrm{b}}(Y)$.
We also have the Verdier dual functor $D = D_X\colon D_B^{\mathrm{b}}(X)\to D_B^{\mathrm{b}}(X)$.
For $\mathcal{F}\in D_B^{\mathrm{b}}(X)$, the $n$-th $B$-equivariant cohomology of $\mathcal{F}$ is denoted by $H_B^n(X,\mathcal{F})$ and we put $H^{\bullet}_B(X,\mathcal{F}) = \bigoplus_n H_B^n(X,\mathcal{F})$.
We also put $\Hom^{\bullet}_{D_B^{\mathrm{b}}(X)}(\mathcal{F},\mathcal{G}) = \bigoplus_n\Hom_{D_B^{\mathrm{b}}(X)}(\mathcal{F},\mathcal{G}[n])$ for $\mathcal{F},\mathcal{G}\in D_B^{\mathrm{b}}(X)$.
The constant sheaf on $X$ is denoted by $\Coeff_X$ or just $\Coeff$.
The analogous notation applies for ind-varieties.

\subsection{Theorem}
Let $G$ be a Kac-Moody group over $\C$ attached to a generalized Cartan matrix.
We also have the Borel subgroup $B\subset G$, the unipotent radical $U\subset B$ and the Cartan subgroup $T\subset B$ such that $B = TU$.

Let $\Phi$ be the set of roots, $\Pi$ the set of simple roots and $W$ the Weyl group.
For each $\alpha\in\Phi$, we have the reflection $s_{\alpha}\in W$.
The subset $S = \{s_{\alpha}\mid \alpha\in\Pi\}$ gives a structure of a Coxeter system to $W$.
Let $X^*(T)$ be the character group of $T$ and set $V = X^*(T)\otimes_{\Z}\Coeff$.
For each $s = s_{\alpha}\in S$ with $\alpha\in\Pi$, we put $\alpha_s = \alpha$ and $\alpha_s^\vee = \alpha^\vee$.
Then with $(V,\{(\alpha_s,\alpha_s^\vee)\}_{s\in S})$, we have the category of Soergel bimodules $\Sbimod{}$.
We say that a subset $I\subset \Pi$ is of finite type if the subgroup of $W$ generated by $S_{I} = \{s_{\alpha}\mid \alpha\in I\}$ is finite.
We fix such $I$ throughout this section and put $\Sbimod{I} = \Sbimod{S_{I}}$, $W_{I} = W_{S_{I}}$.

With $I$, we have a parabolic subgroup $P_I \subset G$.
Let $\presubscript{I}{X} = P_I\backslash G$ be the generalized flag variety attached to $I$.
We also put $X = \presubscript{\emptyset}{X}$.
For each $w\in W_I\backslash W$, we have the Schubert variety $\presubscript{I}{X}_{\le w}\subset \presubscript{I}{X}$ and the Schubert cell $\presubscript{I}{X}_{w}\subset \presubscript{I}{X}_{\le w}$.
We set $\presubscript{I}{X}_{<w} = \presubscript{I}{X}_{\le w}\setminus \presubscript{I}{X}_{w}$.
Let $j_{\le w}$, $j_{<w}$ and $j_{w}$ be the inclusion maps $\presubscript{I}{X}_{\le w}\hookrightarrow \presubscript{I}{X}$, $\presubscript{I}{X}_{<w}\hookrightarrow \presubscript{I}{X}$ and $\presubscript{I}{X}_{w}\hookrightarrow \presubscript{I}{X}$, respectively.
If $J$ is a subset of $I$, we have the projection $\pi_{I,J}\colon \presubscript{J}{X}\to \presubscript{I}{X}$.
Let $\Parity_B(\presubscript{I}{X})\subset D^{\mathrm{b}}_B(\presubscript{I}{X})$ be the category of $B$-equivariant parity sheaves on $\presubscript{I}{X}$ with respect to the stratification by Schubert cells~\cite{MR3230821}.
For each $w\in W_I\backslash W$, there exists an indecomposable parity sheaf $\presubscript{I}{\mathcal{E}(w)}$ such that $\supp(\presubscript{I}{\mathcal{E}(w)})\subset \presubscript{I}{X}_{\le w}$ and $\presubscript{I}{\mathcal{E}(w)}|_{\presubscript{I}{X}_{w}}\simeq \Coeff[\ell(w)]$.
The functors $\pi_{I,J,*}$ and $\pi_{I,J}^*$ preserve the parity sheaves~\cite[Proposition~4.10]{MR3230821}.

Throughout this section, we assume the following.
\begin{itemize}
\item The torsion primes of $L_I$ are invertible in $\Coeff$. (See \cite[2.6]{MR3230821}.)
\item Let $\alpha,\beta$ be distinct positive roots of $L_I$.
Then $\{\alpha,\beta\}$ is linearly independent in $V/\mathfrak{m}V$ for any maximal ideal $\mathfrak{m}\subset\Coeff$.
\end{itemize}

Let $\mathcal{F}\in D^{\mathrm{b}}_B(\presubscript{I}{X})$ and $\mathcal{G}\in D^{\mathrm{b}}_B(X)$.
We define the convolution product $\mathcal{F}*\mathcal{G}\in D^{\mathrm{b}}_B(\presubscript{I}{X})$ as follows.
Let $p\colon G\to \presubscript{I}{X}$ be the natural projection and
\[
m\colon \presubscript{I}{X}\overset{B}{\times}G\to \presubscript{I}{X},\quad q\colon \presubscript{I}{X}\times G\to \presubscript{I}{X}\overset{B}{\times}G
\]
be the action map of $G$ on $X$ and the natural projection, respectively.
Then there exists unique $\mathcal{F}\overset{B}{\boxtimes}p^*\mathcal{G}\in D_B^{\mathrm{b}}(\presubscript{I}{X}\overset{B}{\times}G)$ such that $q^*(\mathcal{F}\overset{B}{\boxtimes}p^*\mathcal{G})\simeq \mathcal{F}\boxtimes p^*\mathcal{G}$.
Now we put $\mathcal{F}*\mathcal{G} = m_*(\mathcal{F}\overset{B}{\boxtimes}p^*\mathcal{G})$.
If $\mathcal{F}\in \Parity_B(\presubscript{I}{X})$ and $\mathcal{G}\in \Parity_B(X)$ then $\mathcal{F}*\mathcal{G}\in \Parity_B(\presubscript{I}{X})$~\cite[Theorem~4.8]{MR3230821}.

In this section, we prove the following.
We write $\pi_{I,J,*}$ (resp.\ $\pi_{I,J}^*$) for $\pi_{S_I,S_J,*}$ (resp.\ $\pi_{S_I,S_J}^*$) where $S_I = \{s_{\alpha}\mid \alpha\in I\}$.
Note that this notation is the same as the push-forward (resp.\ pull-back) with respect to $\pi_{I,J}\colon \presubscript{J}{X}\to \presubscript{I}{X}$.
The author thinks that the readers are not confused by this.
\begin{thm}\label{thm:equivalence}
There exists an equivalence of categories $\presubscript{I}{\mathbb{H}}\colon \Parity_B(\presubscript{I}{X})\to \Sbimod{I}$.
The functor satisfies the following.
\begin{enumerate}
\item For $\mathcal{F}\in \Parity_B(\presubscript{I}{X})$ and $\mathcal{G}\in \Parity_B(X)$, we have $\presubscript{I}{\mathbb{H}}(\mathcal{F}*\mathcal{G})\simeq \presubscript{I}{\mathbb{H}}(\mathcal{F})\otimes\mathbb{H}(\mathcal{G})$, here we put $\mathbb{H} = \presubscript{\emptyset}{\mathbb{H}}$.
\item For $J\subset I$, we have $\presubscript{I}{\mathbb{H}}\circ\pi_{I,J,*}\simeq \pi_{I,J,*}\circ\presubscript{J}{\mathbb{H}}$ and $\presubscript{J}{\mathbb{H}}\circ \pi_{I,J}^*\simeq \pi_{I,J}^*\circ\presubscript{I}{\mathbb{H}}$.
\item We have $D\circ\presubscript{I}{\mathbb{H}}\simeq \presubscript{I}{\mathbb{H}}\circ D$.
\end{enumerate}
\end{thm}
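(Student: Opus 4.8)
The plan is to take for $\presubscript{I}{\mathbb{H}}$ the $B$-equivariant hypercohomology functor $\mathcal{F}\mapsto\mathbb{H}^{\bullet}_B(\presubscript{I}{X},\mathcal{F})$, and to equip it with the structure of an object of $\BigCat{S_0}$ as follows. The right $R$-action is the one induced by $B$-equivariance; the left $R^{W_{S_0}}$-action comes from identifying $D^{\mathrm{b}}_B(\presubscript{I}{X})$ with the category of $(P_I\times B)$-equivariant sheaves on $G$ and using $H^{\bullet}_{P_I}(\mathrm{pt})=H^{\bullet}_{L_I}(\mathrm{pt})=R^{W_{S_0}}$, which is where the torsion-prime hypothesis on $L_I$ enters (it makes the cohomology of the flag variety of $L_I$ graded free over $\Coeff$, hence $R^{W_{S_0}}\hookrightarrow R$ and $\mathbb{H}^{\bullet}_B(\presubscript{I}{X},\mathcal{F})$ graded free over $R^{W_{S_0}}$ for $\mathcal{F}$ parity). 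The $W_{S_0}\backslash W$-decomposition of $\presubscript{I}{\mathbb{H}}(\mathcal{F})\otimes_R Q$ is obtained from the stratification of $\presubscript{I}{X}$ by Schubert cells: over $Q$ the attaching triangles split, giving $\bigoplus_{w}\mathbb{H}^{\bullet}_B(\presubscript{I}{X}_w,\mathcal{F}|_{\presubscript{I}{X}_w})\otimes_R Q$, and on the $w$-summand the localized left action is twisted by $w^{-1}$, since the torus weights along the cell $\presubscript{I}{X}_w$ are governed by $w$; this is exactly the relation $fm=mw^{-1}(f)$ defining $\BigCat{S_0}$. That morphisms of sheaves become morphisms in $\BigCat{S_0}$ is routine.

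Before checking that $\presubscript{I}{\mathbb{H}}$ lands in $\Sbimod{S_0}$ I would establish (1)--(3), which are essentially formal. For (1): a K\"unneth isomorphism $\mathbb{H}^{\bullet}_B(\presubscript{I}{X}\overset{B}{\times}G,\mathcal{F}\overset{B}{\boxtimes}p^*\mathcal{G})\cong\presubscript{I}{\mathbb{H}}(\mathcal{F})\otimes_R\mathbb{H}(\mathcal{G})$, which holds without higher Tor because $\mathbb{H}(\mathcal{G})$ is graded free over $R$ for $\mathcal{G}$ parity, composed with $\mathbb{H}^{\bullet}_B(\presubscript{I}{X},m_*(-))=\mathbb{H}^{\bullet}_B(\presubscript{I}{X}\overset{B}{\times}G,-)$ (properness of $m$). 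For (2): $\presubscript{I}{\mathbb{H}}(\pi_{I,J,*}\mathcal{F})=\mathbb{H}^{\bullet}_B(\presubscript{J}{X},\mathcal{F})=\presubscript{J}{\mathbb{H}}(\mathcal{F})$, and under this identification the $(R^{W_{S_I}},R)$-structure is the restriction of the $(R^{W_{S_J}},R)$-structure while the cells of $\presubscript{J}{X}$ lying over a fixed cell of $\presubscript{I}{X}$ get grouped together --- that is, the functor $\pi_{S_I,S_J,*}$; the statement for $\pi_{I,J}^*$ then follows from uniqueness of adjoints together with Lemma~\ref{lem:adjointness in singular Soergel bimodules} and the geometric adjunction $(\pi_{I,J}^*,\pi_{I,J,*})$ (valid as $\pi_{I,J}$ is proper), or directly from the base-change identification $\mathbb{H}^{\bullet}_B(\presubscript{J}{X},\pi_{I,J}^*\mathcal{F})\cong R^{W_{S_J}}\otimes_{R^{W_{S_I}}}\presubscript{I}{\mathbb{H}}(\mathcal{F})$, the change-of-group spectral sequence degenerating by graded freeness. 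For (3): equivariant Poincar\'e--Verdier duality gives, for $\mathcal{F}$ parity (hence with finite, so proper, support, whence $!$- and $*$-pushforwards to a point agree), a natural isomorphism $\mathbb{H}^{\bullet}_B(\presubscript{I}{X},D\mathcal{F})\cong\Hom^{\bullet}_{\text{-$R$}}(\presubscript{I}{\mathbb{H}}(\mathcal{F}),R)=\presubscript{S_0}{D}(\presubscript{I}{\mathbb{H}}(\mathcal{F}))$ compatible with the bimodule and $W_{S_0}\backslash W$ structures. That $\presubscript{I}{\mathbb{H}}$ lands in $\Sbimod{S_0}$ then follows from $\mathbb{H}(\mathcal{E}(e))\cong R_e$ and $\mathbb{H}(\mathcal{E}(s))\cong B_s$: every parity sheaf on $\presubscript{I}{X}$ is a direct sum of shifts of summands of the $\pi_{I,\emptyset,*}\mathcal{E}(w_-)$, and $\mathcal{E}(w_-)$ is a summand of a Bott--Samelson sheaf on $X$, which (1) sends to a Bott--Samelson bimodule in $\Sbimod{}$; so (1) and (2) present $\presubscript{I}{\mathbb{H}}(\mathcal{F})$ as a summand of $\pi_{S_0,\emptyset,*}$ of an object of $\Sbimod{}$.

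The heart is full faithfulness, which I would first reduce to the case $S_0=\emptyset$. Since every parity sheaf on $\presubscript{I}{X}$ is, up to shift, a direct summand of some $\pi_{I,\emptyset,*}\mathcal{E}_0$ with $\mathcal{E}_0\in\Parity_B(X)$, and full faithfulness on a Hom-space passes to direct summands, it suffices to compare $\Hom^{\bullet}(\pi_{I,\emptyset,*}\mathcal{E}_0,\pi_{I,\emptyset,*}\mathcal{E}_0')$ with $\Hom^{\bullet}_{\Sbimod{S_0}}(\pi_{S_0,\emptyset,*}\mathbb{H}\mathcal{E}_0,\pi_{S_0,\emptyset,*}\mathbb{H}\mathcal{E}_0')$. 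Applying the geometric adjunction $(\pi_{I,\emptyset}^*,\pi_{I,\emptyset,*})$, Lemma~\ref{lem:adjointness in singular Soergel bimodules}, and (2) to identify $\pi_{S_0,\emptyset}^*\pi_{S_0,\emptyset,*}\mathbb{H}\mathcal{E}_0\cong\mathbb{H}(\pi_{I,\emptyset}^*\pi_{I,\emptyset,*}\mathcal{E}_0)$, this becomes the comparison for the pair $(\pi_{I,\emptyset}^*\pi_{I,\emptyset,*}\mathcal{E}_0,\mathcal{E}_0')$ of parity sheaves on $X$ (parity because $\pi_{I,\emptyset,*}$ and $\pi_{I,\emptyset}^*$ preserve parity), so everything reduces to full faithfulness of $\mathbb{H}\colon\Parity_B(X)\to\Sbimod{}$. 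For this --- the parity-sheaf version of Soergel's extension theorem, and the real obstacle --- I would proceed as follows. By Theorem~\ref{thm:hom formula} with $S_0=\emptyset$, $\Hom^{\bullet}_{\Sbimod{}}(B,B')$ is graded free over $R$ of graded rank $\langle\ch B,\ch B'\rangle_{\mathcal{H},\emptyset}$; by the standard Hom-formula for parity sheaves (degeneration of the stratification spectral sequence and parity vanishing) $\Hom^{\bullet}_{D^{\mathrm{b}}_B(X)}(\mathcal{E},\mathcal{E}')$ is graded free over $R$, of graded rank the same pairing of the local data of $\mathcal{E},\mathcal{E}'$ along the cells, which $\mathbb{H}$ matches with $\langle\ch\mathbb{H}\mathcal{E},\ch\mathbb{H}\mathcal{E}'\rangle_{\mathcal{H},\emptyset}$. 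Hence the comparison map is a degree-preserving map of graded free $R$-modules of equal graded rank, and it is enough to prove surjectivity. I would prove surjectivity and injectivity together by descending induction on $\#(\supp\mathcal{E}\cup\supp\mathcal{E}')$: for $w$ maximal in this set, the open--closed triangles of the stratification and parity vanishing split the long exact $\Hom$-sequences into short exact ones --- matching, on the algebraic side, the sequences $0\to N_{>w}\to N_{\ge w}\to N^{w}\to 0$ --- and squeeze the comparison for $(\mathcal{E},\mathcal{E}')$ between the comparison over the top cell $X_w$, where $\mathbb{H}$ is visibly an isomorphism (both sides being read off from constant sheaves, resp.\ from the $R_w$-parts), and the comparison over the closed complement, which is the inductive hypothesis; a five-lemma argument closes it. This step requires some care with the ind-variety structure of $X$ but is otherwise standard.

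Once $\presubscript{I}{\mathbb{H}}$ is fully faithful the rest is short. Both categories are Krull--Schmidt, so $\presubscript{I}{\mathbb{H}}$ sends indecomposables to indecomposables; $\presubscript{I}{\mathbb{H}}(\presubscript{I}{\mathcal{E}(w)})$ is indecomposable, supported in $\{x\le w\}$, and has $w$-part $R_w(\ell(w_-))$, read off from $\presubscript{I}{\mathcal{E}(w)}|_{\presubscript{I}{X}_w}$, so by the uniqueness in Theorem~\ref{thm:classification}(1) it is isomorphic to $\presubscript{S_0}{B(w)}$. Since by Theorem~\ref{thm:classification} the objects $\presubscript{S_0}{B(w)}(n)$ exhaust $\Sbimod{S_0}$ up to direct sums, $\presubscript{I}{\mathbb{H}}$ is essentially surjective, hence an equivalence, and properties (1)--(3) were verified above. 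The main obstacle throughout is the extension-theorem step for $S_0=\emptyset$.
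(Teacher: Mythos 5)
Most of your outline coincides with the paper: the definition of $\presubscript{I}{\mathbb{H}}$ via equivariant cohomology and the localization theorem, the formal proofs of (1)--(3), the argument that $\presubscript{I}{\mathbb{H}}$ lands in $\Sbimod{I}$, the matching of graded ranks of Hom-spaces via Theorem~\ref{thm:hom formula}, and the deduction of essential surjectivity from full faithfulness. The gap is in the full-faithfulness induction, precisely at the step you yourself call the real obstacle. As literally stated, your induction on $\#(\supp\mathcal{E}\cup\supp\mathcal{E}')$ does not close: in your short exact sequence the closed-complement term is $\Hom^{\bullet}(\mathcal{E},i_*i^!\mathcal{E}')$, and $i_*i^!\mathcal{E}'$ is only $!$-parity, not a parity sheaf, so ``the inductive hypothesis'' does not apply to it. The fix (the paper's formulation) is to prove the statement for all complexes $f_*f^!\mathcal{G}$ with $f$ the inclusion of a closed union of cells, inducting on the number of cells; but then the cell $w$ being stripped off is maximal in that closed set and \emph{not} in $\supp\mathcal{E}$, and your claim that the top-cell comparison $\Hom^{\bullet}(\mathcal{E},j_{w*}j_w^!\mathcal{G})\to\Hom^{\bullet}_{\BigCat{I}}(\presubscript{I}{\mathbb{H}}(\mathcal{E}),\presubscript{I}{\mathbb{H}}(j_{w*}j_w^!\mathcal{G}))$ is ``visibly an isomorphism'' breaks down: the restriction $\presubscript{I}{\mathbb{H}}(\mathcal{E})\to\presubscript{I}{\mathbb{H}}(j_{w*}j_w^*\mathcal{E})$ is then not surjective, so a morphism out of $\presubscript{I}{\mathbb{H}}(\mathcal{E})$ into a module supported at $w$ need not be induced from a map of stalks/costalks, and even injectivity of this comparison requires an argument. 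This is exactly where the paper works hardest: it factors through $\presubscript{I}{\mathbb{H}}(j_{!}j^{*}\mathcal{F})$, uses $\presubscript{I}{\mathbb{H}}(j_{w!}j_w^*\mathcal{F})=\presubscript{I}{\mathbb{H}}(j_{w*}j_w^*\mathcal{F})\,a$ with $a$ the equivariant Euler class of the cell, and uses torsion-freeness of $\presubscript{I}{\mathbb{H}}(j_{w*}j_w^!\mathcal{G})$ --- and it obtains only \emph{injectivity} of the comparison at each inductive stage, not an isomorphism, so your five-lemma closure (which needs the outer maps to be isomorphisms, or at least surjective) is not available.

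Consequently your plan ``it is enough to prove surjectivity'' is not delivered by the proposed induction; what the induction can yield is injectivity, and to upgrade injectivity plus equality of graded ranks to an isomorphism one must reduce to the case where $\Coeff$ is a field (so that each graded piece is a finite-dimensional vector space) --- a base-change step the paper proves as a separate lemma and which is missing from your proposal. Two further points to write out if you pursue your route: your reduction of full faithfulness to $I=\emptyset$ via the adjunctions $(\pi_{I,\emptyset}^*,\pi_{I,\emptyset,*})$ and Lemma~\ref{lem:adjointness in singular Soergel bimodules} is a legitimate alternative to the paper's direct treatment of general $I$, but it requires checking that the comparison map intertwines the geometric and algebraic adjunction isomorphisms (via the isomorphism $\presubscript{\emptyset}{\mathbb{H}}\circ\pi_{I,\emptyset}^*\simeq\pi_{S_0,\emptyset}^*\circ\presubscript{I}{\mathbb{H}}$ of (2)); and your observation that the top-cell comparison is an isomorphism when $w$ is maximal in the \emph{supports} is fine for the first step only, since the induction cannot stay within pairs of parity sheaves.
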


The functor $\presubscript{I}{\mathbb{H}}$ is given by taking the global sections.
We will give the definition in the next subsection.

\subsection{The functor $\presubscript{I}{\mathbb{H}}$}
Let $\mathcal{F}\in D_B^{\mathrm{b}}(\presubscript{I}{X})$ and we put $\presubscript{I}{\mathbb{H}}(\mathcal{F}) = H^{\bullet}_{B}(\presubscript{I}{X},\mathcal{F}) = \bigoplus_{n\in\Z}H_B^n(\presubscript{I}{X},\mathcal{F})$.
This is an $H^{\bullet}_{B}(\presubscript{I}{X})$-module.
Recall that $R = S(V) = S(X^*(T)\otimes_{\Z}\Coeff)$ and $R^{W_{I}}$ the subalgebra of $W_{I}$-fixed elements.
We have a natural homomorphism $R^{W_I}\otimes_{\Coeff}R \simeq H^{\bullet}_{P_I\times B}(\mathrm{pt})\to H^{\bullet}_{P_I\times B}(G)\simeq H^{\bullet}_B(\presubscript{I}{X})$.
Hence $\presubscript{I}{\mathbb{H}}(\mathcal{F})$ is an $(R^{W_{I}},R)$-bimodule.

Recall that $Q$ is a field of fractions of $R$.
Note that we have $H^{\bullet}_{B} = H^{\bullet}_{T}$.
By the localization theorem, 
\[
\presubscript{I}{\mathbb{H}}(\mathcal{F})\otimes_{R}Q 
=
H^{\bullet}_{T}(\presubscript{I}{X}^{T},\mathcal{F}|_{\presubscript{I}{X}^T})\otimes_{R}Q.
\]
The $T$-fixed points of $\presubscript{I}{X}^T$ are parametrized by $W_I\backslash W$.
For $w\in W_I\backslash W$, we use the same letter $w$ for the corresponding $T$-fixed point.
Then we have $\mathcal{F}|_{\presubscript{I}{X}^T} = \bigoplus_{w\in W_I\backslash W}\mathcal{F}_w$.
Therefore we get
\[
\presubscript{I}{\mathbb{H}}(\mathcal{F})\otimes_{R}Q
=
\bigoplus_{w\in W_{I}\backslash W}H_T^{\bullet}(\{w\},\mathcal{F}_w)\otimes_{R}Q.
\]
For $f\in R^{W_I}$ and $m\in H_T^{\bullet}(\{w\},\mathcal{F}_w)$, we have $fm = mw^{-1}(f)$.
Therefore by putting $\presubscript{I}{\mathbb{H}}(\mathcal{F})_Q^w = H_T^{\bullet}(\{w\},\mathcal{F}_w)\otimes_{R}Q$, we have $\presubscript{I}{\mathbb{H}}(\mathcal{F})\in \BigCat{I}$.

For $\mathcal{F}\in D_B^{\mathrm{b}}(\presubscript{I}{X})$, the complex $R\Gamma_{B}(\presubscript{I}{X},\mathcal{F})$ can be regarded as an $H^{\bullet}_B(\presubscript{I}{X})$-module.
Hence this is an $(R^{W_I},R)$-bimodule.
The proof of the following proposition is taken from \cite[Proposition~3.2.1]{MR3003920}.
\begin{prop}
Let $\mathcal{F}\in D_B^{\mathrm{b}}(\presubscript{I}{X})$ and $\mathcal{G}\in D_B^{\mathrm{b}}(X)$.
Then, as $(R^{W_I},R)$-bimodules, we have 
\[ R\Gamma_B(\presubscript{I}{X},\mathcal{F})\overset{L}{\otimes}_{R}R\Gamma_B(X,\mathcal{G})\simeq R\Gamma_B(\presubscript{I}{X},\mathcal{F}*\mathcal{G}).\]
\end{prop}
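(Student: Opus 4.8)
The plan is to follow the argument of \cite[Proposition~3.2.1]{MR3003920}: realize both sides as the $B$-equivariant hypercohomology of the twisted product $\presubscript{I}{X}\overset{B}{\times}G$, and then split that hypercohomology into a derived tensor product over $R$ by a K\"unneth-type computation. The first move is to reduce to a single space. Since $\mathcal{F}*\mathcal{G}=m_*(\mathcal{F}\overset{B}{\boxtimes}p^*\mathcal{G})$ and $R\Gamma_B(\presubscript{I}{X},-)$ is the equivariant pushforward to a point, composing the pushforwards along $m$ and along the structure map of $\presubscript{I}{X}\overset{B}{\times}G$ gives
\[
R\Gamma_B(\presubscript{I}{X},\mathcal{F}*\mathcal{G})\;\simeq\;R\Gamma_B\bigl(\presubscript{I}{X}\overset{B}{\times}G,\ \mathcal{F}\overset{B}{\boxtimes}p^*\mathcal{G}\bigr),
\]
as $(R^{W_I},R)$-bimodules, where the left $R^{W_I}$-action still comes from the $P_I$-quotient structure on the $\presubscript{I}{X}$-factor and the right $R$-action from the residual right $B$-action on the $G$-factor.

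Next I would separate the two factors. The quotient map $q\colon\presubscript{I}{X}\times G\to\presubscript{I}{X}\overset{B}{\times}G$ is a torsor for the ``middle'' copy $B'$ of $B$ (the one acting on $\presubscript{I}{X}$ on the right and on $G$ on the left), and $q^*(\mathcal{F}\overset{B}{\boxtimes}p^*\mathcal{G})\simeq\mathcal{F}\boxtimes p^*\mathcal{G}$; hence the hypercohomology above equals the $(B'\times B'')$-equivariant hypercohomology of $\mathcal{F}\boxtimes p^*\mathcal{G}$ on $\presubscript{I}{X}\times G$, where $B''$ is the remaining copy of $B$, acting on $G$ on the right and trivially on $\presubscript{I}{X}$. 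Now $B'$ acts diagonally on the two factors while $B''$ lives only on the second, and the projection $p\colon G\to X$ is exactly the quotient of $G$ by the left action of $B'$; so $p$ is a $B'$-torsor and the $G$-factor contributes $R\Gamma_{B''}(X,\mathcal{G})=R\Gamma_B(X,\mathcal{G})$. A K\"unneth-type formula for a diagonal group action then yields
\[
R\Gamma_B\bigl(\presubscript{I}{X}\overset{B}{\times}G,\ \mathcal{F}\overset{B}{\boxtimes}p^*\mathcal{G}\bigr)\;\simeq\;R\Gamma_B(\presubscript{I}{X},\mathcal{F})\overset{L}{\otimes}_R R\Gamma_B(X,\mathcal{G}),
\]
the tensor product being over $R=H^\bullet_{B'}(\mathrm{pt})$ precisely because $B'$ is the diagonal copy. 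As in the cited reference this step is carried out by replacing equivariant hypercohomology by ordinary hypercohomology of the Borel constructions built from finite-dimensional approximations $E_nB$ of the classifying space, applying the ordinary K\"unneth isomorphism on each approximation, and passing to the colimit. Finally I would check that the right $R$-action on $R\Gamma_B(\presubscript{I}{X},\mathcal{F})$ and the left $R$-action on $R\Gamma_B(X,\mathcal{G})$ occurring in the tensor product are the ones carried by $\presubscript{I}{\mathbb{H}}(\mathcal{F})$ and $\mathbb{H}(\mathcal{G})$: for $R\Gamma_B(X,\mathcal{G})$ this amounts to identifying the classifying map of the $B'$-torsor $G\to X$ with the map $R\to H^\bullet_B(X)$ defining the left $R$-module structure, while the $R^{W_I}$-action on the $\presubscript{I}{X}$-factor is untouched throughout.

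The main obstacle is the K\"unneth step: establishing the isomorphism at the level of complexes, not merely of cohomology, i.e.\ showing that the diagonal copy $B'$ produces the derived tensor product $\overset{L}{\otimes}_R$ rather than $\overset{L}{\otimes}_{\Coeff}$, all the while keeping track of the $B''$-equivariant structure transported along the $G$-factor. This is what forces the $E_nB$-approximation argument together with the attendant boundedness and flatness bookkeeping (one uses here that $R$ is a polynomial ring, so that after a suitable degreewise truncation the modules involved are free over $\Coeff$ and ordinary K\"unneth applies). By contrast, the reduction in the first two steps and the identification of the module structures are formal once the geometry of $\presubscript{I}{X}\overset{B}{\times}G$ and the relevant torsors have been laid out.
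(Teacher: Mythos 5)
Your outer reductions are fine and agree with the paper: the identification $R\Gamma_B(\presubscript{I}{X},\mathcal{F}*\mathcal{G})\simeq R\Gamma_B\bigl(\presubscript{I}{X}\overset{B}{\times}G,\mathcal{F}\overset{B}{\boxtimes}p^*\mathcal{G}\bigr)$ and the quotient equivalence along the $B$-torsor $q$ are exactly how the paper begins and ends. The genuine gap is at the step you yourself flag as the main obstacle, and the method you propose for it does not work. For a \emph{diagonal} action of $B'$ on the two factors, the Borel construction of $\presubscript{I}{X}\times G$ is not a product of the Borel constructions of the factors; it is (an approximation of) the fiber product of the two Borel constructions over $BB'$. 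The ordinary K\"unneth theorem applied on finite-dimensional approximations $E_nB$ therefore never produces a tensor product over $R=H^{\bullet}_{B'}(\mathrm{pt})$; it only applies to honest products and yields $\overset{L}{\otimes}_{\Coeff}$. Extracting $\overset{L}{\otimes}_R$ from the cohomology of a fiber product over $BB'$ is an Eilenberg--Moore/Koszul-duality type statement, which is precisely the non-formal content of the proposition; asserting ``a K\"unneth-type formula for a diagonal group action'' and reducing it to degreewise freeness over $\Coeff$ leaves that content unproved. Note also that one cannot repair this by introducing an auxiliary group $(B\times B)/\diag(B)$ as one does for tori, since $\diag(B)$ is not normal in $B\times B$.

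The paper circumvents exactly this point as follows. It replaces $G\to X$ by $\widetilde{X}=U\backslash G$, using $H^{\bullet}_B=H^{\bullet}_T$, and considers $\presubscript{I}{X}\times\widetilde{X}$ with the action of $T\times T\times B$ in which each torus factor acts on its own factor; for this genuine product action the ordinary K\"unneth isomorphism applies and gives $R\Gamma_T(\presubscript{I}{X},\mathcal{F})\overset{L}{\otimes}_{\Coeff}R\Gamma_{T\times B}(\widetilde{X},p^*\mathcal{G})$. The twisted product $\presubscript{I}{X}\overset{T}{\times}\widetilde{X}$ then still carries an action of the quotient torus $(T\times T)/\diag(T)$, and the key input is the de-equivariantization isomorphism \cite[Corollary~B.4.2]{MR3003920}: derived tensoring over $H^{\bullet}_{(T\times T)/\diag(T)}(\mathrm{pt})$ with $\Coeff$ recovers $R\Gamma_B$ of the twisted product, and on the K\"unneth side this same operation converts $\overset{L}{\otimes}_{\Coeff}$ into $\overset{L}{\otimes}_{R}$. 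Finally, the comparison between $\presubscript{I}{X}\overset{T}{\times}\widetilde{X}$ and $\presubscript{I}{X}\overset{B}{\times}G$ is made through fibrations with pro-affine fibers. If you want to salvage your plan, the statement you need is exactly this cited corollary (or an equivalent chain-level Eilenberg--Moore argument); it cannot be replaced by the approximation-plus-K\"unneth argument as written.
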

\begin{proof}
Set $\widetilde{X} = U\backslash G$.
Then $T$ acts on $\widetilde{X}$ from the left.
Consider the action of $T\times T$ on $\presubscript{I}{X}\times \widetilde{X}$ defined by $(t_1,t_2)(x,y) = (xt_1^{-1},t_2x)$.
The action of $\diag(T)\subset T\times T$ is free and the quotient space is $\presubscript{I}{X}\overset{T}{\times} \widetilde{X}$.
On this space, we have an action of $(T\times T)/\diag(T)$.

Let $p\colon \widetilde{X}\to X$ be the natural projection.
Then there exists a unique object $\mathcal{F}\overset{T}{\boxtimes}p^*\mathcal{G}\in D_{(T\times T)/\diag(T)\times B}^{\mathrm{b}}(\presubscript{I}{X}\overset{T}{\times} \widetilde{X})$ such that $q^*(\mathcal{F}\overset{T}{\boxtimes}\mathcal{G})\simeq \mathcal{F}\boxtimes p^*(\mathcal{G})$ where $q\colon \presubscript{I}{X}\times \widetilde{X}\to \presubscript{I}{X}\overset{T}{\times} \widetilde{X}$ is the natural projection.
By \cite[Corollary~B.4.2]{MR3003920}, we have
\[
R\Gamma_{(T\times T)/\diag(T)\times B}(\presubscript{I}{X}\overset{T}{\times} \widetilde{X},\mathcal{F}\overset{T}{\boxtimes}p^*\mathcal{G})\overset{L}{\otimes}_{H^{\bullet}_{T\times T/\diag(T)}(\mathrm{pt})}\Coeff\simeq R\Gamma_B(\presubscript{I}{X}\overset{T}{\times} \widetilde{X},\mathcal{F}\overset{T}{\boxtimes}p^*\mathcal{G}),
\]
here $B$ acts on $\widetilde{X}$ from the right.
The left hand side is isomorphic to 
\[
R\Gamma_{T\times T\times B}(\presubscript{I}{X}\times \widetilde{X},\mathcal{F}\boxtimes p^*\mathcal{G})\overset{L}{\otimes}_{H^{\bullet}_{T\times T/\diag(T)}(\mathrm{pt})}\Coeff\simeq R\Gamma^{\bullet}_T(\presubscript{I}{X},\mathcal{F})\overset{L}{\otimes}_{R} R\Gamma_{T\times B}(\widetilde{X},p^*\mathcal{G})
\]
and
\[
R\Gamma_{T\times B}(\widetilde{X},p^*\mathcal{G})\simeq R\Gamma_{B}(X,\mathcal{G})
\]
since the action of $T$ on $\widetilde{X}$ is free.
Hence 
\[
R\Gamma_B(\presubscript{I}{X},\mathcal{F})\overset{L}{\otimes}_{R}R\Gamma_B(X,\mathcal{G})\simeq R\Gamma_B(\presubscript{I}{X}\overset{T}{\times} \widetilde{X},\mathcal{F}\overset{T}{\boxtimes}p^*\mathcal{G}).
\]
Both projections $\presubscript{I}{X}\overset{T}{\times}G\to \presubscript{I}{X}\overset{T}{\times}\widetilde{X}$ and $\presubscript{I}{X}\overset{T}{\times}G\to \presubscript{I}{X}\overset{B}{\times}G$ are fibrations such that fibers are isomorphic to pro-affine spaces.
Hence
\[
R\Gamma_B(\presubscript{I}{X}\overset{T}{\times}\widetilde{X},\mathcal{F}\overset{T}{\boxtimes}p^*\mathcal{G})\simeq R\Gamma_B(\presubscript{I}{X}\overset{B}{\times}G,\mathcal{F}\overset{B}{\boxtimes}p^*\mathcal{G}).
\]
By the definition of the convolution, the last one is isomorphic to $R\Gamma_B(\presubscript{I}{X},\mathcal{F}*\mathcal{G})$.
\end{proof}

For $s\in S$, let $j_{\le s}\colon X_{\le s}\hookrightarrow X$ be the inclusion map.
Since $X_{\le s}\simeq \mathbb{P}^1$, it is easy to see that $j_{\le s,*}\Coeff[1]$ is an indecomposable parity sheaf.
Therefore it is isomorphic to $\mathcal{E}(s)$.
Hence we have $\mathbb{H}(\mathcal{E}(s))\simeq H^{\bullet}_{B}(X_{\le s},\Coeff[1]) \simeq B_s$.
Since this is free as a left $R$-module, with the above proposition, we get $\mathbb{H}(\mathcal{F}*\mathcal{E}(s))\simeq \mathbb{H}(\mathcal{F})\otimes_{R}B_s$ as an $(R^{W_I},R)$-bimodules for any $\mathcal{F}\in D_B^{\mathrm{b}}(\presubscript{I}{X})$.
\begin{lem}\label{lem:monoidal structure and H}
We have $\presubscript{I}{\mathbb{H}}(\mathcal{F}*\mathcal{E}(s))\simeq \presubscript{I}{\mathbb{H}}(\mathcal{F})\otimes B_s$ as objects in $\BigCat{I}$.
\end{lem}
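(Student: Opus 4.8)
The plan is to promote the $(R^{W_I},R)$-bimodule isomorphism obtained just above to an isomorphism inside $\BigCat{I}$. Call that isomorphism $\alpha\colon\presubscript{I}{\mathbb{H}}(\mathcal{F}*\mathcal{E}(s))\xrightarrow{\ \sim\ }\presubscript{I}{\mathbb{H}}(\mathcal{F})\otimes_R B_s$; it is graded and is obtained by taking cohomology of the isomorphism $R\Gamma_B(\presubscript{I}{X},\mathcal{F})\overset{L}{\otimes}_R R\Gamma_B(X,\mathcal{E}(s))\simeq R\Gamma_B(\presubscript{I}{X},\mathcal{F}*\mathcal{E}(s))$ of the proposition above, using $\presubscript{I}{\mathbb{H}}(\mathcal{E}(s))\simeq B_s$ and the fact that $B_s$ is graded free over $R$. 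Its source lies in $\BigCat{I}$ because $\presubscript{I}{\mathbb{H}}$ takes values there, and its target lies in $\BigCat{I}$ by the definition of $\otimes\colon\BigCat{I}\times\BigCat{}\to\BigCat{I}$. Hence the only remaining point is to prove that $\alpha\otimes_R\id_Q$ carries $\presubscript{I}{\mathbb{H}}(\mathcal{F}*\mathcal{E}(s))^w_Q$ into $(\presubscript{I}{\mathbb{H}}(\mathcal{F})\otimes B_s)^w_Q$ for each $w\in W_I\backslash W$.

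Next I would unwind both decompositions. On the target, $\supp_W(B_s)=\{e,s\}$ and $(B_s)^e_Q$, $(B_s)^s_Q$ are free of rank one over $Q$, so the definition of $\otimes$ in $\BigCat{I}$ gives $(\presubscript{I}{\mathbb{H}}(\mathcal{F})\otimes B_s)^w_Q\simeq\presubscript{I}{\mathbb{H}}(\mathcal{F})^w_Q\oplus\presubscript{I}{\mathbb{H}}(\mathcal{F})^{ws}_Q$, where $ws$ denotes the class of $\dot{w}s$ for any representative $\dot{w}$ of $w$, and $\presubscript{I}{\mathbb{H}}(\mathcal{F})^x_Q=H^\bullet_T(\{x\},\mathcal{F}_x)\otimes_R Q$. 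On the source, $\presubscript{I}{\mathbb{H}}(\mathcal{F}*\mathcal{E}(s))^w_Q=H^\bullet_T(\{w\},(\mathcal{F}*\mathcal{E}(s))_w)\otimes_R Q$. So the problem becomes: identify the localized stalk of the convolution at $w$ with $\bigl(H^\bullet_T(\{w\},\mathcal{F}_w)\oplus H^\bullet_T(\{ws\},\mathcal{F}_{ws})\bigr)\otimes_R Q$, compatibly with $\alpha$.

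This is a computation at the $T$-fixed points of $\presubscript{I}{X}$. The map $m$ of the convolution is, on the support of $\mathcal{F}\overset{B}{\boxtimes}p^*\mathcal{E}(s)$, a proper $\mathbb{P}^1$-bundle over $\presubscript{I}{X}$ (the Bott--Samelson picture), and for $w\in W_I\backslash W$ its $T$-fixed fibre is $m^{-1}(w)^T=\{[v_1,v_2]\mid v_1\in W_I\backslash W,\ v_2\in W,\ v_1v_2=w\}$. Since $\mathcal{E}(s)\simeq j_{\le s,*}\Coeff[1]$ with $X_{\le s}\simeq\mathbb{P}^1$ having $T$-fixed points $e,s$ and stalk $\Coeff[1]$ at each, proper base change together with the localization theorem applied to $m^{-1}(w)$ give $\presubscript{I}{\mathbb{H}}(\mathcal{F}*\mathcal{E}(s))^w_Q\simeq\bigl(H^\bullet_T(\{w\},\mathcal{F}_w)\oplus H^\bullet_T(\{ws\},\mathcal{F}_{ws})\bigr)\otimes_R Q$, where the two summands come respectively from the contributions $v_2=e$, $v_1=w$ and $v_2=s$, $v_1=ws$, and are carried to $\presubscript{I}{\mathbb{H}}(\mathcal{F})^w_Q\otimes_Q(B_s)^e_Q$ and $\presubscript{I}{\mathbb{H}}(\mathcal{F})^{ws}_Q\otimes_Q(B_s)^s_Q$. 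Finally I would check that this identification is the one induced by $\alpha$: since $\alpha$ is the cohomology of the sheaf-level convolution isomorphism of the proposition above and the localization theorem is functorial, $\alpha\otimes_R\id_Q$ realizes exactly the base-change decomposition just described — in particular the shift $[1]$ in $\mathcal{E}(s)$ matches the twist $(1)$ in $B_s$ and the bimodule structures match on the nose. The hard part is precisely this last compatibility: identifying the $T$-fixed fibre of the convolution with $\{w,ws\}$ and checking that the base-change isomorphism there is the localization of $\alpha$ and not merely some abstract isomorphism (this matters because without a faithful $W$-action the $\BigCat{I}$-structure need not be recoverable from the underlying bimodule). Granting that, one may alternatively discard $\alpha$ and read off the isomorphism in $\BigCat{I}$ directly from this description; the matching of the degree shift and of the $W_I$-twists is then routine.
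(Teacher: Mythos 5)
Your proposal follows essentially the same route as the paper: start from the bimodule isomorphism of the preceding proposition together with $\mathbb{H}(\mathcal{E}(s))\simeq B_s$ and its freeness, and then verify that after tensoring with $Q$ it respects the decompositions by analyzing the $T$-fixed points $[w,e]$ and $[ws,s]$ of the convolution fibre over each $w\in W_I\backslash W$, matching them with the summands $\presubscript{I}{\mathbb{H}}(\mathcal{F})^w_Q\otimes(B_s)^e_Q$ and $\presubscript{I}{\mathbb{H}}(\mathcal{F})^{ws}_Q\otimes(B_s)^s_Q$ in the definition of $\otimes$ on $\BigCat{I}$. The only real difference is technical: where you invoke proper base change plus localization on the $\mathbb{P}^1$-fibre and defer the compatibility with $\alpha$ (correctly flagged as the hard part), the paper builds the restriction-to-fixed-point maps explicitly through $\presubscript{I}{X}\overset{T}{\times}(U\backslash G)$, pro-affine fibrations and free $T$-actions, so that the needed commutative diagrams, and the correct left $R^{W_I}$/right $R$ twists, hold by construction.
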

\begin{proof}
Set $\widetilde{X} = U\backslash G$.
Let $\mathcal{F}\in D_B^{\mathrm{b}}(\presubscript{I}{X})$ and $\mathcal{G}\in D_B^{\mathrm{b}}(X)$.
For $x\in W_{I}\backslash W$ and $y\in W$, we fix representatives in $G$ and denote the representatives by the same letter $x,y$.
Let $p\colon G\to X$, $q\colon \widetilde{X}\to X$ be the natural projections and 
$Y$ 
the inverse image of $xy\in \presubscript{I}{X}$ by the action map $\presubscript{I}{X}\overset{B}{\times}G\to \presubscript{I}{X}$.
Then we have 
\[
R\Gamma_T(\{xy\},(\mathcal{F}*\mathcal{G})_{xy}) \simeq R\Gamma_T(Y,(\mathcal{F}\overset{B}{\boxtimes}p^*\mathcal{G})|_{Y})
\]
Let $z_0$ be the image of $(P_Ix,y)$ in $\presubscript{I}{X}\overset{B}{\times}G$.
Then $z_0\in Y$ and it is fixed by $T$.
Hence we have a natural map
\[
R\Gamma_T(Y,(\mathcal{F}\overset{B}{\boxtimes}p^*\mathcal{G})|_{Y})
\to
R\Gamma_T(\{z_0\},(\mathcal{F}\overset{B}{\boxtimes}p^*\mathcal{G})_{z_0}).
\]
Denote the inverse image of $z_0$ under $\presubscript{I}{X}\overset{T}{\times}G\to \presubscript{I}{X}\overset{B}{\times}G$ by $Z_2$ and the image of $Z_2$ under $\presubscript{I}{X}\overset{T}{\times}\widetilde{X}$ by $Z_1$.
Then $U\simeq Z_2$ by $u\mapsto [(P_Ixu^{-1},uy)]$ and $Z_1$ is the image of $\presubscript{I}{X}_x\times \{Uy\}\subset \presubscript{I}{X}\times \widetilde{X}$ in $\presubscript{I}{X}\overset{T}{\times}\widetilde{X}$.
Therefore $Z_2\to \{z_0\}$ and $Z_2\to Z_1$ are fibrations whose fibers are isomorphic to pro-affine spaces.
Hence 
\[
R\Gamma_T(\{z_0\},(\mathcal{F}\overset{B}{\boxtimes}p^*\mathcal{G})_{z_0})
\simeq
R\Gamma_T(Z_1,(\mathcal{F}\overset{T}{\boxtimes}q^*\mathcal{G})|_{Z_1})
\]
Let $z_1$ be the image of $(P_Ix,Uy)\in \presubscript{I}{X}\times \widetilde{X}$ in $\presubscript{I}{X}\overset{T}{\times }\widetilde{X}$.
This is a $(T\times T)/\diag(T)$-fixed point and contained in $Z_1$.
Hence we have a natural morphism
\[
R\Gamma_T(Z_1,(\mathcal{F}\overset{T}{\boxtimes}q^*\mathcal{G})|_{Z_1})
\to
R\Gamma_T(\{z_1\},(\mathcal{F}\overset{T}{\boxtimes}q^*\mathcal{G})_{z_1})
\]
As in the proof of the above proposition, we have
\[
R\Gamma_T(\{z_1\},(\mathcal{F}\overset{T}{\boxtimes}q^*\mathcal{G})_{z_1})
\simeq 
R\Gamma_{((T\times T)/\diag(T))\times T}(\{z_1\},(\mathcal{F}\overset{T}{\boxtimes}q^*\mathcal{G})_{z_1})\overset{L}{\otimes}_{H^{\bullet}_{(T\times T)/\diag T}(\mathrm{pt})}\Coeff.
\]
Let $Z$ be the inverse image of $z_1$ by $\presubscript{I}{X}\times \widetilde{X}\to \presubscript{I}{X}\overset{T}{\times}\widetilde{X}$.
Then $Z\simeq \{P_Ix\}\times Z_0$ where $Z_0 = \{Uty\mid t\in T\}$ and $Z\to \{z_1\}$ is a $T$-torsor.
We have
\begin{align*}
& R\Gamma_{((T\times T)/\diag(T))\times T}(\{z_1\},(\mathcal{F}\overset{T}{\boxtimes}q^*\mathcal{G})_{z_1})\overset{L}{\otimes}_{H^{\bullet}_{(T\times T)/\diag T}(\mathrm{pt})}\Coeff\\
& \simeq R\Gamma_{T\times T\times T}(Z,(\mathcal{F}\boxtimes q^*\mathcal{G})|_Z)\overset{L}{\otimes}_{H^{\bullet}_{(T\times T)/\diag T}(\mathrm{pt})}\Coeff\\
& \simeq R\Gamma_T(\{P_Ix\},\mathcal{F}_{P_Ix})\overset{L}{\otimes}_{R}R\Gamma_{T\times T}(Z_0,q^*\mathcal{G}_{Z_0})\\
& \simeq R\Gamma_T(\{P_Ix\},\mathcal{F}_{P_Ix})\overset{L}{\otimes}_{R}R\Gamma_{T}(\{By\},\mathcal{G}_{By})
\end{align*}
since $T$ acts on $Z_0$ freely.
Hence we get a map $R\Gamma_T(\{xy\},(\mathcal{F}*\mathcal{G})_{xy})\to R\Gamma_T(\{x\},\mathcal{F}_{x})\overset{L}{\otimes}_{R}R\Gamma_{T}(\{y\},\mathcal{G}_{y})$.
By the construction, the following diagram is commutative:
\[
\begin{tikzcd}
R\Gamma_B(\presubscript{I}{X},\mathcal{F}*\mathcal{G})\arrow[r,dash,"\sim"]\arrow[d] & R\Gamma_B(\presubscript{I}{X},\mathcal{F})\overset{L}{\otimes}_{R}R\Gamma_B(X,\mathcal{G})\arrow[d]\\
R\Gamma_T(\{xy\},(\mathcal{F}*\mathcal{G})_{xy})\arrow[r] & R\Gamma_T(\{x\},\mathcal{F}_{x})\overset{L}{\otimes}_{R}R\Gamma_{T}(\{y\},\mathcal{G}_{y}).
\end{tikzcd}
\]
Now let $\mathcal{G} = \mathcal{E}(s)$.
Then both $\mathbb{H}(\mathcal{G})$ and $H^{\bullet}_{T}(\{y\},\mathcal{G}_{y})$ are free as left $R$-modules.
Therefore, by taking the cohomology and tensoring $Q$, we get the following commutative diagram:
\[
\begin{tikzcd}
\presubscript{I}{\mathbb{H}}(\mathcal{F}*\mathcal{G})\otimes_{R}Q\arrow[d]\arrow[r,dash,"\sim"] & (\presubscript{I}{\mathbb{H}}(\mathcal{F})\otimes_{R}Q)\otimes_{Q}(\mathbb{H}(\mathcal{G})\otimes_{R}Q)\arrow[d]\\
\presubscript{I}{\mathbb{H}}(\mathcal{F}*\mathcal{G})_Q^{xy}\arrow[r] & \presubscript{I}{\mathbb{H}}(\mathcal{F})^{x}_Q\otimes_R\mathbb{H}(\mathcal{G})^y_Q.
\end{tikzcd}
\]
Therefore, for each $w\in W$, we have
\[
\begin{tikzcd}
\presubscript{I}{\mathbb{H}}(\mathcal{F}*\mathcal{G})\otimes_{R}Q\arrow[d]\arrow[r,dash,"\sim"] & (\presubscript{I}{\mathbb{H}}(\mathcal{F})\otimes_{R}Q)\otimes_{Q}(\mathbb{H}(\mathcal{G})\otimes_{R}Q)\arrow[d]\\
\presubscript{I}{\mathbb{H}}(\mathcal{F}*\mathcal{G})_Q^{w}\arrow[r] & \bigoplus_{xy = w}\presubscript{I}{\mathbb{H}}(\mathcal{F})^{x}_Q\otimes_Q\mathbb{H}(\mathcal{G})^y_Q\arrow[r,equal] & (\presubscript{I}{\mathbb{H}}(\mathcal{F})\otimes\mathbb{H}(\mathcal{G}))_Q^w.
\end{tikzcd}
\]
Hence the isomorphism $\presubscript{I}{\mathbb{H}}(\mathcal{F}*\mathcal{E}(s))\simeq \presubscript{I}{\mathbb{H}}(\mathcal{F})\otimes_{R}B_s$ is an isomorphism in $\BigCat{I}$.
\end{proof}

\begin{prop}\label{prop:H and pi_*}
Let $J\subset I$ be a subset.
Then we have $\presubscript{I}{\mathbb{H}}\circ\pi_{I,J,*}\simeq \pi_{I,J,*}\circ\presubscript{J}{\mathbb{H}}$.
\end{prop}
\begin{proof}
Let $\mathcal{F}\in D_B^{\mathrm{b}}(\presubscript{J}{X})$.
Since $H^n_B(\presubscript{I}{X},\pi_{I,J,*}\mathcal{F})\simeq H_B^n(\presubscript{J}{X},\mathcal{F})$, we have $\presubscript{I}{\mathbb{H}(\pi_{I,J,*}(\mathcal{F}))}\simeq \pi_{I,J,*}(\presubscript{J}{\mathbb{H}}(\mathcal{F}))$ as $(R^{W_I},R)$-bimodules.
For each $w\in W_I\backslash W$, by the localization theorem, we have
\begin{align*}
H^n_T(\{w\},\pi_{I,J,*}(\mathcal{F})_{w})\otimes_{R}Q & \simeq H^n_T(\pi_{I,J}^{-1}(\{w\}),\mathcal{F}|_{\pi_{I,J}^{-1}(w)})\otimes_{R}Q\\
& \simeq H_T^n((\pi_{I,J}^{-1}(\{w\}))^{T},\mathcal{F}|_{(\pi_{I,J}^{-1}(\{w\}))^{T}})\otimes_{R}Q.
\end{align*}
Since $(\pi_{I,J}^{-1}(\{w\}))^{T} = \{x\in W_{J}\backslash W\mid \overline{x} = w\}$ where $\overline{x}$ is the image of $x$ in $W_I\backslash W$, we have
\begin{align*}
H_T^n((\pi_{I,J}^{-1}(\{w\}))^{T},\mathcal{F}|_{(\pi_{I,J}^{-1}(\{w\}))^{T}})\otimes_{R}Q
& \simeq
\bigoplus_{x\in W_{J}\backslash W,\ \overline{x} = w}
H_T^n(\{x\},\mathcal{F}|_{\{x\}})\otimes_{R}Q\\
& =
(\pi_{I,J,*}\presubscript{I}{\mathbb{H}}(\mathcal{F}))_Q^w.
\end{align*}
Therefore $\presubscript{I}{\mathbb{H}(\pi_{I,J,*}(\mathcal{F}))}\simeq \pi_{I,J,*}(\presubscript{J}{\mathbb{H}}(\mathcal{F}))$ in $\BigCat{I}$.
\end{proof}

\begin{cor}
We have $\presubscript{I}{\mathbb{H}}(\Parity_B(\presubscript{I}{X}))\subset \Sbimod{I}$.
\end{cor}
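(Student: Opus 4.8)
The plan is to reduce the assertion to the special case $I=\emptyset$ by means of the pull--push functors, and then to invoke the standard fact that every $B$-equivariant parity sheaf on the full flag variety is, up to shift, a direct summand of a Bott--Samelson object. To carry out the reduction I would first record the geometric analogue of Lemma~\ref{lem:pull-push is direct sum}: since $\pi_{I,\emptyset}\colon X\to\presubscript{I}{X}$ is a (Zariski-locally trivial, smooth, proper) fibration with fibre $P_I/B$, and since the torsion primes of $L_I$ are invertible in $\Coeff$, the cohomology $H^{\bullet}(P_I/B;\Coeff)$ is free over $\Coeff$ with graded rank $\sum_{w\in W_I}v^{2\ell(w)}$ and concentrated in even degrees; hence $R\pi_{I,\emptyset,*}\Coeff_{X}\simeq\bigoplus_{w\in W_I}\Coeff_{\presubscript{I}{X}}[-2\ell(w)]$ and, by the projection formula, $\pi_{I,\emptyset,*}\pi_{I,\emptyset}^{*}\mathcal F\simeq\bigoplus_{w\in W_I}\mathcal F[-2\ell(w)]$ for every $\mathcal F\in D^{\mathrm b}_B(\presubscript{I}{X})$.

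Given $\mathcal F\in\Parity_B(\presubscript{I}{X})$, the complex $\mathcal G:=\pi_{I,\emptyset}^{*}\mathcal F$ is then a parity complex on $X$ by \cite[Proposition~4.10]{MR3230821}, i.e.\ $\mathcal G\in\Parity_B(X)$, and $\mathcal F$ is a direct summand of $\pi_{I,\emptyset,*}\mathcal G$ by the displayed splitting. Applying Proposition~\ref{lem:H and pi_*} I obtain that $\presubscript{I}{\mathbb{H}}(\mathcal F)$ is a direct summand of $\presubscript{I}{\mathbb{H}}(\pi_{I,\emptyset,*}\mathcal G)\simeq\pi_{I,\emptyset,*}(\mathbb{H}(\mathcal G))$; since $\Sbimod{I}$ is by definition closed under direct summands and contains $\pi_{I,\emptyset,*}(B)$ for every $B\in\Sbimod{}$, it remains only to prove that $\mathbb{H}(\mathcal G)\in\Sbimod{}$ for every $\mathcal G\in\Parity_B(X)$, i.e.\ the case $I=\emptyset$. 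For that case, every object of $\Parity_B(X)$ is a finite direct sum of shifts of the indecomposable parity sheaves $\mathcal E(w)$ ($w\in W$), and each $\mathcal E(w)$ is, up to a shift, a direct summand of a Bott--Samelson object $\mathcal E(s_1)*\dots*\mathcal E(s_n)$ attached to a reduced expression $w=s_1\dots s_n$ (for $w=e$ the object is the skyscraper $\Coeff_{\mathrm{pt}}$, the empty convolution); see \cite{MR3230821}. Iterating Lemma~\ref{lem:monoidal structure and H} with $I=\emptyset$, together with the identifications $\mathbb{H}(\mathcal E(s))\simeq B_s$ and $\mathbb{H}(\Coeff_{\mathrm{pt}})\simeq R_e$ already available, gives $\mathbb{H}(\mathcal E(s_1)*\dots*\mathcal E(s_n))\simeq B_{s_1}\otimes\dots\otimes B_{s_n}\in\Sbimod{}$. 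As $\mathbb{H}$ is additive and intertwines the cohomological shift with the grading shift, $\mathbb{H}(\mathcal G)$ is a direct summand of a direct sum of grading shifts of such tensor products of $B_s$'s, and therefore lies in $\Sbimod{}$, which is closed under $\oplus$, $(1)$, $(-1)$ and direct summands. This completes the argument.

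I do not expect a genuine obstacle here. The only two inputs that are not purely formal manipulation of the functorial isomorphisms already established are (i) the cohomological splitting $R\pi_{I,\emptyset,*}\Coeff\simeq\bigoplus_{w\in W_I}\Coeff[-2\ell(w)]$, which is standard given the invertibility of the torsion primes of $L_I$ and mirrors Lemma~\ref{lem:pull-push is direct sum}, and (ii) the generation of $\Parity_B(X)$ by Bott--Samelson objects, which is a basic structural fact from \cite{MR3230821}; everything else is bookkeeping with Proposition~\ref{lem:H and pi_*} and Lemma~\ref{lem:monoidal structure and H}.
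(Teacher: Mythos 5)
Your proof is correct, but it reduces to the case $I=\emptyset$ by a genuinely different device than the paper. The paper first treats $I=\emptyset$ exactly as you do (Bott--Samelson generation of $\Parity_B(X)$ plus Lemma~\ref{lem:monoidal structure and H}), and then, for general $I$, passes to the indecomposables $\presubscript{I}{\mathcal{E}(w)}$ and shows by a support estimate that $\presubscript{I}{\mathcal{E}(w)}[l]$ is a direct summand of $\pi_{I,\emptyset,*}(\mathcal{E}(w_-))$; applying $\presubscript{I}{\mathbb{H}}$ and Proposition~\ref{lem:H and pi_*} then realizes $\presubscript{I}{\mathbb{H}}(\presubscript{I}{\mathcal{E}(w)})$ as a summand of $\pi_{I,\emptyset,*}$ of an object of $\Sbimod{}$. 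You instead split an arbitrary $\mathcal{F}$ off $\pi_{I,\emptyset,*}\pi_{I,\emptyset}^{*}\mathcal{F}$, using that $\pi_{I,\emptyset}^{*}$ preserves parity (\cite[Proposition~4.10]{MR3230821}, which the paper quotes anyway) together with the cohomological triviality of the $P_I/B$-bundle; this is the exact geometric counterpart of Lemma~\ref{lem:pull-push is direct sum} and lets you avoid both the reduction to indecomposables and the support estimate. What the paper's route buys in exchange is that it never needs the decomposition $\pi_{I,\emptyset,*}\pi_{I,\emptyset}^{*}\mathcal{F}\simeq\bigoplus_{w\in W_I}\mathcal{F}[-2\ell(w)]$, only that the pushforward of a parity sheaf is parity. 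Two points in your input (i) deserve tightening: freeness and evenness of $H^{\bullet}(P_I/B;\Coeff)$ hold unconditionally (affine paving) and do not by themselves yield the sheaf-level splitting of $R\pi_{I,\emptyset,*}\Coeff$; you should either run a Leray--Hirsch argument (this is where the torsion-prime hypothesis genuinely enters, via Demazure \cite{MR0342522}, to produce classes on the total space restricting to a basis of the fibre cohomology) or argue inside parity complexes ($\pi_{I,\emptyset,*}\pi_{I,\emptyset}^{*}\mathcal{F}$ is parity, its stalks are $H^{\bullet}(P_I/B;\Coeff)\otimes_{\Coeff}\mathcal{F}_x$, and a parity complex is determined up to isomorphism by the graded ranks of its stalks). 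Moreover, since $X$ and $\presubscript{I}{X}$ are ind-varieties, the assertion about $R\pi_{I,\emptyset,*}\Coeff_X$ should be interpreted over the finite-dimensional support of $\mathcal{F}$, which is precisely the projection-formula form you actually use; with these adjustments the argument is complete.
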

\begin{proof}
First we assume that $I = \emptyset$.
Any object in $\Parity_B(X)$ is a direct summand of a direct sum of objects of a form $\mathcal{E}(s_1)*\dotsm*\mathcal{E}(s_l)[n]$ where $s_1,\dots,s_l\in S$ and $n\in\Z$.
Hence the corollary follows from Lemma~\ref{lem:monoidal structure and H}.

In general, let $\mathcal{F}\in \Parity_B(\presubscript{I}{X})$.
We may assume that $\mathcal{F}\simeq \presubscript{I}{\mathcal{E}(w)}$ for some $w\in W_I\backslash W$.
The object $\pi_{I,\emptyset,*}(\mathcal{E}(w_-))$ is a parity sheaf.
By analyzing the support, $\pi_{I,\emptyset,*}(\mathcal{E}(w_-))$ contains $\presubscript{I}{\mathcal{E}(w)}[l]$ as a direct summand for some $l\in \Z$.
Hence $\presubscript{I}{\mathbb{H}}(\presubscript{I}{\mathcal{E}(w)})$ is a direct summand of $\presubscript{I}{\mathbb{H}}(\pi_{I,\emptyset,*}(\mathcal{E}(w_-)[-l]))\simeq \pi_{I,\emptyset,*}(\presubscript{I}{\mathbb{H}}(\mathcal{E}(w_-)[-l])))$.
Since $\presubscript{I}{\mathbb{H}}(\mathcal{E}(w_-)[-l])\in \Sbimod{}$, we have $\presubscript{I}{\mathbb{H}}(\presubscript{I}{\mathcal{E}(w)})\in \Sbimod{I}$.
\end{proof}

\begin{cor}
For $\mathcal{F}\in \Parity_B(\presubscript{I}{X})$ and $\mathcal{G}\in\Parity_B(X)$, we have $\presubscript{I}{\mathbb{H}}(\mathcal{F}*\mathcal{G})\simeq \presubscript{I}{\mathbb{H}}(\mathcal{F})\otimes\mathbb{H}(\mathcal{G})$.
\end{cor}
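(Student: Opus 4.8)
The plan is to deduce the corollary directly from the preceding proposition and (the proof of) Lemma~\ref{lem:monoidal structure and H}, extending the latter from the generators $\mathcal{E}(s)$ to all parity sheaves on $X$ rather than relying on a generation argument. The first point is that the proposition's isomorphism $R\Gamma_B(\presubscript{I}{X},\mathcal{F}*\mathcal{G})\simeq R\Gamma_B(\presubscript{I}{X},\mathcal{F})\overset{L}{\otimes}_R R\Gamma_B(X,\mathcal{G})$ of $(R^{W_I},R)$-bimodules is natural in $\mathcal{F}$ and $\mathcal{G}$, being assembled from base-change and K\"unneth isomorphisms. The second point is that for $\mathcal{G}\in\Parity_B(X)$ the complex $R\Gamma_B(X,\mathcal{G})$ is formal over $R$: its cohomology $\mathbb{H}(\mathcal{G})$ lies in $\Sbimod{}$, hence is graded free over $R$, and a perfect complex of graded $R$-modules with graded free cohomology splits off its cohomology with zero differential. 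Combining the two, $\overset{L}{\otimes}_R$ collapses to $\otimes_R$ and, on cohomology, we obtain a natural isomorphism $\presubscript{I}{\mathbb{H}}(\mathcal{F}*\mathcal{G})\simeq \presubscript{I}{\mathbb{H}}(\mathcal{F})\otimes_R\mathbb{H}(\mathcal{G})$ of $(R^{W_I},R)$-bimodules, for every $\mathcal{F}\in D^{\mathrm{b}}_B(\presubscript{I}{X})$ and $\mathcal{G}\in\Parity_B(X)$.

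Next I would upgrade this bimodule isomorphism to an isomorphism in $\BigCat{I}$, i.e.\ show it carries the decomposition $\presubscript{I}{\mathbb{H}}(\mathcal{F}*\mathcal{G})\otimes_R Q=\bigoplus_w\presubscript{I}{\mathbb{H}}(\mathcal{F}*\mathcal{G})^w_Q$ onto the decomposition of $\presubscript{I}{\mathbb{H}}(\mathcal{F})\otimes\mathbb{H}(\mathcal{G})$. For this I would run the fixed-point half of the proof of Lemma~\ref{lem:monoidal structure and H} essentially verbatim: the construction of the map $R\Gamma_T(\{P_Ixy\},(\mathcal{F}*\mathcal{G})_{P_Ixy})\to R\Gamma_T(\{x\},\mathcal{F}_x)\overset{L}{\otimes}_R R\Gamma_T(\{y\},\mathcal{G}_y)$, and the commutative squares relating it to global sections, are valid for arbitrary $\mathcal{F}$ and $\mathcal{G}$; the only place the hypothesis $\mathcal{G}=\mathcal{E}(s)$ entered was to guarantee that $\mathbb{H}(\mathcal{G})$ and the stalk cohomologies $H^{\bullet}_T(\{y\},\mathcal{G}_y)$ are graded free over $R$. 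Both remain true for an arbitrary parity sheaf $\mathcal{G}$: $\mathbb{H}(\mathcal{G})\in\Sbimod{}$ by the previous corollary, and since $\mathcal{G}$ is $*$-even along the Schubert stratification its stalk $\mathcal{G}_y$ is a direct sum of shifts of $\Coeff$, so $H^{\bullet}_T(\{y\},\mathcal{G}_y)$ is a direct sum of shifts of $R$. The diagrams then identify $\presubscript{I}{\mathbb{H}}(\mathcal{F}*\mathcal{G})^w_Q$ with $\bigoplus_{xy=w}\presubscript{I}{\mathbb{H}}(\mathcal{F})^x_Q\otimes_Q\mathbb{H}(\mathcal{G})^y_Q=(\presubscript{I}{\mathbb{H}}(\mathcal{F})\otimes\mathbb{H}(\mathcal{G}))^w_Q$, so the isomorphism is an isomorphism in $\BigCat{I}$; and when $\mathcal{F}\in\Parity_B(\presubscript{I}{X})$ it is an isomorphism in $\Sbimod{I}$, since $\presubscript{I}{\mathbb{H}}(\mathcal{F})\in\Sbimod{I}$ and $\Sbimod{I}\otimes\Sbimod{}\subseteq\Sbimod{I}$ (by $\pi_{I,\emptyset,*}(B)\otimes B'\simeq\pi_{I,\emptyset,*}(B\otimes B')$ and closure of $\Sbimod{}$ under $\otimes$).

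I expect the genuinely substantive inputs to be exactly these two: the naturality of the proposition's K\"unneth isomorphism, and the formality of $R\Gamma_B(X,\mathcal{G})$ for $\mathcal{G}$ parity; everything else is the verbatim geometric bookkeeping of Lemma~\ref{lem:monoidal structure and H}. A more formal alternative would be to use that $\Parity_B(X)$ is generated, under $\oplus$, shifts and direct summands, by the objects $\mathcal{E}(s_1)*\dotsm*\mathcal{E}(s_l)$, and to iterate Lemma~\ref{lem:monoidal structure and H} (together with its $I=\emptyset$ case for $\mathbb{H}$) using associativity of the convolution; but there the main obstacle appears precisely at the passage to direct summands, which requires transporting an idempotent of a Bott--Samelson object across the isomorphism and hence requires the isomorphism to be natural in $\mathcal{G}$ --- a naturality that is not evident from iterating the lemma along a fixed factorization, but which the first route supplies automatically. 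For that reason I would build the proof on the first route.
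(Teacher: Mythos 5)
Your proposal is correct and is essentially the paper's own argument: the paper proves this corollary by observing that $\mathbb{H}(\mathcal{G})\in\Sbimod{}$ is free as a left $R$-module (and the stalk cohomologies of a parity sheaf are free), so the proof of Lemma~\ref{lem:monoidal structure and H} applies verbatim with $\mathcal{E}(s)$ replaced by $\mathcal{G}$. Your additional remarks on naturality and on why the generation-by-$\mathcal{E}(s_1)*\dotsm*\mathcal{E}(s_l)$ route is more delicate are sensible but not needed for the paper's proof.
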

\begin{proof}
Since $\mathbb{H}(\mathcal{G})\in\Sbimod{}$ is free as a left $R$-module, the same proof of Lemma~\ref{lem:monoidal structure and H} can apply.
\end{proof}

\begin{lem}
For $\mathcal{F}\in\Parity_B(\presubscript{I}{X})$, we have $\presubscript{I}{\mathbb{H}}(D(\mathcal{F}))\simeq D(\presubscript{I}{\mathbb{H}}(\mathcal{F}))$.
\end{lem}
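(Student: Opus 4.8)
The strategy is to reduce, in two steps, to a situation one can check by hand.

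\emph{Step 1: reduction to $I=\emptyset$.} Every $\mathcal{F}\in\Parity_B(\presubscript{I}{X})$ is a direct summand of $\pi_{I,\emptyset,*}(\mathcal{G})$ for some $\mathcal{G}\in\Parity_B(X)$ (the input already used to show $\presubscript{I}{\mathbb{H}}(\Parity_B(\presubscript{I}{X}))\subset\Sbimod{I}$). Now $\pi_{I,\emptyset}$ is proper, so $D\circ\pi_{I,\emptyset,*}\simeq\pi_{I,\emptyset,*}\circ D$ on the sheaf side; the same compatibility on the bimodule side is the proposition $\pi_{S_0,\emptyset,*}(D(M))\simeq D(\pi_{S_0,\emptyset,*}(M))$; and $\presubscript{I}{\mathbb{H}}\circ\pi_{I,\emptyset,*}\simeq\pi_{I,\emptyset,*}\circ\mathbb{H}$ is Proposition~\ref{lem:H and pi_*}. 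Granting a \emph{natural} isomorphism $\mathbb{H}\circ D\simeq D\circ\mathbb{H}$ on $\Parity_B(X)$, chaining these four isomorphisms yields a natural isomorphism $\presubscript{I}{\mathbb{H}}\circ D\simeq D\circ\presubscript{I}{\mathbb{H}}$ on the full subcategory of the objects $\pi_{I,\emptyset,*}(\mathcal{G})$; since these generate $\Parity_B(\presubscript{I}{X})$ under direct summands and a natural isomorphism of additive functors restricts to summands, the lemma follows. So it is enough to produce a natural isomorphism $\mathbb{H}(D\mathcal{G})\simeq D(\mathbb{H}(\mathcal{G}))$ for $\mathcal{G}\in\Parity_B(X)$.

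\emph{Step 2: reduction to Bott--Samelson objects.} Here $\Parity_B(X)$ is the idempotent completion of the additive subcategory generated by the objects $\mathcal{E}(s_1)*\dotsb*\mathcal{E}(s_l)$ and their shifts. Verdier duality is monoidal for the convolution product, $D(\mathcal{A}*\mathcal{B})\simeq D(\mathcal{A})*D(\mathcal{B})$, and $D\mathcal{E}(s)\simeq\mathcal{E}(s)$ since $\mathcal{E}(s)\simeq j_{\le s,*}\Coeff[1]$ with $X_{\le s}\simeq\mathbb{P}^1$ smooth. On the bimodule side $D(M\otimes B)\simeq D(M)\otimes D(B)$ and $D(B_s)\simeq B_s$. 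Combining these with $\mathbb{H}(\mathcal{E}(s))\simeq B_s$ and the monoidal isomorphism $\mathbb{H}(\mathcal{A}*\mathcal{B})\simeq\mathbb{H}(\mathcal{A})\otimes\mathbb{H}(\mathcal{B})$ (Lemma~\ref{lem:monoidal structure and H} and the corollary following it), one finds that both $\mathbb{H}\bigl(D(\mathcal{E}(s_1)*\dotsb*\mathcal{E}(s_l))\bigr)$ and $D\bigl(\mathbb{H}(\mathcal{E}(s_1)*\dotsb*\mathcal{E}(s_l))\bigr)$ are isomorphic to $B_{s_1}\otimes\dotsb\otimes B_{s_l}$, compatibly with the two monoidal structures. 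Hence $\mathbb{H}\circ D$ and $D\circ\mathbb{H}$ agree on Bott--Samelson objects, and thus, after idempotent completion, on all of $\Parity_B(X)$.

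\emph{Where the work is.} The delicate point is upgrading ``isomorphic on Bott--Samelson objects'' to a \emph{natural} isomorphism of functors: object-wise isomorphisms together with Krull--Schmidt do not suffice to pin down the summand-wise statement in Step~1. The clean way is to build the comparison morphism globally and functorially: for $\mathcal{F}\in D^{\mathrm{b}}_B(\presubscript{I}{X})$ the cup product pairs $H^\bullet_B(\presubscript{I}{X},D\mathcal{F})$ with $H^\bullet_B(\presubscript{I}{X},\mathcal{F})$ into $H^\bullet_B(\presubscript{I}{X},D\mathcal{F}\otimes^{L}\mathcal{F})$; the evaluation $D\mathcal{F}\otimes^{L}\mathcal{F}\to\omega$ sends this to equivariant Borel--Moore homology, and pushing forward to a point along the Schubert variety carrying $\mathcal{F}$ lands in $R$; adjointness turns the resulting $R$-bilinear pairing into a morphism $\Theta_\mathcal{F}\colon\presubscript{I}{\mathbb{H}}(D\mathcal{F})\to D(\presubscript{I}{\mathbb{H}}(\mathcal{F}))$, natural in $\mathcal{F}$. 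The genuinely technical checks are then that $\Theta_\mathcal{F}$ is a morphism in $\BigCat{I}$ (compare the global pairing with the one over the $T$-fixed points via the localization theorem, keeping track of the Thom shifts), and that $\Theta$ is compatible with $\pi_{I,\emptyset,*}$ and with $*$; the latter reduce ``$\Theta_\mathcal{F}$ is an isomorphism'' to the case $\mathcal{F}=\mathcal{E}(s)$, where $\Theta$ is the classical perfect Poincaré pairing on $\mathbb{P}^1$.
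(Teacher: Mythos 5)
Your Steps~1--2 are, as you yourself concede, not yet a proof: object-wise isomorphisms on the generating objects $\pi_{I,\emptyset,*}(\mathcal{G})$ and on Bott--Samelson objects do not descend to direct summands, so the actual content of your proposal is the final paragraph. The comparison map you build there --- cup product/evaluation followed by pushforward to a point --- is precisely equivariant Poincar\'e duality, and that is the whole of the paper's proof: since the support of a parity sheaf is a finite union of Schubert varieties, hence proper, one has $R\Gamma_B(\presubscript{I}{X},D\mathcal{F})\simeq R\Hom_R(R\Gamma_B(\presubscript{I}{X},\mathcal{F}),R)$, and because $\presubscript{I}{\mathbb{H}}(\mathcal{F})$ is graded free as a right $R$-module (it lies in $\Sbimod{I}$ by the corollary preceding the lemma), the derived Hom collapses to $\Hom^{\bullet}_{\text{-$R$}}(\presubscript{I}{\mathbb{H}}(\mathcal{F}),R)=D(\presubscript{I}{\mathbb{H}}(\mathcal{F}))$, compatibly with localization to the $T$-fixed points, i.e.\ with the $\BigCat{I}$-structure. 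Where you genuinely diverge is in how you propose to see that your map $\Theta_{\mathcal{F}}$ is an isomorphism: through compatibility of $\Theta$ with $\pi_{I,\emptyset,*}$ and with convolution, reducing to $\mathcal{E}(s)$ on $\mathbb{P}^1$. That route could be completed, but its decisive ingredient --- compatibility of the duality pairing with $*$ and with the $\overset{B}{\times}$ descent, an argument of the same weight as the convolution proposition in the paper --- is only asserted, not carried out, so as written the isomorphism claim is still open in your sketch. It is also unnecessary: freeness of $\presubscript{I}{\mathbb{H}}(\mathcal{F})$ already makes the Poincar\'e duality map an isomorphism outright, with no reduction to Bott--Samelson objects, no Krull--Schmidt, and no naturality bookkeeping. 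In short, you have the paper's key idea, but you should replace the reduction machinery by the one-line freeness argument, or else actually prove the compatibilities you defer.
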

\begin{proof}
This follows the equivariant Poincar\'e duality and the freeness of $\presubscript{I}{\mathbb{H}}(\mathcal{F})$ as a right $R$-module.
\end{proof}

\subsection{Proof of Theorem~\ref{thm:equivalence}}
Let $Z$ be a closed subset of $\presubscript{I}{X}$ which is a union of Schubert cells and $U = \presubscript{I}{X}\setminus Z$.
By putting $\mathcal{F}$ to be the constant sheaf in \cite[Corollary~2.9]{MR3230821}, for $j\colon U\hookrightarrow \presubscript{I}{X}$ and $i\colon Z\hookrightarrow \presubscript{I}{X}$, the sequence
\[
0\to \presubscript{I}{\mathbb{H}}(i_*i^!\mathcal{G})\to \presubscript{I}{\mathbb{H}}(\mathcal{G})\to \presubscript{I}{\mathbb{H}}(j_*j^!\mathcal{G})\to 0
\]
is exact for a $!$-parity sheaf $\mathcal{G}\in D_B^{\mathrm{b}}(\presubscript{I}{X})$.
\begin{rem}
\cite[Corollary~2.9]{MR3230821} only states that $\presubscript{I}{\mathbb{H}}(\mathcal{G})\to \presubscript{I}{\mathbb{H}}(j_*j^!\mathcal{G})$ is surjective.
The long exact sequence attached to the triangle $i_*i^!\mathcal{G}\to \mathcal{G}\to j_*j^!\mathcal{G}\xrightarrow{+1}$ implies that the kernel is isomorphic to $\presubscript{I}{\mathbb{H}}(i_*i^!\mathcal{G})$.
\end{rem}

Since $\mathcal{G}$ is a $!$-parity sheaf, $j^!\mathcal{G}$ and $i^!\mathcal{G}$ are $!$-parity.
Hence by \cite[Proposition~2.6]{MR3230821}, we have a (non-canonical) isomorphism $\presubscript{I}{\mathbb{H}}(i_*i^!\mathcal{G}) \simeq \bigoplus_{w\in W_{I}\backslash W,\presubscript{I}{X}_w\subset Z}H^{\bullet}_B(j_w^!\mathcal{G})$.
Since $j_w^!\mathcal{G}$ is isomorphic to a direct sum of shifts of the constant sheaf, $H^{\bullet}_B(j_w^!\mathcal{G})$ is graded free as an $H^{\bullet}_B(\mathrm{pt})\simeq R$-module.
Hence $\presubscript{I}{\mathbb{H}}(i_*i^!\mathcal{G})$ is graded free.
By the same argument, $\presubscript{I}{\mathbb{H}}(j_*j^!\mathcal{G})$ is also graded free.

We apply the above argument to $\mathcal{G} = D(\mathcal{F})$ where $\mathcal{F}\in D_B^{\mathrm{b}}(\presubscript{I}{X})$ is a $*$-parity sheaf.
Since $\presubscript{I}{\mathbb{H}}(D(\mathcal{F}))$ is free, we have $D(\presubscript{I}{\mathbb{H}}(D(\mathcal{F})))\simeq \presubscript{I}{\mathbb{H}}(D^2(\mathcal{F}))\simeq \presubscript{I}{\mathbb{H}}(\mathcal{F})$.
Hence $\presubscript{I}{\mathbb{H}}(\mathcal{F})$ is graded free.
Similarly we have $D(\presubscript{I}{\mathbb{H}}(i_*i^!D(\mathcal{F})))\simeq \presubscript{I}{\mathbb{H}}(i_!i^*\mathcal{F})$ and $D(\presubscript{I}{\mathbb{H}}(j_*j^!D(\mathcal{F})))\simeq \presubscript{I}{\mathbb{H}}(j_!j^*\mathcal{F})$ and both are graded free.
Therefore we get an exact sequence
\begin{equation}\label{eq:exact seqence in open and closed}
0\to \presubscript{I}{\mathbb{H}}(j_!j^*\mathcal{F})\to \presubscript{I}{\mathbb{H}}(\mathcal{F})\to \presubscript{I}{\mathbb{H}}(i_!i^*\mathcal{F})\to 0
\end{equation}
and each term is a graded free $R$-module.

\begin{lem}\label{lem:general lemma to determine M_I,M^I}
Let $M_1,M,M_2\in \BigCat{I}$ with a sequence $0\to M_1\to M\to M_2\to 0$ in $\BigCat{I}$ which is exact as $(R^{W_I},R)$-bimodules.
Let $A\subset W_I\backslash W$ and assume that $\supp_{W}(M_1)\subset A$, $\supp_{W}(M_2)\subset (W_I\backslash W)\setminus A$ and $M_2$ is torsion-free as a right $R$-module.
Then we have $M_1\simeq M_A$ and $M_2\simeq M^{(W_I\backslash W)\setminus A}$.
\end{lem}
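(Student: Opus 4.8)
The plan is to localize the given sequence at $Q$, match it against the decomposition $M\otimes_{R}Q=\bigoplus_{x\in W_I\backslash W}M^x_Q$, and then exploit the torsion-freeness of $M_2$ to identify $M_1$ with $M_A$ inside $M$.

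First I would tensor $0\to M_1\to M\to M_2\to 0$ with $Q$ over $R$. Since $Q$ is a localization of $R$ this sequence stays exact, and since every map is a morphism in $\BigCat{I}$ it is compatible with the $(W_I\backslash W)$-gradings on the $Q$-tensors; a direct sum of complexes being exact summandwise, we obtain an exact sequence $0\to (M_1)^x_Q\to M^x_Q\to (M_2)^x_Q\to 0$ for each $x$. For $x\in A$ the hypothesis gives $(M_2)^x_Q=0$, hence $(M_1)^x_Q\xrightarrow{\sim}M^x_Q$; for $x\notin A$ it gives $(M_1)^x_Q=0$, hence $M^x_Q\xrightarrow{\sim}(M_2)^x_Q$. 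Consequently the image of $M_1\otimes_{R}Q$ in $M\otimes_{R}Q$ is exactly $\bigoplus_{x\in A}M^x_Q$, and this submodule is also the kernel of $M\otimes_{R}Q\to M_2\otimes_{R}Q$.

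Next, the composite $M_1\to M\to M\otimes_{R}Q$ factors through $M_1\otimes_{R}Q\to M\otimes_{R}Q$ by naturality, so by the previous step its image lies in $\bigoplus_{x\in A}M^x_Q$; hence $M_1\to M$ factors through the submodule $M_A\subseteq M$, yielding a morphism $f\colon M_1\to M_A$ in $\BigCat{I}$. It is injective because $M_1\to M_A\hookrightarrow M$ is, and its image, viewed inside $M$, is the image of $M_1$ in $M$. For the reverse containment, let $m\in M_A$ and let $\bar{m}$ be its image in $M_2$. The image of $\bar{m}$ in $M_2\otimes_{R}Q$ is the image of $m\in M\otimes_{R}Q$ under $M\otimes_{R}Q\to M_2\otimes_{R}Q$, and since $m\in M_A$ its image in $M\otimes_{R}Q$ lies in $\bigoplus_{x\in A}M^x_Q=\ker(M\otimes_{R}Q\to M_2\otimes_{R}Q)$; thus $\bar{m}$ maps to $0$ in $M_2\otimes_{R}Q$. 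Because $M_2$ is torsion-free, the map $M_2\to M_2\otimes_{R}Q$ is injective, so $\bar{m}=0$, i.e.\ $m$ lies in the image of $M_1$ in $M$. Therefore $M_A$ equals the image of $M_1$, and $f\colon M_1\xrightarrow{\sim}M_A$ is an isomorphism in $\BigCat{I}$.

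Finally, $M_2\simeq M/M_1=M/M_A$, and $M/M_A\simeq M^{(W_I\backslash W)\setminus A}$ via the map $M\to M\otimes_{R}Q\to\bigoplus_{x\notin A}M^x_Q$, whose kernel is $M_A$ by the definition of $M_A$; both identifications respect the $\BigCat{I}$-structure since they respect the gradings on the $Q$-tensors. I do not expect a genuine obstacle here: the single essential ingredient is the torsion-freeness of $M_2$, used exactly once to guarantee $M_2\hookrightarrow M_2\otimes_{R}Q$, and the remainder is a formal diagram/element chase together with the bookkeeping of the $\BigCat{I}$-structures.
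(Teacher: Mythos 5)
Your proof is correct and follows essentially the same route as the paper: identify $M_1$ with its image in $M$, deduce $M_1\subset M_A$ from the support condition, and use torsion-freeness of $M_2$ (i.e.\ injectivity of $M_2\to M_2\otimes_R Q$) to show every $m\in M_A$ dies in $M_2$, forcing $M_A=M_1$ and hence $M_2\simeq M/M_A\simeq M^{(W_I\backslash W)\setminus A}$. You merely spell out the localization and grading bookkeeping that the paper leaves implicit.
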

\begin{proof}
We regard $M_1$ as a submodule of $M$.
Then $M_1\subset M_A$ is obvious.
Let $m\in M_A$.
Then the image of $m$ in $M_2\otimes_{R}Q$ is zero.
By assumption, $m = 0$ in $M_2$.
Hence $m\in M_1$.
\end{proof}

\begin{lem}\label{lem:relation of restriction between sheaf and Soergel bimodule}
Keep the above notation and let $A\subset W_I\backslash W$ be the subset such that $Z = \bigcup_{w\in A}\presubscript{I}{X}_w$.
Set $A^c = (W_{I}\backslash W)\setminus A$.
\begin{enumerate}
\item If $\mathcal{F}\in D_B^{\mathrm{b}}(\presubscript{I}{X})$ is $*$-parity, $\presubscript{I}{\mathbb{H}}(j_!j^*\mathcal{F})\simeq \presubscript{I}{\mathbb{H}}(\mathcal{F})_{A^c}$ and $\presubscript{I}{\mathbb{H}}(i_!i^*\mathcal{F})\simeq \presubscript{I}{\mathbb{H}}(\mathcal{F})^{A}$.
\item If $\mathcal{F}\in D_B^{\mathrm{b}}(\presubscript{I}{X})$ is $!$-parity, $\presubscript{I}{\mathbb{H}}(j_*j^!\mathcal{F})\simeq \presubscript{I}{\mathbb{H}}(\mathcal{F})^{A^c}$ and $\presubscript{I}{\mathbb{H}}(i_*i^!\mathcal{F})\simeq \presubscript{I}{\mathbb{H}}(\mathcal{F})_{A}$.
\end{enumerate}
\end{lem}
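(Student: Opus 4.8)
The plan is to deduce part~(1) directly from the exact sequence~\eqref{eq:exact seqence in open and closed} together with the support lemma just proved, and then to obtain part~(2) from part~(1) by Verdier duality.

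For part~(1): First I would observe that the two maps in~\eqref{eq:exact seqence in open and closed} are obtained by applying $\presubscript{I}{\mathbb{H}}=H_B^\bullet(\presubscript{I}{X},-)$ to the adjunction triangle $j_!j^*\mathcal{F}\to\mathcal{F}\to i_*i^*\mathcal{F}\xrightarrow{+1}$ (here $i_!=i_*$ since $i$ is a closed immersion), so they are morphisms in $\BigCat{I}$, and by~\eqref{eq:exact seqence in open and closed} the sequence $0\to\presubscript{I}{\mathbb{H}}(j_!j^*\mathcal{F})\to\presubscript{I}{\mathbb{H}}(\mathcal{F})\to\presubscript{I}{\mathbb{H}}(i_!i^*\mathcal{F})\to 0$ is exact as $(R^{W_I},R)$-bimodules with graded free terms. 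Next I would compute the two supports from the localization description $\presubscript{I}{\mathbb{H}}(\mathcal{G})_Q^w\simeq H_T^\bullet(\{w\},\mathcal{G}_w)\otimes_R Q$: the $T$-fixed point indexed by $w\in W_I\backslash W$ lies in the Schubert cell $\presubscript{I}{X}_w$, hence it lies in $Z$ precisely when $w\in A$; therefore the stalk $(j_!j^*\mathcal{F})_w$ vanishes unless $w\in A^c$ and $(i_*i^*\mathcal{F})_w$ vanishes unless $w\in A$, which gives $\supp_W(\presubscript{I}{\mathbb{H}}(j_!j^*\mathcal{F}))\subset A^c$ and $\supp_W(\presubscript{I}{\mathbb{H}}(i_!i^*\mathcal{F}))\subset A=(W_I\backslash W)\setminus A^c$. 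Since $\presubscript{I}{\mathbb{H}}(i_!i^*\mathcal{F})$ is graded free it is torsion-free as a right $R$-module, so the support lemma, applied with its ``$A$'' taken to be our $A^c$, yields $\presubscript{I}{\mathbb{H}}(j_!j^*\mathcal{F})\simeq\presubscript{I}{\mathbb{H}}(\mathcal{F})_{A^c}$ and $\presubscript{I}{\mathbb{H}}(i_!i^*\mathcal{F})\simeq\presubscript{I}{\mathbb{H}}(\mathcal{F})^{A}$, which is~(1).

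For part~(2): I would apply part~(1) to the $*$-parity sheaf $D(\mathcal{F})$ and then dualize. Using $Dj_!=j_*D$, $Dj^*=j^!D$, $Di_!=i_*D$, $Di^*=i^!D$, the isomorphism $\presubscript{I}{\mathbb{H}}\circ D\simeq D\circ\presubscript{I}{\mathbb{H}}$, and Lemma~\ref{lem:dual of M^I} (which, for graded free modules, interchanges $M_I$ with $M^I$ under $D$ — and the relevant modules here are graded free, $A^c$ being an open subset of $W_I\backslash W$), the isomorphism $\presubscript{I}{\mathbb{H}}(j_!j^*(D\mathcal{F}))\simeq\presubscript{I}{\mathbb{H}}(D\mathcal{F})_{A^c}$ becomes, after applying $D$ and using $D^2\mathcal{F}\simeq\mathcal{F}$, the isomorphism $\presubscript{I}{\mathbb{H}}(j_*j^!\mathcal{F})\simeq\presubscript{I}{\mathbb{H}}(\mathcal{F})^{A^c}$; similarly $\presubscript{I}{\mathbb{H}}(i_!i^*(D\mathcal{F}))\simeq\presubscript{I}{\mathbb{H}}(D\mathcal{F})^{A}$ dualizes to $\presubscript{I}{\mathbb{H}}(i_*i^!\mathcal{F})\simeq\presubscript{I}{\mathbb{H}}(\mathcal{F})_{A}$.

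The real content is already contained in~\eqref{eq:exact seqence in open and closed} and in the support lemma; what remains is bookkeeping. The point I expect to need the most care is matching the orientations — checking that it is $j_!j^*\mathcal{F}$ (not $i_!i^*\mathcal{F}$) whose global sections give the ``subobject'' $\presubscript{I}{\mathbb{H}}(\mathcal{F})_{A^c}$ while $i_!i^*\mathcal{F}$ gives the ``quotient'' $\presubscript{I}{\mathbb{H}}(\mathcal{F})^{A}$ — and then keeping the contravariant interchange $M_I\leftrightarrow M^I$ straight when passing to~(2) by duality. I would also pause to confirm the geometric input used above, namely that the $T$-fixed point indexed by $w$ is the unique $T$-fixed point of the cell $\presubscript{I}{X}_w$, so that ``$w\in Z$'' is equivalent to ``$w\in A$''.
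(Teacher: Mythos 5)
Your part~(1) is correct and is essentially the paper's own argument: compute $\supp_W(\presubscript{I}{\mathbb{H}}(j_!j^*\mathcal{F}))\subset A^c$ and $\supp_W(\presubscript{I}{\mathbb{H}}(i_!i^*\mathcal{F}))\subset A$ from the stalks at the $T$-fixed points, then feed the exact sequence \eqref{eq:exact seqence in open and closed} (whose third term is graded free, hence torsion-free) into the unnumbered support lemma just before the statement, with its ``$A$'' taken to be $A^c$. That part needs no further comment.

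Part~(2) is where your route diverges from the paper, and it has a genuine gap in the open half. Dualizing $\presubscript{I}{\mathbb{H}}(i_!i^*(D\mathcal{F}))\simeq \presubscript{I}{\mathbb{H}}(D\mathcal{F})^{A}$ does work, because only the unconditional identity $D(M^J)\simeq D(M)_J$ of Lemma~\ref{lem:dual of M^I} and $D^2\simeq\id$ on the graded free modules $\presubscript{I}{\mathbb{H}}(\mathcal{F})$ and $\presubscript{I}{\mathbb{H}}(i_!i^*D\mathcal{F})$ are used: one gets $\presubscript{I}{\mathbb{H}}(i_*i^!\mathcal{F})\simeq D\bigl(D(\presubscript{I}{\mathbb{H}}(\mathcal{F}))^{A}\bigr)\simeq \presubscript{I}{\mathbb{H}}(\mathcal{F})_{A}$. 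But dualizing $\presubscript{I}{\mathbb{H}}(j_!j^*(D\mathcal{F}))\simeq \presubscript{I}{\mathbb{H}}(D\mathcal{F})_{A^c}$ only yields $\presubscript{I}{\mathbb{H}}(j_*j^!\mathcal{F})\simeq D\bigl(D(\presubscript{I}{\mathbb{H}}(\mathcal{F}))_{A^c}\bigr)\simeq D^2\bigl(\presubscript{I}{\mathbb{H}}(\mathcal{F})^{A^c}\bigr)$, and replacing $D^2(\presubscript{I}{\mathbb{H}}(\mathcal{F})^{A^c})$ by $\presubscript{I}{\mathbb{H}}(\mathcal{F})^{A^c}$ is exactly the second, conditional statement of Lemma~\ref{lem:dual of M^I}, whose hypothesis is that $\presubscript{I}{\mathbb{H}}(\mathcal{F})^{A^c}$ is a free (at least reflexive) right $R$-module. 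Your parenthetical justification (``the relevant modules are graded free, $A^c$ being open'') is not available here: freeness of $M^{J}$ for $J$ open is proved in the paper only for $M\in\Sbimod{I}$, and in (2) the complex $\mathcal{F}$ is merely $!$-parity (this generality is used later, e.g.\ for $f_*f^!\mathcal{G}$), so $\presubscript{I}{\mathbb{H}}(\mathcal{F})$ is not yet known to lie in $\Sbimod{I}$; indeed the freeness of $\presubscript{I}{\mathbb{H}}(\mathcal{F})^{A^c}$ is part of what (2) delivers, so your argument is circular at this step. (Note also that you cannot get the support of the $j_*j^!$ term from stalks, since $j_*$ has nonzero stalks on the boundary.) The paper's fix is to prove (2) by the same mechanism as (1): apply the support lemma to the exact sequence $0\to \presubscript{I}{\mathbb{H}}(i_*i^!\mathcal{F})\to \presubscript{I}{\mathbb{H}}(\mathcal{F})\to \presubscript{I}{\mathbb{H}}(j_*j^!\mathcal{F})\to 0$ for $!$-parity $\mathcal{F}$, computing $\supp_W(\presubscript{I}{\mathbb{H}}(i_*i^!\mathcal{F}))\subset A$ from the stalks and $\supp_W(\presubscript{I}{\mathbb{H}}(j_*j^!\mathcal{F}))\subset A^c$ by comparing $Q$-fibre dimensions with those of $\presubscript{I}{\mathbb{H}}(j_!j^*D\mathcal{F})$ (duality is used only fibrewise), and using that the quotient term is graded free, hence torsion-free. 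A smaller point: the compatibility $\presubscript{I}{\mathbb{H}}\circ D\simeq D\circ\presubscript{I}{\mathbb{H}}$ you invoke is stated in the paper only for parity sheaves, though it extends to $*$- and $!$-parity complexes because the relevant equivariant cohomologies are free; if you keep any duality in your argument, say this explicitly.
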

\begin{proof}
We have $\presubscript{I}{\mathbb{H}}(\mathcal{F})_Q^w = H^{\bullet}_T(\{w\},(j_!j^*\mathcal{F})_w)\otimes_{R}Q$ and it is zero if $w\notin A^c$.
Therefore $\supp_W(\presubscript{I}{\mathbb{H}}(j_!j^*\mathcal{F}))\subset A^c$.
Similarly we also have $\supp_W(\presubscript{I}{\mathbb{H}}(i_!i^*\mathcal{F}))\subset A$.
Therefore, by the exact sequence \eqref{eq:exact seqence in open and closed} and since $\presubscript{I}{\mathbb{H}}(i_!i^*\mathcal{F})$ is a free (hence tosion-free) $R$-module, we get (1) by Lemma~\ref{lem:general lemma to determine M_I,M^I}.

We prove (2).
If $w\notin A$, then $(i_*i^!\mathcal{F})_w = 0$.
Hence $\presubscript{I}{\mathbb{H}}(i_*i^!\mathcal{F})_Q^w = 0$.
Therefore we have $\supp_W(\presubscript{I}{\mathbb{H}}(i_*i^!\mathcal{F}))\subset A$.
Let $w\in A$.
We have $\dim_Q\presubscript{I}{\mathbb{H}}(j_*j^*\mathcal{F})^w_Q = \dim_Q D(\presubscript{I}{\mathbb{H}}(j_*j^*\mathcal{F}))^w_Q = \dim_Q\presubscript{I}{\mathbb{H}}(j_!j^!(D(\mathcal{F})))^w_Q = 0$.
Hence $\supp_W(\presubscript{I}{\mathbb{H}}(j_*j^*\mathcal{F}))\subset A^c$.
We get (2) with the previous lemma.
\end{proof}

\begin{lem}\label{lem:relation of restriction between sheaf and Soergel bimodule, locally closed}
Let $Z' = \bigcup_{w\in A'}\presubscript{I}{X}_w\subset Z$ be a closed subset and set $U' = \presubscript{I}{X}\setminus Z$.
Put $Y = Z\cap U'$ and denote the inclusions $Z\hookrightarrow\presubscript{I}{X}$, $U'\hookrightarrow \presubscript{I}{X}$, $Y\hookrightarrow \presubscript{I}{X}$ by $i'$, $j'$, $a$, respectively.
Let $\mathcal{F}\in D_B^{\mathrm{b}}(\presubscript{I}{X})$.
\begin{enumerate}
\item If $\mathcal{F}$ is $*$-parity, then $0\to \presubscript{I}{\mathbb{H}}(j_!j^*\mathcal{F})\to \presubscript{I}{\mathbb{H}}(j'_!(j')^*\mathcal{F})\to \presubscript{I}{\mathbb{H}}(a_!a^*\mathcal{F})\to 0$ is exact.
\item If $\mathcal{F}$ is $!$-parity, then $0\to \presubscript{I}{\mathbb{H}}(i'_*(i')^!\mathcal{F})\to \presubscript{I}{\mathbb{H}}(i_*i^!\mathcal{F})\to \presubscript{I}{\mathbb{H}}(a_*a^!\mathcal{F})\to 0$ is exact.
\end{enumerate}
\end{lem}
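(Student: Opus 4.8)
The plan is to deduce both parts from the two short exact sequences already produced in the proof of Theorem~\ref{thm:equivalence}, by feeding in suitable auxiliary complexes. One has \eqref{eq:exact seqence in open and closed} for a $*$-parity complex and a closed union of Schubert cells with open complement, and its $!$-counterpart $0\to\presubscript{I}{\mathbb{H}}(i_*i^!\mathcal{G})\to\presubscript{I}{\mathbb{H}}(\mathcal{G})\to\presubscript{I}{\mathbb{H}}(j_*j^!\mathcal{G})\to0$ for a $!$-parity complex $\mathcal{G}$; both are valid for an arbitrary closed union of Schubert cells (with its open complement) and for complexes, not merely parity sheaves, exactly as they are already applied above. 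Throughout, write $i'\colon Z'\hookrightarrow\presubscript{I}{X}$, $j'\colon U'=\presubscript{I}{X}\setminus Z'\hookrightarrow\presubscript{I}{X}$ and $a\colon Y=Z\cap U'\hookrightarrow\presubscript{I}{X}$; since $Z'\subset Z$ we have $U\subset U'$ and $Y=U'\setminus U=Z\setminus Z'$, and I factor $i'=i\circ\iota$, $j=j'\circ\kappa$, $a=i\circ\beta'=j'\circ\alpha'$ with $\iota\colon Z'\hookrightarrow Z$ and $\alpha'\colon Y\hookrightarrow U'$ closed and $\kappa\colon U\hookrightarrow U'$ and $\beta'\colon Y\hookrightarrow Z$ open; the square with edges $\beta',\alpha',i,j'$ is cartesian.

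For (2): if $\mathcal{F}$ is $!$-parity, then $i_*i^!\mathcal{F}$ is again a $!$-parity complex on $\presubscript{I}{X}$ (its costalks vanish off $Z$ and coincide with those of $i^!\mathcal{F}$ on each Schubert cell of $Z$, hence are direct sums of shifts of the constant sheaf of the correct parity). I would then apply the $!$-counterpart above to the closed subset $Z'$, so that the maps denoted $i,j$ there become $i',j'$, and to $\mathcal{G}=i_*i^!\mathcal{F}$. The outer terms are identified by standard recollement and base change: since $i^!i_*\simeq\id$ one gets $(i')^!(i_*i^!\mathcal{F})\simeq\iota^!i^!\mathcal{F}=(i')^!\mathcal{F}$; and base change along the cartesian square gives $(j')^!(i_*i^!\mathcal{F})=(j')^*(i_*i^!\mathcal{F})\simeq\alpha'_*(\beta')^*i^!\mathcal{F}=\alpha'_*a^!\mathcal{F}$, hence $j'_*(j')^!(i_*i^!\mathcal{F})\simeq a_*a^!\mathcal{F}$. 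Substituting these into the sequence yields precisely the assertion of (2).

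For (1): symmetrically, if $\mathcal{F}$ is $*$-parity then $j'_!(j')^*\mathcal{F}$ is a $*$-parity complex, and I would apply \eqref{eq:exact seqence in open and closed} to the closed subset $Z$ (with complement $U$) and to $j'_!(j')^*\mathcal{F}$; using $(j')^*j'_!\simeq\id$ one gets $j^*(j'_!(j')^*\mathcal{F})\simeq\kappa^*(j')^*\mathcal{F}=j^*\mathcal{F}$, and by base change $i^*(j'_!(j')^*\mathcal{F})\simeq\beta'_!(\alpha')^*(j')^*\mathcal{F}=\beta'_!a^*\mathcal{F}$, so $i_!i^*(j'_!(j')^*\mathcal{F})\simeq a_!a^*\mathcal{F}$; substituting gives (1). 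All the computations are routine; the only points that need a little care are checking that the auxiliary complexes $i_*i^!\mathcal{F}$ and $j'_!(j')^*\mathcal{F}$ really are $!$- respectively $*$-parity, so that the earlier exact sequences apply, together with the base-change identifications of the outer terms — I do not anticipate a genuine obstacle.
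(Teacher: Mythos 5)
Your argument is correct and is essentially the paper's own proof: part (1) is obtained exactly as in the paper by applying the sequence \eqref{eq:exact seqence in open and closed} for the decomposition $Z\sqcup U$ to the auxiliary $*$-parity complex $j'_!(j')^*\mathcal{F}$ and identifying $j_!j^*j'_!(j')^*\simeq j_!j^*$ and $i_!i^*j'_!(j')^*\simeq a_!a^*$, while your part (2) (the $!$-sequence for $Z'\sqcup U'$ applied to $i_*i^!\mathcal{F}$, with the base-change identifications of the outer terms) is precisely the ``similar argument'' the paper leaves implicit. The extra care you take in checking that the auxiliary complexes remain $*$- resp.\ $!$-parity, and in spelling out the cartesian-square base changes, fills in details the paper omits but does not change the method.
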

\begin{proof}
Note that $j'_{!}(j')^*\mathcal{F}$ is $*$-parity if $\mathcal{F}$ is $*$-parity.
By \eqref{eq:exact seqence in open and closed}, we have an exact sequence $0\to \presubscript{I}{\mathbb{H}}(j_!j^*j'_!(j')^*\mathcal{F})\to \presubscript{I}{\mathbb{H}}(j_!j^*\mathcal{F})\to \presubscript{I}{\mathbb{H}}(i_!i^*j'_!(j')^*\mathcal{F})\to 0$.
We have $j_!j^*j'_!(j')^*\simeq j_!j^*$ and $i_!i^*j'_!(j')^*\simeq a_!a^*$.
We deduce (1), and (2) follows from a similar argument.
\end{proof}

In particular, $\presubscript{I}{\mathbb{H}}(j_{w!}j_w^*\mathcal{F})\simeq \presubscript{I}{\mathbb{H}}(\mathcal{F})_{\ge w}/\presubscript{I}{\mathbb{H}}(\mathcal{F})_{> w}$ if $\mathcal{F}\in \Parity_{B}(\presubscript{I}{X})$.

\begin{lem}\label{lem:on Schubert cell and ch}
Let $\mathcal{F}\in\Parity_B(\presubscript{I}{X})$ and $w\in W_I\backslash W$.
\begin{enumerate}
\item Take $n_{w,k}\in\Z_{\ge 0}$ such that $j_w^*\mathcal{F}\simeq \bigoplus_k\Coeff[k]^{\oplus n_{w,k}}$.
Then the coefficient of $1\otimes H_{w_-}$ in $\presubscript{I}{\ch}(\presubscript{I}{\mathbb{H}}(\mathcal{F}))$ is $v^{-\ell(w_-)}\sum_k n_{w,k}v^{k}$.
\item Take $m_{w,k}\in\Z_{\ge 0}$ such that $j_w^!\mathcal{F}\simeq \bigoplus_k\Coeff[k]^{\oplus m_{w,k}}$.
Then the coefficient of $1\otimes H_{w_-}$ in $\overline{\presubscript{I}{\ch}(\presubscript{I}{\mathbb{H}}(\mathcal{F}))}$ is $v^{\ell(w_-)}\sum m_{w,k}v^{-k}$.
\end{enumerate}
\end{lem}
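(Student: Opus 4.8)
The plan is to deduce (1) from a single equivariant cohomology computation on the Schubert cell $\presubscript{I}{X}_w$, and then to obtain (2) from (1) by Verdier duality. Throughout I write $d_w=\ell(w_-)=\dim\presubscript{I}{X}_w$, and I will use that $\presubscript{I}{X}_w$ is $T$-equivariantly isomorphic to the affine space $\mathbb{A}^{d_w}$ with a linear $T$-action whose unique fixed point is the point $w$.

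For (1), I would first observe that, by the definition of $\presubscript{I}{\ch}$, the coefficient of $1\otimes H_{w_-}$ in $\presubscript{I}{\ch}(\presubscript{I}{\mathbb{H}}(\mathcal{F}))$ is $v^{\ell(w_-)}\grk(\presubscript{I}{\mathbb{H}}(\mathcal{F})_{\ge w}/\presubscript{I}{\mathbb{H}}(\mathcal{F})_{>w})$, and that this subquotient is $\presubscript{I}{\mathbb{H}}(j_{w!}j_w^*\mathcal{F})$ by the observation following Lemma~\ref{lem:relation of restriction between sheaf and Soergel bimodule, locally closed} (applicable since a parity sheaf is $*$-parity). Decomposing $j_w^*\mathcal{F}\simeq\bigoplus_k\Coeff[k]^{\oplus n_{w,k}}$ and using additivity of $\presubscript{I}{\mathbb{H}}$, the identity $\presubscript{I}{\mathbb{H}}(\mathcal{G}[k])\simeq\presubscript{I}{\mathbb{H}}(\mathcal{G})(k)$, and the commutation of $j_{w!}$ with shifts, this reduces (1) to computing $\grk(\presubscript{I}{\mathbb{H}}(j_{w!}\Coeff_{\presubscript{I}{X}_w}))$. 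For the latter, the point is that $j_{w!}\Coeff_{\presubscript{I}{X}_w}$ is supported on the projective Schubert variety $\presubscript{I}{X}_{\le w}$, so $\presubscript{I}{\mathbb{H}}(j_{w!}\Coeff_{\presubscript{I}{X}_w})\cong H^\bullet_{B,c}(\presubscript{I}{X}_w,\Coeff)=H^\bullet_{T,c}(\mathbb{A}^{d_w},\Coeff)$; by the equivariant Thom isomorphism this is free of rank one over $R$ with generator in cohomological degree $2d_w$, so $\grk(\presubscript{I}{\mathbb{H}}(j_{w!}\Coeff_{\presubscript{I}{X}_w}))=v^{-2\ell(w_-)}$ (and, together with a stalk computation at the $T$-fixed points showing $\supp_W=\{w\}$, one even gets $\presubscript{I}{\mathbb{H}}(j_{w!}\Coeff_{\presubscript{I}{X}_w})\simeq R_w(-2\ell(w_-))$ in $\BigCat{I}$, though only the graded rank is needed). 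Substituting back, the coefficient of $1\otimes H_{w_-}$ becomes $v^{\ell(w_-)}\sum_k n_{w,k}v^{k-2\ell(w_-)}=v^{-\ell(w_-)}\sum_k n_{w,k}v^{k}$, which is (1).

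For (2), I would pass to the Verdier dual: using $\presubscript{S_0}{\ch}(D(M))\simeq\overline{\presubscript{S_0}{\ch}(M)}$ and $\presubscript{I}{\mathbb{H}}(D(\mathcal{F}))\simeq D(\presubscript{I}{\mathbb{H}}(\mathcal{F}))$ gives $\overline{\presubscript{I}{\ch}(\presubscript{I}{\mathbb{H}}(\mathcal{F}))}\simeq\presubscript{I}{\ch}(\presubscript{I}{\mathbb{H}}(D\mathcal{F}))$, and $D\mathcal{F}\in\Parity_B(\presubscript{I}{X})$. Since $j_w^*\circ D\simeq D\circ j_w^!$ and $D_{\presubscript{I}{X}_w}\Coeff\simeq\Coeff[2\ell(w_-)]$ (the cell being smooth of dimension $\ell(w_-)$), the decomposition $j_w^!\mathcal{F}\simeq\bigoplus_k\Coeff[k]^{\oplus m_{w,k}}$ yields $j_w^*(D\mathcal{F})\simeq\bigoplus_k\Coeff[2\ell(w_-)-k]^{\oplus m_{w,k}}$. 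Applying (1) to $D\mathcal{F}$, the coefficient of $1\otimes H_{w_-}$ in $\presubscript{I}{\ch}(\presubscript{I}{\mathbb{H}}(D\mathcal{F}))$ equals $v^{-\ell(w_-)}\sum_k m_{w,k}v^{2\ell(w_-)-k}=v^{\ell(w_-)}\sum_k m_{w,k}v^{-k}$, which is (2).

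The only genuine obstacle I anticipate is the normalization in the cell computation in the second paragraph: that $\presubscript{I}{\mathbb{H}}(j_{w!}\Coeff_{\presubscript{I}{X}_w})$ is a free $R$-module of rank one whose generator sits in degree exactly $2\ell(w_-)$. This rests on identifying the global sections of the extension-by-zero with the equivariant compactly supported cohomology of the cell and on the equivariant Thom isomorphism for $\mathbb{A}^{d_w}$ with its linear $T$-action; one also has to check that the $R$-action occurring there is the right $R$-action on $\presubscript{I}{\mathbb{H}}(\mathcal{F})$. Everything else is formal manipulation with results already established.
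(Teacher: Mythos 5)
Your proof is correct, and it uses the same basic ingredients as the paper's proof --- the identification $\presubscript{I}{\mathbb{H}}(j_{w!}j_w^*\mathcal{F})\simeq \presubscript{I}{\mathbb{H}}(\mathcal{F})_{\ge w}/\presubscript{I}{\mathbb{H}}(\mathcal{F})_{>w}$ following Lemma~\ref{lem:relation of restriction between sheaf and Soergel bimodule, locally closed}, the compatibilities $\presubscript{I}{\mathbb{H}}\circ D\simeq D\circ \presubscript{I}{\mathbb{H}}$ and $\presubscript{I}{\ch}\circ D=\overline{(\,\cdot\,)}\circ\presubscript{I}{\ch}$, and a single rank-one computation on the cell --- but it runs them in the opposite order. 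You prove (1) directly, identifying $\presubscript{I}{\mathbb{H}}(j_{w!}\Coeff_{\presubscript{I}{X}_{w}})$ with $H^{\bullet}_{B,c}(\presubscript{I}{X}_{w},\Coeff)\simeq R(-2\ell(w_-))$ (via properness of $\presubscript{I}{X}_{\le w}$ and the equivariant Thom isomorphism), and then deduce (2) by Verdier duality. The paper proves (2) first: it computes $\presubscript{I}{\mathbb{H}}(j_{w*}j_w^!\mathcal{F})\simeq\bigoplus_k R(k)^{m_{w,k}}$ using only the ordinary equivariant cohomology of the contractible cell, identifies this with $D(\presubscript{I}{\mathbb{H}}(j_{w!}j_w^*(D\mathcal{F})))$, i.e.\ with the dual of the relevant subquotient for $D\mathcal{F}$, and then obtains (1) from (2) using $D(\Coeff_{\presubscript{I}{X}_{w}})\simeq\Coeff_{\presubscript{I}{X}_{w}}[2\ell(w_-)]$; thus in the paper the shift $2\ell(w_-)$ enters only through Verdier duality on the cell and no compactly supported cohomology is needed. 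The normalization you flag as the only delicate point --- $H^{\bullet}_{B,c}(\presubscript{I}{X}_{w})$ free of rank one over $R$ with generator in degree $2\ell(w_-)$, compatibly with the right $R$-action --- is indeed available: it is exactly the fact the paper establishes and uses later in the fully-faithfulness argument, and the $R$-linearity is automatic because all your identifications are $H^{\bullet}_B(\presubscript{I}{X})$-linear. The trade-off is that your route makes part (1) self-contained at the cost of this extra geometric input, while the paper's route needs only the contractibility of the cell but applies duality twice.
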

\begin{proof}
We prove (2) first.
We have
\begin{align*}
D(\presubscript{I}{\mathbb{H}}(j_{w!}j_{w}^*(D(\mathcal{F})))) & \simeq \presubscript{I}{\mathbb{H}}(j_{w*}j_{w}^!\mathcal{F})\\
& \simeq \bigoplus_k H_B^{\bullet}(\presubscript{I}{X}_{w},\Coeff[k]^{\oplus m_{w,k}})\\
& \simeq \bigoplus_k R(k)^{\oplus m_{w,k}}.
\end{align*}
By the previous lemma, we have $\presubscript{I}{\mathbb{H}}(j_{w!}j_{w}^*\mathcal{F})\simeq \presubscript{\mathbb{H}}(\mathcal{F})_{\ge w}/\presubscript{\mathbb{H}}(\mathcal{F})_{> w}$.
Hence the coefficient of $1\otimes H_{w_-}$ in $\presubscript{I}{\ch}(\presubscript{I}{\mathbb{H}}(D(\mathcal{F})))$ is $v^{\ell(w_-)}\sum_k m_{w,k}v^{-k}$.
As we have $\presubscript{I}{\ch}(\presubscript{I}{\mathbb{H}}(D(\mathcal{F}))) = \overline{\presubscript{I}{\ch}(\presubscript{I}{\mathbb{H}}(\mathcal{F}))}$, we get (2).

We prove (1).
We have $j_w^!D(\mathcal{F})\simeq \bigoplus_k D(\Coeff_{\presubscript{I}{X}_{w}})[-k]^{\oplus n_{w,k}}$.
Since $\presubscript{I}{X}_{w}\simeq \mathbb{A}^{\ell(w_-)}$, we have $D(\Coeff_{\presubscript{I}{X}_{w}})\simeq \Coeff_{\presubscript{I}{X}_{w}}[2\ell(w_-)]$.
Hence $j_w^!D(\mathcal{F})\simeq \bigoplus_k \Coeff_{\presubscript{I}{X}_{w}}[2\ell(w_-) - k]^{\oplus n_{w,k}}$.
By (2), the coefficient of $H_w$ in $\overline{\presubscript{I}{\ch}(\presubscript{I}{\mathbb{H}}(D(\mathcal{F})))} = \presubscript{I}{\ch}(\presubscript{I}{\mathbb{H}}(\mathcal{F}))$ is $v^{\ell(w_-)}\sum_k n_{w,2\ell(w_-) - k}v^{-k} = v^{-\ell(w_-)}\sum_k n_{w,k}v^{k}$.
\end{proof}

\begin{lem}\label{lem:hom formula on Parity}
For any $\mathcal{F},\mathcal{G}\in \Parity_B(\presubscript{I}{X})$, $\Hom^{\bullet}_{\Parity_B(\presubscript{I}{X})}(\mathcal{F},\mathcal{G})$ is graded free and we have
\[
\grk\Hom^{\bullet}_{\Parity_B(\presubscript{I}{X})}(\mathcal{F},\mathcal{G}) = \grk\Hom_{\Sbimod{I}}^{\bullet}(\presubscript{I}{\mathbb{H}}(\mathcal{F}),\presubscript{I}{\mathbb{H}}(\mathcal{G})).
\]
\end{lem}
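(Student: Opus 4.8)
The plan is to dispatch the right-hand side with the Hom formula for singular Soergel bimodules and the left-hand side with the equivariant-cohomology dévissage already available for $!$-parity complexes, and then to reconcile the two outputs via Lemma~\ref{lem:on Schubert cell and ch}. Since $\presubscript{I}{\mathbb{H}}$ maps $\Parity_B(\presubscript{I}{X})$ into $\Sbimod{I}$ (corollary above), Theorem~\ref{thm:hom formula} identifies the right-hand side as $\grk\Hom^{\bullet}_{\Sbimod{I}}(\presubscript{I}{\mathbb{H}}(\mathcal{F}),\presubscript{I}{\mathbb{H}}(\mathcal{G})) = \langle\presubscript{I}{\ch}(\presubscript{I}{\mathbb{H}}(\mathcal{F})),\presubscript{I}{\ch}(\presubscript{I}{\mathbb{H}}(\mathcal{G}))\rangle_{\mathcal{H},S_0}$, so it suffices to prove that $\Hom^{\bullet}_{\Parity_B(\presubscript{I}{X})}(\mathcal{F},\mathcal{G})$ is graded free over $R$ with exactly this graded rank.

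The main step is the observation that, as graded $R$-modules, $\Hom^{\bullet}_{\Parity_B(\presubscript{I}{X})}(\mathcal{F},\mathcal{G}) = \Hom^{\bullet}_{D_B^{\mathrm{b}}(\presubscript{I}{X})}(\mathcal{F},\mathcal{G}) \simeq H^{\bullet}_B(\presubscript{I}{X},R\mathcal{H}om(\mathcal{F},\mathcal{G})) = \presubscript{I}{\mathbb{H}}(R\mathcal{H}om(\mathcal{F},\mathcal{G}))$, together with the fact that the internal hom $R\mathcal{H}om(\mathcal{F},\mathcal{G})$ is a $!$-parity complex supported on a finite closed union of Schubert cells. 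Indeed $j_w^!R\mathcal{H}om(\mathcal{F},\mathcal{G})\simeq R\mathcal{H}om(j_w^*\mathcal{F},j_w^!\mathcal{G})$ for each $w\in W_I\backslash W$, and since $\mathcal{F}$ is $*$-parity and $\mathcal{G}$ is $!$-parity both $j_w^*\mathcal{F}$ and $j_w^!\mathcal{G}$ are direct sums of shifts of the constant sheaf on $\presubscript{I}{X}_{w}$, hence so is their internal hom. Applying to $R\mathcal{H}om(\mathcal{F},\mathcal{G})$ the discussion preceding this lemma (based on \cite[Proposition~2.6]{MR3230821}, which uses only $!$-parity) then yields a non-canonical isomorphism of graded $R$-modules $\presubscript{I}{\mathbb{H}}(R\mathcal{H}om(\mathcal{F},\mathcal{G}))\simeq\bigoplus_{w\in W_I\backslash W}H^{\bullet}_B(\presubscript{I}{X}_{w},R\mathcal{H}om(j_w^*\mathcal{F},j_w^!\mathcal{G}))$, each summand being a direct sum of grading shifts of $H^{\bullet}_B(\presubscript{I}{X}_{w})\simeq R$ (because $\presubscript{I}{X}_{w}$ is $T$-equivariantly an affine space) and hence graded free; in particular $\Hom^{\bullet}_{\Parity_B(\presubscript{I}{X})}(\mathcal{F},\mathcal{G})$ is graded free.

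Finally, write $j_w^*\mathcal{F}\simeq\bigoplus_k\Coeff[k]^{\oplus n_{w,k}}$ and $j_w^!\mathcal{G}\simeq\bigoplus_l\Coeff[l]^{\oplus m_{w,l}}$ on $\presubscript{I}{X}_{w}$; then $R\mathcal{H}om(j_w^*\mathcal{F},j_w^!\mathcal{G})\simeq\bigoplus_{k,l}\Coeff[l-k]^{\oplus n_{w,k}m_{w,l}}$, so $\grk H^{\bullet}_B(\presubscript{I}{X}_{w},R\mathcal{H}om(j_w^*\mathcal{F},j_w^!\mathcal{G})) = \bigl(\sum_k n_{w,k}v^{-k}\bigr)\bigl(\sum_l m_{w,l}v^{l}\bigr)$, whence $\grk\Hom^{\bullet}_{\Parity_B(\presubscript{I}{X})}(\mathcal{F},\mathcal{G}) = \sum_{w\in W_I\backslash W}\bigl(\sum_k n_{w,k}v^{-k}\bigr)\bigl(\sum_l m_{w,l}v^{l}\bigr)$. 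On the other hand, by Lemma~\ref{lem:on Schubert cell and ch}(1) the coefficient of $1\otimes H_{w_-}$ in $\presubscript{I}{\ch}(\presubscript{I}{\mathbb{H}}(\mathcal{F}))$ is $v^{-\ell(w_-)}\sum_k n_{w,k}v^{k}$, and by Lemma~\ref{lem:on Schubert cell and ch}(2) the coefficient of $1\otimes H_{w_-}$ in $\overline{\presubscript{I}{\ch}(\presubscript{I}{\mathbb{H}}(\mathcal{G}))}$ is $v^{\ell(w_-)}\sum_l m_{w,l}v^{-l}$; substituting these into the definition of $\langle\,\cdot\,,\,\cdot\,\rangle_{\mathcal{H},S_0}$ the normalizations $v^{\pm\ell(w_-)}$ cancel and the bar converts each term into $\bigl(\sum_k n_{w,k}v^{-k}\bigr)\bigl(\sum_l m_{w,l}v^{l}\bigr)$, recovering the same sum. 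I expect the only delicate point to be this shift bookkeeping — matching the cohomological shifts $[\,\cdot\,]$ on the sheaf side with the internal shifts $(\,\cdot\,)$ on the bimodule side and tracking the $v^{\pm\ell(w_-)}$ built into $\presubscript{I}{\ch}$ — since the geometric ingredients (that $R\mathcal{H}om(\mathcal{F},\mathcal{G})$ is $!$-parity, and that costalks have graded-free equivariant cohomology) are standard.
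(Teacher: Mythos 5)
Your proposal is correct and follows essentially the same route as the paper: the paper gets the stratum-by-stratum decomposition $\Hom^{\bullet}_{\Parity_B(\presubscript{I}{X})}(\mathcal{F},\mathcal{G})\simeq\bigoplus_{w}\Hom^{\bullet}(j_w^*\mathcal{F},j_w^!\mathcal{G})\simeq\bigoplus_{w,k,l}R(l-k)^{\oplus n_{w,k}m_{w,l}}$ by citing \cite[Proposition~2.6]{MR3230821} directly for the pair $(\mathcal{F},\mathcal{G})$, which is exactly what your detour through the $!$-parity complex $R\mathcal{H}om(\mathcal{F},\mathcal{G})$ and the hypercohomology d\'evissage repackages. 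The remaining bookkeeping --- the graded rank $\sum_{w}\bigl(\sum_k n_{w,k}v^{-k}\bigr)\bigl(\sum_l m_{w,l}v^{l}\bigr)$ matched against $\langle\presubscript{I}{\ch}(\presubscript{I}{\mathbb{H}}(\mathcal{F})),\presubscript{I}{\ch}(\presubscript{I}{\mathbb{H}}(\mathcal{G}))\rangle_{\mathcal{H},S_0}$ via Theorem~\ref{thm:hom formula} and Lemma~\ref{lem:on Schubert cell and ch} --- is the same as in the paper.
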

\begin{proof}
Take $n_{w,k}\in\Z_{\ge 0}$ (resp.\ $m_{w,k}\in\Z_{\ge 0}$) such that $j_w^*\mathcal{F}\simeq \bigoplus_k \Coeff[k]^{\oplus n_{w,k}}$ (resp.\ $j_w^!\mathcal{G}\simeq \bigoplus_k \Coeff[k]^{\oplus m_{w,k}}$).
By \cite[Proposition~2.6]{MR3230821}, we have
\begin{align*}
\Hom^{\bullet}_{\Parity_B(\presubscript{I}{X})}(\mathcal{F},\mathcal{G}) & \simeq \bigoplus_w\Hom_{D_B^{\mathrm{b}}(\presubscript{I}{X}_w)}(j_w^*\mathcal{F},j_w^!\mathcal{G})\\
& \simeq \bigoplus_w \bigoplus_{k,l}\Hom^{\bullet}_{D_B^{\mathrm{b}}(\presubscript{I}{X}_w)}(\Coeff[k]^{\oplus n_{w,k}},\Coeff[l]^{\oplus m_{w,l}})\\
& = \bigoplus_w \bigoplus_{k,l}R(-k + l)^{\oplus n_{w,k}m_{w,l}}.
\end{align*}
This is graded free and, by Theorem~\ref{thm:hom formula} and Lemma~\ref{lem:on Schubert cell and ch}, the graded rank is equal to $\grk\Hom^{\bullet}_{\Sbimod{I}}(\presubscript{I}{\mathbb{H}}(\mathcal{F}),\presubscript{I}{\mathbb{H}}(\mathcal{G}))$.
\end{proof}

Now we are ready to prove that $\presubscript{I}{\mathbb{H}}$ is fully-faithful.
\begin{lem}
Let $\Coeff'$ be a $\Coeff$-algebra.
\begin{enumerate}
\item Let $\mathcal{F},\mathcal{G}\in \Parity_B(\presubscript{I}{X})$.
Then $\Coeff'\otimes_{\Coeff}\Hom(\mathcal{F},\mathcal{G})\simeq \Hom(\Coeff'\otimes_{\Coeff}\mathcal{F},\Coeff'\otimes_{\Coeff}\mathcal{G})$.
\item Let $M,N\in \Sbimod{I}$.
Then $\Coeff'\otimes_{\Coeff}\Hom(M,N)\simeq \Hom(\Coeff'\otimes_{\Coeff}M,\Coeff'\otimes_{\Coeff}N)$.
\end{enumerate}
\end{lem}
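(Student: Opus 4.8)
The plan is to reduce both statements, through the explicit descriptions of the two $\Hom$-spaces already obtained, to the compatibility of a few elementary operations with extension of scalars. Write $(-)'=\Coeff'\otimes_{\Coeff}(-)$ for extension of scalars — on the realization (so $R'=\Coeff'\otimes_{\Coeff}R$, and $(R')^{W_{S_0}}=\Coeff'\otimes_{\Coeff}R^{W_{S_0}}$ because $R^{W_{S_0}}\hookrightarrow R$ splits), on the $B$-equivariant derived categories, and on $(R^{W_{S_0}},R)$-bimodules. One records once and for all that $(-)'$ is additive, monoidal, commutes with grading shifts, with the functors $\pi_{I,J,*}$ and $\pi_{I,J}^{*}$, and with splitting of idempotents; hence it carries $\Sbimod{I}$ (resp.\ $\Parity_B(\presubscript{I}{X})$) over $\Coeff$ into its analogue over $\Coeff'$, and in each of (1), (2) there is a natural base-change morphism $\theta$, which is what must be shown to be an isomorphism. (Tacitly $\Coeff'$ is such that the standing hypotheses of the relevant section persist over $\Coeff'$, e.g.\ a residue field of $\Coeff$; otherwise the right-hand side is not defined.)

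For (1) I would use the computation in the proof of Lemma~\ref{lem:hom formula on Parity}: for $\mathcal{F}$ a $*$-parity and $\mathcal{G}$ a $!$-parity sheaf, writing $j_w^{*}\mathcal{F}\simeq\bigoplus_k\Coeff[k]^{\oplus n_{w,k}}$ and $j_w^{!}\mathcal{G}\simeq\bigoplus_l\Coeff[l]^{\oplus m_{w,l}}$, \cite[Proposition~2.6]{MR3230821} gives a natural isomorphism
\[
\Hom^{\bullet}_{\Parity_B(\presubscript{I}{X})}(\mathcal{F},\mathcal{G})\ \simeq\ \bigoplus_{w}\Hom^{\bullet}_{D_B^{\mathrm{b}}(\presubscript{I}{X}_w)}(j_w^{*}\mathcal{F},j_w^{!}\mathcal{G})\ \simeq\ \bigoplus_{w,k,l}R(l-k)^{\oplus n_{w,k}m_{w,l}} .
\]
Now $\mathcal{F}'$ is again $*$-parity and $\mathcal{G}'$ again $!$-parity; $j_w^{*}$ commutes with $(-)'$ formally, and $j_w^{!}$ commutes with $(-)'$ on a $!$-parity sheaf (write $j_w^{!}=D\circ j_w^{*}\circ D$ and use that $\presubscript{I}{X}_w$ is an affine space, so Verdier duality of a finite sum of shifted constant sheaves commutes with $(-)'$; alternatively use that $\presubscript{I}{X}_w$ is open in its closure). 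Hence the multiplicities $n_{w,k},m_{w,l}$ are unchanged by $(-)'$, and since $H^{\bullet}_B(\presubscript{I}{X}_w)\simeq H^{\bullet}_B(\mathrm{pt})\simeq R$ base-changes to $R'$, naturality of the above isomorphism identifies $\theta$ with $\bigoplus_w$ of the base-change maps for $\Hom$ in $D_B^{\mathrm{b}}(\presubscript{I}{X}_w)$, which are isomorphisms.

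For (2) I would first note that $\Hom_{\Sbimod{I}}=\Hom_{\BigCat{I}}=\Hom_{(R^{W_{S_0}},R)}$ by Lemma~\ref{lem:automatically morphism in C} (the decomposition $M\otimes_R Q=\bigoplus_x M^x_Q$ is intrinsic, hence automatically respected), and that the support truncations $N\mapsto N_I, N^I$ and their open or closed subquotients — being graded free right $R$-modules presented as (co)kernels of $R$-linear maps of graded free right $R$-modules by Proposition~\ref{prop:freeness of subquotients} (and its open counterpart) — have formation commuting with $(-)'$. Fix $M,N\in\Sbimod{I}$ and take the standard filtration of $N$ from the proof of Theorem~\ref{thm:classification}: a chain $0=N_{I_0}\subset N_{I_1}\subset\dotsb\subset N_{I_k}=N$ of closed $I_j$ with single-coset jumps $w_j$ and $N_{I_j}/N_{I_{j-1}}\simeq N^{w_j}$ a finite direct sum of standard objects $R_{w_j}(m)$. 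I would prove, by induction on $j$, that $\Hom_{\BigCat{I}}(M,N_{I_j})$ is graded free over $R$ and $\theta\colon(\Hom_{\BigCat{I}}(M,N_{I_j}))'\to\Hom_{\BigCat{I}}(M',(N_{I_j})')$ is an isomorphism; the case $j=0$ is trivial. For a standard object $R_w(n)$, any $M\to R_w(n)$ in $\BigCat{I}$ annihilates $\bigoplus_{x\ne w}M^x_Q$, hence factors through $M\to M^w$, so $\Hom_{\BigCat{I}}(M,R_w(n))\simeq\Hom_{\text{-$R$}}(M^w,R(n))$, graded free over $R$ with formation commuting with $(-)'$ ($M^w$ being graded free and base-change compatible). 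For the inductive step, $\Hom_{\BigCat{I}}(M,-)$ applied to $0\to N_{I_{j-1}}\to N_{I_j}\to N^{w_j}\to 0$ yields a short exact sequence, by left-exactness of $\Hom$ and the surjectivity of Proposition~\ref{prop:projectivity}, with all terms graded free over $R$, hence $\Coeff$-free; so $(-)'$ preserves its exactness. Comparing through the base-change maps with the corresponding short exact sequence over $\Coeff'$ (valid by Proposition~\ref{prop:projectivity} over $\Coeff'$ and the identifications $(N_{I_j})'=(N')_{I_j}$, $(N^{w_j})'=(N')^{w_j}$), the outer vertical maps are isomorphisms by the base case and by induction, so the middle one is, by the five lemma. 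Taking $j=k$ proves (2) for this $N$, and since $M$ was arbitrary and such a filtration exists for every $N$, (2) follows.

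The main obstacle is the part-(2) argument. By Theorem~\ref{thm:hom formula}, Lemma~\ref{lem:hom formula on Parity}, and base-change invariance of the characters $\presubscript{I}{\ch}$, both sides of (2) are already visibly graded free over $R'$ of the same graded rank; but this does not by itself force $\theta$ to be an isomorphism, and it is exactly Proposition~\ref{prop:projectivity} (projectivity relative to the standard filtration) that upgrades the rank count to a genuine isomorphism by keeping the relevant short exact sequences exact after $(-)'$. The remaining delicate point is the compatibility of $j_w^{!}$ with extension of scalars in (1), to be handled via Verdier duality on the smooth strata.
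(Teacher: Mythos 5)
Part (1) is essentially fine and is the same argument the paper intends by its citation of \cite[Proposition~2.6]{MR3230821}. Part (2), however, has a genuine gap, and it sits precisely at the base case of your induction. You assert that for a standard object, $\Hom_{\BigCat{I}}(M,R_w(n))\simeq\Hom_{\text{-$R$}}(M^w,R(n))$ is graded free over $R$ ``with formation commuting with $(-)'$'', on the grounds that ``$M^w$ is graded free and base-change compatible''. Neither claim is covered by the results you invoke: Proposition~\ref{prop:freeness of subquotients} and its open counterpart (and the remark after the duality section) give freeness of $M^I$ only for $I$ open or closed, whereas $\{w\}$ is merely locally closed; dualizing via Lemma~\ref{lem:dual of M^I} only trades this for freeness of $D(M)_{\{w\}}$, which is equally unavailable. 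More seriously, the compatibility $(\Coeff'\otimes_{\Coeff}M)^w\simeq\Coeff'\otimes_{\Coeff}M^w$, and likewise $(\Coeff'\otimes_{\Coeff}N)_{I_j}\simeq\Coeff'\otimes_{\Coeff}(N_{I_j})$, is not formal: $\Coeff\to\Coeff'$ need not be flat (the intended application is a residue field of the complete local ring $\Coeff$), $Q$ does not even map to $Q'$, and $M^w$, $N_{I_j}$ are defined as an image and a kernel inside $M\otimes_RQ$; images and kernels do not commute with non-flat base change without an argument (a flatness-of-cokernel plus torsion argument would work, but it needs the very freeness statements above). So the inductive scheme is sound, but its inputs are asserted rather than proved, and they are of essentially the same depth as the lemma itself. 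Two smaller points: the identification $N_{I_j}/N_{I_{j-1}}\simeq N^{w_j}$ is false in general (the closed-chain subquotient is of ``sections supported on $w_j$'' type, not the quotient $N^{w_j}$; already for $B_s$ and $w=e$ the graded ranks are $v^{-1}$ and $v$), although what you actually use --- that it is a finite direct sum of shifted $R_{w_j}$'s --- is correct; and $\Hom_{\Sbimod{I}}=\Hom_{(R^{W_I},R)}$ does not follow from Lemma~\ref{lem:automatically morphism in C}, which only concerns components lying over a single coset of $W_{S_0}\backslash W$; separating distinct cosets would require $Q^{W_I}$ to distinguish them, which is not among the standing hypotheses. (You do not need this identification, so simply drop it.)

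For comparison, the paper's proof of (2) bypasses all of these issues: by adjunction one reduces to $N=\pi_{I,\emptyset,*}(N_0)$ and hence to $I=\emptyset$, then to Bott--Samelson objects $M=B_{s_1}\otimes\dotsb\otimes B_{s_l}$ and $N=B_{t_1}\otimes\dotsb\otimes B_{t_r}$, where $\Hom$ has the double leaves basis of \cite[Theorem~5.5]{arXiv:1901.02336_accepted}, whose construction is defined over the base ring and is manifestly compatible with $\Coeff\to\Coeff'$. If you wish to salvage your filtration-plus-projectivity argument, you must first prove (for instance by this same reduction, or from \cite{arXiv:1901.02336_accepted}) that $M^{\{w\}}$ is graded free and that the support filtration commutes with base change; at that point the double-leaves route is both shorter and already gives the full statement.
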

\begin{proof}
(1) follows from an argument of \cite[Proposition~2.6]{MR3230821}.
For (2), we may assume $N = \pi_{I,\emptyset,*}(N_0)$.
Then $\Hom(M,N)\simeq\Hom(\pi_{I,\emptyset}^*M,N_0)$ and $\Hom(\Coeff'\otimes_{\Coeff}M,\Coeff'\otimes_{\Coeff}N)\simeq \Hom(\Coeff'\otimes_{\Coeff}\pi_{I,\emptyset}^*M,\Coeff'\otimes_{\Coeff}N_0)$.
Hence we may assume $I = \emptyset$.
Moreover we may assume $M = B_{s_1}\otimes\dotsb\otimes B_{s_l}$ and $N = B_{t_1}\otimes\dotsb\otimes B_{t_r}$ for some $s_1,\dots,s_l,t_1,\dots,t_r\in S$.
Then $\Hom(M,N)$ and $\Hom(\Coeff'\otimes_{\Coeff}M,\Coeff'\otimes_{\Coeff}N)$ has a basis called double leaves~\cite[Theorem~5.5]{MR4321542} (cf.~\cite{MR2441994}).
From the construction of double leaves basis, they correspond to each other by the base change to $\Coeff'$.
Hence we have (2).
\end{proof}

\begin{lem}
The functor $\presubscript{I}{\mathbb{H}}\colon \Parity_B(\presubscript{I}{X})\to \Sbimod{I}$ is fully-faithful.
\end{lem}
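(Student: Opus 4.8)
We may assume $\Coeff$ is a field. For $\mathcal F,\mathcal G\in\Parity_B(\presubscript{I}{X})$ the functor $\presubscript{I}{\mathbb{H}}$ induces a degree-preserving map of graded $R$-modules
\[
\Phi_{\mathcal F,\mathcal G}\colon\Hom^{\bullet}_{\Parity_B(\presubscript{I}{X})}(\mathcal F,\mathcal G)\longrightarrow\Hom^{\bullet}_{\Sbimod{I}}(\presubscript{I}{\mathbb{H}}(\mathcal F),\presubscript{I}{\mathbb{H}}(\mathcal G)).
\]
By Lemma~\ref{lem:hom formula on Parity} both sides are graded free $R$-modules with the same graded rank, and each homogeneous component of $R$ is finite dimensional over $\Coeff$; hence in every degree $\Phi_{\mathcal F,\mathcal G}$ is a $\Coeff$-linear map between spaces of equal finite dimension, so it is bijective as soon as it is injective. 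Thus it suffices to prove that $\presubscript{I}{\mathbb{H}}$ is faithful.

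So let $\phi\colon\mathcal F\to\mathcal G$ with $\presubscript{I}{\mathbb{H}}(\phi)=0$; we must show $\phi=0$. Applying $-\otimes_{R}Q$ and using that $\presubscript{I}{\mathbb{H}}(\phi)$ is a morphism in $\BigCat{I}$, the induced maps on the summands $\presubscript{I}{\mathbb{H}}(\mathcal F)^{w}_{Q}=H^{\bullet}_{T}(\{w\},\mathcal F_{w})\otimes_{R}Q$ all vanish; since $\mathcal F$ is parity, $H^{\bullet}_{T}(\{w\},\mathcal F_{w})$ is graded free, so $-\otimes_{R}Q$ is injective on it and the maps $H^{\bullet}_{T}(\{w\},\mathcal F_{w})\to H^{\bullet}_{T}(\{w\},\mathcal G_{w})$ induced by the stalk maps $\phi_{w}\colon\mathcal F_{w}\to\mathcal G_{w}$ vanish for all $T$-fixed points $w$. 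On the point $\{w\}$ the objects $\mathcal F_{w}$ and $\mathcal G_{w}$ are direct sums of shifted constant sheaves, $\Hom^{\bullet}_{D^{\mathrm{b}}_{T}(\{w\})}(\Coeff[k],\Coeff[l])\simeq R$ up to a grading shift, and $H^{\bullet}_{T}(\{w\},-)$ identifies this with $\Hom^{\bullet}_{R}$ between the corresponding free rank one $R$-modules; hence $H^{\bullet}_{T}(\{w\},-)$ is faithful on such objects, and therefore $\phi_{w}=0$ for every $T$-fixed point $w$. Because each Schubert cell $\presubscript{I}{X}_{w}$ is $T$-equivariantly contractible onto $\{w\}$, restriction to $\{w\}$ is faithful on the full subcategory of $D^{\mathrm{b}}_{B}(\presubscript{I}{X}_{w})$ of sums of shifted constant sheaves, so $j_{w}^{*}\phi=0$ for every $w$. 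Finally, by \cite[Proposition~2.6]{MR3230821} the natural map $\Hom(\mathcal F,\mathcal G)\to\bigoplus_{w}\Hom(j_{w!}j_{w}^{*}\mathcal F,\mathcal G)$, $\psi\mapsto(\psi\circ\varepsilon_{w})_{w}$ with $\varepsilon_{w}\colon j_{w!}j_{w}^{*}\mathcal F\to\mathcal F$ the counit, is injective (it rewrites the isomorphism $\Hom(\mathcal F,\mathcal G)\simeq\bigoplus_{w}\Hom(j_{w}^{*}\mathcal F,j_{w}^{!}\mathcal G)$); and by naturality $\phi\circ\varepsilon_{w}=\varepsilon'_{w}\circ j_{w!}j_{w}^{*}\phi=0$, where $\varepsilon'_{w}\colon j_{w!}j_{w}^{*}\mathcal G\to\mathcal G$. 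Hence $\phi=0$, so $\presubscript{I}{\mathbb{H}}$ is faithful, and by the previous paragraph fully faithful.

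The real content of the argument is already contained in Theorem~\ref{thm:hom formula} together with Lemma~\ref{lem:on Schubert cell and ch}, which supply the equality of graded ranks that makes ``faithful $\Rightarrow$ fully faithful'' work; the chain of faithful functors in the second paragraph is essentially formal. The one point that needs a little care is the implication $j_{w}^{*}\phi=0\ (\forall w)\Rightarrow\phi=0$, for which the cleanest route is the one above via \cite[Proposition~2.6]{MR3230821} and the naturality of the counit; alternatively one can argue by induction on $\#\supp\mathcal F$, peeling off a single maximal Schubert cell at a time using the exact sequences of Lemma~\ref{lem:relation of restriction between sheaf and Soergel bimodule, locally closed} and the remark following it.
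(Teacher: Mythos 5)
Your opening reduction (equality of graded ranks from Lemma~\ref{lem:hom formula on Parity}, so faithfulness suffices) is exactly the paper's, and your first two steps are sound: $\presubscript{I}{\mathbb{H}}(\phi)=0$ forces, via localization and torsion-freeness of the stalk cohomologies, that $j_w^*\phi=0$ on every stratum. The gap is in the final step, the implication ``$j_w^*\phi=0$ for all $w$ implies $\phi=0$''. First, the map you use to prove it is not defined: for a stratum $\presubscript{I}{X}_w$ that is not open in $\presubscript{I}{X}$ there is no counit $\varepsilon_w\colon j_{w!}j_w^*\mathcal F\to\mathcal F$ (for a closed stratum the adjunction runs the other way, $\mathcal F\to j_{w*}j_w^*\mathcal F$). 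Second, \cite[Proposition~2.6]{MR3230821} produces a \emph{non-canonical} isomorphism $\Hom^{\bullet}(\mathcal F,\mathcal G)\simeq\bigoplus_w\Hom^{\bullet}(j_w^*\mathcal F,j_w^!\mathcal G)$ by splitting a filtration coming from open--closed recollement; it does not assert injectivity of any particular canonical map assembled from restrictions to strata, so it cannot be quoted for your claim.

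More seriously, the implication you need is not formal: it fails non-equivariantly. On $X=\mathbb{P}^1$ with its two Schubert cells, the nonzero morphism $i_*\Coeff_{\mathrm{pt}}\to\Coeff_{\mathbb{P}^1}[2]$ (a map between shifts of the parity sheaves $\mathcal E(e)$ and $\mathcal E(s)$) has vanishing $*$-restriction to both strata, because the non-equivariant Euler class of the tangent space at the fixed point is zero. So any correct argument must use equivariance, i.e.\ that the relevant Euler classes are non-zero-divisors in $R$, equivalently torsion-freeness of modules such as $\presubscript{I}{\mathbb{H}}(j_{w*}j_w^!\mathcal G)$ --- and this is precisely the content of the paper's proof: an induction over finite closed unions of cells using the exact sequences of Lemma~\ref{lem:relation of restriction between sheaf and Soergel bimodule, locally closed}, reducing to a single cell, where injectivity is obtained by identifying $\presubscript{I}{\mathbb{H}}(j_{w!}j_w^*\mathcal F)$ with $\presubscript{I}{\mathbb{H}}(j_{w*}j_w^*\mathcal F)\,a$ for the class $a$ coming from compactly supported cohomology and then using that the target is torsion-free. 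Your parenthetical alternative (``peel off one cell at a time'') points in the right direction, but as sketched it also misses this point: after factoring $\phi$ through the closed part, its restriction to the closed stratum only gives the composition of the factored map with the canonical morphism $i^!\mathcal G\to i^*\mathcal G$, and concluding that the factored map itself vanishes is exactly where the equivariant Euler-class/torsion-freeness input is required. As it stands, the crucial step of the lemma is unproved.
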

\begin{proof}
Let $\mathcal{F},\mathcal{G}\in \Parity_{B}(\presubscript{I}{X})$.
We prove that
\begin{equation}\label{eq:map between Hom}
\Hom_{\Parity_B(\presubscript{I}{X})}(\mathcal{F},\mathcal{G})\to \Hom_{\Sbimod{I}}(\presubscript{I}{\mathbb{H}}(\mathcal{F}),\presubscript{I}{\mathbb{H}}(\mathcal{G}))
\end{equation}
is bijective.
By Lemma~\ref{lem:hom formula on Parity}, it is sufficient to prove that \eqref{eq:map between Hom} is surjective.
It is sufficient to prove that \eqref{eq:map between Hom} is surjective after localizing at any maximal ideal $\mathfrak{m}\subset \Coeff$ and by Nakayama's lemma, it is sufficient to prove the surjectivity after tensoring with $\Coeff/\mathfrak{m}$.
By the previous lemma, it is sufficient to prove that \eqref{eq:map between Hom} is surjective after replacing $\Coeff$ with $\Coeff/\mathfrak{m}$.
Namely we may assume that $\Coeff$ is a field.

So we assume that $\Coeff$ is a field in the rest of the proof.
By Lemma~\ref{lem:hom formula on Parity}, it is sufficient to prove that \eqref{eq:map between Hom} is injective.
Let $Y = \bigcup_{w\in A}\presubscript{I}{X}_{w}\subset\presubscript{I}{X}$ be a closed subset such that $\#A < \infty$ and $f\colon Y\hookrightarrow\presubscript{I}{X}$ the inclusion map.
We prove $\Hom(\mathcal{F},f_*f^!\mathcal{G})\to\Hom(\presubscript{I}{\mathbb{H}}(\mathcal{F}),\presubscript{I}{\mathbb{H}}(f_*f^!\mathcal{G}))$ is injective by induction on $\#A$.

Let $w\in A$ be a maximal element and set $Z = Y\setminus \presubscript{I}{X}_{w}$, $U = \presubscript{I}{X}\setminus Z$.
Let $i\colon Z\hookrightarrow \presubscript{I}{X}$ and $j\colon U\hookrightarrow\presubscript{I}{X}$ be the inclusion maps.
Set $A' = A\setminus\{w\}$.
Then we have the following commutative diagram
\begin{equation}\label{eq:commutative diagram, for fully-faithfulness}
\begin{tikzcd}
 & 0\arrow[d]\\
\Hom(\mathcal{F},i_*i^!\mathcal{G})\arrow[r]\arrow[d] & \Hom(\presubscript{I}{\mathbb{H}}(\mathcal{F}),\presubscript{I}{\mathbb{H}}(i_*i^!\mathcal{G}))\arrow[d]\\
\Hom(\mathcal{F},f_*f^!\mathcal{G})\arrow[r]\arrow[d] & \Hom(\presubscript{I}{\mathbb{H}}(\mathcal{F}),\presubscript{I}{\mathbb{H}}(f_*f^!\mathcal{G}))\arrow[d]\\
\Hom(\mathcal{F},j_{w*}j_{w}^!\mathcal{G})\arrow[r] & \Hom(\presubscript{I}{\mathbb{H}}(\mathcal{F}),\presubscript{I}{\mathbb{H}}(j_{w*}j_{w}^!\mathcal{G})).
\end{tikzcd}
\end{equation}
Here we use $i_*i^!f_*f^! \simeq i_*i^!$ and $j_*j^!f_*f^!\simeq j_{w*}j_w^!$.
By Lemma~\ref{lem:relation of restriction between sheaf and Soergel bimodule, locally closed}, the right column is exact.
The first row is injective by inductive hypothesis.
It is sufficient to prove that the last row is injective.

Note that we have an equality
\[
\Hom_{H^{\bullet}_B(\presubscript{I}{X}_w)}(H_B^{\bullet}(\presubscript{I}{X}_w,j_{w}^*\mathcal{F}),H^{\bullet}_B(\presubscript{I}{X}_w,j_{w}^!\mathcal{G})) = \Hom_{\BigCat{I}}(\presubscript{I}{\mathbb{H}}(j_{w*}j_{w}^*\mathcal{F}),\presubscript{I}{\mathbb{H}}(j_{w*}j_{w}^!\mathcal{G})).
\]
Indeed, we have $H_B^{\bullet}(\presubscript{I}{X}_w,j_{w}^*\mathcal{F}) = \presubscript{I}{\mathbb{H}}(j_{w*}j_{w}^*\mathcal{F})$, $H_B^{\bullet}(\presubscript{I}{X}_w,j_{w}^!\mathcal{G}) = \presubscript{I}{\mathbb{H}}(j_{w*}j_{w}^!\mathcal{G})$ and $H_B^{\bullet}(\presubscript{I}{X}_w)\simeq R$.
Hence the right hand side is contained in the left hand side.
We have $\supp_W(\presubscript{I}{\mathbb{H}}(j_{w*}j_{w}^*\mathcal{F}))\subset \{w\}$ as in the proof of Lemma~\ref{lem:relation of restriction between sheaf and Soergel bimodule}.
Since $H_B^{\bullet}(\presubscript{I}{X}_w,j_{w}^*\mathcal{F}) = \presubscript{I}{\mathbb{H}}(j_{w*}j_{w}^*\mathcal{F})$ is a free $R$-module, we have $\presubscript{I}{\mathbb{H}}(j_{w*}j_{w}^*\mathcal{F})\hookrightarrow \presubscript{I}{\mathbb{H}}(j_{w*}j_{w}^*\mathcal{F})_Q^w$ and the same is true for $j_{w*}j_{w}^!\mathcal{G}$.
Therefore any $R$-module homomorphism $\presubscript{I}{\mathbb{H}}(j_{w*}j_{w}^*\mathcal{F})\to \presubscript{I}{\mathbb{H}}(j_{w*}j_{w}^!\mathcal{G})$ is a morphism in $\BigCat{I}$, hence we have the above equality.

The last row of \eqref{eq:commutative diagram, for fully-faithfulness} is decomposed into
\begin{align*}
\Hom_{D_{B}^{\mathrm{b}}(\presubscript{I}{X})}(\mathcal{F},j_{w*}j_{w}^!\mathcal{G})
& \simeq\Hom_{D_{B}^{\mathrm{b}}(\presubscript{I}{X}_w)}(j_{w}^*\mathcal{F},j_{w}^!\mathcal{G})\\
& \to \Hom_{H^{\bullet}_B(\presubscript{I}{X}_w)}(H_B^{\bullet}(\presubscript{I}{X}_w,j_{w}^*\mathcal{F}),H^{\bullet}_B(\presubscript{I}{X}_w,j_{w}^!\mathcal{G}))\\
& = \Hom_{\BigCat{I}}(\presubscript{I}{\mathbb{H}}(j_{w*}j_{w}^*\mathcal{F}),\presubscript{I}{\mathbb{H}}(j_{w*}j_{w}^!\mathcal{G}))\\
& \to \Hom_{\BigCat{I}}(\presubscript{I}{\mathbb{H}}(\mathcal{F}),\presubscript{I}{\mathbb{H}}(j_{w*}j_{w}^!\mathcal{G})),
\end{align*}
here the last map is induced by $\mathcal{F}\to j_{w*}j_{w}^*\mathcal{F}$.
The second morphism is an isomorphism since $j_w^*\mathcal{F}$ and $j_{w}^!\mathcal{G}$ are constant.
Therefore it is sufficient to prove that the last map is injective.

We have the commutative diagram
\[
\begin{tikzcd}
\Hom(\presubscript{I}{\mathbb{H}}(\mathcal{F}),\presubscript{I}{\mathbb{H}}(j_{w*}j_{w}^!\mathcal{G}))\arrow[r] & \Hom(\presubscript{I}{\mathbb{H}}(j_{!}j^*\mathcal{F}),\presubscript{I}{\mathbb{H}}(j_{w*}j_{w}^!\mathcal{G}))\\
\Hom(\presubscript{I}{\mathbb{H}}(j_{w*}j_{w}^*\mathcal{F}),\presubscript{I}{\mathbb{H}}(j_{w*}j_{w}^!\mathcal{G}))\arrow[u]\arrow[r] & \Hom(\presubscript{I}{\mathbb{H}}(j_{w!}j_{w}^*\mathcal{F}),\presubscript{I}{\mathbb{H}}(j_{w*}j_{w}^!\mathcal{G}))\arrow[u].
\end{tikzcd}
\]

The right column is injective by Lemma~\ref{lem:relation of restriction between sheaf and Soergel bimodule, locally closed}.
Therefore it is sufficient to prove the lower row is injective.

Note that $H_{B}^{\bullet}(\presubscript{I}{X}_w)\simeq H_B^{\bullet}(\mathrm{pt})\simeq R$.
Since $j_w^*\mathcal{F}$ is constant and $\presubscript{I}{X_w}\simeq \mathbb{A}^{\ell(w_-)}$, the natural map $H_{B}^{\bullet}(\presubscript{I}{X}_w,j_w^*\mathcal{F})\otimes_{H_{B}^{\bullet}(\presubscript{I}{X}_w)} H_{B,c}^{\bullet}(\presubscript{I}{X_w})\to H_{B,c}^{\bullet}(\presubscript{I}{X}_w,j_w^*\mathcal{F})$ is an isomorphism and $H_{B,c}^{\bullet}(\presubscript{I}{X_w})$ is free of rank one as an $H_{B}^{\bullet}(\presubscript{I}{X_w})\simeq R$-module generated by some $u\in H^{2\ell(w_-)}_{B,c}(\presubscript{I}{X_{w}})$.
Let $a\in H_B^{2\ell(w_-)}(\presubscript{I}{X_w})\subset H^{\bullet}_{B}(\presubscript{I}{X}_{w})\simeq R$ be the image of $u$ under $H_{B,c}^{\bullet}(\presubscript{I}{X}_w)\hookrightarrow H_{B}^{\bullet}(\presubscript{I}{X}_w)$.
Then we have $\presubscript{I}{\mathbb{H}}(j_{w!}j_{w}^*\mathcal{F}) = H_{B,c}^{\bullet}(\presubscript{I}{X}_w,j_{w}^*\mathcal{F}) \simeq H_B^{\bullet}(\presubscript{I}{X}_w,j_{w}^*\mathcal{F})a = \presubscript{I}{\mathbb{H}}(j_{w*}j_{w}^*\mathcal{F})a$.
The $R$-module $\presubscript{I}{\mathbb{H}}(j_{w*}j_w^!\mathcal{G})$ is free, hence it is torsion-free.
Hence the map $\Hom(\presubscript{I}{\mathbb{H}}(j_{w*}j_{w}^*\mathcal{F}),\presubscript{I}{\mathbb{H}}(j_{w*}j_{w}^!\mathcal{G}))\to \Hom(\presubscript{I}{\mathbb{H}}(j_{w!}j_{w}^*\mathcal{F}),\presubscript{I}{\mathbb{H}}(j_{w*}j_{w}^!\mathcal{G}))$ which is given by the restriction to $\presubscript{I}{\mathbb{H}}(j_{w*}j_w^*\mathcal{F})a$, is injective.
\end{proof}

\begin{proof}[{Proof of Theorem~\ref{thm:equivalence}}]
We prove that $\presubscript{I}{\mathbb{H}}$ is essentially surjective.
Let $M\in \Sbimod{I}$.
Then $M$ is a direct summand of a direct sum of objects of a form $\pi_{I,\emptyset,*}(B_{s_1}\otimes\dotsb \otimes B_{s_l})(n)$ for some $s_1,\dots,s_l\in S$ and $n\in\Z$.
By Lemma~\ref{lem:monoidal structure and H} and Proposition~\ref{prop:H and pi_*}, there exists $\mathcal{F}\in \Parity_B(\presubscript{I}{X})$ such that $M$ is a direct summand of $\presubscript{I}{\mathbb{H}}(\mathcal{F})$.
Since $\presubscript{I}{\mathbb{H}}$ is fully-faithful, there exists $\mathcal{F}'$ such that $M\simeq \presubscript{I}{\mathbb{H}}(\mathcal{F}')$.

Let $\presubscript{I}{\mathbb{H}}^{-1}$ be the quasi-inverse functor of $\presubscript{I}{\mathbb{H}}$.
Then we have $\presubscript{I}{\mathbb{H}}^{-1}\circ\pi_{I,J,*}\simeq \pi_{I,J,*}\circ\presubscript{I}{\mathbb{H}}^{-1}$ by Proposition~\ref{prop:H and pi_*}.
By taking the left adjoint functors of both sides, we get $\pi_{I,J}^*\circ\presubscript{I}{\mathbb{H}}\simeq \presubscript{J}{\mathbb{H}}\circ\pi_{I,J}^*$.
\end{proof}

\end{document}